\documentclass[11pt,reqno]{amsart}
\usepackage{etex}
\usepackage{textcmds} 
\usepackage{amssymb, amsfonts, amstext, verbatim, mathrsfs}
\usepackage[centertags]{amsmath}
\usepackage{microtype}
\usepackage[all]{xy}
\usepackage[modulo]{lineno}
\usepackage[usenames]{color}
\usepackage{enumitem}
\usepackage{xspace}
\usepackage{rotating}
\usepackage[margin=3cm]{geometry}
\usepackage{dsfont}
\usepackage{bm}
\usepackage{color}
\usepackage{subfigure}
\usepackage{array}
\usepackage[all]{xy}
\usepackage{euscript}
\usepackage[T1]{fontenc}
\usepackage{mathbbol}
\usepackage{nccmath}
\usepackage{stmaryrd}
\usepackage{youngtab}
\usepackage{tikz,pgfplots}
\usepackage{blkarray}
\usepackage{ulem}
\usepackage{mathtools}
\usepackage{tikzsymbols}
\usepackage{enumitem}

\usepackage{float}

\usepackage{graphics,graphicx}  

\setcounter{tocdepth}{2}

\usepackage[colorlinks=true,linkcolor=black,citecolor=blue,urlcolor=blue,citebordercolor={0 0 1},urlbordercolor={0 0 1},linkbordercolor={0 0 1}]{hyperref} 

\tikzset{node distance=3cm, auto}




\numberwithin{figure}{subsection}  

\numberwithin{equation}{subsection}  

\numberwithin{table}{subsection}  

\makeatletter
\let\c@equation\c@figure
\makeatother

\makeatletter
\let\c@table\c@figure
\makeatother

\makeatletter
\let\c@algorithm\c@figure
\makeatother



\newtheorem{thm}{Theorem}
\newtheorem{lemm}[thm]{Lemma}
\newtheorem{prop}[thm]{Proposition}
\newtheorem{cor}[thm]{Corollary}
\newtheorem{CONJEC}[thm]{Conjecture}
\newtheorem{DEFN}[thm]{Definition}

\newtheorem{EXAMple}[thm]{Example}
\newtheorem{REMark}[thm]{Remark}
\newtheorem{rmk}[thm]{Remark}
\newtheorem{CLAim}[thm]{Claim}

\newcommand{\eend}{{\rm end}}

\newcommand{\N}{{\mathbb{N}}}
\newcommand{\Ee}{\mathcal{E}}

\newenvironment{itemlist}
   { \begin{list} {$\bullet$}
         { \setlength{\topsep}{.5ex}  \setlength{\itemsep}{.5ex} \setlength{\leftmargin}{2.5ex} } }
   { \end{list} }

\newcommand{\bbm}{{\bf{m}}}
\newcommand{\bw}{{\bf{w}}}
\newcommand{\bn}{{\bf{n}}}

\newcommand{\bE}{{\bf{E}}}
\newcommand{\bB}{{\bf{B}}}

\newcommand{\NI}{{\noindent}}

\newcommand{\Ss}{{\mathcal S}}

\newcommand{\ov}{\overline}
\newcommand{\al}{{\alpha}}

\newcommand{\be}{{\beta}}

\newcommand{\om}{{\omega}}

\newcommand{\la}{{\lambda}}

\newcommand{\si}{{\sigma}}

\newcommand{\less} {{\smallsetminus}}
\newcommand{\p}{{\partial}}
\newcommand{\MS}{{\medskip}}

\newcommand{\er}{{\Diamond}}
\newcommand{\Z}{\mathbb{Z}}

\newcommand{\R}{\mathbb{R}}
\newcommand{\Q}{\mathbb{Q}}
\newcommand{\C}{\mathbb{C}}
\newcommand{\CP}{\mathbb{CP}}

\newcommand{\eps}{\varepsilon}

\newcommand{\op}[1]{{\operatorname{#1}}}

\newcommand{\Sh}{{\op{Sh}}}

\newcommand{\acc}{\mathrm{acc}}
\newcommand{\sembeds}{\stackrel{s}{\hookrightarrow}}

\setlength{\marginparwidth}{2.9cm}

\makeatletter
\newcommand{\dashover}[2][\mathop]{#1{\mathpalette\df@over{{\dashfill}{#2}}}}
\newcommand{\fillover}[2][\mathop]{#1{\mathpalette\df@over{{\solidfill}{#2}}}}
\newcommand{\df@over}[2]{\df@@over#1#2}
\newcommand\df@@over[3]{%
  \vbox{
    \offinterlineskip
    \ialign{##\cr
      #2{#1}\cr
      \noalign{\kern1pt}
      $\m@th#1#3$\cr
    }
  }%
}
\newcommand{\dashfill}[1]{%
  \kern-.5pt
  \xleaders\hbox{\kern.5pt\vrule height.4pt width \dash@width{#1}\kern.5pt}\hfill
  \kern-.5pt
}
\newcommand{\dash@width}[1]{%
  \ifx#1\displaystyle
    2pt
  \else
    \ifx#1\textstyle
      1.5pt
    \else
      \ifx#1\scriptstyle
        1.25pt
      \else
        \ifx#1\scriptscriptstyle
          1pt
        \fi
      \fi
    \fi
  \fi
}
\newcommand{\solidfill}[1]{\leaders\hrule\hfill}
\makeatother

\DeclarePairedDelimiter\ceil{\lceil}{\rceil}
\DeclarePairedDelimiter\floor{\lfloor}{\rfloor}

\setcounter{section}{0}

\title{Infinite staircases for Hirzebruch surfaces}
\date{\today}

\author{Maria Bertozzi}
\address{MB: Ruhr Universit\"at Bochum}
\email{maria.bertozzi@rub.de}

\author{Tara S. Holm}
\address{TSH: Cornell University}
\email{tsh@math.cornell.edu}

\author{Emily Maw}
\address{EM: University College London}
\email{emilyharrietmaw@gmail.com}

\author{Dusa McDuff}
\address{DMc: Barnard College, Columbia University}
\email{dusa@math.columbia.edu}

\author{Grace T. Mwakyoma}
\address{GTM: Instituto Superior T\'ecnico}
\email{grace.mwakyoma@tecnico.ulisboa.pt,  gracetmwakyoma@gmail.com}

\author{Ana Rita Pires}
\address{ARP: University of Edinburgh}
\email{apires@ed.ac.uk}

\author{Morgan Weiler}
\address{MW: Rice University}
\email{morgan.weiler@rice.edu}


\begin{document}
%
%
%
%
%
%
%

\maketitle              

\begin{abstract} 
We consider the embedding capacity functions $c_{H_b}(z)$ for symplectic embeddings of ellipsoids of eccentricity $z$ into the family of nontrivial rational Hirzebruch surfaces $H_b$ with symplectic form parametrized by $b\in [0,1)$.  
This function was known to have an infinite staircase in the monotone cases ($b= 0$ and $ b= 1/3$).  It is also known that for each $b$ there is at most one value of $z$ that can be the accumulation point of such a staircase.  
In this manuscript, we identify three sequences of  open, disjoint,  blocked $b$-intervals, consisting of $b$-parameters where the embedding capacity function for $H_b$ does not contain an infinite staircase. There is one sequence in each of the intervals $(0,1/5)$, $(1/5,1/3)$,  and $(1/3,1)$.  We then establish six sequences of associated infinite staircases, one occurring at each endpoint of the blocked $b$-intervals.  The staircase numerics are variants of those in the Fibonacci staircase for the projective plane (the case $b=0$). We also show that there is no staircase  at the point $b=1/5$, even though this value is not blocked. The focus of this paper is to develop techniques, both graphical and numeric, that allow identification of potential staircases, and then to understand the obstructions 
well enough to prove that the purported staircases really do have the required properties.    
A subsequent paper will explore   in more depth the set of $b$ that admit  infinite staircases. 

\noindent {\bf Keywords}. symplectic embedding, four-dimensional ellipsoids, infinite staircase,  \newline
symplectic capacity, embedded contact homology, quantitative symplectic geometry
\end{abstract}

\tableofcontents

\section{Introduction}\label{sec:intro}

\subsection{Overview of results} \label{ss:overv}
Given a four-dimensional symplectic manifold $(X,\omega)$, we define its {\bf ellipsoid embedding} function to be
$$c_X(z):=\inf\left\{\lambda\ \Big|\ E(1,z)\sembeds \lambda X\right\},$$
where $z\ge 1$ 
is a real variable,  $\lambda X: = (X,\lambda\omega)$ is the symplectic scaling of $X$,  an ellipsoid $E(c,d)\subset \mathbb{C}^2$ is the set
$$
E(c,d)=\left\{(\zeta_1,\zeta_2)\in\mathbb{C}^2 \ \big|\  \pi \left(  \frac{|\zeta_1|^2}{c}+\frac{|\zeta_2|^2}{d} \right)<1 \right\},
$$
and we write $E\sembeds \la X$ if there is a symplectic embedding of $E$ into $\la X$.
One simple property of this function is that it is bounded below by the \textbf{volume curve}, because symplectomorphisms preserve volume:
$$
c_X(z)\geq\sqrt{\frac{z}{\text{vol(X)}}},
$$
where $\text{vol(X)}$ is the appropriately normalized volume of $X$.
Similarly, the invariance of symplectomorphisms under scaling $\om\mapsto \la \om$ of the symplectic form
implies   the following  scaling property 
\begin{align}\label{eq:scale} 
c_{X}(\la z) \le \la c_{X}(z),\qquad z\ge 1, \la \ge 1;
\end{align}
indeed, if $E(1,z) \sembeds X$ then $E(1,\la z)\subset \la(E(1,z))\sembeds \la X$.

The first ellipsoid embedding function to be computed was that of a ball $B^4$, by McDuff and Schlenk in \cite{ball}. They 
describe the following striking behavior: on the interval $[1,\tau^4]$ the function is piecewise linear, alternating 
between horizontal line segments and line
segments that extend to lines passing through the origin in increasingly smaller intervals, creating what looks 
like an infinite staircase accumulating at $\tau^4$, where $\tau = \left(1+\sqrt 5\right)/2$ is the golden ratio. 
Similar behavior has been described for other targets $X$: the 
symmetric polydisk $B^2(1)\times B^2(1)$
 in 
\cite{frenkelmuller}, 
certain convex toric domains  in \cite{AADT}, and certain infinite families of polydisks $B^2(1)\times B^2(\beta)$
with irrational ratios $\beta$ in \cite{usher}.  For { Delzant} toric domains,  \cite{AADT} establishes that the open  manifold has the same embedding function as its closure; for example the functions for the $4$-ball and the projective plane $\C P^2$ are the same.  
Further, for { all convex} toric domains $X$ { of finite type}, the capacity function exhibits packing stability: 
it coincides with the volume function 
for sufficiently large $z$ (cf.\ \cite[Prop.~2.1]{AADT}).
Moreover, the function $z\mapsto c_X(z)$ is piecewise linear when not identically equal to the volume constraint curve, and when its graph has infinitely many nonsmooth points lying above the volume curve, we say that $c_X$ has an \textbf{infinite staircase}: see Definition~\ref{def:stair}.

For $0\leq b<1$, we denote by 
$$
H_b:=\mathbb{C}P^2_1\#\overline{\mathbb{C}P}^2_{b}
$$ 
the one point blow up of $\C P^2$, where the line has symplectic area $1$ and the exceptional divisor has symplectic area $b$.
In light of \cite{ball,AADT}, we know that the ellipsoid embedding functions of $\mathbb{C}P^2$ and of 
$H_{1/3}$ 
have infinite staircases. In this note we report on the search for infinite staircases 
in the ellipsoid embedding functions of the Hirzebruch surfaces in $H_b$.

By \cite[Theorem 1.11]{AADT}, which applies because the moment polytope of  $H_b$ has rational conormals, we know that if $c_{H_b}$ has an infinite staircase, then its accumulation point is at the point 
$z=\acc(b)$, the unique solution $>1$ of the following quadratic equation involving $b$:
\begin{equation}\label{eqn:accb}
z^2-\left( \frac{\left(  3-b\right)^2}{1-b^2}-2 \right)z+1=0
\end{equation}
Furthermore, if an infinite staircase exists, then at $z=\acc(b)$ the ellipsoid embedding function must 
coincide with the volume constraint curve, that is we must have 
\begin{align}\label{eq:cHb}
c_{H_b}(\acc(b))=\sqrt{\frac{\acc(b)}{1-b^2}} = : V_b(\acc(b)).
\end{align}
As shown in Fig.~\ref{fig:101}, the function $b\mapsto \acc(b)$ decreases for $b\in [0,1/3)$, with minimum value $\acc(1/3) = 3+2\sqrt2$, and then increases.  The nature of this function leads to an interesting and as yet not well understood    interplay between the $z$-coordinates that parametrize  the domain\footnote
{
Our convention is that a general point in the domain is denoted by the letter $z$ while special points (such as step corners, or break points of constraints) are  denoted $a$.}
 and the $b$-coordinates parametrizing the target. The point $b=1/3$ is the only known nonzero rational number such that $H_b$ has a staircase, and according to 
 \cite[Conjecture 1.20]{AADT} it is expected to be the only such value. In this paper, we focus on irrational values of $b$.

\begin{center}
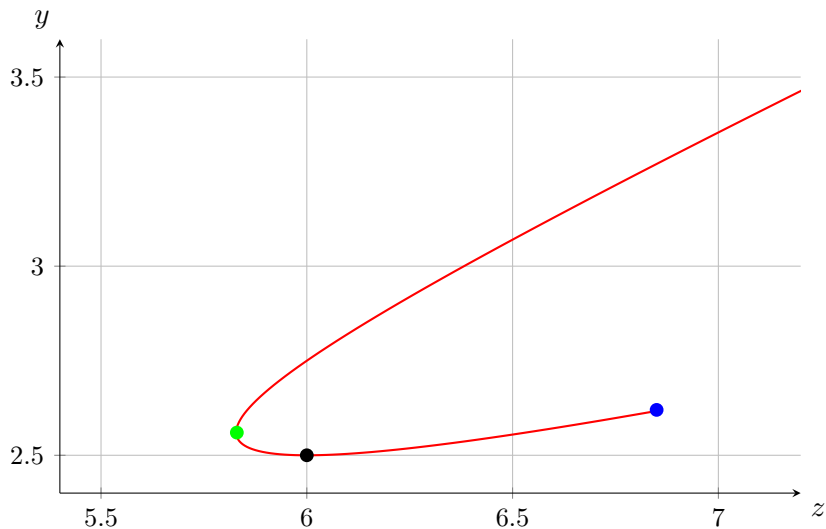
\begin{figure}[H]
\begin{tikzpicture}
\begin{axis}[
	axis lines = middle,
	xtick = {5.5,6,6.5,7},
	ytick = {2.5,3,3.5},
	tick label style = {font=\small},
	xlabel = $z$,
	ylabel = $y$,
	xlabel style = {below right},
	ylabel style = {above left},
	xmin=5.4,
	xmax=7.2,
	ymin=2.4,
	ymax=3.6,
	grid=major,
	width=4.5in,
	height=3in]
\addplot [red, thick,
	domain = 0:0.7,
	samples = 120
]({ (((3-x)*(3-x)/(2*(1-x^2))-1)+sqrt( ((3-x)*(3-x)/(2*(1-x^2))-1)* ((3-x)*(3-x)/(2*(1-x^2))-1) -1 ))},{sqrt(( (((3-x)*(3-x)/(2*(1-x^2))-1)+sqrt( ((3-x)*(3-x)/(2*(1-x^2))-1)* ((3-x)*(3-x)/(2*(1-x^2))-1) -1 )))/(1-x^2))});
\addplot [black, only marks, very thick, mark=*] coordinates{(6,2.5)};
\addplot [blue, only marks, very thick, mark=*] coordinates{(6.85,2.62)};
\addplot [green, only marks, very thick, mark=*] coordinates{(5.83,2.56)};
\end{axis}
\end{tikzpicture}
\caption{ This curve indicates the location of the accumulation point $(z,y)=\bigl(\acc(b), V_b(\acc(b))\bigr)$ for $0\le b< 1$.
The blue point with $b=0$ is at $(\tau^4, \tau^2)$ and is the accumulation point for the Fibonacci stairs.  The green point  
at $(3+2\sqrt{2}  ,1/3)$  
is the accumulation point for the 
stairs in  $H_{1/3}$, 
and is the minimum of the function $z\mapsto\acc(z)$.  
The black point  at $(6, 5/2)$ corresponds to $b=1/5$ and 
is the location where $V_b(\acc(b))$ takes its minimum. 
}\label{fig:101} 
\end{figure}
\end{center}

\MS

The capacity function $c_{H_b}$ may be described either in terms of exceptional divisors $\bE$ as recounted in \S\ref{sec:obstr}
or in terms of  ECH capacities as laid out  in \S\ref{sec:ECH}.  Both approaches have added to our understanding in this project.
ECH capacities gave us a way to understand the broad outlines of the problem and to discover interesting phenomena.
Exceptional divisors
are then well-adapted to establishing  
the intricate details; they also yield intriguing numerical data.  
The most relevant exceptional classes have \lq\lq centers''
at rational numbers that, in the case of a staircase,  form the first coordinate of  the corners of the steps.
We now lay out two  results that we obtained early on.  For notation, see \S\ref{sec:obstr}.
We say that a staircase ascends (resp.\ descends) if the steps occur at increasing (resp.\ decreasing) values of the domain parameter $z$.  

In the first result, we identify intervals of shape parameters $b$  for which $H_b$ cannot admit an infinite staircase.
Exceptional divisors $\bB$ give rise to {\bf obstruction functions} $\mu_{b,\bB}(z)$ that give lower bounds 
$$c_{H_b}(z)\geq \mu_{\bB,b}(z).$$ 
For a fixed class
$\bB$, then, we can let $b$ vary and determine intervals of $b$ for which 
$$\mu_{\bB,b}(\acc(b))>V_b(\acc(b)),$$ 
violating \eqref{eq:cHb} and
thereby {blocking} the existence of a staircase for those $b$ parameters.

\begin{thm}\label{thm:block} For each $n\ge 0$, the exceptional divisor in class $$
\bB_n^U:  =(n+3)L - (n+2)E_0 - \sum_{i=1}^{2n+6} E_i
$$
 blocks the existence of a staircase for $b$ in the interval 
 \begin{align*}
1/3\; < \; \be_{\bB^U_n,\ell}: = \frac{(2n^2+6n+3)-\sqrt{\si_n}}{2n^2+6n+2}<b<\frac{(n+3)\left(3n+7+\sqrt{\si_n}\right)}{5n^2+30n+44} =: \be_{\bB^U_n,u} < 1,
\end{align*}
where $\si_n: = (2n+1)(2n+5)$.  Correspondingly, there is no staircase with accumulation point in the interval 
\begin{align*}
\al_{\bB^U_n,\ell}: =  \frac{\sigma_n + (2n+3)\sqrt{\sigma_n}}{2(2n+1)} \ < z < 6+\frac{\sigma_n + (2n+3)\sqrt{\sigma_n}}{2(2n+5)}  =: \al_{\bB^U_n,u},
\end{align*}
where  $\al_{\bB^U_n,\ell}$ and  $\al_{\bB^U_n,u}$ have continued fraction expansions
\begin{align*}
\al_{\bB^U_n,\ell}
= [\{ 2n+5,2n+1\}^\infty], \qquad \al_{\bB^U_n,u} 
=[2n+7;\{ 2n+5,2n+1\}^\infty]
\end{align*}
\end{thm}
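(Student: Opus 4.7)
The plan is to carry out the argument in four steps.

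First, verify that $\bB_n^U = (n+3)L-(n+2)E_0-\sum_{i=1}^{2n+6}E_i$ is indeed an exceptional class. This is immediate from the two identities $c_1\cdot\bB_n^U = 3(n+3)-(n+2)-(2n+6) = 1$ and $\bB_n^U\cdot\bB_n^U = (n+3)^2-(n+2)^2-(2n+6) = -1$. The general recipe from Section~\ref{sec:obstr} then assigns to $\bB_n^U$ the obstruction function
\[
\mu_{b,\bB_n^U}(z) \;=\; \frac{\sum_{i=1}^{2n+6} w_i(z)}{(n+3)-(n+2)b},
\]
where $(w_i(z))$ denotes the weight expansion of the ellipsoid $E(1,z)$. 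I would then identify the interval of $z$-values on which the first $2n+6$ weights have a closed-form expression compatible with this class --- the class's \emph{live range} --- and confirm that $\acc(b)$ lies in it for every $b$ in the targeted interval.

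Next, substitute $z = \acc(b)$ and use \eqref{eqn:accb} to express $\acc(b)$ in terms of $b$. The blocking inequality $\mu_{b,\bB_n^U}(\acc(b)) > V_b(\acc(b)) = \sqrt{\acc(b)/(1-b^2)}$ then becomes an algebraic inequality on $b$, whose two boundary values $\be_{\bB_n^U,\ell}$ and $\be_{\bB_n^U,u}$ have rather different origins: the lower one arises as a root of the equation $\mu = V_b$, while the upper one arises from the transition at which either the weight expansion of $(1,\acc(b))$ changes form or an adjacent exceptional class takes over. The appearance of $\sigma_n = (2n+1)(2n+5)$ as a discriminant-type quantity is what one expects from this setup, and explicitly computing both roots recovers the formulas in the statement. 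The coarse bounds $1/3<\be_{\bB_n^U,\ell}$ and $\be_{\bB_n^U,u}<1$ then follow from elementary estimates on $\sqrt{\sigma_n}$.

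To pass from the $b$-interval to the $z$-interval, substitute $\be_{\bB_n^U,\ell}$ and $\be_{\bB_n^U,u}$ back into \eqref{eqn:accb} and solve for the unique root larger than $1$; this gives the stated formulas for $\al_{\bB_n^U,\ell}$ and $\al_{\bB_n^U,u}$. Both endpoints lie in the real quadratic field $\mathbb{Q}(\sqrt{\sigma_n})$, so their continued fractions are eventually periodic, and the Euclidean algorithm applied directly should yield the purely periodic pattern $[\{2n+5,2n+1\}^\infty]$ for the lower endpoint and the one-step shift $[2n+7;\{2n+5,2n+1\}^\infty]$ for the upper, as claimed; this can be checked for small $n$ and then bootstrapped into the general recursion.

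The main obstacle, in my view, is the weight-expansion analysis in Step~1: pinning down the precise $z$-range on which the formula for $\mu_{b,\bB_n^U}$ is valid, and verifying that $\bB_n^U$ really is the binding obstruction at $\acc(b)$ rather than being dominated by some other exceptional class. Once this live-range bookkeeping is in place, the algebra in the remaining steps, although elaborate, is essentially routine work in $\mathbb{Q}(b,\sqrt{\sigma_n})$.
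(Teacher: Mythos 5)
Your approach differs genuinely from the paper's. The paper derives Theorem~\ref{thm:block} as a corollary of Theorem~\ref{thm:U}: it first establishes the two families of perfect pre-staircases $\Ss^U_{\ell,n}$, $\Ss^U_{u,n}$ (including a Cremona-reduction argument that the staircase classes are genuinely exceptional), then invokes Theorem~\ref{thm:blockrecog} to conclude that $\bB^U_n$ is a perfect blocking class whose interval endpoints $\be_{\bB^U_n,\ell}$, $\be_{\bB^U_n,u}$ are the limiting ratios $M/D$ of the two pre-staircases. You instead propose a direct computation: write down $\mu_{\bB^U_n,b}(z)$ explicitly, substitute $z=\acc(b)$, and solve the blocking inequality $\mu > V_b$ for $b$. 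This is legitimate and is essentially the content of Lemma~\ref{lem:block2} and Lemma~\ref{lem:accV} in the paper, so your route to the explicit endpoint formulas can be made to work, and it is arguably more self-contained than the paper's indirect argument through staircase limits. What the paper's longer route buys is that it establishes the blocking class and its two associated staircases simultaneously, which matters for the surrounding theorems.

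There is, however, a conceptual misstatement in your Step~2 that you should fix. You write that the lower boundary value $\be_{\bB^U_n,\ell}$ is a root of $\mu = V_b$, while the upper one ``arises from the transition at which either the weight expansion of $(1,\acc(b))$ changes form or an adjacent exceptional class takes over.'' This is not right: \emph{both} endpoints arise from the equality $\mu_{\bB^U_n,b}(\acc(b)) = V_b(\acc(b))$. The obstruction function is piecewise linear with a break at the class's center $a=2n+6$ (Lemma~\ref{lem:munearc}): $\mu_{\bB^U_n,b}(z) = \frac{z}{(n+3)-(n+2)b}$ for $z<2n+6$ and $\mu_{\bB^U_n,b}(z)=\frac{2n+6}{(n+3)-(n+2)b}$ for $z\ge 2n+6$. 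The lower endpoint $\be_\ell$ solves the equation with the first formula (since there $\acc(b)<2n+6$), and the upper endpoint $\be_u$ solves it with the second formula (since there $\acc(b)>2n+6$). No other exceptional class or separate transition enters the determination of $J_{\bB^U_n}$. If you keep this straight, your algebraic route through $\Q(b,\sqrt{\sigma_n})$ should close up. You also need a sentence invoking \eqref{eq:cHb} to pass from ``$\bB^U_n$ blocks $b$'' to ``$H_b$ has no staircase,'' which your write-up takes as given.
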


For a proof see the end of \S\ref{ss:thmU}. 
We will say that an interval such as 
$\bigl(\be_{\bB^U_n,\ell},\be_{\bB^U_n,u}\bigr)$
that is blocked by an exceptional divisor $\bB$ is a {\bf $\bB$-blocked $b$-interval},   
and denote it by $J_{\bB}$.  The corresponding interval $\acc(J_\bB)$ on the $z$-axis is denoted 
$I_{\bB} =  (\al_{\bB,\ell}, \al_{\bB,u})$, and, if $J_{\bB}\subset (1/3,1)$ (resp.\ if $J_{\bB}\subset [0,1/3)$), consists of points that cannot be the accumulation point of any staircase  for $b> 1/3$ (resp.\ $b< 1/3$).
The class $\bB$ itself is called a {\bf blocking class}.  

In Theorem~\ref{thm:block},  the blocked 
$z$-intervals  
$(\al_{\bB^U_n,\ell}, \al_{\bB^U_n,u})$ have \lq centers' $2n$ at distance $2$ apart, while their lengths increase with limit $2$. 
{ Correspondingly} most  values of $b$ close to $1$ are blocked; see Fig.~\ref{fig:blockU}.
Further, the above classes $\bB^U_n$ are {\bf center-blocking} in the sense of  Definition~\ref{def:centerbl},
which implies by Proposition~\ref{prop:block} that they also
block  the existence of a descending (resp.\ ascending) staircase 
that accumulates at
the lower endpoint 
$\al_{\bB^U_n,\ell}$ (resp.\ upper endpoint  $\al_{\bB^U_n,u}$) of $I_\bB$. 
\MS

In our second main result,  proved in \S\ref{ss:thmLE}, we find a new infinite family of shape parameters for which
the embedding capacity function does include an infinite staircase.

\begin{thm}\label{thm:stair} There is a  decreasing sequence of parameter values 
$b_n: = b^E_{u,n,\infty} $ in the interval $(1/5,1/3)$, for  $n\ge 0,$   with limit $1/5$, so that each $H_{b_n}$ admits a descending staircase $\Ss_{u,n}^E=\bigl(\bE^E_{u,n,k}\bigr)_{k\ge 0}$ 
whose steps have exterior corners
at the points $z=a^E_{u,n,k}$ with continued fraction expansions 
\[
a^E_{u,n,k}=[5; 1,2n+6,\{2n+5,2n+1\}^k, 2n+4], \quad k\ge 0.
\]
The accumulation point $\acc(b^E_{u,n,\infty})$ is
\begin{align*}
a^E_{u,n,\infty} &
= [5; 1,2n+6,\{2n+5,2n+1\}^\infty].
\end{align*}
These accumulation points  form an increasing sequence that converges to $6$
as $n\to \infty$.
\end{thm}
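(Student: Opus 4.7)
My plan is to construct the exceptional classes $\bE^E_{u,n,k}$ explicitly from the continued fraction data of the corners and then show that, at the distinguished parameter $b_n$, they assemble into the claimed descending staircase. The expansion
\[
a^E_{u,n,k}=[5;1,2n+6,\{2n+5,2n+1\}^k,2n+4]
\]
determines, via the standard weight-expansion algorithm, a multiplicity vector $(w_{k,1},w_{k,2},\ldots)$, and I posit exceptional classes
\[
\bE^E_{u,n,k}=d_k\,L-m_k\,E_0-\sum_{i\ge 1} w_{k,i}\,E_i,
\]
with $(d_k,m_k)$ pinned down by the Chern and self-intersection identities $3d_k-m_k-\sum_i w_{k,i}=1$ and $d_k^2-m_k^2-\sum_i w_{k,i}^2=-1$. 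The periodic tail $\{2n+5,2n+1\}^k$ forces $(d_k,m_k,|\bw_k|)$ to satisfy a linear recursion whose transfer matrix is precisely the one controlling $[\{2n+5,2n+1\}^\infty]$ in Theorem~\ref{thm:block}, so the staircase is combinatorially tied to the blocking class $\bB^U_n$. Once these numerical identities are verified, a Cremona-move reduction (or a direct appeal to the classification of exceptional spheres on blowups of $\C P^2$) shows that each $\bE^E_{u,n,k}$ is represented by a symplectically embedded sphere and hence yields a valid obstruction $\mu_{b,\bE^E_{u,n,k}}(z)$.

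Next I would identify $b_n:=b^E_{u,n,\infty}$ as the unique value of $b$ for which the equality
\[
\mu_{b,\bE^E_{u,n,k}}\bigl(a^E_{u,n,k}\bigr)=V_b\bigl(a^E_{u,n,k}\bigr)
\]
holds in the limit $k\to\infty$. Both sides scale by the dominant eigenvalue of the period-two transfer matrix, so the limiting equality reduces to a single polynomial condition on $b$, whose unique root in $(1/5,1/3)$ is $b_n$; correspondingly, $a^E_{u,n,\infty}$ automatically solves \eqref{eqn:accb} at $b=b_n$, confirming the coincidence between staircase and volume curve at the accumulation point demanded by \eqref{eq:cHb}. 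As $n\to\infty$ the entry $2n+6$ drives $a^E_{u,n,\infty}\to[5;1,\infty]=6$ and $b_n\to 1/5$, consistent with $(6,5/2)$ being the minimum of $V_b(\acc(b))$ highlighted in Figure~\ref{fig:101}.

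The main obstacle is to verify that these classes actually cut out $c_{H_{b_n}}$ on the full window terminating at $a^E_{u,n,\infty}$, not merely bound it below individually. Concretely, one must show (a) each $(a^E_{u,n,k},V_{b_n}(a^E_{u,n,k}))$ is a genuine nonsmooth corner of $c_{H_{b_n}}$, i.e.\ $\mu_{b_n,\bE^E_{u,n,k}}(a^E_{u,n,k})$ strictly exceeds $V_{b_n}(a^E_{u,n,k})$; (b) consecutive classes $\bE^E_{u,n,k}$ and $\bE^E_{u,n,k+1}$ interlock, so that the horizontal piece of one meets the slanted piece of the next to form a step; and (c) no competing exceptional class supplies a stronger obstruction anywhere in the window. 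Part (c) is the delicate piece: one must enumerate candidate classes by their centers and weight expansions and rule each out, either by showing it coincides with some $\bE^E_{u,n,k}$ or by establishing numerical dominance at the critical points. The continued-fraction resonance with the blocking family $\bB^U_n$ should provide the key leverage, since any serious competitor must share the same period-two tail and is therefore amenable to the same transfer-matrix analysis that produced $\bE^E_{u,n,k}$ in the first place.
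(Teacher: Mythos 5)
Your plan follows the same broad architecture as the paper's: write down the candidate classes from the continued-fraction data, verify that they are genuine exceptional classes via Cremona reduction, compute the limiting parameter $b_n$ from the eigenvalue of the period-two recursion, and then show the obstructions are actually live by controlling the competitors. That much matches the proof scheme laid out in \S\ref{ss:thmU}--\S\ref{ss:thmLE}. However, there are two concrete gaps in the reasoning, one minor and one serious.

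First, you assert that $(d_k,m_k)$ are ``pinned down by the Chern and self-intersection identities.'' With $\bbm_k=q_k\bw(p_k/q_k)$ fixed, those two equations reduce to a single quadratic in $d_k$ (eliminate $m_k$ by the linear identity), so they give at most two candidates and need not have integer solutions at all. What actually determines $d_k$ in the paper is the additional linear relation $R_0 d_k=R_1 p_k+R_2 q_k$, with $(R_0,R_1,R_2)$ dictated by the blocking class $\bB^E_n$ via Theorem~\ref{thm:blockrecog}; the recursion then propagates integrality from the first two terms. Without this relation the construction is underspecified, and you would still owe a check that the quadratic discriminant $p_k^2-6p_kq_k+q_k^2+8$ is a perfect square. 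Related to this: the staircase $\Ss^E_{u,n}$ is tied to the blocking class $\bB^E_n$, not $\bB^U_n$. The shift symmetry $Sh$ of Lemma~\ref{lem:symm} lets you transport the Cremona reduction from the $U$-family to the $E$-family (Proposition~\ref{prop:CrEL}), but the linear relation one reads off is that of $\bB^E_n$.

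Second, and more importantly, your strategy for step (c) does not work as stated. You argue that ``any serious competitor must share the same period-two tail and is therefore amenable to the same transfer-matrix analysis,'' but overshadowing classes need have no periodicity or resemblance to the staircase at all. For instance, in the proof of Lemma~\ref{lem:SsLubest} the class $(6,2;3,2^{\times6})$, with break point at $z=7$, had to be considered and ruled out by direct comparison with the volume curve, not by continued-fraction structure. The mechanism that actually makes the competitor analysis a finite check is Proposition~\ref{prop:live} together with Lemma~\ref{lem:perfect}(i): for a chosen $r/s>b_\infty$, condition~\eqref{eq:bcondit3} guarantees the staircase classes dominate all classes with $m'/d'\le r/s$, and then any potential overshadowing class must satisfy $m'/d'>r/s$ and $|b_\infty d'-m'|<1$, forcing $d'<s/(r-sb_\infty)$. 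For $\Ss^E_{u,n}$ one takes $r/s=1/3$ (Lemma~\ref{lem:SsEubest}), obtains a degree bound, and then enumerates the finitely many $(d',m')$ pairs, using Lemma~\ref{lem:obstruct} to constrain the possible $\bbm'$ via the break point $a'$. Verifying the $r/s$ inequality is itself a computation via Lemma~\ref{lem:DMineq}, which needs the explicit formulas for $(d_0,m_0,d_1,m_1)$ coming from the pre-staircase relation. Replacing all of this with a transfer-matrix resonance argument would not close the step.
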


We found this family of staircases by trial and error, using a combination of the computer programs described in \S\ref{sec:Mathem} and the algebro-geometric methods described in  Remark~\ref{rmk:ECH}.

\begin{rmk}\label{rmk:numbers} \normalfont
(i) 
Note that, in contrast to the $z$-coordinates,  the continued fraction expansions of the shape parameters 
$b^E_{u,n,\infty}$ seem to have no special properties. 
On the other hand, because the value $\si_n$ that occurs in the formula for $b_n$ is so closely related to the iterated pair $2n+5,2n+1$ in the  continued fraction of
the accumulation points $a^E_{u,n,\infty}$, 
one can show that 
the 
$a^E_{u,n,\infty}$   are also numbers in $\Q+ \Q\sqrt{\si_n}$.
From this point of view, the fact that the shape parameters $b^E_{u,n,\infty}$ also involve the surd $\sqrt{\si_n}$ is rather mysterious, but this follows because the new staircases accumulate at the end points of blocked $z$-intervals; see Corollary~\ref{cor:block2}.
\MS

\NI (ii) 
The staircases in Theorem~\ref{thm:stair} actually consist of two intertwining  sequences of classes that are fully 
described in Theorem~\ref{thm:E} below.  In fact, all the new staircases that we find  in this paper have this form. 
In Figure~\ref{fig:515},
one can see the steps with centers 
 $[5;1,5,1,4] =  {199}/{34} \approx 5.853$ and $[5;1,5,1,5,2] =  {521}/{89} \approx 5.8539$,
while in Figure~\ref{fig:751}, one can see the steps with centers $[7;5,2] =  {79}/{11} \approx 7.1818$ and
$[7;5,1,4]= {208}/{29} \approx 7.172$.  The small step with center just $> 7.190$ is not part of the staircase.
\hfill$\er$
\end{rmk}

Rather surprisingly, the two theorems above are related both numerically (the same square roots and continued fractions occur) and geometrically, in ways that we explore in \S\ref{ss:prestair} and \S\ref{ss:Fib}.  
 In general, we make the following conjecture:

\begin{CONJEC}\label{conj:block}
For each center-blocking class $\bB$ with corresponding blocked $b$-interval $J_\bB=(\be_{\bB,\ell}, \be_{\bB,u})$, 
both  $H_{\be_{\bB,\ell}}$ and  $H_{\be_{\bB,u}}$
admit staircases $\Ss_{\be_{\bB,\ell}}, \Ss_{\be_{\bB,u}}$ where $\Ss_{\be_{\bB,\ell}}$ ascends and  $\Ss_{\be_{\bB,u}}$ descends if $J\subset (1/3,1)$ and the reverse holds true if $J\subset [0, 1/3)$.  Further, if $b$ is irrational and  $H_b$  has a staircase, then 
$b$ is the endpoint of some (center-)blocked $b$-interval $J_\bB$.
\end{CONJEC}

\noindent Figures \ref{fig:515} and \ref{fig:751} illustrate Conjecture \ref{conj:block} for the center-blocking class $\bB^U_0$ of 
Theorem \ref{thm:block}.   

\newpage

\begin{figure}[H]
\includegraphics[width=3.5in]{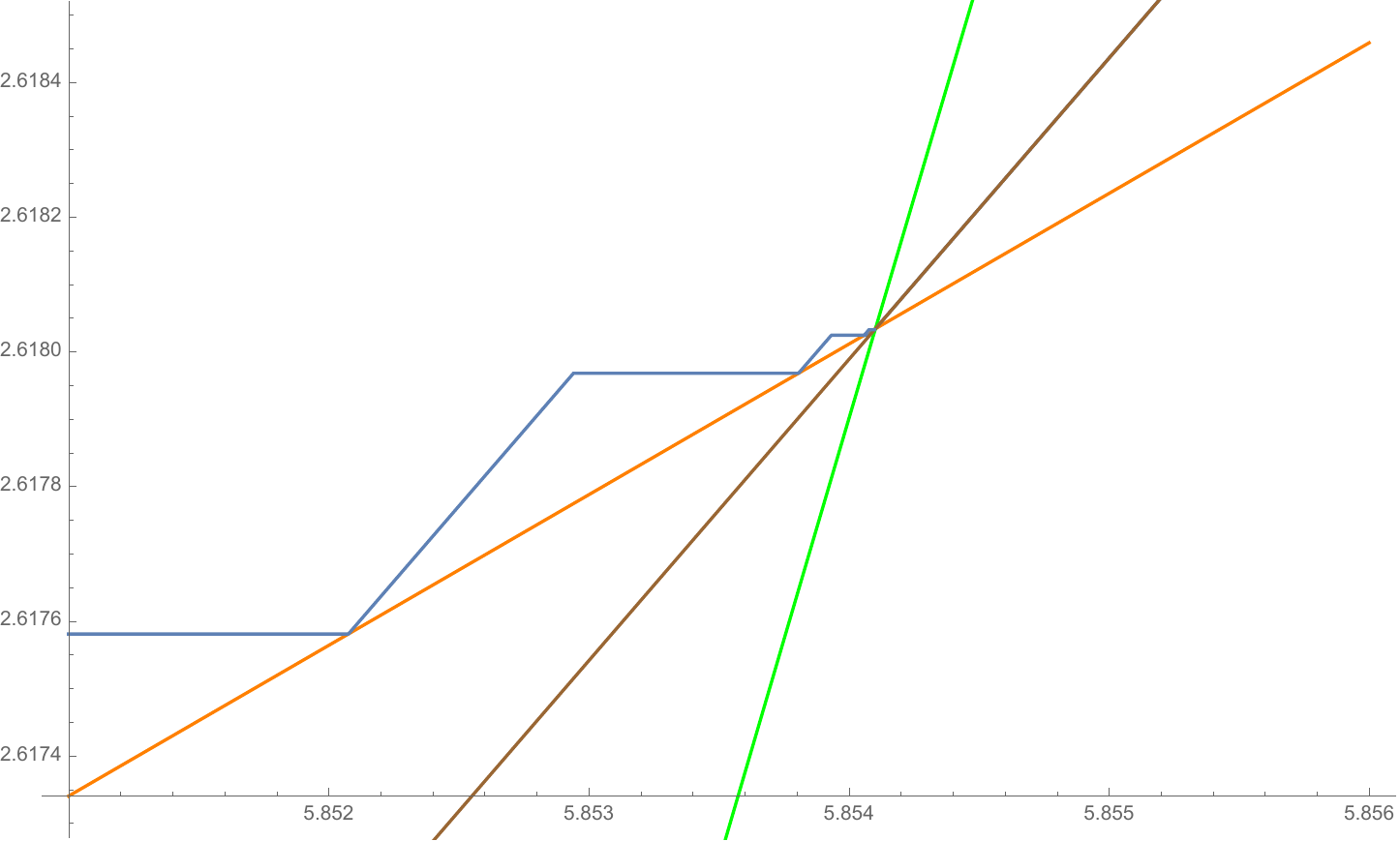}
\caption{In blue is the plot of a lower bound for $c_{H_b}$,  where $b= 
{\beta_{\bB^U_0,\ell}} = \left(3-\sqrt 5\right)/2  > 1/3$, 
which 
contains the infinite staircase with the same numerics as the $n=0$ case of the family $\Ss^U_{\ell, n}$ (Theorem \ref{thm:U}) 
 whose accumulation point has a $z$-value with continued fraction $[\{5,1\}^\infty]$; see Remark~\ref{rmk:SUell0}. 
 The volume constraint $V_b$ is in orange, and the parametrized curve $\bigl(\acc(b),V_b(\acc(b))\bigr)$, which intersects the volume constraint at the 
 accumulation point of $\Ss^U_{\ell, 0}$, is in green. The obstruction $\mu_{\bB^U_0,b}$ (see (\ref{eq:obstr})) from $\bB^U_0$ is in brown. 
 }
 \label{fig:515}
\end{figure}

\begin{figure}[H]
\includegraphics[width=4in]{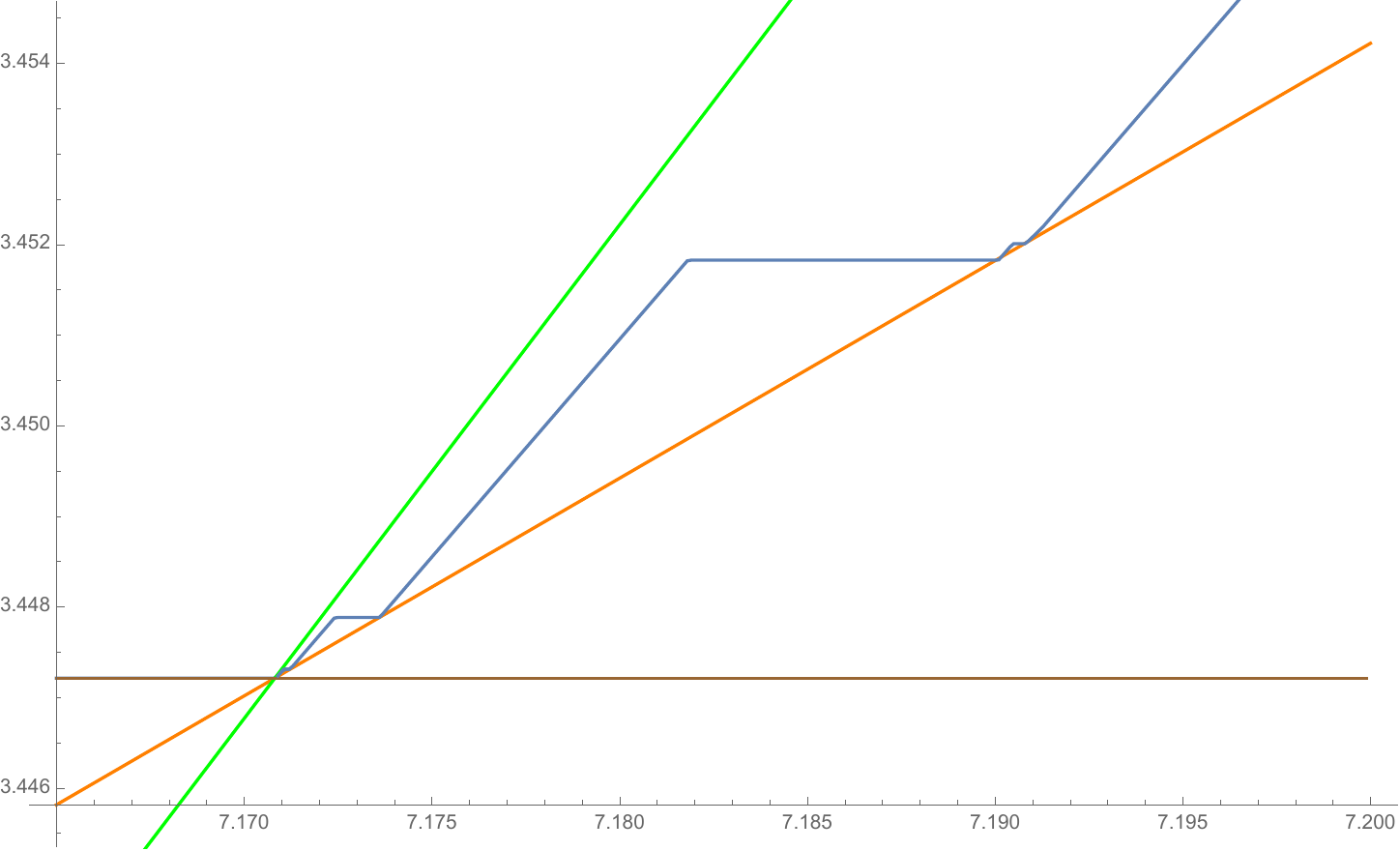}
\caption{In blue is the plot of a lower bound for $c_{H_b}$ 
where $b= \beta_{\bB^U_0,u}$, which 
contains the infinite staircase $\Ss^U_{u,0}$ (Theorem \ref{thm:U})
whose accumulation point has a $z$-value with continued fraction $[7;\{5,1\}^\infty]$. The volume constraint $V_b$ is in orange, and the parameterized curve $\bigl(\acc(b),V_b(\acc(b))\bigr)$, which intersects the volume constraint at the accumulation point of $\Ss^U_{u,0}$, is in green. The obstruction $\mu_{\bB^U_0,b}$ (see (\ref{eq:obstr})) from the blocking class $\bB^U_0$ is in brown.}\label{fig:751}
\end{figure}

As evidence for this conjecture, we  prove the following theorem in \S\ref{ss:thmLE}.

\begin{thm}\label{thm:blockstair}  For each $n\ge 1$ there is a center-blocking  class 
$\bB^E_n$  { blocking $b$-values} $J_{\bB^E_n} \subset (0,1/3) $  such that 
the accumulation point $a^E_{u,n,\infty}$ of the decreasing staircase $\Ss^E_{u,n}$ in Theorem~\ref{thm:stair}  
is  the upper endpoint $\al_{\bB^E_n,u}$ of { the corresponding $z$-interval}   $I_{\bB^E_n}$.
\end{thm}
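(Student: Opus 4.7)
The plan is to construct an explicit candidate for the blocking class $\bB^E_n$ by analogy with the class $\bB^U_n$ from Theorem~\ref{thm:block}, and then verify it has the required blocking and numerical properties. Since the accumulation point $a^E_{u,n,\infty}=[5;1,2n+6,\{2n+5,2n+1\}^\infty]$ of the staircase $\Ss^E_{u,n}$ has the same periodic tail $\{2n+5,2n+1\}^\infty$ that appears in the continued fractions of $\al_{\bB^U_n,\ell}$ and $\al_{\bB^U_n,u}$, the class $\bB^E_n$ should be obtained from $\bB^U_n$ by a sequence of Cremona moves corresponding to the extra prefix digits $[5;1,2n+6,\dots]$. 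Concretely, $\bB^E_n$ will be a class of the form $dL - m E_0 - \sum_{i\ge 1}m_i E_i$ whose weight expansion on the side of the ellipsoid is read from the continued fraction of $a^E_{u,n,\infty}$; $d$ and $m$ are then determined uniquely by the exceptional class conditions.

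First I would check that the proposed $\bB^E_n$ really is an exceptional divisor by verifying the Diophantine identities $\bB^E_n\cdot \bB^E_n=-1$ and $c_1(\bB^E_n)=1$. These are standard for Cremona-equivalent exceptional classes, and they reduce to two arithmetic relations between $d,m$ and the weight sequence, which should hold inductively in $n$. Second, I would compute the endpoints $\al_{\bB^E_n,\ell}$, $\al_{\bB^E_n,u}$ of the blocked $z$-interval $I_{\bB^E_n}$ by solving the equation that sets the obstruction $\mu_{\bB^E_n,b}(\acc(b))$ equal to $V_b(\acc(b))$; this is a quadratic in $b$ whose roots parametrize $J_{\bB^E_n}$, just as in the proof of Theorem~\ref{thm:block} in \S\ref{ss:thmU}. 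One then verifies that $J_{\bB^E_n}\subset(0,1/3)$ by estimating the roots.

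Third, and this is the numerical heart of the theorem, I would compute the continued fraction expansion of $\al_{\bB^E_n,u}$ and check it equals $[5;1,2n+6,\{2n+5,2n+1\}^\infty]$. This is where the Cremona construction pays off: each Cremona move prepends exactly one block to the continued fraction, so starting from $\al_{\bB^U_n,\ell}=[\{2n+5,2n+1\}^\infty]$ and applying moves corresponding to $(2n+6,1,5)$ in reverse produces the claimed expansion. The check reduces to the recursive behavior of quadratic surds of the form $\Q+\Q\sqrt{\si_n}$ under continued fraction prepending.

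Finally, I would verify the center-blocking property of Definition~\ref{def:centerbl}. By Proposition~\ref{prop:block} this will then imply the class blocks descending staircases accumulating at the upper endpoint, which is consistent with $\Ss^E_{u,n}$ being descending. The main obstacle is ensuring the mutual consistency of all these pieces: the construction of $\bB^E_n$ must simultaneously produce an exceptional class, yield a $b$-interval inside $(0,1/3)$, satisfy the center-blocking hypotheses, and have upper $z$-endpoint precisely equal to $a^E_{u,n,\infty}$. My expectation is that this consistency follows almost automatically from the Cremona framework once the prefix $[5;1,2n+6]$ is correctly identified, but keeping track of the inductive step in $n$ and verifying the center condition at the boundary of the blocked interval will require the technical work. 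The endpoint-matching claim itself is then a direct comparison of continued fractions, and the containment $J_{\bB^E_n}\subset(0,1/3)$ follows from monotonicity in $n$ combined with an explicit check at, say, $n=1$.
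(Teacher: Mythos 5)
Your proposal runs into a fundamental confusion between the \emph{center} of the blocking class and the \emph{endpoints} of the blocked interval. The class $\bB^E_n$ in the paper is the quasi-perfect class $\bigl(5(n+3), n+4; (2n+6)\bw(\tfrac{12n+35}{2n+6})\bigr)$, whose center $\tfrac{12n+35}{2n+6}=[5;1,2n+5]$ is a \emph{rational} number with finite continued fraction. The accumulation point $a^E_{u,n,\infty}=[5;1,2n+6,\{2n+5,2n+1\}^\infty]$ of the staircase, by contrast, is an irrational quadratic surd, and it is the \emph{upper endpoint $\al_{\bB^E_n,u}$ of the blocked $z$-interval}, not the center of $\bB^E_n$. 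Your sketch says the weight expansion of $\bB^E_n$ should be ``read from the continued fraction of $a^E_{u,n,\infty}$''; this is impossible, since a class in $\Ee$ has finitely many integer multiplicities $m_i$, while the weight expansion of an irrational number is infinite. The center of a quasi-perfect blocking class and the endpoints of its blocked interval are genuinely different objects with different continued fraction shapes (cf.\ Theorem~\ref{thm:block}, where $\bB^U_n$ has integer center $2n+6$ but $\al_{\bB^U_n,\ell}=[\{2n+5,2n+1\}^\infty]$).

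Your mechanism ``each Cremona move prepends exactly one block to the continued fraction'' is also not correct. Cremona transformations act on tuples $(d,n_1,\dots,n_N)$ of homology coefficients, not on rational numbers or their continued fractions. The operations in this paper that manipulate continued fractions are the fractional linear maps $\Phi,\Psi,Sh$ of Lemma~\ref{lem:symm}, which act on the $z$-line; the paper uses Cremona moves (Proposition~\ref{prop:CrEL}) purely to establish perfectness of the staircase classes, not to produce continued fraction expansions of accumulation points. Conflating these two leads to an inductive argument (``applying moves corresponding to $(2n+6,1,5)$'') that has no well-defined meaning.

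Finally, even setting aside these confusions, your outline misses the structural shortcut that makes the theorem tractable. The paper does not compute $\al_{\bB^E_n,u}$ by solving the quadratic $\mu_{\bB^E_n,b}(\acc(b))=V_b(\acc(b))$ and then matching continued fractions. Instead, Theorem~\ref{thm:blockstair} is subsumed in Theorem~\ref{thm:E}, whose proof applies the blocking-class recognition criterion Theorem~\ref{thm:blockrecog}: given the two perfect pre-staircases $\Ss^E_{\ell,n}$ (ascending) and $\Ss^E_{u,n}$ (descending), whose linear relations have coefficients $R_0=d-3m$, $R_1$, $R_2$ built exactly from the parameters $(d,m,p,q)$ of $\bB^E_n$, the conclusion that $\bB^E_n$ is a perfect blocking class with $I_{\bB^E_n}$ endpoints $P_\ell/Q_\ell$ and $P_u/Q_u=a^E_{u,n,\infty}$ follows automatically from Lemma~\ref{lem:block2}~(ii), with no separate continued fraction computation required. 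You would need to first establish the pre-staircases (as in Lemmas~\ref{lem:SsEellbest}, \ref{lem:SsEubest}, and Proposition~\ref{prop:CrEL}) and then invoke Theorem~\ref{thm:blockrecog}; the endpoint identification you single out as the ``numerical heart'' is, in the paper's framework, a corollary of that theorem rather than a separate calculation.
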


It turns out that this second set of center-blocking classes $(\bB^E_n)_{n\ge 1}$  is related to the first set $(\bB_n^U)_{n\ge 0}$ by a symmetry (described in Corollary~\ref{cor:symm}) that arises from the properties of the recursive sequence $1,6,35,204,\dots$. 
In fact, in \S\ref{ss:Fib} we define three families of center-blocking classes, the two $(\bB^U_n), (\bB^E_n)$ 
that are mentioned above, together with one more called $(\bB^L_n)$, that are all related by symmetries. We also describe the six associated families of staircases, with full proofs of our claims given in  \S\ref{ss:thmU},
 \S\ref{ss:thmLE} and \S\ref{ss:reduct}.   As explained in  Remark~\ref{rmk:symm}, these symmetries generate yet more staircases, that will be explored more fully in our next paper.

As we show in \S\ref{ss:Fib}, these new staircases are variants of the Fibonacci staircase for the ball, and belong to a class of staircases (called {\bf pre-staircases} and defined in Definition~\ref{def:prestair}) made up of exceptional classes whose entries are determined by the centers of the steps together with a linear relation. As we show in Theorem~\ref{thm:blockrecog},  the   coefficients of this relation are in turn determined by  the parameters of the associated center-blocking class.
Thus the structure of these staircases is very different from that of the staircase at $b=1/3$ (described in Example~\ref{ex:13} on page~\pageref{ex:13}), which is not associated to any blocking class.  
\MS

Our last main result concerns the point $b=1/5$ that has associated accumulation point $\acc(1/5) = 6$.
It is relatively straightforward to show that $H_{1/5}$ is  {\bf unobstructed}, that is
its capacity and volume functions agree at the point $z=\acc(b)$; see Examples~\ref{ex:52},~\ref{ex:seq}~(i).  
As we remarked above, this is a necessary condition for $H_b$ to admit a staircase; however, as the following result shows, it is not sufficient.

\begin{thm}\label{prop:150}    $H_{1/5}$ is unobstructed,  but does not have a staircase.
\end{thm}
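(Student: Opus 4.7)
The plan splits into two parts corresponding to the two assertions.  For unobstructedness at $b=1/5$, I would carry out the computation already alluded to in Examples~\ref{ex:52} and \ref{ex:seq}(i).  The accumulation point is $\acc(1/5)=6$ with volume bound $V_{1/5}(6)=\sqrt{6/(1-1/25)}=5/2$, so the task is to exhibit embedding data (or to verify, class-by-class among the finite set of obstructions that could possibly be relevant near $z=6$) showing that the supremum of the obstruction functions at $(b,z)=(1/5,6)$ does not exceed $5/2$.  Combined with the volume lower bound $c_{H_{1/5}}(6)\ge V_{1/5}(6)=5/2$, this gives equality.

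For the non-existence of a staircase, I would argue by contradiction.  Assume $\Ss=(\bE_k)$ is an infinite staircase for $H_{1/5}$.  By \cite[Theorem 1.11]{AADT} its accumulation point must be $z=\acc(1/5)=6$, so $\Ss$ contains either an ascending subsequence $a_k\nearrow 6$, a descending subsequence $a_k\searrow 6$, or both; each must be ruled out.  The key geometric input is that $(b,z)=(1/5,6)$ is precisely the minimum of the parametrized curve $b\mapsto\bigl(\acc(b),V_b(\acc(b))\bigr)$ of Fig.~\ref{fig:101} — the black dot in that figure — so the curve is tangent from above to the horizontal line $y=5/2$ at $z=6$.  Using the pre-staircase framework of \S\ref{ss:prestair} and Definition~\ref{def:prestair}, any staircase accumulating at $z=6$ is governed by a linear recursion in the class data whose eigenstructure is determined by $z=6$ and $b=1/5$.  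I would set up this recursion explicitly for both the ascending and descending cases and show that, at the distinguished value $b=1/5$, the solutions yield candidate classes $\bE_k$ whose obstruction corners $\mu_{\bE_k,1/5}(a_k)$ lie on $V_{1/5}$ rather than strictly above it; the mechanism is that the first-order discrepancy between $\mu_{\bE_k,1/5}(a_k)$ and $V_{1/5}(a_k)$ is controlled by the derivative of $b\mapsto V_b(\acc(b))$ at the accumulation, which vanishes at $b=1/5$.  This forces the putative steps to sink onto the volume curve, contradicting Definition~\ref{def:stair}.

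The main obstacle is that $b=1/5$ is \emph{not} blocked by any class in Theorem~\ref{thm:block} or its analogues for the families $\bB^L_n$ and $\bB^E_n$, so neither Proposition~\ref{prop:block} nor the direct argument used in Theorem~\ref{thm:blockstair} applies.  One must instead enumerate all possible pre-staircases at $(b,z)=(1/5,6)$ and exploit the precise cancellation afforded by the minimality of $b\mapsto V_b(\acc(b))$ at $b=1/5$.  I expect the delicate part to be distinguishing obstructions that touch the volume curve tangentially at a single corner (harmless) from those that would cross strictly above it on a sequence of corners (required for a staircase), and showing by a quantitative analysis of the recursion that the latter cannot occur.  A natural route is to identify the candidate staircases as the limiting objects of the $\bB^L_n$ and $\bB^E_n$ families as $b\to 1/5$ from each side, and then to verify that at the limit point the step heights collapse onto $V_{1/5}$.
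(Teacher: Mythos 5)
Your first half, the unobstructedness of $H_{1/5}$, matches the paper's argument (Example~\ref{ex:52}: the perfect class $(2,0;1^{\times 5})$ is live on $[5,6)$ for $b<1/5$, giving $c_{H_{1/5}}(6)=5/2=V_{1/5}(6)$ by continuity).

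The second half has a genuine gap, in two places. First, you assume that any staircase accumulating at $z=6$ is governed by the pre-staircase recursion of Definition~\ref{def:prestair}, but the paper explicitly warns in Remark~\ref{rmk:fib}(i) that this is false: ``it is not true that a staircase must be a pre-staircase,'' and indeed the staircase at $b=1/3$ in Example~\ref{ex:13} is not one. Without that hypothesis, you cannot ``enumerate all possible pre-staircases at $(b,z)=(1/5,6)$'' and conclude. Second, the proposed mechanism -- that the vanishing of the derivative of $b\mapsto V_b(\acc(b))$ at $b=1/5$ forces step heights to collapse onto the volume curve -- does not match what actually happens and would give the wrong picture even if made precise. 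The ellipsoid embedding function of $H_{1/5}$ for $z\in[6,6+\eps)$ is \emph{strictly above} the volume curve: the paper (Theorem~\ref{prop:15}, Example~\ref{ex:19th}) shows it equals the obstruction $\mu_{\bE_5,1/5}(z)=(z+6)/24$ (rescaled) from the single class $\bE_5=(5,1;2^{\times 6},1)$, which is tangent to $V_{1/5}$ from above at $z=6$. So the putative descending steps are not ruled out because they ``sink onto the volume curve''; they are ruled out because one explicit obstruction dominates on a whole one-sided interval, leaving no room for infinitely many corners. Establishing that upper bound on $c_{H_{1/5}}$ for $z>6$ is precisely the hard part, and the paper does it by a concrete ECH-capacity / lattice-point-count comparison (Claims~\ref{claim15}--\ref{claimdtilde}, following \cite{rationalellipsoids}) that has no analogue in your outline. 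The minimality of $b\mapsto V_b(\acc(b))$ at $b=1/5$ is used in the paper only for the much smaller observation \eqref{eq:deg5} that $\bE_5$ cannot itself block any $b\ne1/5$; it is not the engine of the no-staircase proof.
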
 

\NI We prove this fact  in \S\ref{ss:15}, using ECH tools developed in \cite{rationalellipsoids}.
\MS

The fundamental question that concerns us here is that of the structure of the sets 
\begin{align*}
{\it Stair}: &= \{b \ | \ H_b \mbox{ has a staircase} \} \subset [0,1) ,\;\;\mbox{ and }\\
{\it Block}: & = \bigcup \bigl\{ J_{\bB} \ \big| \ \bB  \mbox{ is a blocking class} \bigr\} \subset [0,1).
\end{align*}
{\it Block} is a union of open intervals, and as far as we know  it could be dense in $[0,1)$. 
 Indeed, Proposition~\ref{prop:stairblock} shows that most (if not all) of the classes in the new staircases  are themselves blocking classes.   Note also that if Conjecture~\ref{conj:block} is true, then $H_b$ is unobstructed at each end of a center-blocked $b$-interval (provided these are irrational), so that each 
 such class determines a single
 connected  component of {\it Block}.  It is unknown whether there are other components of {\it Block}.
Further,   {\it Stair} is contained in the closed set $[0,1)\less {\it Block}$, but is not equal to it by Theorem~\ref{prop:150}.
This result also shows that the subset {\it Stair} is not closed, since Theorem~\ref{thm:stair} shows that $1/5$ is the limit point of a sequence in {\it Stair} (and also the limit of a sequence of endpoints of  blocked $b$-intervals by Theorem~\ref{thm:blockstair}),  while $1/5\notin {\it Stair}$ by Theorem~\ref{prop:150}. 

We conjecture above that the irrational points in {\it Stair}  are precisely the union of the endpoints of the center-blocked intervals in {\it Block}, while, by \cite[Conj.~1.20]{AADT}, the only rational points in {\it Stair}  are believed to be $0$ and $1/3$.
However, at the moment we cannot say anything more about these sets.  
There are many open questions here. For example, are there any blocking classes that are not center-blocking?   
Do all staircases (except for the one at $b=1/3$) have the special structure of a pre-staircase?
 We will explore such questions  in more depth in our forthcoming manuscript.

\begin{rmk}\normalfont   
All the staircases that we mention here are generated by families 
of perfect exceptional classes in the sense of Definition~\ref{def:perf}.  Hence, the proof of   \cite[Prop.3.6.1]{cghm}
should generalize to show that they all stabilize when we multiply both domain and target by $\C^k$.  \hfill $\er$
 \end{rmk}

\subsection{Organization of the paper}

In \S\ref{sec:obstr}, we first explain how to define the embedding capacity function $c_{H_b}$ from knowledge of exceptional spheres.  We then develop the general properties of exceptional classes, 
contrasting the notion of a  {\bf quasi-perfect } class (which is numerically defined) with that of a {\bf perfect} class (which satisfies an addition geometric condition); see Definition~\ref{def:perf}.  Proposition~\ref{prop:live} shows how to tell when a class is  {\bf live}, i.e. contributes to the capacity function; while   Propositions~\ref{prop:stair0} and~\ref{prop:stair00}
formulate our basic  staircase recognition criteria.  After introducing  the notion 
 of a {\bf blocking class} in \S\ref{ss:block}, we show in Proposition~\ref{prop:block}  that suitable knowledge about  the capacity function forces a quasi-perfect blocking class to be perfect.  We also begin to analyse the relation between blocking classes and staircases.   Finally \S\ref{ss:prestair} introduces the notion of a {\bf pre-staircase }(Definition~\ref{def:prestair}), which is a numerically defined set of classes of special form.  The main results are Theorem~\ref{thm:stairrecog} which gives criteria for a pre-staircase to be an actual staircase, and Theorem~\ref{thm:blockrecog} which explains how the existence of suitable  perfect pre-staircases that converge to the ends of a potential blocking class forces that class to be a perfect blocking class.

The next section describes the three staircase families in Theorems~\ref{thm:L},~\ref{thm:U} and ~\ref{thm:E}.  
This can be read once the basic definitions are understood.  
The rest  of the section develops the techniques necessary to prove that these really are staircases.
There are two parts to this proof. The first part (in \S\ref{ss:thmU}, \S\ref{ss:thmLE}) involves showing that the staircase classes are quasi-perfect and that the estimates in Theorem~\ref{thm:stairrecog} hold.  The second part is a proof that the staircase classes are perfect, in other words that they satisfy the necessary geometric condition (reduction under Cremona moves) to be true (rather than fake) exceptional classes.
Here the proof is greatly eased by the existence of the symmetries described in Corollary~\ref{cor:symm} that transform one staircase into another: see Proposition~\ref{prop:CrEL}.

\S\ref{sec:ECH} describes a different (but equivalent)  way to compute the capacity function, this time using capacities coming from embedded contact homology. These ``ECH capacities'' are the most efficient way to do certain calculations, as is shown in the proof in \S\ref{ss:15} that $c_{H_b}$ has no infinite staircase at $b=1/5$. In \S\ref{subsec:ecECH} we also prove Lemma \ref{lem:ECHcap}, which explains the relationship between the obstructions from perfect classes and those from ECH capacities.
 This allowed us to identify live classes using computer models based on ECH capacities, as explained in \S\ref{subsec:findingk}.

Finally, \S\ref{sec:Mathem} explains the computer programs that we used for our graphical explorations. In \S\ref{subsec:computingECHcap} we explain the combinatorics of lattice paths which allowed us to reduce computation time; our methods should generalize to any convex toric domain with two sides with rational slopes. After discussing our actual Mathematica code in \S\ref{subsec:ECHcapcode}-\ref{subsec:excclasscode}, in \S\ref{subsec:findingk} we explain our strategy for identifying values of $b$ for which $c_{H_b}$ has an infinite staircase. We conclude with plots of $c_{H_b}$ which illustrate several of the major phenomena we discuss in this paper. Remark~\ref{rmk:ECH} describes how the computer-aided approach of \S\ref{sec:Mathem} interacted fruitfully with the methods of \S\ref{sec:obstr} in our search for potential staircases.

\subsection*{Acknowledgements}
We are indebted to the organizers (Bahar Acu,
Catherine Cannizzo,
Dusa McDuff,
Ziva Myer,
Yu Pan, and
Lisa Traynor) of the Women in Symplectic and Contact Geometry and Topology (WiSCon) network
for bringing us together at a research collaboration workshop in July 2019.  ICERM was a gracious host for our week in
 Providence.  We would also like to thank
Dan Cristofaro-Gardiner, Michael Hutchings, and Ben Wormleighton for helpful conversations and lemmata.

\vskip 0.2in

\noindent MB was supported by the grant DFG-CRC/TRR 191 ``Symplectic Structures in Geometry, Algebra and Dynamics''.

\noindent TSH was supported by NSF grant DMS 1711317 and is grateful for the hospitality of Clare Hall during 2019--2020.

\noindent EM was supported by the EPSRC grant Centre for Doctoral Training, {\it London School of Geometry and Number Theory}.

\noindent GTM was supported by Funda\c{c}\~{a}o para a Ci\^{e}ncia e a Tecnologia (FCT) through the grant PD/BD/128420/2017.

\noindent MW was supported by NSF grant RTG--1745670.

\section{Embedding obstructions from exceptional spheres}\label{sec:obstr}
In this section, we explain how to define the embedding capacity function $c_{H_b}$ 
using exceptional spheres. The first subsection explains the general properties of exceptional classes, 
and the second discusses the properties of quasi-perfect and perfect classes, and 
formulates our first  staircase recognition criteria (in Propositions~\ref{prop:stair0} and~\ref{prop:stair00}).
The next subsection explains the properties of blocking classes,  while the last introduces the notion of a pre-staircase and, by combining their properties   with those of blocking classes, gives in Theorem~\ref{thm:stairrecog} a  powerful staircase recognition criterion and in 
Theorem~\ref{thm:blockrecog}  a  blocking class  recognition criterion.

\subsection{The role of exceptional spheres}
{ It was shown in \cite{Mc0} that the existence of a symplectic 
embedding of a rational ellipsoid $E(1,z)$ into either the projective 
plane $\C P^2$ or another rational ellipsoid is equivalent to a 
particular embedding of balls into $\C P^2$.  This result extends 
trivially to embeddings of ellipsoids into any blow up of $\C P^2$;
with some additional argument to  embeddings into rational convex 
toric domains  (see \cite[Theorem~1.6]{cg} and   
\cite[Theorem~1.2]{AADT}); and into certain other toric domains (see 
\cite{LMT}).}
In our framework, when $z$ is rational there is
a symplectic embedding
\begin{equation}
\label{eqn:balls}
E(1,z)\sembeds \lambda H_b \iff \bigsqcup_{j=1}^n B(w_j)\sembeds\lambda  H_b \iff  \bigsqcup_{j=1}^n B(w_j)\bigsqcup B(\lambda b)\sembeds B(\lambda).
\end{equation}
Here, $\bw(z)=(w_1,\dots,w_n)$ is the {\bf  weight expansion} of $z\in\Q$ defined in \eqref{eq:wgt} below.
Thus  the ellipsoid embedding question converts into a question about symplectic forms on 
an $(n+1)$-fold blowup\footnote
{
For discussion of the relationship between ball embeddings and the symplectic  blow up, see for example, \cite[Ch.7.1]{McSal}.} of $\C P^2$, a problem that was solved in the 1990s. 

The natural basis for $H_2\left(\C P^2\# (n+1)\overline{\C P}\,\!^2\right)$ consists of the original generator 
$L$ on $\C P^2$, plus
the classes of the $n+1$ exceptional spheres $E_0,\dots,E_{n}$. It is known that there is
a ball embedding as in \eqref{eqn:balls} if and only if there is a symplectic form $\omega$ on $\C P^2\# (n+1)\overline{\C P}\,\!^2$
satisfying
$$
PD[\omega] = \la L-\la bE_0 -\sum_{i=1}^n w_iE_i
$$
with canonical class $K$ satisfying
$$
PD(K) = -3L + E_0 + \sum_{i=1}^n E_i.
$$
We set $\mathscr{E}_{n+1}$ to be the set of {\bf exceptional classes} in $H_2\left(\C P^2\# (n+1)
\overline{\C P}\,\!^2\right)$.  It
consists of those classes $A$ that have square $-1$, $A\cdot K = -1$, and that can be represented by a smoothly 
embedded, symplectic sphere.   Li and Li have shown that
the requirement that a sphere be symplectic is superfluous \cite{li-li}; thus  $\mathscr{E}_{n+1}$ is independent of  
$\omega$ (though it depends on $K$).  An exceptional class  $$
\bE: = (d,m; \bbm) : = dL - m E_0 - \sum_{i\ge 1} m_i E_i
$$
 must satisfy the following 
Diophantine equations, which correspond to the homological conditions $A^2=-1$ and $A\cdot K=-1$:
\begin{eqnarray}\label{eq:diophantine}
d^2 - m^2 -\sum_{i=1}^n m_i^2 = -1 \mbox{ and }
3d - m -\sum_{i=1}^n m_i = 1. 
\end{eqnarray}
We call classes that satisfy these two equations \eqref{eq:diophantine}  {\bf Diophantine classes}.
As explained in \S\ref{ss:reduct}, Diophantine classes
are exceptional 
if and only if they reduce to
the class $E_1$  under repeated Cremona moves.

Li-Liu~\cite{li-liu} proved that  the symplectic cone of $\C P^2\# (n+1)\overline{\C P}\,\!^2$ consists of classes $[\om]$ such that $[\om]^2>0$ and $PD([\om])\cdot E > 0$ 
for all  $E\in \mathscr{E}_{n+1}$ .
It follows that    
 there is a symplectic embedding \eqref{eqn:balls} if and only if 
\begin{itemize}\item[$\bullet$]  the volume of $E(1,z)$ is less than that of $\la H_b$, that is $z < \la^2(1-b^2)$;
\item[$\bullet$] for each exceptional class
$\bE: = (d,m; \bbm)\in \mathscr{E}_{n+1}$, we have $\omega(E)>0$, that is
$
mb + \sum_{i=1}^n m_i\frac{w_i}{\lambda} <d;
$
or equivalently
$$
\lambda> \frac{\sum_{i=1}^n m_iw_i}{d-mb} = \frac{\bbm\cdot \bw(z)}{d-mb}.
$$
\end{itemize} 
Thus, the exceptional classes provide potential  {obstructions} to the desired embeddings.  
\MS

For any exceptional class $\bE= (d,m; \bbm)$,
we may vary $z$ to get an obstruction function as follows.   Any real number $z$ has (possibly infinitely long) {\bf weight expansion}
$\bw(z) = (w_1,w_2,\dots)$ defined for $z>1$ as follows:
\begin{itemize}\item[$\bullet$]  $w_1 = 1$ and $w_n\ge w_{n+1}>0$
 for all $n$,
\item[$\bullet$] if $w_i> w_{i+1} = \dots = w_n$ (where we set $w_0: = z$), then 
\begin{align}
\label{eq:wgt}
w_{n+1} = 
\begin{cases} 
 w_{i+1} & \mbox{ if }\;\; w_{i+1} + \dots +w_{n+1} = (n-i+1)w_{i+1}  < w_i,\\
w_i - (n-i)w_{i+1} & \mbox{ otherwise}
\end{cases}
\end{align}
\end{itemize}
Thus $\bw(z)$  starts with $\lfloor z\rfloor$ copies of $1$ (where $\lfloor z\rfloor$ is the largest integer $\le z$), is finite for rational $z$, and  satisfies the recursive relation
\begin{align*}
\bw(z) = \Bigl(1^{\times \lfloor z\rfloor}, (z-\lfloor z\rfloor) \bw\bigl(\frac 1{z-\lfloor z\rfloor}\bigr)\Bigr),
\end{align*}
where we take  $\bw\bigl(\frac 1{z-\lfloor z\rfloor}\bigr)$ to be empty if $z= \lfloor z\rfloor$. 
If we think of the weight expansion as an infinitely long sequence $\bw(z) = (w_1,w_2,\dots)$, appending
infinitely many zeros if necessary, then the coordinates of the weight expansion are continuous (but not differentiable!) functions
$w_i(z)$ of $z$.

\begin{EXAMple}\label{ex:ctfr0}   \normalfont
If $z = p/q$ (in lowest terms) and  $k_0<p/q<k_0+1$ , then 
\begin{align}\label{eq:ctfract}
q \bw(p/q) = \Bigl(q^{\times k_0}, (p-k_0q)^{\times k_1}, (q-k_1(p-k_0q))^{\times k_2}, \dots, 1\Bigr)
\end{align}
where $z$ has continued fraction expansion $[k_0;k_1,k_2,\dots]$, i.e. the
 multiplicities of the entries in $\bw(z)$ give the continued fraction expansion of $z$.  
 Notice that all the entries here are integral and that the last entry is $1$.   Conversely, the  
 continued fraction expansion of $z$ determines the  (continuous!) linear functions of $z = p/q$ that give the weight expansion.
For example,
$$
\bw\Bigl(\frac {35}6\Bigr) = \Bigl(1^{\times 5}, \frac 56,\frac 16^{\times 5}\Bigr),\qquad 
\frac {35}6 = [5;1,5] = 5 + \frac 1{1 + \frac15}.
$$
We  call $q \bw(p/q)$ the {\bf integral weight expansion} of $p/q = [\ell_0;\ell_1,\dots,\ell_N]$.
An easy inductive argument on the length $N$ shows that the integers $p,q$ found by this inductive process from a continued 
fraction $[\ell_0;\ell_1,\dots,\ell_N]$ are always relatively prime.
 For more details, see \cite[\S1.2]{ball}, and also Example~\ref{ex:356} below. \hfill$\er$
\end{EXAMple}

As in~\cite[\S1.2]{ball}, it is not hard to check the following identities for the weight expansion
 $\bw(z)$ at $z = p/q$:
  \begin{equation}\label{eq:wtexp}
\sum_i w_i = z + 1 - \frac 1q,\qquad \sum_i w_i^2 = \frac pq. 
\end{equation}

To define the {\bf obstruction function} $z\mapsto \mu_{\bE, b}(z)$ associated to $\bE$, we  first pad $(d,m; \bbm)$ with zeros on the
right to make it infinitely long, and  then set
\begin{equation}\label{eq:obstr}
\mu_{\bE, b}(z) : = \mu_{(d,m; \bbm), b}(z): = \frac{\bbm\cdot \bw(z)}{d-mb}.
\end{equation}
The above discussion implies that these obstruction functions, together with the volume, completely determine the embedding capacity function 
$c_{H_b}$, namely
\begin{equation}\label{eq:obstr1}
 c_{H_b}(z) = \sup_{\bE\in \Ee} \bigl(\mu_{\bE,b}(z), \ V_b(z) \bigr),
\end{equation}
where $V_b(z): = \sqrt{\frac z{1-b^2}}$ denotes the volume obstruction.

{
\begin{REMark}  \normalfont
A key point in our analysis of staircases is  
the result in
\cite[Theorem~1.8]{AADT} that the geometry of the target $H_b$ 
determines the accumulation point $\acc(b)$ of any staircase via 
\eqref{eqn:accb}.  This result
is valid (and the proof is the same) when the target $X$ is any blow 
up of $\CP^2_1$ by balls of sizes $b_1,\dots, b_N$, provided that we 
replace $3-b$ by  $3-\sum_i b_i$ and $1-b^2$ by the volume
$1-\sum_i b_i^2$.
  \hfill$\er$
\end{REMark}
}

\begin{DEFN}\label{def:nontriv}  An obstruction 
$\mu_{\bE,b}$ is said to be {\bf nontrivial at $z$} if $\mu_{\bE,b}(z)> V_b(z)$, and {\bf live at $z$} if it is nontrivial and equals the capacity function $c_{H_b}$ at $z$.
Further, it is said to  {\bf block $b$} if it is nontrivial at the accumulation point $\acc(b)$, that is, if
$$\mu_{\bE,b}(\acc(b))>V_b(\acc(b)).$$

We say that $H_b$ is {\bf unobstructed at $\acc(b)$} (or simply {\bf unobstructed}) if 
$$
c_{H_b}(\acc(b)) = V_b(\acc(b)).
$$  Otherwise, we say that $H_b$ is {\bf obstructed} or, equivalently, {\bf blocked}.  Further, a $b$-interval $J$ is {\bf blocked} if $c_{H_b}(\acc(b))> V_b(\acc(b))$  for all $b\in J$.
\end{DEFN}

Since all functions involved are continuous, the condition that $\mu_{\bE,b}$ is nontrivial at $z$ is continuous in both $z$ and $b$.  Similarly, the condition that $\mu_{\bE,b}$ blocks $b$ is an open condition on $b$, so that the set 
\begin{align}\label{eq:blockset}
{\it Block}: = \{b\in [0,1) \ | \ H_b \mbox{ is blocked}\}
\end{align}
is open.
Thus the set of $b$ that are unobstructed  is closed and contains the set 
$$
{\it Stair} =\{b\in [0,1) \ | \ H_b \mbox{ has a staircase}\}.
$$
Theorem~\ref{prop:150} guarantees that ${\it Stair}$  is a proper subset of the unobstructed values. 

Observe that the condition of being live at $(z_0, b_0)$ need  not be continuous with respect to either variable $z$ or $b$
since   it might happen that two different classes are live at $(z_0,b_0)$, one remaining live as $z$ or $b$ increases and the other remaining live as $z$ or $b$ decreases.  If  $b$ is kept fixed,  there  is no known example of this behavior at an outward corner\footnote
{
i.e. a nonsmooth point where the slope decreases} of the capacity function, though it  does happen at  other  corners provided that these are not on the volume curve. 

\begin{rmk}\label{rmk:Dioph}   \normalfont
Hutchings~\cite{Hut} was the first to observe that  the inequalities defining the capacity function do not require that $\bE\in \Ee$ but only that $\bE$ be Diophantine: 
that its entries satisfy~\eqref{eq:diophantine}. 
However, it is sometimes very important to know that a class does lie in $\Ee$  because any two such classes have nonnegative intersection. See, for example, Proposition~\ref{prop:live}.  
\hfill$\er$
\end{rmk}

\begin{DEFN}\label{def:perf} Let $\bE: = (d,m; \bbm)$ be a  Diophantine class.
If there is a rational number $a= p/q$ such that 
$\bbm = q \bw(a)$, then we say that 
 $\bE$ has {\bf center} $a= p/q$   and (as in \cite[Def.1.4]{usher}) call the class {\bf quasi-perfect}.    If in addition $\bE$ is an exceptional class, then we say that $\bE$ is {\bf perfect}.   A class that is quasi-perfect but not perfect, is called {\bf fake}.
 
 Finally, for a quasi-perfect, resp.\ perfect, class $\bE$, if 
 $|bd - m| < \sqrt{1-b^2}$ then we say that $\bE$ is
 {\bf $b$-quasi-perfect}, resp.\  {\bf $b$-perfect}.
\end{DEFN}

\begin{EXAMple}\label{ex:356} \normalfont
We now show how to compute an obstruction function on a small interval.
Let $a ={35}/6$ with $\bw({35}/6) = (1^{\times 5}, 5/6, (1/6)^{\times 5})$ as in Example~\ref{ex:ctfr0}.  It is not hard to check that there is a corresponding exceptional class $\bE_3 = (15,4; 6\bw( {35}/6))$. 
 If ${35}/6 \le z< 6$ then $\bw(z) =  (1^{\times 5}, z-5, (6-z)^{\times k},\dots)$ for some $k\ge 5$  and we have $\bbm\cdot \bw(z) = 35$. On the other hand, if $z<  {35}/6$, then its 
 weight expansion is $(1^{\times 5}, z-5, (6-z)^{\times 4}, (5z-29)^{\times k},\dots)$ provided that
$4(6-z)<z-5<5(6-z)$, which is equivalent to $ {29}/5 < z <  {35}/6$.  In this case
$$
\bbm\cdot \bw(z) = 30 + 5(z-5) + 4(6-z) + (5z-29) = 6z.
$$
Thus
\begin{align*} \mu_{\bE,b}(z) =\begin{cases} \frac{6z}{15- 4b} & \quad\mbox{ if } \;\; \frac {29}5 < z < \frac {35}6\\
\frac {35}{15- 4b} & \quad\mbox{ if } \;\; \frac{35}6 \le z < 6.
\end{cases}
\end{align*}
For a more general result, see Lemma~\ref{lem:munearc} below.\hfill$\er$
\end{EXAMple}

The next task is to determine whether a particular exceptional class  $\bE$  gives us a useful embedding
obstruction.
We denote by $\Ee: = \bigcup_n \Ee_n$ the set of exceptional classes in arbitrary blow ups of $\C P^2$, and will  define the {\bf length} $\ell(\bbm)$ of $\bbm$ (resp.\ $\ell(z)$ of $z$)  to be the number of nonzero entries in 
the vector $\bbm$ (resp.\ $\bw(z)$).  The following  lemma, taken from
\cite[Lemma~2.21 \&\ Prop.2.23]{AADT},
 extends results in \cite{ball} about embeddings into a ball to the case of a general convex toric target such as $H_b$.

\begin{lemm}\label{lem:0}   Let $\bE=(d,m; \bbm)$ be a Diophantine class with corresponding obstruction function $\mu_{\bE,b}$.

\begin{enumerate}
\item[{\rm(i)}]  The function $\mu_{\bE,b}(z)$ is continuous with respect to both variables 
$z,b$, and  for fixed $b$, is piecewise linear with rational coefficients.

\item[{\rm(ii)}] For any $b$, the obstruction $\mu_{\bE,b}(z)$ 
given by $\bE=(d,m; \bbm)$ is nontrivial at $z$ only if  $\ell(z) \ge \ell(\bbm)$.

\item[{\rm(iii)}] 
Any maximal interval $I$ on which $\mu_{\bE,b}$ is nontrivial  contains a unique point $a$, called the {\bf break point},  with
$\ell(a) = \ell(\bbm)$.   Moreover,
the function $$
I\to (0,\infty),\;\;  z\mapsto \mu_{\bE,b}(z) - V_b(z)
$$
 reaches its maximum at 
the break point $a$.
\end{enumerate}
\end{lemm}
\begin{proof}  Denote by $w_i(z)$ the $i^{\mathrm{th}}$ weight of $z$ considered as a function of $z$. Then, by \eqref{eq:wgt} it is piecewise linear, and is linear on any open interval that does not contain an element $a'$ with $\ell(a')\le i$.   This implies (i).

Property (ii) 
is an easy consequence of the  inequality
\begin{align}\label{eq:CSineq}
\sqrt{d^2-m^2}\ \sqrt{1-b^2}\le d-mb,
\end{align}
which, after squaring both sides simply reduces to $0\leq(db-m)^2$.

For property (iii), we note
that the graph of $V_b(z) = \sqrt{\frac z{1-b^2}}$ is concave down, so the function
 $ \mu_{\bE,b}(z)$ cannot be linear on any maximal interval on which it is nontrivial.  
On the other hand, it follows from \eqref{eq:wgt} that the formula for $w_i(z)$ can change only if it, or one of the 
earlier weights, becomes zero.  From this, it is not hard to see that  $\mu_{\bE,b}$ is linear on every open interval  
on which $\ell(z)> \ell(\bbm)$.  
The above remarks imply that
  $I$ must contain at least one break point $a$, which occurs when $\ell(a)=\ell(\bbm)$.    
     Moreover this point $a$ is unique because  if $\ell(a) = \ell(a')$ for some $a<a'$,
      then it is shown in  \cite[Lem.2.1.3]{ball} that  there is $z \in (a,a')$ with $\ell(z) < \ell(a)=\ell(\bbm)$,
       contradicting property (ii).
\end{proof}

 Note that part (iii) of the following lemma shows that, if $\bE$ is perfect with center $a$, then it is $b$-perfect in the sense of Definition~\ref{def:perf} exactly  if $\mu_{\bE, b}$  is nontrivial at $a$.  
Further, part (ii) establishes that the capacity function  $c_{H_b}(z)$ is the maximum, rather than just the supremum, 
of the obstruction functions $\mu_{\bE,b}(z)$.

\begin{lemm}\label{lem:perfect}
Let $\bE= (d,m; \bbm)$ be a Diophantine class. 
\begin{enumerate}\item[{\rm(i)}]  If  $\mu_{\bE, b}$ is  nontrivial at $z$, we must have 
$|bd-m|<\sqrt{1-b^2}$.  Furthermore,
$$
\mu_{\bE ,b}(z) \le
 V_b(z) \sqrt{ 1+ \frac 1{d^2 - m^2}}.
$$
\item[{\rm(ii)}]  If $c_{H_{b}}(z) > V_{b}(z)$, there is an exceptional class $\bE$ such that
$c_{H_b}(z) = \mu_{\bE,b}(z)$.  Moreover, $c_{H_b}$ is piecewise linear on any interval on which it is bounded away from the volume.
Finally, if $J\subset [0,1)$ is an open subset such that $c_{H_{b}}(z) > V_{b}(z) + \eps$ for all $b\in J$ and some $\eps>0$, then  for all $b$
outside of a finite subset of $J$, we may assume that
$c_{H_{b'}}(z) = \mu_{\bE,b'}(z)$ for all $b'$ in some neighborhood $J_b$ of $b$.

 \item[{\rm(iii)}]  If  $\bE$  has center $a=p/q$, then
$\mu_{\bE, b}(z)$  is nontrivial at $z=a$ if and only if $|bd-m|<\sqrt{1-b^2}$.  Further, when $b_0 = {m}/d$, we have
 \begin{align}\label{eq:upperb}
 \mu_{\bE, b_0}(a) =  \frac{pd}{d^2-m^2}\le \frac dq \Bigl(1 +\frac 2{pq}\Bigr).
 \end{align}
\end{enumerate}
\end{lemm}
\begin{proof}  The obstruction $\mu_{\bE, b}$ satisfies  
\begin{align}\label{eq:estim}
(d - m b)\, \mu_{\bE, b}(z) =  \bbm\cdot \bw(z) \le \|\bbm\|\ \|\bw\| = \sqrt{z} \sqrt {d^2 - m^2 + 1}.
\end{align}
But $\sqrt z \frac{\sqrt {d^2 - m^2 + 1}}{d-m b} > V_b(z)$
 if
$$
\frac{\sqrt {d^2 - m^2 + 1}}{d-m b} > \frac 1{\sqrt{1-b^2}}.
$$
This happens if 
\begin{align*}
&(d^2 - m^2 + 1) (1-b^2)\;\; > \;\; (d-mb)^2, \quad  i.e.\\
&d^2 - m^2 + 1 - b^2d^2 +b^2 m^2 - b^2 \;\; > \;\; d^2 - 2 m db + m^2 b^2,  \quad  i.e.\\
& 1-b^2 \;\; > \;\; d^2 b^2 - 2m db + m^2, \quad  i.e.\\
&1-b^2  \;\; > \;\; (db-m)^2.
\end{align*}
This proves the first claim in (i).

Combining the inequality
   $
d -m b\ge \sqrt{d^2 - m^2}\ \sqrt{1-b^2},
$  from \eqref{eq:CSineq} with \eqref{eq:estim},  we have  %
$$
\mu_{\bE ,b}(z) \le \frac{\sqrt{z}\sqrt {d^2 - m^2 + 1}}{\sqrt{d^2 - m^2}\ \sqrt{1-b^2}}
=  V_b(z) \sqrt{ 1+ \frac 1{d^2 - m^2}}.
$$
This completes the proof of  (i).
\MS

To prove (ii), we first show that if $c_{H_b}(z)=(1+\eps') V_b(z)$ for some $\eps'>0$ and a fixed value of $b$,  then there are only finitely many exceptional  classes $\bE$ such that $\mu_{\bE,b}(z) > (1+\eps'/2) V_b(z)$.  To this end, notice that (i) 
implies that $\sqrt{1 + \frac 1{d^2-m^2}} > 1 + \eps/2 > \sqrt{1+\eps}$, so that  we must have
$\frac 1{d^2-m^2} > \eps$.  Thus there is a constant $C$ so that $d^2-m^2 < C$.
 
 Since  we must also have 
 $|db-m|< 1$ by (i), 
 we are then looking for pairs of non-negative integers $(d,m)$ which lie in a compact region of the plane, so
there is an upper bound for the degrees $d$ of the relevant  classes.  Further, we must have $\ell(\bbm)\le \ell(z)$ by 
 Lemma~\ref{lem:0}.    Therefore there are only a finite number of relevant classes.  Hence, for each $b$,  there is 
 equality for at least one such class. 
 
 Moreover, if we now consider  sufficiently small neighborhoods $I'$ of $z$ and $J'$ of $b$,  then the inequality 
 $\mu_{\bE,b'}(z') > (1+\eps/2) V_{b'}(z')$
 holds for all $z'\in I'$ and $b'\in J'$, 
 so that again there are only finitely many classes $\bE$ such that 
  $c_{H_{b'}}(z') = \mu_{\bE, b'}(z')$ for some $z'\in I'$ and $b'\in J'$.  
  By Lemma~\ref{lem:0}, each function $z\mapsto \mu_{\bE, b'}(z)$ is piecewise 
  linear and hence with a finite number of nonsmooth points; see also Lemma~\ref{lem:munearc}.  
  Therefore the same is true for $z'\mapsto c_{H_{b'}}(z')$  on any interval on which it is bounded 
  away from the volume function.  Thus for fixed $b'$, these functions $c_{H_b'}$ and $\mu_{\bE,b'}$ of $z'$ either 
  agree at a single point or 
  agree on a finite union of  closed $z'$-intervals.  Moreover, as $b'$ changes  these $z'$-intervals change continuously. 
  Hence if we now consider our fixed $z$, all but finitely many $b'$ values lie in the interior of one of these intervals. 
 This completes the proof of item (ii).

The first claim in (iii) follows from the first calculation in (i) since in this case, the inequality in \eqref{eq:estim} is an equality.  
Further,  because $d^2-m^2 = pq-1$ and $\bbm\cdot \bw(a) =  q\bw(a)\cdot \bw(a) = p$, we have
  $$
  \mu_{\bE, m/d}(a) = \frac {pd}{d^2 - m^2} = \frac {pd}{pq-1}  = \frac dq \frac{1}{1-1/pq} \le \frac dq\Bigl(1 + \frac 2{pq}\Bigr),
  $$
  where the last inequality is a consequence of the fact that $\frac{1}{1-1/x}\le 1+\frac2x$ for $x\ge 2$. 
   This completes the proof.
\end{proof}

The calculation in Example~\ref{ex:356} generalizes as follows.

\begin{lemm}\label{lem:munearc}
Let  $\bE= (d,m; \bbm)$ be a quasi-perfect class with center $a = p/q$. 
Then the corresponding capacity function $\mu_{\bE,b}$ 
has the following form near $a$:  there are numbers $z_1 < a < z_2$ such that
\begin{align} \mu_{\bE,b}(z) =\begin{cases} \frac{qz}{d-mb} & \quad\mbox{ if } \;\; z_1< z < a\\
\frac p{d-mb} & \quad\mbox{ if } \;\; a\le z < z_2.
\end{cases}\label{eqn:munearcenter}
\end{align}
In particular, if  $\mu_{\bE,b}(a) > V_b(a)$ then this formula applies throughout the neighborhood of $a$ on which $\mu_{\bE,b}$ is nontrivial.
\end{lemm}
\begin{proof}  Consider the continued fraction expansion  $a = [\ell_0; \ell_1,\dots,\ell_N]$. 
We will carry out the proof when $N$ is odd, since this is the case for the exceptional classes $\bE_{n,k}$ that give our new staircases.  The proof for $N$ even is similar.

When $N$ is odd, 
by \cite[Lem.2.2.1]{ball} there is $\eps>0$ and $h>0$ so that  $z\in (a-\eps,a)$ has weight expansion
\begin{align}\label{eq:wtexph}
\bw(z) = \bigl( 1^{\times \ell_0}, (x_1(z))^{\times \ell_1},\dots, (x_N(z))^{\times \ell_N},
\ (x_{N+1}(z))^{\times h}, 
\dots,\bigr),
\end{align}
where the functions $x_j(z) = \al_j + z\be _j$ are linear functions of $z$, that increase w.r.t. $z$ when $j$ is odd and decrease when $j$ is even.  Denote the $k$th convergent to $a$ by $p_k/q_k$ so that $p_0/q_0 = \lfloor a\rfloor$ and $\p_n/q_n = p/q = a$.  Then 
 \cite[Lem.2.2.3~(ii),~(iii)]{ball} shows that
 $$
| \al_j| = p_{j-1},\qquad |\be_j| = q_{j-1} \qquad \mbox{ for } \;\; 1\le j\le N+1.
 $$
Further,  \cite[Lem.2.2.3~(i)]{ball} shows that the  weight expansion $\bw(a)$ may be written
 $$
 \bigl(1^{\times \ell_0}, (x_1(a))^{\times \ell_1}, \dots, (x_N(a) = 1/q)^{\times \ell_N}\bigr),
 $$ 
 where the weights $x_j(a)$ are determined by the convergents to the mirror continued fraction.
 Because $\bbm = q \bw(a)$,  we have
$$
\bbm\cdot \bw(z) =  q \bw(a)\cdot \bw(z) = \sum_{j=1}^{N} q \ell_j \ x_j(a) \ x_j(z) = qz,\qquad \mbox{ if } z\in (a-\eps,a),
$$
where the last equality holds for odd $N$ by \cite[Cor.2.2.7~(ii)]{ball}.
 This establishes the claimed formula when $z<a$.  The formula for $z>a$ now follows by 
 \cite[Prop.2.3.2]{ball}, since the assumption that $\bbm = q\bw$ implies that the smallest entry in $\bbm$ is $1$.
 
 This proves the first claim.  The second holds because the formula for $\mu_{\bE,b}(z)$ can change only at a point $a'$ with $\ell(a')\le \ell(\bbm)$, while Lemma~\ref{lem:0}~(iii) shows that   $a$ is the unique point with this property in the maximal neighborhood of  $a$ on which $\mu_{\bE,b}$ is nontrivial.
\end{proof}  

\begin{rmk} \label{rmk:munearcenter}\normalfont 
(i)  When $N$ is odd, the first formula in \eqref{eqn:munearcenter} holds for all 
$z$ that have a continued fraction expansion of the form  \eqref{eq:wtexph}.  Hence we may take
our value $z_1 = 
[\ell_0; \ell_1,\dots, \ell_N + 1] < a$.    (Note that, somewhat counterintuitively, when you increase an odd-placed coefficient of the continued fraction the number it represents decreases; see Remark~\ref{rmk:ctfr}). Similarly, 
the second formula holds for all $z< z_2$, where we have the value $z_2 = [\ell_0; \ell_1,\dots, \ell_N - 1]> a$, unless $\ell_N = 2$, in which case we 
have $z_2 = [\ell_0; \ell_1,\dots, \ell_{N - 1}+1]$.
 \MS

\NI  (ii) In the case of a ball (i.e. $H_0$), the paper \cite{ball} considered exceptional classes as above but with $m = 0$.    In this case, every exceptional class that 
has a center  $a$ is perfect, and hence (as in Lemma~\ref{lem:perfect}) is nontrivial at $a$.  Moreover,
 the obstruction function $\mu_{\bE,0}$   is live  throughout the interval on which it is nontrivial.  It also turns out that  $a< \tau^4 = \acc(0)$, and that  the  capacity function $c_{H_0}(z)$ for $z< \tau^4$ is completely determined by these perfect classes.
\MS

\NI(iii)  The claims in (ii) no longer hold when $b>0$.
 Lemma~\ref{lem:perfect}~(iii) shows that every exceptional class $\bE$ with center $a$ is live at $a$ for some $b$, but this $b$ may be  far from the most relevant value of $b$, namely $ \acc^{-1}(a)$.  (Here $b\mapsto \acc(b)$  is as in \eqref{eq:cHb} and we take the appropriate branch of its inverse as in \eqref{eq:acc1}.)   However, as we will see in \S\ref{ss:block},  if $\bE$ is a perfect blocking class with center $a$  then it does 
share some of the characteristics of  perfect classes for the ball. In particular, we show in Proposition~\ref{prop:block} that  
$\mu_{\bE,b}$  is  live at $a$ for all $b\in J_\bE$, and hence in particular at  $\acc^{-1}(b)$.
\hfill$\er$
\end{rmk}

The following remarks about the structure of a general class $\bE$ that gives a nontrivial   $\mu_{\bE,b}$ 
can be disregarded on a first reading.  They  will be useful  when establishing the existence of staircases; 
see  Example~\ref{ex:SsUu0}.   They are slight generalizations of results in \cite[\S2]{ball}.   We always assume that $\bbm= (m_1,\dots,m_n)$ is {\bf ordered}, i.e. $m_1\ge m_2\ge\dots\ge m_n$, since, because weight decompositions are ordered, the obstruction given by an ordered $\bbm$ is at least as large as that given by the any unordered tuple. 
If $a = [\ell_0; \ell_1,\dots,\ell_N]$ has weight decomposition
$\bw(a) = (1^{\times \ell_0}, w_1^{\times \ell_1}, \dots, w_N^{\times\ell_N})$, we call each group $w_i^{\times\ell_i}$ of equal terms a {\bf block} of $\bw(a)$.

\begin{lemm}\label{lem:obstruct}  Let $\bE = (d,m;\bbm)$ be a Diophantine class such  that the function $\mu_{\bE,b}$ is nontrivial  on a maximal interval $I$ with break point $a = [\ell_0; \ell_1,\dots,\ell_N]$.
Suppose further that $\bbm= (m_1,\dots,m_n)$ is ordered and that $z_0\in I$.
Then: 
\begin{itemize}
\item[{\rm(i)}]   there is a constant $ \al(z_0,b)>0$ 
and a vector $\eps$ of length  $\ell(z_0)$ such that
 \begin{align}\label{eq:eps1}
\bbm = \al(z_0,b) \bw(z_0) + \eps, \qquad \|\eps\|^2 = \eps\cdot\eps <1,
\end{align}
 where the constant $\al(z_0,b)$ is such  that the obstruction from $\al(z_0,b)\bw(z_0)$ is the volume constraint $V_b(z_0)$, that is,
 \begin{align}\label{eq:eps2}
\al(z_0,b) = \frac{d -m b}{\sqrt {z_0(1-b^2)}}.
\end{align}
 \item[{\rm(ii)}]  The vector $\bbm$  is constant on all blocks of $\bw(z_0)$ of length two or more, except perhaps for one block on which all the entries but one  are the same.  In the latter case, the entries on this block differ by $1$.
\end{itemize}
\end{lemm}

\begin{proof}  
Since $\mu_{\bE ,b}$ is nontrivial  at $z_0$, it follows from \eqref{eq:eps1},
\eqref{eq:eps2} that  we must have 
\begin{align} \label{eq:bigger}
\bw(z_0)\cdot \eps > 0.
\end{align}
Indeed, because $\bw(z_0)\cdot \bw(z_0)=z_0$, the choice of $\al(z_0,b)$ implies that 
\begin{align*}
V_{b}(z_0) & < \mu_{\bE,b}(z_0)\\
& = \frac{\al(z_0,b) \bw(z_0)\cdot \bw(z_0) + \eps\cdot \bw(z_0)}
{d-mb} = V_{b}(z_0)  +  \frac{ \eps\cdot \bw(z_0)}
{d-mb}.
\end{align*}
Now notice that because $\bE \cdot \bE  = -1$ and $\bw(z_0)\cdot \bw(z_0) = z_0$ by \eqref{eq:wtexp}, we have
\begin{align*}
d^2 - m^2 + 1 &= \bbm \cdot \bbm \\
 &= (\al(z_0,b) \bw(z_0) + \eps)\cdot(\al(z_0,b) \bw(z_0) + \eps)\\
& = \al^2(z_0,b)\  z_0 + 2\al(z_0,b)\ \eps\cdot\bw(z_0) + \eps\cdot\eps 
\\
& =  \frac{(d -m b)^2}{1-b^2} + 2\al(z_0,b)\ \eps\cdot\bw(z_0)+ \eps\cdot\eps \quad \mbox{by } \eqref{eq:eps2}\\
& = d^2 - m^2 +   \frac {(m  - bd )^2}{1-b^2} +2\al(z_0,b)\ \eps\cdot\bw(z_0)+  \eps\cdot\eps.
\end{align*}
  Therefore,  we must have
\begin{align}\label{eq:estim1}
\frac {(m  - b d )^2}{1-b^2} +2\al(z_0,b)\ \eps\cdot\bw(z_0)+  \eps\cdot\eps = 1.
\end{align}
Now observe that $\al(z_0,b)>0$ by \eqref{eq:eps2},  and $ \eps\cdot\bw(z_0)>0$ by \eqref{eq:bigger}. 
Hence we have  $\eps\cdot\eps < 1$ as claimed in (i). 
  
  To prove (ii), 
consider a block of $\bw(z_0)$ of length $s\ge 2$, and suppose that the corresponding  entries of $\bbm $ are the integers $m_{1},\dots, m_s$.  If $\la$ is the size of the corresponding entries in $\al(z_0,b) \bw(z_0)$ we have
$$
\sum_{i=1}^s(m_i-\la)^2 \le \|\eps\|^2<1.
$$
In particular, $|m_i-\la|<1$ for all $i$, which implies that the  integers $m_i$ differ by at most one on this block.
Write these entries as $m^{\times r}, (m-1)^{\times(s-r)}$, and set $\mu: = m-\la$.   Then
\begin{align*}
\sum_{i=1}^s(m_i-\la)^2& = r\mu^2 + (s-r)(1-\mu)^2 = R \mu^2 -2S\mu + T \\
&\qquad \mbox { where } R = s,\ S = (s-r), \ T = s-r
\end{align*} 
and hence has minimum value  $T - {S^2}/{R}$ taken at $\mu =S/R$. 
It remains to check that $$T - \frac{S^2}{R} = \frac{(s-r)r}s< 1$$ only if $r=0,1, s-1,s$. 
\end{proof}

\begin{rmk}\label{rmk:ECH}  \normalfont
{\bf (Relation to ECH obstructions)}   (i)    As explained in \S\ref{sec:ECH} below, there is an alternative approach to these calculations that use the obstructions coming from embedded contact homology (ECH). Thus,  the capacity function $z\mapsto c_{H_b}(z)$ may also be described as the supremum of a ratio of ECH lattice counts as in \eqref{def:csup}.  
If a class $\bE: = \bE(d,m;q\bw(p/q))$ with center $p/q$ is live at $p/q$ for some value of $b$, then $c_{H_b}(z) = \mu_{\bE, b}(z)$ for $z\approx p/q$, and  it follows from Lemma~\ref{lem:ECHcap} that $c_{H_b}(z)$ must equal a ratio of  counts for a particular lattice path.  Thus, there is some interval around $p/q$ on which the function $ z\mapsto\mu_{\bE, b}(z)$ 
agrees with the obstruction given by the $k$th ECH capacity, where,  by \eqref{eqn:Ldm},  
\begin{align}\label{eq:ECHind}
k = \tfrac 12\bigl(d(d+3) - m(m+1)\bigr).
\end{align}
  However, as is shown by Figure~\ref{fig:center17029}, for other values of $z$ these two obstruction functions may be  quite different.  Notice here that 
$\bE' = (73,20; 29\bw(170/29))$ is an exceptional class  with center at $170/29\approx
5.682.$  Further, $\bigl(d(d+3) - m(m+1)\bigr)/2 = 2564$.      \MS

\NI (ii)  If we fix $b$, then we can often figure out from the graph of $z\mapsto c_{H_b}(z)$ exactly which exceptional classes are giving the   obstruction.   
For example, in 
Figure~\ref{fig:b3k125accpt}, we see that when $b = 3/10$, the capacity function $c_{H_b}$ has an outer corner at $a =23/4$.    There is an exceptional class $\bE$ with center $23/4$, namely $\bE = \bigl(10,3; 4\bw(23/4)\bigr)$, such that $\mu_{\bE,3/10}$ is live near $23/4$ by Proposition~\ref{prop:live} below.  Hence we see that $c_{H_b}=\mu_{\bE,3/10}$ is given near $a$  by the $59$th ECH capacity.  As another example, we found that the $125$th capacity has a corner at about $35/6$ and from 
this it was not too hard  to figure out the corresponding  exceptional class.   Indeed, with $p/q =35/6$ we simply need to find suitable
 $d, m$ satisfying $(d-m)(d+m) = pq -1$, which we can do in this case by factoring $pq-1 = 209$. In this way, we are led to the  class $\bE = \bigl(15,4;6\bw(35/6)\bigr)$.
This was one of the first ways we used to identify relevant exceptional classes.  Once the first few such classes were found, we found others by numerical experimentation. One method, using Mathematica, is explained in \S\ref{subsec:findingk}.
\hfill$\er$
\end{rmk}

\begin{rmk}\label{rmk:computer}  \normalfont
The computer calculations described in \S\ref{sec:Mathem} take into account the ECH obstructions 
for  $k\le 25,000$.  The obstruction $\mu_{\bE,b}$ given by a class $\bE = (d,m;\bbm)$  with break point $a$
corresponds to the $k$th ECH obstruction near $z = a$, where $k =\bigl(d(d+3) - m(m+1)\bigr)/2$.  If we restrict the range of $b$, then we can estimate the degree $d$ of $\bE$ as a function of $k$ since we know from Lemma~\ref{lem:perfect} that $|b d - m|<1$.  For example,  if $b \le 1/3$, then  all relevant classes  have $m < bd+1 <  d/3+1$.  Combining that with the inequality
$d(d+3) - m (m+1)  < 50,000$, we find that $\frac 89 d^2< 
50,000$, which roughly corresponds to 
  $d \le 237$. 
   The only proviso here is that the computer works with a fixed value for $b$, which is often a rational approximation to the irrational value $b_\infty$ of interest.  However, it follows from results such as Proposition~\ref{prop:live} below that one can quantify the $b$-interval on which a class is visible.  
We will rigorously prove all our results, rather than relying on computer programs to tell us that certain classes cannot exist, since the relevant numbers grow quickly, escaping the range that is easily accessible to our computers.
\hfill$\er$
 \end{rmk}

\subsection{Characterizing staircases}\label{ss:charstair}

The section  explains how to recognize families of obstructions that give staircases for the Hirzebruch domains  $H_b, 0<b<1$. 
We begin with a useful criterion for a class to be live.  It is an adaptation  of \cite[Lem.4.1]{usher}.

\begin{prop}\label{prop:live} Suppose that  $\bE = (d,m;\bbm)$ is $b$-perfect with  center $a$.  Then: 
\begin{enumerate}
\item[{\rm(i)}]  If $b$ satisfies
\begin{align}\label{eq:bcondit}
\frac{m^2-1}{dm} \le b \le  \frac{1 + m(d-m)}{1+d(d-m)},
\end{align}
 the obstruction $\mu_{\bE, b}$ is live at $a$; that is, $c_b(a) = \mu_{\bE, b}(a)> V_b(a)$.
  \item[{\rm(ii)}] 
  If 
  $b< r/s$ satisfies
 \begin{align}\label{eq:bcondit3}
\frac{m^2-1}{dm} \le b\le  \frac{s + m(rd-sm)}{r+d(rd-sm)},
\end{align}
then $\mu_{\bE, b}(a)\ge \mu_{\bE', b}(a)$ for every exceptional divisor $\bE' = (d',m',\bbm')$ with $m'/d' \le r/s$. 
  \item[{\rm(iii)}]  If $\frac{m}{d}> \frac rs(1 + \frac 1{d^2})$ and $b>  r/s$ satisfies
 \begin{align}\label{eq:bcondit4}
\frac{m(sm-rd) - s}{d(sm-rd)  -r} \le b\le  \frac{m}d
\end{align}
then $\mu_{\bE, b}(a)\ge \mu_{\bE', b}(a)$ for every exceptional divisor $\bE' = (d',m',\bbm')$ with $m'/d' \ge r/s$.
\end{enumerate}
\end{prop}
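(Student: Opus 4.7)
The plan is to bound every competing obstruction $\mu_{\bE',b}(a)$ uniformly using the nonnegativity of the intersection pairing and then reduce to a linear analysis in $x = m'/d'$. For any $\bE' = (d',m';\bbm') \in \Ee$ distinct from $\bE$, the intersection $\bE \cdot \bE' \ge 0$ yields $\bbm \cdot \bbm' \le dd' - mm'$, which combined with $\bbm = q\bw(a)$ gives $\bw(a) \cdot \bbm' \le (dd' - mm')/q$. Using $\bbm \cdot \bw(a) = p$ from \eqref{eq:wtexp}, one obtains
$$
\mu_{\bE,b}(a) = \frac{p}{d - mb}, \qquad \mu_{\bE',b}(a) \le \frac{dd' - mm'}{q(d' - m'b)}.
$$
Here $d' - m'b > 0$ because exceptional classes satisfy $m' \le d'$ (from $d'^2 - m'^2 + 1 \ge 0$ and integrality) and $b < 1$. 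Substituting $pq = d^2 - m^2 + 1$ and clearing denominators, the desired inequality $\mu_{\bE',b}(a) \le \mu_{\bE,b}(a)$ reduces after a routine expansion to the single condition
$$
P(x) := xA + B \;\ge\; 0, \qquad x := m'/d', \quad A := dm - b(d^2+1), \quad B := 1 + m(bd - m).
$$

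Since $P$ is linear in $x$ and every exceptional class satisfies $x \in [0,1]$, the task reduces to checking nonnegativity at the endpoints of the appropriate interval. For (i) with $x \in [0,1]$, the endpoint conditions are $P(0) = B \ge 0 \iff b \ge (m^2 - 1)/(dm)$ and $P(1) = 1 + m(d-m) - b(1 + d(d-m)) \ge 0 \iff b \le (1 + m(d-m))/(1 + d(d-m))$, which are exactly the bounds of \eqref{eq:bcondit}. Part (ii) repeats this argument on $[0, r/s]$: multiplying $P(r/s) \ge 0$ through by $s$ and rearranging gives the upper bound of \eqref{eq:bcondit3}. For (iii) I would work on $[r/s, 1]$ and use the direct identity $P(m/d) = (d - mb)/d > 0$, which holds for all $b \le m/d$. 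The extra hypothesis $m/d > (r/s)(1 + 1/d^2)$ is equivalent to $A > 0$ at $b = r/s$, so the slope $A$ changes sign at $b = dm/(d^2+1)$ inside the allowed range; in either sign regime the minimum of $P$ on $[r/s, 1]$ is controlled either by $P(r/s) \ge 0$ (giving the lower bound of \eqref{eq:bcondit4}) or by $P(1) > 0$ (automatic from $b \le m/d$, since $m/d \le (1 + m(d-m))/(1 + d(d-m))$).

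For the liveness claim in (i), the $b$-perfect hypothesis yields $\mu_{\bE,b}(a) > V_b(a)$ by Lemma~\ref{lem:perfect}(iii); combining with the uniform bound above and the supremum formula \eqref{eq:obstr1} forces $c_{H_b}(a) = \mu_{\bE,b}(a)$. The main obstacle will be the sign bookkeeping in (iii): in the relevant regime $sm > rd$, both $r + d(rd - sm)$ and $s + m(rd - sm)$ are negative, so dividing the inequality $P(r/s) \ge 0$ through by the first flips its direction and delivers the lower bound of \eqref{eq:bcondit4} rather than an (incorrect) upper bound. The hypothesis $m/d > (r/s)(1 + 1/d^2)$ is precisely what places the zero of $A$ strictly inside $(r/s, m/d)$, so that $P(r/s)$ becomes the binding constraint rather than a vacuous one.
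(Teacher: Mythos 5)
Your proposal is correct, and it takes a genuinely cleaner route than the paper's. Both arguments share the same foundation: nonnegativity of $\bE\cdot\bE'$ gives $\bbm'\cdot\bbm\le dd'-mm'$, hence $\mu_{\bE',b}(a)\le\frac{dd'-mm'}{q(d'-m'b)}$, and the problem reduces to comparing this upper bound with $\mu_{\bE,b}(a)=p/(d-mb)$. The difference is in how that comparison is finished. The paper introduces the rational function $g(x)=\frac{d-mx}{1-bx}$, observes that $g$ is monotone (decreasing if $b<m/d$, increasing if $b>m/d$), bounds $g$ by its value at the extreme allowed $x$, and then unpacks a single inequality; this forces a case split on $b\lessgtr m/d$ in each part. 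You instead clear denominators once and for all to get a single linear function $P(x)=Ax+B$ in $x=m'/d'$, with $A=dm-b(d^2+1)$, $B=1+m(bd-m)$, and reduce everything to endpoint evaluations of $P$ on $[0,1]$, $[0,r/s]$, or $[r/s,1]$. Your algebra is correct — I verified $P(0)\ge0$ and $P(1)\ge0$ are exactly the two bounds of \eqref{eq:bcondit}, $P(r/s)\ge0$ unwinds to the upper bound of \eqref{eq:bcondit3}, and in (iii) the identity $sP(r/s)=(s+m(rd-sm))-b(r+d(rd-sm))$ with $r+d(rd-sm)<0$ (forced by $m/d>(r/s)(1+1/d^2)$) produces the lower bound of \eqref{eq:bcondit4} with the correct flip. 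What your approach buys is that parts (i) and (ii) need no case split at all, since linearity makes endpoint-checking both necessary and sufficient; the only residual case analysis is the sign of $A$ in (iii), which plays the same structural role as the paper's $b\lessgtr m/d$ dichotomy but is arguably more transparent. What the paper's approach buys is a marginally sharper reading of (i): the monotonicity of $g$ shows that when $b\le m/d$ only the lower bound of \eqref{eq:bcondit} is actually used, and when $b>m/d$ only the upper bound — information your symmetric endpoint check suppresses, though of course the stated hypothesis provides both.

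Two small blemishes, neither a logical error: the observation $P(m/d)=(d-mb)/d>0$ in your sketch of (iii) is a red herring — $m/d$ is an interior point of $[r/s,1]$, not an endpoint, and the final argument doesn't use it. And your claim that ``both $r+d(rd-sm)$ and $s+m(rd-sm)$ are negative'' in the relevant regime overstates what the hypothesis gives: only the sign of $r+d(rd-sm)$ is forced (and is what matters for the flip); the numerator $s+m(rd-sm)$ can have either sign, and when it is nonnegative the lower bound of \eqref{eq:bcondit4} is simply non-binding. You might also flag explicitly, as the paper does implicitly, that the entire bound uses $m'\ge0$ (so $x\in[0,1]$) and $\bE'\ne\bE$; both are standard for the obstructive classes at play.
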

\begin{proof}  Suppose that $b$ satisfies \eqref{eq:bcondit}, and
 let  $\bE' = (d', m'; \bbm')$ be
any exceptional class. Then because $\bE\cdot \bE' \ge 0$ we have
\begin{align}\label{eq:posit}
dd' - m m' - \bbm\cdot \bbm' \ge 0.
\end{align}
Therefore if $b\le m/d$ we have
\begin{align*}
\mu_{\bE', b}(a) &= \frac{\bbm'\cdot \bw(a)}{d'-m'b}\;\; \le\;\; \frac{\bbm' \cdot \bbm}{q(d'-m'\frac{m}{d})}  \;\;\mbox{ since } b\le \frac{m}d\\
&\le \frac {d (dd'-mm')}{q(dd'-mm')} = \frac dq.
\end{align*}
On the other hand,  because $\bw(a)\cdot \bw(a) = a=p/q$ by \eqref{eq:wtexp}, 
$$
\mu_{\bE, b}(a) = \frac {\bbm\cdot\bw(a)}{d-mb}= \frac p{d-mb}\ge  \frac dq \;\;\mbox{ if } \;\;  pq \ge d^2-dm b.
$$
Since $pq= d^2 - m^2 +1$ by \eqref{eq:diophantine}, this will hold if 
$$
dm b \ge d^2-pq = m^2-1, 
$$
i.e. $  b\ge \left(m^2-1\right)/\left(dm\right).$  This shows that when  $\left(m^2-1\right)/\left(dm\right) \le b \le m/d$  
the obstruction  $\mu_{\bE, b}(a)$ is at least as large as
any other that is defined by an exceptional class.  Finally, we have assumed that $\bE$ is $b$-perfect, so $|bd-m|< \sqrt{1-b^2}$. 
Hence $\mu_{\bE, b}$ is live at $a$ by Lemma~\ref{lem:perfect}~(iii).  

Similarly,  if $b> {m}/d$, we have
\begin{align*}
\mu_{\bE', b}(a) &= \frac{\bbm'\cdot \bw(a)}{d'-m'b} \;\; \le\;\; \frac{dd'-mm'}{q(d'-m'b)}  \;\;\mbox{ by } \eqref{eq:posit}\\
& \le \frac{d-m\frac{m'}{d'}}{q(1-b\frac{m'}{d'})} 
\;\; \le\;\; \frac {d-m}{q(1-b)} 
\end{align*}
where the last inequality uses the fact that because ${m}/d < b< 1$ 
the function $x\mapsto \left(1-\frac{m}d x\right)/\left(1-bx\right)$  increases on the interval 
$(0,1]$.
Hence, as  above, the inequality
$\mu_{\bE, b}(a)\ge \mu_{\bE', b}(a)$ holds whenever
$$
\frac p{d-mb}  \ge \frac {d-m}{q(1-b)}. 
$$
Substituting $pq = d^2-m^2 + 1$, one readily checks that this is equivalent to the upper bound in \eqref{eq:bcondit}.
Since we again have assumed $|bd-m|< \sqrt{1-b^2}$, this completes the proof of (i).

Claim (ii) is proved in exactly the same way  except that if $b>{m}/d$ 
we use the fact that ${m'}/{d'}\le  r/s$ to conclude that $\mu_{\bE', b}(a) \le \frac{sd-rm}{q(s-rb)}$. Hence it suffices that 
$$
\frac {pq}{d-mb}  = \frac {d^2 - m^2 + 1}{d-mb} \ge \frac {sd-rm}{s-rb},
$$
which is equivalent to the condition $b\le \frac{s + m(rd-sm)}{r+d(rd-sm)}$. 

Finally, to prove (iii) we consider the function $x\mapsto \frac{d-mx}{1-bx}$ for $ r/s < b < {m}/d<1$ and $ r/s \le x < 1$.    This is a decreasing function.  Hence, taking $x ={m'}/{d'}$  we have
\begin{align*}
\mu_{\bE', b}(a) &  \le\;\; \frac{d-m\frac{m'}{d'}}{q(1-b\frac{m'}{d'})}  \;\;\mbox{ by } \eqref{eq:posit}\\
& \le \frac{sd-rm}{q(s-rb)}.  
\end{align*}
Hence it suffices that 
$$
\frac {pq}{d-mb}  = \frac {d^2 - m^2 + 1}{d-mb} \ge \frac {sd-rm}{s-rb}, 
$$
which is equivalent to the condition $b\ge \frac{sm^2 - rm d - s}{smd - rd^2 - r}$, since we assumed that  $sm> r(d + 1/d)$. 
\end{proof}

\begin{EXAMple}\label{ex:52}   \normalfont
Consider the exceptional class $\bE = (2,0;1^{\times 5})$.  It has center $a=5$ and is $b$-perfect   for $b$ such that $|2b|< \sqrt{1-b^2}$, i.e. for $b\in [0,1/\sqrt5)$.  By Lemma~\ref{lem:munearc}, $\mu_{\bE,b}(z) = 5/2$ for all $z\ge 5$. Notice that
 $$
5/2=  \mu_{\bE, b}(6) =  \sqrt{\frac 6{1-b^2}} = V_{b}(6),\quad\mbox{ when } b=1/5.
 $$
Further $b$ satisfies condition
 \eqref{eq:bcondit} precisely if $b< 1/5$.  
Hence  $c_{H_{b}}(z)  = 5/2$ for all $b<1/5$ and $z\in [5,6)$, so that by continuity, this  also holds at $z = 6$ and $b = 1/5$.    Therefore $H_{1/5}$ is unobstructed.  Further, there can be no ascending staircase for $b = 1/5$.  However, this argument does not rule out the possible existence of a descending staircase for $b = 1/5$.  For further discussion of the point $b=1/5$, see Example~\ref{ex:19th} and \S\ref{ss:15}.
\hfill$\er$
\end{EXAMple}

\begin{rmk}\label{rmk:fake}    \normalfont
The proof of Lemma~\ref{lem:perfect} only uses the Diophantine conditions~\eqref{eq:diophantine} satisfied by $\bE$, and hence holds for {\bf fake  classes} $\bE = (d,m;\bbm)$ that satisfy these conditions but do not reduce correctly under Cremona moves, and so are not exceptional classes.
 It turns out that 
there are many such fake classes: see Example~\ref{ex:seq} below. 
  By contrast,   the proof of Proposition~\ref{prop:live} is based on  the fact that different classes in $\Ee$ have nonnegative  intersection, which is false for fake classes. In fact, it seems likely that the obstructions given by fake classes are never live, though we do not attempt to give a proof here. In any case, it follows from \eqref{eq:cHb} that if there were a live fake class for some values of $a,b$ there would have to be a class in $\Ee$ giving the same obstruction at $a,b$.
  
  By contrast, results such as Lemma~\ref{lem:block0}  about blocking classes do not require  the class in question to be exceptional, and hence we phrase them in terms of Diophantine classes. It turns out that many quasi-perfect blocking classes are in fact perfect: see Proposition~\ref{prop:block}.
 \hfill$\er$ \end{rmk}

We now give a formal definition of staircase, to clarify the language used below.  In \S\ref{ss:overv}, we said that $H_b$ has an infinite staircase precisely when
the  graph of $c_{H_b}$  has infinitely many nonsmooth points that lie above the volume curve.  
We saw in  Lemma~\ref{lem:perfect}~(ii) that $c_{H_b}$ has finitely many nonsmooth points on any interval on which it is bounded away from the volume.  It follows that it can have only countably many nonsmooth points that lie above the volume curve.  Moreover,
by \cite[Thm. 1.11]{AADT}, the corresponding set of $z$-coordinates has a unique limit point $a=\acc(b)$.  Hence we can always arrange these $z$-coordinates into a convergent sequence $a_k$.   
 By Lemma~\ref{lem:0}~(iii) and  Lemma~\ref{lem:perfect}~(ii), each such  point $a_k$ is the break point   of some obstructive class $\bE_k$  such that $\mu_{\bE_k,b}$ is live at $a_k$ in the sense of   Definition~\ref{def:nontriv}.  Hence $H_b$ has an infinite staircase precisely when there is a sequence of such obstructive classes.

\begin{DEFN}\label{def:stair}  A {\bf staircase} $\Ss$ for $H_b$ is a  sequence of 
distinct Diophantine classes $\bigl(\bE_k\bigr)_{k\ge 0}$ such that each
 $\mu_{\bE_k,b}$ is live at its break point $a_k$ for some sequence of distinct points $a_k$ that converge to $a_\infty: = \acc(b)$.  
 The points $a_k$ are called the {\bf steps} or {\bf exterior corners} of $\Ss$, and 
 $\Ss$ is said to be  {\bf ascending} (respectively {\bf descending}) if the sequence of steps $a_k$ is increasing (resp.\ decreasing). 
We say that an ascending (resp.\ descending) staircase is {\bf complete} if there is a one-sided neighborhood $(a_\infty-\eps, a_\infty]$  (resp.\ $[a_\infty,a_\infty + \eps)$)  of $a_\infty$  such that there is no class $\bE'$ other than the $\bE_k$ with break point $a'$ in this neighborhood  and such that $\mu_{\bE',b}$ is live at $a'$. Finally we say that a stairase $\Ss$ is {\bf perfect} if all its classes are perfect.
\end{DEFN}

\begin{rmk} \normalfont
(i)
It follows from the above remarks that $H_b$ has an infinite staircase  if and only if  there is a sequence of classes 
$\bE_k$ with the above properties.   However notice that even if $\bigl(\bE_k\bigr)$ is complete, with ascending  steps $a_k$, then the capacity function $c_{H_b}$ may not be determined on any interval $(a_\infty - \eps, a_\infty]$ by the classes $\bE_k$, since there might be \lq big' obstructions as in Example~\ref{ex:big}.
\MS

\NI (ii)  The new staircases that we describe in \S\ref{ss:Fib} consist of two interwoven sets of classes.  According to our definition, each of these sets forms a staircase, as does their union.   It is likely (though we do not prove that here) that  their union is complete in the sense  defined above.\hfill$\er$
\end{rmk}

We are now in a position to establish  our first {\bf staircase recognition} criterion. 
We will show in the proof that condition (ii) below cannot be satisfied if $b_\infty = 0$,  though we do not assume that from the beginning.

\begin{prop} \label{prop:stair0} Suppose that $\Bigl(\bE_k = \bigl(d_k, m_{{k}}; q_k\bw(p_k/q_k)\bigr)\Bigr)_{k\ge 1}$ is a sequence of perfect  classes with $m_k \ne 0$ such that
\MS

\begin{itemize}\item[{\rm(i)}]  $m_{k}/d_k\to b_\infty \in [0,1)$ and $p_k/q_k\to a_\infty$;
\MS

\item[{\rm(ii)}]  $ \frac{m_{k}^2-1}{d_km_{k}} < b_\infty<  \frac{1 + m_{k}(d_k-m_{k})}{1+d_k(d_k-m_k)}
 $ for all $k\ge k_0$.
\end{itemize}

\NI
Then  the classes $\bigl(\bE_k\bigr)_{k\ge k_0}$ provide a staircase for $H_{b_\infty}$. 
Moreover, $a_\infty = \acc(b_\infty)$.
\end{prop}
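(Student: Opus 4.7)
The plan is to verify the conditions of Definition~\ref{def:stair}: each $\mu_{\bE_k,b_\infty}$ should be live at its break point $a_k := p_k/q_k$, the classes and centers should be distinct, and one should check $a_k \to \acc(b_\infty)$. By Lemma~\ref{lem:munearc}, $a_k$ is indeed the break point of the perfect class $\bE_k$, so the main content is liveness and identification of the limit.

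First I would verify that $\bE_k$ is $b_\infty$-perfect for $k$ large. Rewriting hypothesis (ii), the lower bound gives $m_k - b_\infty d_k < 1/m_k$ and the upper bound (after clearing denominators) gives $(d_k - m_k)(b_\infty d_k - m_k) < 1 - b_\infty$. Distinctness of the $\bE_k$ forces $d_k \to \infty$, since the Diophantine equations~\eqref{eq:diophantine} admit only finitely many perfect classes for any bounded range of $d$. The lower bound in (ii) combined with $m_k\neq 0$ also rules out $b_\infty = 0$, so $m_k/d_k \to b_\infty \in (0,1)$ implies $m_k \to \infty$ and $d_k - m_k \to \infty$, whence $|b_\infty d_k - m_k| \to 0$. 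Since hypothesis (ii) is exactly the strict form of~\eqref{eq:bcondit}, Proposition~\ref{prop:live}(i) yields that $\mu_{\bE_k, b_\infty}$ is live at $a_k$. Passing to a subsequence if necessary --- only finitely many perfect classes share any given center, so infinitely many distinct centers appear among the $a_k$ --- we may assume the $a_k$ are distinct, and the $\bE_k$ form a staircase.

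For the moreover claim, Lemma~\ref{lem:perfect}(i) gives $\mu_{\bE_k,b_\infty}(a_k) \le V_{b_\infty}(a_k)\sqrt{1+1/(d_k^2-m_k^2)}$, while liveness gives $c_{H_{b_\infty}}(a_k) = \mu_{\bE_k,b_\infty}(a_k) > V_{b_\infty}(a_k)$. Since $d_k^2 - m_k^2 \to \infty$, taking $k \to \infty$ and using continuity of $c_{H_{b_\infty}}$ and $V_{b_\infty}$ gives $c_{H_{b_\infty}}(a_\infty) = V_{b_\infty}(a_\infty)$. Having produced an infinite staircase for $H_{b_\infty}$, the AADT uniqueness result recalled in~\S\ref{ss:overv} identifies its accumulation point as $\acc(b_\infty)$, so $a_\infty = \acc(b_\infty)$. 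The main delicate step is the first one: confirming $b_\infty$-perfectness requires ruling out degenerate behaviour of the sequences $d_k$, $m_k$, $d_k - m_k$; once these are shown to grow without bound, the rest is essentially a direct appeal to Proposition~\ref{prop:live}(i).
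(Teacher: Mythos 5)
Your proposal is correct and follows essentially the same path as the paper's proof: rewrite condition (ii) to force $|b_\infty d_k - m_k|\to 0$ (hence $b_\infty$-perfectness for large $k$), invoke Proposition~\ref{prop:live}(i) for liveness, and appeal to \cite[Thm.1.11]{AADT} for $a_\infty=\acc(b_\infty)$. The intermediate squeeze argument showing $c_{H_{b_\infty}}(a_\infty)=V_{b_\infty}(a_\infty)$ is not needed once one has a staircase in hand (AADT identifies the accumulation point directly), and your remark on distinct centers spells out a detail the paper leaves implicit, but neither changes the substance.
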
 
\begin{proof}    Condition (ii) implies that for sufficiently large $k$ 
there is a  constant $c $ depending on 
$1-b_\infty>0$ such that
$$
\frac{m_k}{d_k}  - \frac 1{m_kd_k} < b_\infty < \frac{m_k}{d_k + \frac1{d_k-m_k}} + 
\frac{1}{d_k(d_k-m_k)} < \frac{m_k}{d_k} + \frac{1}{cd_k^2}.
$$
Rearranging, this gives us $1/m_k>m_k-b_\infty d_k>-1/(cd_k)$. 
Now we must have $d_k \to \infty$, since there are only finitely many exceptional classes of bounded degree. 
Further, since  $m_k \ne 0$,  we also have $m_k \to \infty$, since otherwise $b_\infty = 0$  and we have $m_k<1/m_k$, which is impossible since $m_k\ge0$ is an integer.  This
 implies $|m_k-b_\infty d_k|\to0$. We obtain in particular the bound $|m_k - d_k b_\infty| < \sqrt{1-b_\infty^2}$ for large $k$,  so that the classes
$\bE_k$ are $b_\infty$-perfect for large $k$. 
 Proposition~\ref{prop:live} (i) now implies  that  $E_k$ is live at $b_\infty$ for sufficiently large $k$.  Thus,  we have a sequence of live obstructions $\mu_{\bE_k, b_\infty}$ whose centers converge to $a_\infty$; in other words, a staircase.
Finally,  we may use \cite[Thm.1.11]{AADT} to deduce that $a_\infty = \acc(b_\infty)$, as desired.
\end{proof}

Since condition (ii) above is rather strong, we now establish a weaker version.
We begin by giving conditions that guarantee that $H_{b_\infty}$ is unobstructed.

\begin{lemm}\label{lem:unobstr}
Suppose that $\Bigl(\bE_k = \bigl(d_k, m_k; q_k\bw(p_k/q_k)\bigr)\Bigr)_{k\ge 1}$ is a sequence of distinct quasi-perfect  classes such that $m_k/d_k\to b_\infty \in (0,1)$, that $|d_kb_\infty-m_k|<1$ for all $k$, and that $a_k: = p_k/q_k\to a_\infty$.  Then
\begin{itemize}\item[{\rm(i)}]  $a_\infty = \acc(b_\infty)$.
\item[{\rm(ii)}]  If the $\bE_k$ are perfect, then $H_{b_\infty}$ is unobstructed.
\end{itemize}
 \end{lemm}

\begin{proof}   By Lemma~\ref{lem:perfect}~(iii)   the obstructions $\mu_{\bE_k, b_\infty}(a_k)$ are nontrivial for all $k$.  
The proof of \cite[Thm.1.11]{AADT} then shows that $a_\infty = \acc(b_\infty)$. Indeed, a careful reading of the \cite{AADT} 
argument shows that it never uses the hypothesis that the obstructions $\mu_{\bE_k, b_\infty}$ are live at $a_k$, instead 
using extensions of the arguments in Lemma~\ref{lem:obstruct} above. 
This proves (i).  
\MS

If we now assume that the $\bE_k$ are perfect and set $b_k: = m_k/d_k$, then Proposition~\ref{prop:live} implies that 
 $\mu_{\bE_k, b_k}$ is live at $a_k$.  Thus, by Lemma~\ref{lem:perfect}~(i), we have
   $$
V_{b_k}(a_k) <  c_{H_{b_k}}(a_k) = \mu_{\bE_k, b_k}(a_k) < V_{b_k}(a_k)\sqrt{1 + 1/(d_k^2 - m_k^2)}.
$$
But $d_k^2 - m_k^2 = p_kq_k-1\to \infty$.    Hence by the continuity of the function $(z,b)\mapsto c_{H_b}(z)$,
we must have $c_{H_{b_\infty}}(a_\infty) = V_{b_\infty}(a_\infty)$.    In other words, $H_{b_\infty}$ is unobstructed.
 \end{proof}
 
 \MS

\begin{EXAMple}\label{ex:seq}  \normalfont
The following examples illustrate the need for some of these conditions.\MS

\NI (i) For each $k \ge 1$ there is an exceptional  class  $\bE_k = (5k, k+1; \bbm_k)$ with 
center $a_k = 6- 1/(2k) = \left(12k-1\right)/\left(2k\right)$, where 
$$
\bbm_k = \bigl((2k)^{\times 5}, 2k-1, 1^{\times (2k-1)}\bigr)= 2k\bw\bigl(\frac{12k-1}{2k}\bigr).
$$
These are exceptional  classes because they evidently satisfy \eqref{eq:diophantine} and it is not hard to prove 
by induction that they reduce correctly.
Since ${m_k}/{b_k} \to 1/5$ and $a_k\to 6 = \acc(1/5)$, the first  two conditions in Lemma~\ref{lem:unobstr} hold.
However,  condition (ii) in  Proposition~\ref{prop:stair0}  does not hold: indeed $\bE_k$ is not even $1/5$-perfect.  
Nevertheless, the existence of these classes implies that $H_{1/5}$ is unobstructed.  We prove that it does not admit a staircase in \S\ref{ss:15}.
\MS

\NI (ii)  There is an exceptional class $\bE' = \bigl(73,20; 29\bw({170}/{29}) \bigr)$ with center $a ={170}/{29}$.  Here we have $29\bw({170}/{29})  = (29^{\times 5}, 25, 4^{\times 6}, 1^{\times 4})$, and it not hard to check that $\bE'$ does reduce correctly under Cremona moves.  By Proposition~\ref{prop:live}
this class is live for $b\approx 20/73 = 0.27397...$ while\footnote
{
The function $\acc^{-1}_L$ is a branch of the inverse to $b\mapsto\acc(b)$; see \eqref{eq:acc1}.}  $\acc_L^{-1}({170}/{29}) \approx  .275425...$ is almost the same.   Therefore one can expect that this class is relevant to the study of staircases.  Indeed, it turns out that this class is the entry $\bE^E_{0,0}$  in the staircase $\Ss^E_0$ in Theorem~\ref{thm:stair}.
    \MS

\NI (iii)   There is another  related\footnote
{
In fact there is a family of such classes, whose centers $a_i = 6 - \frac i{(i+1)(i+2) - 1}, i\ge 3,$ converge to $6$; for  $i=2n+4$ 
these are the exceptional classes $\bE_{n,0}$ in $\Ss^E_n$, but they seem to be fake for odd $i$. }
 class $\bE'' = \bigl(48, 14; 19 \bw({111}/{19}) \bigr)$ with center ${111}/{19}$, but one can check that this one is fake,
i.e. quasi-perfect but not perfect:  see Remark~\ref{rmk:fake}. 
As is shown by Figure~\ref{fig:center11119}, when $b$ is chosen so that $\acc(b) = 111/19$ (i.e. $b\approx .296654$)
 the corresponding function $z\mapsto \mu_{\bE'', b}(z)$ agrees with the $k$th ECH capacity
 where $k$ is given by \eqref{eq:ECHind}, and lies between the volume function and the capacity function $c_{H_b}$.    
\hfill$\er$
\end{EXAMple}

Although some of the staircases that we identify do satisfy the criteria in Proposition~\ref{prop:stair0}, 
some of them only satisfy a modification of condition (ii), such as that in \eqref{eq:bcondit3}. 
Thus we have a sequence of classes $\bE_k$ that are nontrivial, though perhaps not live, at $b_\infty$.  In  order for such a  sequence  to form a staircase,  we have to rule out the existence of an overshadowing class $\bE'$, defined as follows.

\begin{DEFN}\label{def:big}  Let $I = (y_0,y_1)$ be a maximal open interval on which the obstruction $\mu_{\bE', b}$ is nontrivial.    Then we say that 
\begin{itemize}\item[$\bullet$] 
$\bE'$ is a {\bf right-overshadowing class at $y_0$} if
\begin{itemize}\item[$\circ$] $\acc(b) = y_0$, and 
\item[$\circ$]
$\mu_{\bE',b}$ is live  on some nonempty subset $(y_0,y_0+\eps)\subset I$;
\end{itemize} 
\item[$\bullet$] $\bE'$ is a {\bf left-overshadowing class at $y_1$}  if
\begin{itemize}\item[$\circ$] $\acc(b) = y_1$, and 
\item[$\circ$] $\mu_{\bE',b}$ is live  on some nonempty subset $(y_1-\eps, y_1)\subset I$.
\end{itemize}
\end{itemize}
If $\Ss$ is a sequence of  quasi-perfect classes whose centers $a_k$ converge to $a_\infty$, we say that $\bE'$ {\bf overshadows} $\Ss$ (or {\bf is an overshadowing class}, if $\Ss$ is understood from the context)  if either
\begin{itemize}\item[$\bullet$]  the $a_k$ descend and $\bE'$ is right-overshadowing (i.e. overshadows to the right of $a_\infty$); or
\item[$\bullet$]  the $a_k$ ascend and $\bE'$ is left-overshadowing at $a_\infty$ (i.e. overshadows to the left of $a_\infty$).
\end{itemize}
\end{DEFN}

Here is a  refined version of Proposition~\ref{prop:stair0}. Notice that setting $r/s = 1$ in (iii) below gives the same as
condition (ii) in Proposition~\ref{prop:stair0}.

\begin{prop}\label{prop:stair00}
Let  $\Bigl(\bE_k = \bigl(d_k, m_k; q_k\bw(p_k/q_k)\bigr)\Bigr)_{k\ge 0}$ be a sequence of perfect  classes  that satisfies the following conditions for some $r/s \in (0,1]$:

\begin{itemize}\item[{\rm(i)}]  $m_k/d_k\to b_\infty \in [0,r/s)$ and the centers $p_k/q_k$ are a monotonic  sequence with limit  $a_\infty$;
\smallskip

\item[{\rm(ii)}]  $ \frac{m_k^2-1}{d_km_k} \le b_\infty\le  \frac{s + m_k(rd_k-sm_k)}{r+d_k(rd_k-sm_k)}
 $ for all $k\ge k_0$.
\smallskip

\item[{\rm(iii)}]  There is no overshadowing class  $\bE'$ at 
$a_\infty$ of degree $d'< s/(r-sb_\infty)$ and with $m'/d'>r/s$
\end{itemize}
\smallskip

\noindent Then there is $k_0$ such that the classes $\bigl(\bE_k\bigr)_{k\ge k_0}$ are live at $b_\infty$ and hence form a staircase for $b_\infty$.
\end{prop}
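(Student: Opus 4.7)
The plan is to refine the proof of Proposition~\ref{prop:stair0}: condition (ii) handles all competing exceptional classes with $m'/d' \le r/s$ via Proposition~\ref{prop:live}~(ii), while condition (iii) is engineered to rule out the remaining classes with $m'/d' > r/s$ using a degree bound extracted from Lemma~\ref{lem:perfect}~(i).

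First, I would reproduce the asymptotic analysis from Proposition~\ref{prop:stair0}: the two-sided bound in (ii), together with $b_\infty < r/s \le 1$, forces $d_k, m_k \to \infty$ and $m_k - b_\infty d_k \to 0$. Hence $\bE_k$ is eventually $b_\infty$-perfect, so $\mu_{\bE_k, b_\infty}$ is nontrivial at $a_k$ by Lemma~\ref{lem:perfect}~(iii), and Lemma~\ref{lem:unobstr} yields both $a_\infty = \acc(b_\infty)$ and the unobstructedness $c_{H_{b_\infty}}(a_\infty) = V_{b_\infty}(a_\infty)$. Since condition (ii) is exactly the hypothesis of Proposition~\ref{prop:live}~(ii), we also obtain
\[
\mu_{\bE_k,b_\infty}(a_k) \ge \mu_{\bE',b_\infty}(a_k)
\]
for every exceptional class $\bE' = (d',m';\bbm')$ with $m'/d' \le r/s$ and every sufficiently large $k$.

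Next, suppose for contradiction that $\bE_k$ fails to be live at $a_k$ for infinitely many $k$. By Lemma~\ref{lem:perfect}~(ii), for each such $k$ there is an exceptional class $\bE^*$ (depending on $k$) with $\mu_{\bE^*,b_\infty}(a_k) = c_{H_{b_\infty}}(a_k) > \mu_{\bE_k, b_\infty}(a_k)$. The previous paragraph forces $m^*/d^* > r/s$. Since $\mu_{\bE^*,b_\infty}$ is nontrivial at $a_k$, Lemma~\ref{lem:perfect}~(i) yields $m^* - b_\infty d^* < \sqrt{1 - b_\infty^2} < 1$, and combining with $m^* > (r/s)d^*$ gives $d^*(r - s b_\infty)/s < 1$, i.e., $d^* < s/(r - s b_\infty)$. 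Only finitely many exceptional classes have degree below this bound, so after passing to a subsequence a single $\bE^*$ is live at all the relevant $a_k$.

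To finish, I would show $\bE^*$ is overshadowing at $a_\infty$, contradicting (iii). Because $\mu_{\bE^*,b_\infty}(a_\infty) \le c_{H_{b_\infty}}(a_\infty) = V_{b_\infty}(a_\infty)$, the point $a_\infty$ lies outside the open nontrivial set of $\bE^*$; combined with $a_k \to a_\infty$ lying in this nontrivial set, $a_\infty$ must sit at the boundary of a maximal nontrivial interval $I^* = (y_0^*, y_1^*)$ of $\bE^*$ containing infinitely many $a_k$. Monotonicity of $(a_k)$ determines the endpoint: a descending sequence forces $a_\infty = y_0^*$ and leads to right-overshadowing, an ascending one forces $a_\infty = y_1^*$ and leads to left-overshadowing. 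The main obstacle is this last step: one must combine the piecewise linearity of $\mu_{\bE^*,b_\infty}$ (Lemma~\ref{lem:0}~(i)) and of $c_{H_{b_\infty}}$ on intervals bounded away from the volume curve (Lemma~\ref{lem:perfect}~(ii)) to propagate the equality $\mu_{\bE^*,b_\infty} = c_{H_{b_\infty}}$ from the discrete sample $\{a_k\}$ to a full one-sided open interval abutting $a_\infty$, thereby realizing the overshadowing condition of Definition~\ref{def:big} and contradicting (iii).
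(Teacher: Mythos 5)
Your proposal follows the same route as the paper's proof: establish unobstructedness and $\acc(b_\infty)=a_\infty$ via Lemma~\ref{lem:unobstr}, use Proposition~\ref{prop:live}~(ii) (via condition (ii)) to eliminate competitors with $m'/d'\le r/s$, invoke Lemma~\ref{lem:perfect}~(i) plus the lower bound $m'/d'>r/s>b_\infty$ to derive the degree bound $d'<s/(r-sb_\infty)$, and reduce by pigeonhole to a single candidate class $\bE'$ that is live at infinitely many of the $a_k$. Up to that point your argument and the paper's are essentially identical and both correct.

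Where you flag a ``main obstacle,'' the paper simply asserts that $\bE'$ ``satisfies the conditions to be an overshadowing class'' and stops. You are right that something is being glossed over here, but your proposed repair — invoking piecewise linearity of $c_{H_{b_\infty}}$ via Lemma~\ref{lem:perfect}~(ii) to propagate equality from the discrete set $\{a_k\}$ to a one-sided interval at $a_\infty$ — does not go through as written. Lemma~\ref{lem:perfect}~(ii) gives piecewise linearity of $c_{H_{b_\infty}}$ only on intervals where it is bounded away from the volume curve; near $a_\infty$ this hypothesis fails, since $c_{H_{b_\infty}}(a_\infty)=V_{b_\infty}(a_\infty)$ and the corners accumulate. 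In fact the equality $\mu_{\bE',b_\infty}=c_{H_{b_\infty}}$ need not extend to a full one-sided interval: between consecutive $a_k$ some other class could peak above $\mu_{\bE',b_\infty}$. The cleaner resolution, and the one the paper implicitly uses, is that being live at a sequence $a_k$ accumulating at $a_\infty$ from one side already realizes the overshadowing condition of Definition~\ref{def:big} (there are live points of $\mu_{\bE',b_\infty}$ in every one-sided neighborhood $(a_\infty,a_\infty+\eps)$ or $(a_\infty-\eps,a_\infty)$), and this is exactly what condition (iii) is designed to rule out. If you insist on the stronger ``live on an entire one-sided interval'' reading, then the propagation step would require a different argument than the one you sketch — for instance comparing $\mu_{\bE',b_\infty}$ against the finitely many candidate competitors on each subinterval $(a_{k+1},a_k)$ directly, rather than appealing to piecewise linearity of $c_{H_{b_\infty}}$.
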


\begin{proof}     We know from Lemma~\ref{lem:unobstr} that $H_{b_\infty}$ is unobstructed and $\acc(b_\infty) = a_\infty$.
Further, because  condition (ii) holds, it follows from Proposition~\ref{prop:live}~(ii)  that $\mu_{\bE_k,b_\infty}(p_k/q_k) > \mu_{\bE',b_\infty}(p_k/q_k)$  for all exceptional classes $\bE'=(d',m',\bbm')$ with $m'/d'\le r/s$.  Hence if $\bE_k$ is not live at $b_\infty$ there is a class 
$\bE' = (d',m';\bbm')$ with $m'/d'>r/s$ such that $\mu_{\bE',b_\infty}(a_\infty) > 
\mu_{\bE_k,b_\infty}(a_\infty)$. But then Lemma~\ref{lem:perfect} implies that 
$|b_\infty d' - m'|<1$ and hence $|m'/d'-b_\infty|  < 1/d'$.  Further,  $b_\infty < r/s$ by (i), so that
$m'/d'-b_\infty > r/s - b_\infty$.  
 Therefore  we must have $1/d' > r/s- b_\infty>0$, or, equivalently,
 $d' < s/(r-sb_\infty)$.  In particular, there are a finite number of such   classes $\bE'$.  It follows 
that either all but finitely many $\bE_k$ are live at $b_\infty$ or there is a single class $\bE'$ with 
$d'<  s/(r-sb_\infty)$ whose  obstruction for $b = b_\infty$ is live at all but finitely many of the points $a_k$.
But in the first case the $(\bE_k)_{k\ge k_0}$ form a staircase, while in the second, since $H_{b_\infty}$  is unobstructed,
$\bE'$  satisfies the conditions to be an  overshadowing class.
Since the latter is ruled out by condition (iv), the classes $(\bE_k)_{k\ge k_0}$ must form a  
(possibly incomplete) staircase, 
as claimed. 
\end{proof}

\begin{rmk}\label{rmk:stair00} \normalfont
(i) There is an analogous result for  classes  $\bE_k$ such that the ratios $b_k = {m_k}/{d_k}$ decrease with limit $b_\infty>r/s$
in which the inequalities  in (iii) above are replaced by \eqref{eq:bcondit4} from Proposition~\ref{prop:live}~(iii):
 \begin{align*}
\frac{m_k(sm_k-rd_k) - s}{d_k(sm_k-rd_k)  -r} \;\le\;  b  \;\le\;  \frac{m_k}{d_k}, \qquad d'<\frac s{sb_\infty - r}.
\end{align*}
Notice that if $m_k/d_k>r/s$ for all $k$ then,  since the $d_k\to \infty$, the inequality 
$
m_k/d_k>r(1 + 1/{d_k^2})/s
$
required by Proposition~\ref{prop:live}~(iii) does   hold for large $k$.  Further, as
 above, any potentially overshadowing  class would have parameters $d',m'$ where ${m'}/{d'} < r/s$,
and could only be live at $b_\infty$ if 
$b_\infty- r/s< b_\infty-m'/d'  < 1/d'$, which gives the bound $d'<  s/(sb_\infty - r)$.\MS 

\NI (ii)   
Although in principle there can be overshadowing classes for both ascending and descending sequences of obstructions, in practice we only seem to have to worry about them in the   descending case.   See Lemmas~\ref{lem:SsLellbest} and ~\ref{lem:SsLubest}, for example.
\hfill$\er$
\end{rmk}

\begin{EXAMple}  \label{ex:big} \normalfont
The exceptional class $\bE': = \bigl(3,1;2,1^{\times 5}\bigr)$ is a potentially awkward  class that  is a rearranged version of the large nontrivial blocking  class $\bE_0: = \bigl(3,2;1^{\times 6}\bigr)$
in Theorem~\ref{thm:block}.  For many values of $z$ it corresponds to the $8$th ECH capacity, and, as explained in \S\ref{subsec:pictures}, is visible on many of the figures.
Though $\bE'$ has no center and so is not perfect, it has break point at $a=6$, and gives the following obstruction
\begin{equation*}
\mu_{\bE',b}(z) = \left\{\begin{array}{ll} \displaystyle{\frac {1+z}{3-b}} , & 5<z<6,\\  
\phantom{.} & \\
\displaystyle{\frac {7}{3-b}}, & z \ge 6.\end{array}\right.
\end{equation*}
Note that $\bE'$ is nontrivial  at its break point  $z=6$ precisely when
$$ \mu_{\bE',b}(6) = \frac 7{3-b}\; >\; V_b(6) = \sqrt {\frac 6{1-b^2}}\;\;\;\;\;\;\mbox{i.e. if }\;\;
 b\in \left(\frac 15,\frac 5{11}\right).$$
Further, one can check from \eqref{eqn:accb} that the range  of $b$ in which $\acc(b)< 6$ is also
the interval  $1/5< b<5/11 $.  
Therefore, for $b$ in this range  we have
$$ 
\mu_{\bE',b}\bigl(\acc(b)\bigr) =  \frac {1+\acc(b)}{3-b}, 
$$
and this turns out to be   
 precisely the same as the volume obstruction.  Indeed,
if we define $c(b) = \frac{(3-b)^2}{1-b^2} - 2$,
we have
\begin{align}\label{eq:volacc}
&\frac {1+\acc(b)}{3-b}= \sqrt{\frac {\acc(b)}{1-b^2}} \\ \notag
& \qquad\quad  \Longleftrightarrow\;\; \bigl(1+\acc(b)\bigr)^2 = \acc(b)\frac{(3-b)^2}{1-b^2} = \acc(b)\bigl(c(b) + 2\bigr),\\  \notag
& \qquad\quad \Longleftrightarrow\;\;  \acc(b)^2 - c(b) \acc(b) + 1 = 0
\end{align}
which holds by the definition of $\acc(b)$ in \eqref{eqn:accb}.  
Therefore, for 
$b\in (1/5,5/11)$,  this class 
satisfies at least some of  the conditions to be overshadowing, and so potentially obstructs the existence of a descending staircase.
\MS

However, it does not overshadow the staircases $\Ss^E_{u,n}, n\ge 0,$ defined in Theorem~\ref{thm:E}.  These  are descending with limit points at $(a_{n,\infty},\ b_{n,\infty}) \in (5,6)\times (1/5,1/3)$; and we show in Example~\ref{ex:SsEu0} and Lemma~\ref{lem:SsEubest} that they satisfy condition (ii) in Proposition \ref{prop:stair00} (by way of Lemma~\ref{lem:DMineq} (ii)) with $r/s=1/3$. Therefore, because $\bE'$ has $m/d=1/3$ (rather than $>1/3$), it is not overshadowing.  
One can also see  the initial peaks of the steps of $\Ss^E_{u,0}$ 
poking above the line $z\mapsto \mu_{\bE',b} (z)$ in Figure~\ref{fig:b0E}. 
\comment{10/3  rewrote next sentence and also changed end of Example~\ref{ex:SsUu0}, which as you pointed out, Morgan, was wrong before.}
{\tiny 
Similarly, the class $\bE'$ does not overshadow the staircases $\Ss^U_{u,n}, n\ge 0$, defined in Theorem~\ref{thm:U}, as shown in Example~\ref{ex:SsUu0}.}
Note that the class $\bE'$ does not overshadow any staircases (such as the $\Ss^U_{u,n}$) that accumulate at points $> 6$ because the obstruction $z\mapsto \mu_{\bE',b}(z)$ is constant for $z>6$.
 \hfill$\er$
\end{EXAMple}

\begin{rmk} \label{rmk:big} \normalfont
(i)
The observation in \eqref{eq:volacc} about the function $b\mapsto V_b(\acc(b))$
turns out to be an essential ingredient of our proof of the relation between staircases and blocking classes: see 
Lemma~\ref{lem:block2}. 
\MS

\NI (ii) All of the new
 ascending  (resp.\ descending)   staircases   that we have found
accumulate at the initial (resp.\ final) point $\bigl(a_\infty, V(a_\infty)\bigr)$  of a \lq\lq big''  obstruction, namely, an obstruction that goes through  $\bigl(a_\infty, V(a_\infty)\bigr)$ and for the given value of $b$ obstructs above (resp.\ below) $a_\infty$.   
Moreover, in many cases (though not in the case of the Fibonacci staircase at $b=0$) this big obstruction is given by a perfect blocking class as described in \S\ref{ss:block}.
Note that  this  phenomenon is quite different from that of a (potentially) overshadowing class since the latter type of obstruction   overhangs  the staircase, and so is active on the staircase side of the accumulation point.
\hfill$\er$
\end{rmk}

\begin{EXAMple} \label{ex:19th}\normalfont
The degree $5$ class $\bE_5: = (5,1; 2^{\times 6},1)$ also affects the capacity function for some ranges of $b,z$, and for $b\approx1/5$ the obstruction $\mu_{\bE_5,b}$ equals the obstruction from the $19^\text{th}$ ECH capacity for $z\approx7$, a behavior that is discussed in \S\ref{subsec:ecECH}.
One can check that  the corresponding obstruction is
$$
\mu_{\bE_5, b}(z) = \frac{6+z}{5-b}, \quad  11/2< z < 7
$$
which  goes through the accumulation point $(6,5/2)$ when $b= 1/5$, and is tangent to the graph of $z\mapsto V_{1/5}(z)$ at  that point.  We show in \S\ref{ss:15} that $c_{H_{1/5}}(z) = \mu_{\bE_5, b}(z)$ on some interval $[1/5, 1/5 + \eps)$.

We claim  that $b=1/5$ is the unique value where the obstruction given by $\bE_5$ might affect the existence of staircases since we have
\begin{align}\label{eq:deg5}
\mu_{\bE_5, b}(\acc(b)) < V_b(\acc(b)),\quad b\ne 1/5.
\end{align}
To justify this, 
 recall that $
V_b(\acc(b)) = \frac{1 + \acc(b)}{3-b}$ by  \eqref{eq:volacc}.   
Since $\mu_{\bE_5,b}(z)$ is constant  for $z\ge 7$ and the increasing function $z\mapsto \frac{1+z}{3-b}$  is greater than $\mu_{\bE_5,b}(z)$ at $z=7$,  the class $\bE_5$
can only affect staircases with $\acc(b) \le 7$.    Moreover, $\acc(b)\ge 3 + 2\sqrt2 > 11/2$, and in this range 
 we have 
$$
\mu_{\bE_5, b}(z) = \frac{6+z}{5-b} = : f_1(z),
$$
whose graph is a line of  smaller slope than that of  $\frac{1 + z}{3-b} = : f_2(z)$. 
Thus \eqref{eq:deg5} will hold if we show that for each $b\ne 1/5$,  the two lines
$y= f_1(z)$ and $y = f_2(z)$ intersect at a point 
$z_b$  with $f_2(z_b)< V_b(\acc(b))$.  
But $z_b = (13-5b)/2$, and $f_2(z_b) = 5/2 < V_b(\acc(b))$ for all $b\ne 1/5$, since
the function  $b\mapsto V_b(\acc(b)) $ has a minimum value of 
$5/2$, taken at $b= 1/5$; see  Fig.~\ref{fig:101}.
Thus \eqref{eq:deg5} holds. 
\hfill$\er$
\end{EXAMple}

\begin{EXAMple}\label{ex:13} \normalfont
{\bf(The staircase at $b = 1/3$)} \; This staircase 
 is somewhat different from the new staircases that we find in the current paper.  For one thing, it consists of three interwoven  sequences, while the new ones that we discuss here  consist of two such sequences.\footnote
{
See \cite{AADT} for a discussion of such symmetries.}   
For another, it does not satisfy either the condition \eqref{eq:bcondit} or its variant~\eqref{eq:bcondit3}.
Indeed, both these conditions imply that $|b_\infty d_k - m_k| = O(d_k^{-1})$, while the sequences at $b = 1/3$ satisfy $|b_\infty d_k - m_k| = O(1)$.   Further, the ratios $m_k/d_k$  lie on both sides of $1/3$, while in the new staircases these ratios are monotonic.
Finally, as in the case of the Fibonacci staircase
in \eqref{eq:fib}, the numerators and denominators of the center points $a_k: = p_k/q_k$ of each of the three substaircases  fit into a single sequence $g_0,g_1,g_2\dots$ where $a_k = g_k/g_{k-1}$.
Thus  there are  three interwoven  sequences
$$
\bE_{k,i} = \bigl(d_{k,i}, m_{k,i}; g_{k-1,i} \bw(\frac{g_{k,i}}{g_{k-1,i}})\bigr),\quad i = 0,1,2
$$
where for each $i$ the numbers $g_{k,i}$ satisfy the recursion formula
$$
g_{k+1,i} = 6 g_{k,i} - g_{k-1,i}
$$
with seeds (i.e. initial values)
$$
g_{0,0} = 1,\;\;g_{1,0} = 2,\quad g_{0,1} = 1,\;\;g_{1,1} = 4,\quad g_{0,2} = 1,\;\;g_{1,2} = 5.
$$
Thus in all cases, the sequence of centers $g_{k.i}/g_{k-1.i}$ increases,\footnote
{
In fact, one can check that $g_{k+1,i}g_{k-1,i} = g^2_{k, i} + c_i$, where $c_i= 7$ for $i=0,1$ and $c_2 =4$.} 
and can be represented by the continued fractions $[5;\{1,4\}^{k+\eps},\eend_i]$, with $\eend_0 = 2,
\eend_1=(1,3)$ and $\eend_2 =\emptyset$, and $\eps = 0, \pm1$ as appropriate.
(Compare this with the staircase descriptions in Theorems~\ref{thm:L},~\ref{thm:U} and ~\ref{thm:E}.)
The numbers $d_{k,i}, m_{k,i}$ satisfy modified versions of this recursion, and are determined for each $i$ and $k$ by the requirement $3d_{k,i} -m_{k,i}= g_{k,i} + g_{k-1,i}$ and the following additional relation:
$$
d_{k,0} = 3m_{k,0} - (-1)^{k}, \quad d_{k,1} = 3m_{k,1} + (-1)^k, \quad d_{k,2} = 3m_{k,2} - (-2)^k.
$$
For example, the first sequence starts with the classes:
$$
\bE_{1,0} = \bigl(1,0;\bw(2)\bigr), \;\; \bE_{2,0} = \bigl(5,2;2\bw(\frac{11}{2})\bigr), \;\; \bE_{3,0} = \bigl(28,9;11\bw(\frac{64}{11})\bigr).
$$
Thus the linear relations in this staircase are inhomogeneous, and hence different in nature from those in the new staircases, which, as we will see in Theorem~\ref{thm:blockrecog}, are homogeneous and arise because the latter are associated to blocking classes.

The methods developed in \cite{casals-vianna} and  \cite{AADT} involve identifying staircases by constructing tight packings at the inner corners 
using almost toric fibrations (ATFs). It might be rather difficult to use ATF methods for the staircases identified in this manuscript, since the capacity function 
$c_{H_{b_\infty}}$ may well not have well defined inner corners because of the presence of an 
obstruction that nearly overshadows the staircase.\footnote
{
By contrast, Usher conjectures in \cite[Conj.4.23]{usher} that in the cases he considers, 
the capacity function is defined on $(a_\infty, a_1)$ by the union of  the relevant staircase  
classes, which would imply  that the capacity function has well delineated inner corners.}  
This possibility is illustrated in Fig.~\ref{fig:b0E} in the 
case of the descending staircase $\Ss^E_{u,0}$ in Theorems~\ref{thm:stair} and~\ref{thm:E}, which is almost overshadowed by the obstruction from $(3,1;2,1^{\times 5})$.  
Thus, in this case one cannot find tight packings of the fixed target $H_{b_\infty}$  by a sequence of ellipsoids 
$E(1,z_k)$ where $z_k$ increases to $a_\infty$.

Nevertheless, one might be able to use tight packings to show that
there is a decreasing  sequence of unobstructed points $a_k' = \acc(b_k')$  with $b_k'\to b_\infty$.
This would also be enough to show that $\Ss^E_{u,0}$ is  a descending staircase for $H_{b_\infty}$.  
Further, as is shown in Figs.~\ref{fig:515} and~\ref{fig:751}, the staircases  in the other families 
$\Ss^U_{\bullet,\bullet}, \Ss^L_{\bullet,\bullet}$ discussed in \S\ref{ss:Fib} below may not be so nearly obscured, since the problematic class $\bE = (3,1; 2,1^{\times 5})$ in Example~\ref{ex:big}  is not obstructive in the relevant region.
\hfill$\er$ \end{EXAMple}

\subsection{Blocking classes}\label{ss:block}

Let $\bE$ be a Diophantine class in the sense of Definition~\ref{def:perf}, and recall from Definition~\ref{def:nontriv} that an obstruction $\mu_{\bE,b}$  is said to block $b$ if $$
\mu_{\bE,b}(\acc(b)) > V_b(\acc(b)).
$$ 
Further, we call  $\bE = \bigl(d,m;\bbm\bigr)$  a {\bf blocking class} if there is some $b_0$ such that the corresponding obstruction $\mu_{\bE,b_0}$ blocks $b_0$.
We will show  in Remark~\ref{rmk:block0} that $b_0$ need not be $m/d$.
Instead, the next lemma shows that the most relevant $b$-value is $\acc^{-1}(a)$ where $a$ is the break point of $\mu_{\bE, b_0}$ and we choose an appropriate branch  of the inverse.
\MS

As preparation, recall from \S\ref{sec:intro} that the point  $\acc(b)$ is the unique solution  $>1$ of the equation
\begin{align}\label{eq:acc}
z^2 - \left(\frac{(3-b)^2}{1-b^2} - 2\right) z + 1 = 0.
\end{align}
As illustrated in Fig.~\ref{fig:101},  this function is at most two-to-one,  with  minimum  value
$ \acc(1/3)=3 + 2\sqrt2$.
Hence for any  $z\in [1,\infty)$, it has a well defined local inverse 
that may be calculated as follows.
If $z,  1/z$ are the two solutions to \eqref{eq:acc}, write
$
\ell: =  2 + z +1/z,
$
so that $\frac{(3-b)^2}{1-b^2} = \ell$.  Then we have 
\begin{align*}
b  = \frac{ 3 \pm \sqrt{\ell^2 -8\ell}}{\ell+1}, \quad \ell: =  z + \frac 1z + 2.
\end{align*}
With $\ell = \ell(z)$ as above, we will consider the following functions:
\begin{align}\label{eq:acc1} 
&\acc_L^{-1}: \left[3 + 2\sqrt 2, \frac {7+3\sqrt 5}2\right]
\to \left[0,\frac 13\right],\quad z\mapsto \frac{ 3 -\sqrt{\ell^2 -8\ell}}{\ell+1}\\\notag
& \acc_U^{-1}: \left[3 + 2\sqrt 2, \infty\right)\to \left[\frac 13,1\right),\quad z\mapsto \frac{ 3 + \sqrt{\ell^2 -8\ell}}{\ell+1},
 \end{align}
 writing $\acc^{-1}$ to denote one or other of these branches if there is no need to specify the branch any further.

 The following lemma will simplify some calculations below.
 
 \begin{lemm}\label{lem:accV}  Let $\bE = \bigl(d,m;q\bw(p/q)\bigr)$ be a quasi-perfect class,
   and suppose that $\acc (b) = p/q$. Then:
 \begin{itemize} 
 \item[{\rm (i)}]  
 $b$ is given by the formula
  \begin{align}\label{eq:bsi}
 b = \frac{3pq \pm (p+q)\sqrt\si}{p^2+q^2 + 3pq},\quad \mbox{ where } \si: = p^2 + q^2 - 6pq.  
 \end{align}
and we use the sign $+$, resp.\ $-$, if $b>1/3$, resp.\ $b<1/3$. 
 \item[{\rm (ii)}]  $ \mu_{\bE,b}(p/q) = \mu_{\bE,b}(\acc(b)) > V_b(\acc(b))$ if and only if 
 \begin{align}\label{eq:accV0}
& pq \bigl(3(p+q)^2\mp (p+q)\sqrt\si)\bigr) \\ \notag
&\qquad\quad >  (p+q)\Bigl(d(p^2+q^2 + 3pq) -m(3pq \pm (p+q)\sqrt\si)\Bigr)
 \end{align}
 where we use the top, resp.\ bottom, signs if  $b>1/3$, resp.\ $b<1/3$.
\end{itemize}
 \end{lemm}
 \begin{proof}   Since 
 $$
 \ell-8 = \frac pq + \frac qp - 6 = \frac{p^2 + q^2 - 6pq}{pq} = : \frac{\si}{pq},
 $$
we can obtain \eqref{eq:bsi} by substituting for $\ell$ in \eqref{eq:acc1}. 

 To prove (ii), recall from
 \eqref{eq:volacc}  that $ V_b(\acc(b)) = \frac{1 +\acc(b)}{3-b}$. Therefore, we have 
  $ \mu_{\bE,b}(\acc(b)) > V_b(\acc(b))$ if and only if $\frac p{d-mb} > \frac{1 +\acc(b)}{3-b}$.
 Now substitute for $b$ using \eqref{eq:bsi}.
 \end{proof}  
 
\begin{lemm}\label{lem:block0}  Let $\bE$ be a Diophantine class such that $\mu_{\bE,b_0}$ blocks  $b_0$, and let $I_{b_0}$ be the maximal interval containing 
	$\acc(b_0)$ on which $\mu_{\bE,b_0}> V_{b_0}$.  Then,
\smallskip

\NI {\rm (i)} 
If $a\in I_{b_0}$ is the break point of $\mu_{\bE,b_0}$ and $b_1 = \acc^{-1}(a)$ (where $\acc^{-1}$ denotes   the branch\footnote
{
this is well defined since $b_0$ cannot equal the unobstructed point $ 1/3$}
 of the inverse  whose image contains $b_0$), then
 $\mu_{\bE, b_1}$ blocks $b_1$.  
\smallskip

\NI {\rm (ii)}  There is an open interval $J_\bE$ containing $b_0, b_1$ such that
 \begin{itemize}\item[$\bullet$]   $\mu_{\bE,b}$ blocks $b$ for all $b\in J_{\bE}$;
 \item[$\bullet$]   $\mu_{\bE,b}(z) =V_b(z)$ for $z = \acc(b),\ b\in \p J_{\bE}$. 
 \end{itemize}
\end{lemm}
\begin{proof}  Since all functions involved are continuous, the set 
of $b$ such that $\mu_{\bE,b}$  blocks $b$ is open.  Let $J_\bE$ be the connected component of this set that contains $b_0$.  Then, for all $b\in J_\bE$, the interval $I_{b}$ on which  $\mu_{\bE,b}$ is obstructive contains a break point by Lemma~\ref{lem:0}, and because this point is the unique point of shortest length in $I_{b}$ and equals $a$ when $b=b_0$, this point must be independent of $b$ and hence equal to $a$.  Hence $a\in I_{b}$ for all $b\in J_{\bE}$.  To prove (i), we need to show that
$\acc^{-1}(a) \in J_{\bE}$.
To see this, consider a sequence of points $b_n\in J_\bE = (\be_\ell, \be_u)$ that converge to its right endpoint $\be_u$.  Then 
$\acc(b_n)\in I_{b_n}$for all $n$, and, as $n\to \infty$, 
$\acc(b_n)$ must converge to one of the endpoints of $I_{\be_u}$, the right endpoint if $b_n> 1/3$, and the left endpoint if $b_n <1/3$.  For the sake of clarity, let us suppose that $b_n>  1/3$ so that the functions $b\mapsto \acc(b)$ and $z\mapsto \acc_U^{-1}(z)$ preserve orientation.  (The other case is similar, and left to the reader.)  In this case, because $a\in I_{b}$ for all $b\in J_{\bE}$ we may conclude that 
$a < \acc(b_n)$  for all sufficiently large $n$.
Similarly, if $b_m'\in J_{\bE}$ converges to the left endpoint $\be_\ell$,  
$\acc(b_m')$ converges to the left endpoint of  $I_{\be_\ell}$, so that 
$a> \acc(b_m')$ for  sufficiently large $m$.  Therefore $\acc^{-1}(a) \in (b_m', b_n)\subset J_\bE$.  
This completes the proof of (i).

The first claim in (ii) follows immediately from the definition of $J_\bE$, while the second holds by the continuity of 
the  function $z\mapsto \mu_{\bE,b}(z)$ with respect to $z$ and $b$.
 \end{proof}

\begin{cor} \label{cor:block1} Let $\bE$ be a  Diophantine class such that, for some $b$, the obstruction $\mu_{\bE,b}$   is nontrivial on a maximal interval $I$ 
with break point $a> 3 + 2\sqrt 2$. Then $\bE$ is
a blocking class if and only if 
$\mu_{\bE, b_0}(a) > V_{b_0}(a)$, where $b_0$ is one of the solutions to $\acc(b_0) = a$.
\end{cor}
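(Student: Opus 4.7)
The plan is to derive Corollary \ref{cor:block1} directly from Lemma \ref{lem:block0}(i), together with an observation about the hypothesis $a > 3 + 2\sqrt{2}$. Recall from \eqref{eq:acc1} that $b\mapsto\acc(b)$ attains its minimum $3+2\sqrt 2$ at $b=1/3$; each value $a>3+2\sqrt 2$ therefore has exactly two preimages in $[0,1)$, one from the branch $\acc_L^{-1}$ and one from $\acc_U^{-1}$. In particular, the phrase \textit{``$b_0$ is one of the solutions to $\acc(b_0)=a$''} picks out two candidate values of $b_0$, and the statement is well-posed.

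For the $(\Leftarrow)$ direction, I would argue directly from Definition \ref{def:nontriv}. If $\mu_{\bE,b_0}(a)>V_{b_0}(a)$ and $\acc(b_0)=a$, then
\[
\mu_{\bE,b_0}\bigl(\acc(b_0)\bigr)\;>\;V_{b_0}\bigl(\acc(b_0)\bigr),
\]
which is precisely the statement that $\mu_{\bE,b_0}$ blocks $b_0$, hence $\bE$ is a blocking class.

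For the $(\Rightarrow)$ direction, suppose $\bE$ blocks some $b^*$, so $\mu_{\bE,b^*}\bigl(\acc(b^*)\bigr)>V_{b^*}\bigl(\acc(b^*)\bigr)$. Then $\acc(b^*)$ lies in a maximal nontriviality interval $I^*$ of $\mu_{\bE,b^*}$, and by Lemma \ref{lem:0}(iii) this interval contains a unique break point, characterized by the $b$-independent condition $\ell(\cdot)=\ell(\bbm)$. Since the hypothesis tells us $a$ is such a break point (for some maximal nontriviality interval of $\mu_{\bE,b}$), this is the same $a$ once we match intervals. Now apply Lemma \ref{lem:block0}(i) with $b^*$ in place of $b_0$: it produces $b_1=\acc^{-1}(a)\in J_\bE$, using the branch of $\acc^{-1}$ whose image contains $b^*$, such that $\mu_{\bE,b_1}$ blocks $b_1$. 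Translating:
\[
\mu_{\bE,b_1}(a)\;=\;\mu_{\bE,b_1}\bigl(\acc(b_1)\bigr)\;>\;V_{b_1}\bigl(\acc(b_1)\bigr)\;=\;V_{b_1}(a),
\]
so $b_0:=b_1$ is the desired solution.

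The only delicate point is the identification of the break point of the nontriviality interval of $\mu_{\bE,b^*}$ with the given $a$, since in principle a single Diophantine class could have several points of length $\ell(\bbm)$ in disjoint nontriviality intervals. This is resolved by combining the $b$-independence of the characterization $\ell(a)=\ell(\bbm)$ with the uniqueness of break points inside any single maximal nontriviality interval from Lemma \ref{lem:0}(iii); once this is spelled out, the Corollary is nothing more than a reformulation of Lemma \ref{lem:block0}(i).
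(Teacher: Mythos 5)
Your decomposition into the two directions is the right one and follows the route the paper intends: the corollary is an unwinding of Lemma~\ref{lem:block0}, which is stated immediately before it with no further proof offered. The $(\Leftarrow)$ direction is correct and complete.

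The $(\Rightarrow)$ direction correctly invokes Lemma~\ref{lem:block0}(i), but the step where you identify the break point $a^*$ of the interval $I^*$ around $\acc(b^*)$ with the $a$ in the corollary's hypothesis is not actually established by what you write. You flag this yourself as ``the only delicate point,'' but the resolution you then propose is not an argument. The condition $\ell(\cdot)=\ell(\bbm)$ is satisfied by infinitely many rationals (length is a highly non-injective function of $z$), so its $b$-independence does not pin down a unique $a$. The uniqueness statement from Lemma~\ref{lem:0}(iii) is a statement about one maximal interval at a time; it tells you nothing about the relationship between break points of \emph{disjoint} maximal intervals, and disjointness is exactly the scenario you are worrying about. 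So as written, if $\bE$ were nontrivial on two disjoint intervals with break points $a$ and $a^*$, both exceeding $3+2\sqrt 2$, and blocked only via the interval around $a^*$, the left side of the ``iff'' would hold while the right side, tested at $a$, could fail. Your argument does not rule this out.

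There are two legitimate ways to close the gap, and you should commit to one rather than assert that the length condition resolves it. Either (a) add a hypothesis or argument that the nontriviality interval of $\mu_{\bE,b}$ containing a point $>3+2\sqrt2$ is unique for the class at hand --- this holds in practice for the quasi-perfect classes the paper applies the corollary to, where the break point is the center and is an intrinsic datum of $\bE$ --- or (b) read ``$\bE$ is a blocking class'' in the $(\Rightarrow)$ direction as ``$\bE$ blocks via the maximal interval whose break point is $a$,'' in which case your application of Lemma~\ref{lem:block0}(i) immediately yields $\mu_{\bE,\acc^{-1}(a)}(a)>V_{\acc^{-1}(a)}(a)$, and the branch of $\acc^{-1}$ supplied by that lemma is by construction ``one of the solutions to $\acc(b_0)=a$.'' Reading (b) is evidently what the paper intends; say so explicitly.
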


\begin{DEFN}\label{def:centerbl}  Let $\bE = \bigl(d,m; q\bw(p/q)\bigr)$ be a quasi-perfect class with center $a = p/q> 3 + 2\sqrt 2$ and with ${m}/d\ne 1/3$.  Then we say that $\bE$ is {\bf center-blocking} if 
$\mu_{\bE, b_0}(a) > V_{b_0}(a)$, where $b_0$ is  the solution to $\acc(b_0) = a$ that lies in the same component of $[0,1)\less \{1/3\}$ as does ${m}/d.$
\end{DEFN}

\begin{rmk}\label{rmk:block0} \normalfont
(i)  One might expect that a  class $\bE = (d,m; \bbm)$ would block  some $b$  only if it blocked $b = {m}/d$.  However, this is not true even if $\bE$ is perfect.  For example, by Theorem~\ref{thm:block}, the class $\bB^U_0 = (3,2;1^{\times 6})=(3,2;\bw(6))$ blocks the $b$-interval 
$\bigl(\frac{3-\sqrt 5}{2},\  \frac{3(7+\sqrt 5)}{44}\bigr) \approx (0.382, 0.629)$; and this interval does not 
contain $m/d = 2/3$, though it does contain $5/11 =\acc_U^{-1}(6)$.   
For further discussion, see \S\ref{sssec:E0}.

\MS

\NI (ii)  Here are two examples of obstructive exceptional classes that are not blocking classes.
\begin{itemize}
\item[$\bullet$]  The  class $\bE = (2,0; 1^{\times 5})$  is perfect with center of  length $5$. Further
every open $z$-interval $I$ that contains a point $a'$ with $\ell(a') = 5$  must contain numbers $z\le 5$.  It follows that $\bE$ cannot be a blocking class.  For if it were, there would be a $z$-interval $(\al_{\bE,\ell}, \al_{\bE,u})$ with 
lower end point $\al_{\bE,\ell} < 5$ and  in the image of the function $b\mapsto \acc(b)$.  But this is impossible.  

\item[$\bullet$] The class $\bE': = (5,1; 2^{\times 6},1)$ with break point $7$  gives an obstruction $\mu_{\bE',1/5}$ at $b=1/5$ that goes through the accumulation point $(6, 5/2)$.  However, if it were $b$-blocking there would have to be another  point $z> 7$ of the form $z= \acc(b)$ for some $b<1/3$ at which
$\mu_{\bE',b}(z) = V_b(z)$.  But we show in Example~\ref{ex:19th} above that such a point does not exist. 
\end{itemize}

\MS

\NI (iii)
The classes in the (ascending) staircase at $b=1/3$ (see Example~\ref{ex:13}) are certainly not center-blocking since their centers  are less than the accumulation point $ 3+2\sqrt 2$ and so lie outside the range of $b\mapsto \acc(b)$. 
 It is also very likely that they are not  blocking classes.  Indeed, if one of them, say $\bE_n$, were a blocking class with blocked  $b$-interval $J_b$ and blocked $z$-interval $I_b$, then as in (ii) above
the center of $\bE_n$ could not lie in $I_b$.
However, as is shown in Fig.~\ref{fig:center17029}, it is possible for a perfect class to be obstructive in an interval that does not contain its center point.
\hfill$\er$
\end{rmk}

\begin{lemm}\label{lem:Fib}
All the classes (except for the first)  in the Fibonacci staircase  of \eqref{eq:fib}  are 
center-blocking.
\end{lemm}
\begin{proof}   These classes are $\bigl(g_k,0; g_{k-1}\bw(g_{k+1}/g_{k-1})\bigr)$, 
where $g_0=1,g_1=1,g_2=2,g_3=5,\dots.$ Therefore, by well-known identities for Fibonacci numbers
(see \cite[\S3.1]{ball}), we have
  $d = (p+q)/3, m=0$, $d^2 = pq -1$ and $p^2 + q^2 = 7pq -9$.
By \eqref{eq:accV0} we need
$$
3pq(3(p+q)^2 + (p+q)\sqrt \si) > (p+q)^2(p^2 + q^2 + 3pq) = (p+q)^2(10pq -9).
$$
Since $\si = pq-9$, this  holds exactly if
$$
3pq\sqrt\si > (p+q)(pq -9) = 3d \si.
$$
Thus we need $pq > d\sqrt{pq-9}$.  But this holds because $d^2 = pq-1$.
\end{proof}

\MS

\begin{prop}\label{prop:block}  Suppose that $\bE = \bigl(d,m;q\bw(p/q)\bigr)$ is a   center-blocking class with center $a = p/q$ that blocks the $b$-interval $J_\bE = (\be_\ell,\be_u)\subset [0,1/3)\cup (1/3,1)$.

\begin{itemize}\item[{\rm (i)}] if $J_\bE = (\be_\ell,\be_u)\subset (1/3,1)$  then
$H_{\be_u}$ admits no ascending staircase and
$H_{\be_\ell}$ admits no descending staircase.
If in addition  $H_b$ is unobstructed for $b\in \p J_{\bE}$, then 
$\mu_{\bE,b_u}$ is live on $[a, \al_u)$ and $\mu_{\bE,b_\ell}$ is live on $(\al_\ell,a]$.

\item[{\rm (ii)}]  if $J_\bE = (\be_\ell,\be_u)\subset (0, 1/3)$, then
$H_{\be_\ell}$ admits no ascending staircase and
$H_{\be_u}$ admits no descending staircase.
If in addition  $H_b$ is unobstructed for $b\in \p J_{\bE}$, then 
$\mu_{\bE,b_\ell}$ is live on $[a, \al_u)$ and $\mu_{\bE,b_u}$ is live on $(\al_\ell,a]$.

\item[{\rm (iii)}] If  $H_b$ is unobstructed for $b\in \p J_{\bE}$,
then $\mu_{\bE,b}$ is live at  $a$ for all $b\in J_\bE$ and 
$\bE$ is perfect. 
Moreover, $J_{\bB}$ is a connected component of {\it Block}.
\end{itemize}
\end{prop}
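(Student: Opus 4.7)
I will prove the three parts in the order (iii), (i), (ii), with (iii) providing the core live-ness statement used in the other two. Let $a=p/q$ denote the center of $\bE$, which by Lemma~\ref{lem:block0}(i) is also the break point of $\mu_{\bE,b}$ for every $b\in J_\bE$. For case~(i), since $J_\bE\subset(1/3,1)$ and $\acc$ is increasing there, we have $\al_\ell=\acc(\be_\ell)<a<\acc(\be_u)=\al_u$ (the middle inequality because $a\in I_\bE$). By Lemma~\ref{lem:block0}(ii) combined with the assumed unobstructedness on $\partial J_\bE$, $\mu_{\bE,b}(\acc(b))=V_b(\acc(b))=c_{H_b}(\acc(b))$ for $b\in\partial J_\bE$, and by Lemma~\ref{lem:munearc}, $\mu_{\bE,b}(z)=p/(d-mb)$ on an interval $[a,z_2(b))$.

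To establish live-ness of $\mu_{\bE,b}$ at $a$ throughout $J_\bE$, I will argue by contradiction. Suppose some $b_1\in J_\bE$ admits an exceptional class $\bE'$ with $\mu_{\bE',b_1}(a)>\mu_{\bE,b_1}(a)$. Since $\mu_{\bE,b_1}(a)>V_{b_1}(a)$ by blocking, also $\mu_{\bE',b_1}(a)>V_{b_1}(a)$, and by Lemma~\ref{lem:0}(iii) the excess at the break point $a'$ of $\mu_{\bE',b_1}$ is at least this positive. Using joint continuity of $(z,b)\mapsto\mu_{\bE',b}(z)-V_b(z)$ and of $\acc$, I will track $\phi(b):=\mu_{\bE',b}(\acc(b))-V_b(\acc(b))$ along a continuous path from $b_1$ to $\partial J_\bE$; the idea is that $\phi$ cannot vanish on this path without $\bE'$ ceasing to be nontrivial at $\acc(b)$, which by the maximum property of break points would force the excess at $a'$ to vanish too, contradicting strictness. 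This yields some $b^*\in\partial J_\bE$ with $\mu_{\bE',b^*}(\acc(b^*))>V_{b^*}(\acc(b^*))$, contradicting unobstructedness.

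Having established live-ness, Lemma~\ref{lem:perfect}(ii) provides an exceptional $\bE^*=(d^*,m^*;\bbm^*)$ with $\mu_{\bE^*,b}(a)=c_{H_b}(a)=\mu_{\bE,b}(a)$ on an open subset of $J_\bE$. Since $\bE^*$ realizes $c_{H_b}$ at $a$, Lemma~\ref{lem:0}(ii),(iii) forces its break point to be $a$ and its weight vector to be $q\bw(a)=\bbm$. Then Lemma~\ref{lem:munearc} applied to $\bE^*$ gives $\mu_{\bE^*,b}(a)=p/(d^*-m^*b)$, and matching with $p/(d-mb)$ at two distinct $b$ values yields $(d^*,m^*)=(d,m)$, so $\bE=\bE^*$ is exceptional, i.e.\ perfect. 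For the component claim: $J_\bE$ is open and fully blocked by definition, while its endpoints are unobstructed, hence not in $\mathit{Block}$, so $J_\bE$ is a connected component of $\mathit{Block}$.

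For (i) and (ii), I extend live-ness of $\mu_{\bE,\be_u}$ from $\{a\}$ to $[a,\al_u)$ (and analogously to $(\al_\ell,a]$ at $\be_\ell$) by applying the same contradiction-via-boundary argument pointwise. Combining Lemma~\ref{lem:munearc} with $\mu_{\bE,\be_u}(\al_u)=V_{\be_u}(\al_u)$ shows $\mu_{\bE,\be_u}$ is constant equal to $V_{\be_u}(\al_u)$ on $[a,\al_u)$, so live-ness forces $c_{H_{\be_u}}$ to be constant there with no outer corners, ruling out any ascending staircase accumulating at $\al_u$ from below. The descending case at $\be_\ell$ is symmetric, and (ii) follows from (i) by reversing orientation since $\acc$ is decreasing on $(0,1/3)$. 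If unobstructedness at $\partial J_\bE$ fails, the non-existence of the respective staircase is automatic because any staircase requires $c_{H_b}(\acc(b))=V_b(\acc(b))$. The main obstacle is the continuity/monotonicity argument in Paragraph~2: the break points of $\bE'$ and $\bE$ may differ, and the nontriviality interval of $\mu_{\bE',b}$ can shift with $b$, so some care is needed to ensure that positivity of the excess persists along a path from $b_1$ to $\partial J_\bE$ rather than disappearing through the interval shrinking.
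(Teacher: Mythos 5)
Your overall strategy reverses the paper's order (the paper proves (i), (ii) first, then uses the live-ness of $\mu_{\bE,\be_u}$ at $a$ inside the proof of (iii)), and the key step of your argument for (iii) has a genuine gap. You suppose $\mu_{\bE',b_1}(a) > \mu_{\bE,b_1}(a)$ for some $b_1 \in J_\bE$ and want to track $\phi(b) := \mu_{\bE',b}(\acc(b)) - V_b(\acc(b))$ continuously from $b_1$ to $\partial J_\bE$. But $\phi(b_1) > 0$ does not follow from the hypothesis: what you control is $\bE'$'s obstruction at the fixed point $z=a$, not at $z=\acc(b_1)$, and these coincide only when $b_1=\acc^{-1}(a)$. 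Even if $\phi(b_1)>0$ could be arranged, nothing prevents $\phi$ from vanishing strictly inside $J_\bE$: the $z$-interval on which $\mu_{\bE',b}$ is nontrivial shifts with $b$, $\acc(b)$ may exit it, and a third class can then become live with no contradiction produced. Your closing sentence acknowledges this difficulty but offers no mechanism to close it; it is not merely ``some care needed'' but the crux. The paper's argument avoids the issue entirely by taking the supremum $b_{\max}$ of the set where $\mu_{\bE,\cdot}$ is live at $a$, and comparing $\partial_b\mu_{\bE',b}(a)$ with $\partial_b\mu_{\bE,b}(a)$ at $b_{\max}$ to get the $b$-independent inequality $m'd \ge m d'$; that inequality propagates the domination of $\bE'$ all the way to $b=\be_u$, contradicting the live-ness of $\mu_{\bE,\be_u}$ at $a$ established in (i).

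There are two further gaps. In (i) you call the $\be_\ell$ case ``symmetric'' to $\be_u$, but it is not: at $\be_u$ the constancy of $\mu_{\bE,\be_u}$ on $[a,\al_u)$ plus monotonicity of $c_{H_{\be_u}}$ suffices, whereas at $\be_\ell$ the obstruction $\mu_{\bE,\be_\ell}(z)=qz/(d-m\be_\ell)$ on $(\al_\ell,a]$ is a line through the origin, and ruling out a descending staircase requires the scaling property \eqref{eq:scale} to bound $c_{H_{\be_\ell}}$ above by that line beyond $\al_\ell$; this is a different ingredient that you do not invoke. Finally, in the perfection step you apply Lemma~\ref{lem:munearc} to the competing class $\bE^*$ from Lemma~\ref{lem:perfect}~(ii), but that lemma requires $\bE^*$ to be quasi-perfect, which is not given. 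The paper instead decomposes $\bbm^*=\la\bbm+\bn$ with $\bn\perp\bbm$, derives $d^*=\la d$, $m^*=\la m$, and $\|\bn\|^2=1-\la^2$ from the Diophantine identities, and then uses $\bE^*\cdot\bE=-\la\in\Z$ to force $\la=1$ and $\bE^*=\bE$.
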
  

\begin{proof}  In case (i)   the map $b\mapsto \acc(b)$ preserves orientation.  
 Consider the upper endpoint $ \be_u$ of $J_\bE$.  By Lemma~\ref{lem:munearc}, the function $ \mu_{\bE, \be_u}(z)$ is constant for $z\in [a, \al_u: = \acc(\be_u))$.  Also we have
    $ \mu_{\bE, \be_u}(\al_u) = V_{\be_u}(\al_u)$ 
by Lemma~\ref{lem:block0}.   If ${H_{\be_u}}$ did have an ascending  staircase, we saw in \eqref{eq:cHb} that 
we would have to have $c_{H_{\be_u}} = V_{\be_u}(\al_u)$, i.e. ${H_{\be_u}}$  would be unobstructed.  But then, because 
$c_{H_{\be_u}}$ is nondecreasing, $c_{H_{\be_u}}$ would be constant and equal to $\mu_{\bE, \be_u}$ on
the interval $[a, \al_u]$.  Thus there can be no ascending staircase.  (Indeed in this case $\bE$ is 
left-overshadowing in the sense of Definition~\ref{def:big}.)  This proves the claims in (i) that pertain to the end point $\be_u$.

Now consider the lower endpoint $\be_\ell$.  By Lemma~\ref{lem:munearc}
the graph of
$ \mu_{\bE, \be_\ell}(z) = \frac{qz}{d-m \be_\ell}$ for $z\in [\al_\ell,a]$  is a line through the origin that passes through the point $\bigl(\al_\ell, V_{b_\ell}(\al_\ell)\bigr)$.  If $H_{\be_\ell}$ did have a staircase, then again we would have
$V_{b_\ell}(\al_\ell) = c_{H_{b_\ell}}(\al_\ell)$,  i.e. ${H_{\be_\ell}}$  would be unobstructed.  In this case the 
the scaling property \eqref{eq:scale} of capacity functions implies that the graph of $c_{H_{\be_\ell}}$, which as we just saw  goes through $(\acc(\be_\ell), V_{\be_\ell})$, cannot lie above the line $z\mapsto \mu_{\bE, \be_\ell}(z) = \frac{qz}{d-m\be_\ell}$ for $z> \acc(\be_\ell)$. 
Thus we again conclude that $ \mu_{\bE, \be_\ell}$  is live for $z\in (\al_\ell,a]$ (and so is 
right-overshadowing), and that there cannot be a decreasing staircase for $H_{\be_\ell}$.
This completes the proof of (i).

The proof of (ii) is similar, and is left to the reader.
\MS

Now consider (iii). For clarity we will again suppose that $J_\bB\subset (1/3,1)$, leaving the other case to the  reader.   
First note that if $H_b$ is unobstructed then $b\notin {\it Block}$, so that $J_{\bB}$ is a connected component of ${\it Block}$ as claimed.  

We next show that  $\mu_{\bE,b}$  is live at $a$ for all $b\in J_\bE$.   To see this, consider the set  of $b \in [\be_\ell,\be_u]$  such that $\mu_{\bE,b'}$  is live at $a$ for all $b'\in [\be_\ell,b)$.  This is a closed, connected 
 subset of $ [\be_\ell,\be_u]$.    If it is empty, define $b_{\max}: = \be_\ell$, and if it is 
 nonempty but proper, let $b_{\max}< \be_u$ be its maximal element.  Then, 
 by Lemma~\ref{lem:perfect}~(ii), 
  there must be  an exceptional class $\bE' = (d',m_{0}';\bbm')$ and $\eps>0$ such that 
 $$
 \mu_{\bE', b_{\max}} (a) = \frac {\bbm'\cdot\bw(a)}{d'-m_{0}' b_{\max}}  = 
\frac {p}{d -m b_{\max}} =  \mu_{\bE, b_{\max}} (a)
$$
and $ \mu_{\bE_{\max}, b} (a)> \mu_{\bE, b} (a)$ for  $b\in (b_{max}, b_{max}+\eps)$.  
In particular,
\begin{align*} 
\frac{\p}{\p b}\big|_{b=b_{\max}} \mu_{\bE', b} (a)& =
\mu_{\bE', b_{\max}} (a)\  \frac{m'}{d' -m_{0}' b_{\max}}.
\\ & \ge \ \frac{\p}{\p b}\big|_{b=b_{\max}} \mu_{\bE, b} (a)\\
& = \ \mu_{\bE, b_{\max}} (a)\  \frac{m}{d -m b_{\max}}.
\end{align*}
But this implies that $m'\ d \ge m\ d'$.  Thus, because this inequality is independent of
$b_{\max}$, 
it can hold at $b = b_{\max}$ only if it holds for all $b \in [b_{\max}, \be_u]$.  
Moreover, we cannot have  $m'\ d = m\ d'$  since this would imply that
 the two obstruction functions are equal for $b\in [b_{max}, b_{max}+\eps)$.  Hence 
 we must have  $m'\ d > m\ d'$, in which case 
$\mu_{\bE, b}(a)$ could not be live at $b=\be_u$. Therefore  this scenario does not happen, and so $b_{\max} = \be_u$.    This completes the proof that $\mu_{\bE,b}(a)$ is live at $a$ for all $b\in J_\bB$.

 \MS
 
It remains to show that $\bE$ is perfect. By Lemma~\ref{lem:perfect}~(ii), 
there is an exceptional class $\bE' = (d', m';\bbm')$
and an open subset  $J_b$ of $J_\bE$  such that 
\begin{align*}
\mu_{\bE',b}(a) = \mu_{\bE,b}(a), \quad  \forall\ b\in J_b. 
\end{align*}
Then we may write
  $\bbm' = \la \bbm + \bn$ where $\bbm \cdot  \bn=0$.
Since
$$
\frac{\bbm' \cdot \bw(a)}{d' - m'b} = 
\frac{\la p}{d' - m'b}  =
\frac{p}{d - mb} 
 $$
 for all $b\in J_b$, we must have  $d'=\la d,\ m'=\la m$ for some $\la > 0$.
 The identities
\begin{align*}
& \bbm'\cdot \bbm' -1= d'^2 - (m')^2 = \la^2 (d^2-m^2) = \la^2 (pq - 1),\\
& \bbm'\cdot \bbm'  -1 = \la^2 \bbm\cdot\bbm + \|\bn\|^2 - 1 = \la^2 pq + \|\bn\|^2 - 1
\end{align*}
then imply  that $ \|\bn\|^2=1-\la^2$.  Therefore, unless $\bE' = \bE$ we must have  $0<\la < 1$.
Further 
$$
\bE'\cdot \bE = d'd - m' m - \bbm'\cdot \bbm = \la (d^2-m^2 -\bbm\cdot \bbm) =  - \la.
$$
But $\bE'\cdot \bE$ is  an integer.    It follows that $\bE' = \bE$, so that $\bE$ is perfect as claimed.
\end{proof}

\begin{rmk}\label{rmk:block} \normalfont
(i)  
Notice that there may be no $b\in J_\bE$ such that $\mu_{\bE,b}$ 
is live on the whole of the $z$-interval on which it is obstructive,
 since for each such $b$ there may be classes with break points $a'$ outside this interval (and hence with $\ell(a')\le \ell(a)$) that are live near one end or other  of this $z$-interval.  
For further discussion of this point, see  \S\ref{sssec:E0} and the associated figures.
\MS

\NI (ii)   We will see in Proposition~\ref{prop:stairblock} that many of the classes that contribute to a staircase are themselves blocking classes.   
   Fig.~\ref{fig:center17029} shows that the class $\bE'$ with center $170/29$, which is the $k=0$ step with $\operatorname{end}_0=4$ of the staircase  $\Ss^E_{u,0}$, obstructs at  $\acc^{-1}({170}/{29})$.  Hence it is a blocking class by Corollary~\ref{cor:block1}. 
     \hfill$\er$
\end{rmk}

The next lemma  is the key to the proofs of our results about the relation between blocking classes and staircases.

\begin{lemm}\label{lem:block2}   Let $ \bB = \bigl(d,m;q\bw(p/q)\bigr)$ be a 
quasi-perfect class with $m/d \ne 1/3$  and define
$\acc^{-1}$ to be the branch of the inverse whose image contains $m/d$. 
\begin{itemize}\item[{\rm (i)}]   If 
 $ \bB = \bigl(d,m;q\bw(p/q)\bigr)$ is center-blocking with $J_ \bB = (\be_{ \bB,\ell},\be_{ \bB,u})$,  and $I_\bB = (\al_{\bB,\ell}, \al_{\bB,u}) = \acc(J_\bB)$, then 
\begin{align}\label{eq:alellu1}
& \acc^{-1}(\al_{\bB,\ell}) = \frac{(1+\al_{\bB,\ell})d- 3q \al_{\bB,\ell}}{(1+\al_{\bB,\ell})m-q\al_{\bB,\ell}},
\\ \notag
& \acc^{-1}(\al_{\bB,u}): = \frac{(1+\al_{\bB,u})d - 3p}{(1+\al_{\bB,u})m -p}.
\end{align}
\item[{\rm (ii)}]  Conversely, suppose given numbers $z_\ell < p/q < z_u$ such that
$\ell(p/q) < \ell(z)$ for all 
$z\in (z_\ell,z_u)\less \{p/q\}$ and
\begin{align}\label{eq:alellu2}
& \acc^{-1}(z_\ell) = \frac{(1+z_\ell)d- 3q z_\ell}{(1+z_\ell)m-qz_\ell},
\\ \notag
& \acc^{-1}(z_u): = \frac{(1+z_u)d - 3p}{(1+z_u)m -p}.
\end{align}
Then $\bB$ is center-blocking, and we have $I_\bB = (z_\ell, z_u)$.
\end{itemize}
\end{lemm}

\begin{proof}  
Let $c(b): = \frac{(3-b)^2}{1-b^2} - 2$.  Then, because 
$\acc(b)> 0$ and $0\le b<1$, we have 
\begin{align} \label{eq:MDvol}\notag
\frac {1+\acc(b)}{3-b}= \sqrt{\frac {\acc(b)}{1-b^2}} \;\;& \Longleftrightarrow\;\; (1+\acc(b))^2 = \acc(b)\frac{(3-b)^2}{1-b^2} = \acc(b)\bigl(c(b) + 2\bigr),\\
&\Longleftrightarrow\;\;  \acc(b)^2 - c(b) \acc(b) + 1 = 0,
\end{align}
which holds by the definition of $\acc(b)$ in \eqref{eqn:accb}.  
Therefore the function $$
b\mapsto V_b(\acc(b)) = \sqrt{\frac {\acc(b)}{1-b^2}}
$$
 is also given by the formula
$
b\mapsto \frac {1+\acc(b)}{3-b}.
$  

Since $\bB$ is a quasi-perfect class, Lemma~\ref{lem:munearc} shows that the obstruction function $z\mapsto \mu_{\bB,b}(z)$ on $I_\bB$ is given by the formulas $z\mapsto \frac{q z}{d-mb}$ for $z<a$ and 
 $z\mapsto \frac{p}{d-mb}$ for $z> a$. 
 Thus because
 $V_b(\al_{\bB,\ell}) =  \mu_{\bB,b}(\al_{\bB,\ell})$, the point  
 $z=\al_{\bB,\ell}$ satisfies
 $$
 \frac {1+\al_{\bB,\ell}}{3-b} = \frac{q \al_{\bB,\ell}}{d-mb}, \quad \mbox{where }\ b: = \acc^{-1}(\al_{\bB,\ell}).
 $$  
 Similarly, the point  
 $z=\al_{\bB,u}$ satisfies
 the equation $$
 \frac {1+\al_{\bB,u}}{3-b} = \frac{p}{d-mb}, \quad \mbox{where }\ b: = \acc^{-1}(\al_{\bB,u}).
 $$
Now rearrange these identities to obtain  \eqref{eq:alellu1}.
This proves (i).

The identities in~\eqref{eq:MDvol} show that $z = \acc(b)$ if and only if $(1+z)/(3-b) = V_b(z)$.
Because the length $\ell(p/q)$ of the point $p/q$ is minimal  among all points in $(z_\ell,z_u)$, Lemma~\ref{lem:munearc} implies that
$$
\mu_{\bB,b}(z_\ell) =   \frac{qz_\ell}{d-mb},\quad \mu_{\bB,b}(z_u)  = \frac{p}{d-mb}.
$$
We saw in the proof of (i) above that
$$
b=  \frac{(1+ z_\ell)d- 3q z_\ell}{(1+ z_\ell)m-q z_\ell} \Longleftrightarrow 
 \frac{1+z_\ell}{3-b} = \frac{qz_\ell}{d-mb}.
 $$ 
Hence if $b = \acc^{-1}(z_\ell)$ is given by the formula  
$b=  \frac{(1+ z_\ell)d- 3q z_\ell}{(1+ z_\ell)m-q z_\ell}$, then 
$$
V_b(z_\ell) = \frac{1+z_\ell}{3-b} = \mu_{\bB,b}(z_\ell), \quad b: =  \acc^{-1}(z_\ell).
$$
A similar argument with $z_u$ shows that our hypothesis implies
$$
V_b(z_u) = \frac{1+z_u}{3-b} = \mu_{\bB,b}(z_u),  \quad b: =  \acc^{-1}(z_u)
$$
Hence the constraint defined by the class $\bB$ equals the volume obstruction at the two pairs
$(z,b) = (z_\ell,\acc^{-1}(z_\ell)),$   and $(z,b) = (z_u,\acc^{-1}(z_u))$ where $z_\ell < z_u$.  It follows that $\bB$ is a center blocking class that blocks the $z$-interval $(z_\ell,z_u)$, as claimed.  
\end{proof}

\begin{cor}\label{cor:block2}
If $\Ss$ is a staircase in $H_b$  that accumulates at  a point $a_\infty$ that is an endpoint of
the blocked $z$-interval $I_\bB$  for some quasi-perfect blocking class $\bB$, then the two numbers  
$a_\infty, b$ can be expressed in terms of the same quadratic surd, i.e. there is $\si\in \N$ such that $a_\infty, b\in \Q + \Q\sqrt \si$.
\end{cor}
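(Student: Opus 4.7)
The plan is to show that $a_\infty$ is a root of a quadratic polynomial with rational coefficients and that $b$ is a rational function of $a_\infty$ with rational coefficients; the claim then follows by taking $\sigma$ to be the square-free part of the discriminant of the quadratic. Since the staircase $\Ss$ accumulates at $a_\infty$, we have $a_\infty = \acc(b)$, equivalently (by the identity \eqref{eq:MDvol} $V_b(\acc(b)) = (1+\acc(b))/(3-b)$) the polynomial relation
\[
(1 + a_\infty)^2 (1 - b^2) \;=\; a_\infty (3 - b)^2.
\]
Moreover $a_\infty$ is an endpoint of $I_\bB$, so $\mu_{\bB,b}(a_\infty) = V_b(a_\infty) = (1 + a_\infty)/(3-b)$; and because $\bbm \cdot \bw$ is piecewise linear in $z$, on the linear segment of $\mu_{\bB,b}$ containing $a_\infty$ we may write $\mu_{\bB, b}(z) = (Az + B)/(d - mb)$ with $A, B \in \Z$ determined by $\bB$.

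The first step is to solve the equation $(A a_\infty + B)/(d - m b) = (1 + a_\infty)/(3 - b)$ for $b$, obtaining a linear fractional expression $b = P(a_\infty)/Q(a_\infty)$ with $P(a) = (d - 3B) + (d - 3A)a$ and $Q(a) = (m - B) + (m - A)a$, both linear in $a$ with integer coefficients. In the center-blocking case this recovers the formulas of Lemma~\ref{lem:block2}~(i): one has $(A, B) = (q, 0)$ at the lower endpoint and $(A, B) = (0, p)$ at the upper endpoint. The same derivation is valid for any blocking class, since only piecewise linearity of $\bbm \cdot \bw$ on the segment through $a_\infty$ is used.

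The decisive step is the factorization
\[
3 - b \;=\; \frac{3 Q(a_\infty) - P(a_\infty)}{Q(a_\infty)} \;=\; \frac{(3m - d)(1 + a_\infty)}{Q(a_\infty)},
\]
verified by direct expansion: the coefficient of $a$ in $3Q - P$ collapses to $3m - d$, matching the constant term, so a factor of $(1+a)$ emerges regardless of the values of $A, B$. Combining this with $1 - b^2 = (Q - P)(Q + P)/Q^2$ in the equation $(1 + a_\infty)^2(1 - b^2) = a_\infty(3 - b)^2$ allows the common factor $(1 + a_\infty)^2/Q(a_\infty)^2$ to be cancelled from both sides, leaving
\[
\bigl(Q(a_\infty) - P(a_\infty)\bigr)\bigl(Q(a_\infty) + P(a_\infty)\bigr) \;=\; a_\infty (3m - d)^2,
\]
a quadratic equation in $a_\infty$ with integer coefficients. (The Diophantine identities $d^2 - m^2 = pq - 1$ and $3d - m = p + q$ can be used to simplify it further, but this is not needed for the conclusion.)

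Finally, letting $\sigma \in \N$ be the square-free part of the discriminant of this quadratic, we obtain $a_\infty \in \Q + \Q\sqrt{\sigma}$, and then $b = P(a_\infty)/Q(a_\infty) \in \Q(a_\infty) \subseteq \Q + \Q\sqrt{\sigma}$, completing the proof. The main obstacle is the factorization $3 - b = (3m - d)(1 + a_\infty)/Q(a_\infty)$: this identity is precisely what prevents the polynomial equation for $a_\infty$ from having degree four, and once it is in hand the rest of the argument is a routine computation.
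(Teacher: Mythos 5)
Your proof is correct and takes essentially the same route as the paper's, which simply declares the claim an ``immediate consequence of Lemma~\ref{lem:block2}~(i)''; the real content you supply is the algebraic verification that the resulting equation for $a_\infty$ is quadratic rather than quartic, via the factorization $3Q(a_\infty)-P(a_\infty)=(3m-d)(1+a_\infty)$. This factorization holds for any integers $A,B$ in the local formula $\mu_{\bB,b}(z)=(Az+B)/(d-mb)$, and in the quasi-perfect cases $(A,B)=(q,0)$ and $(0,p)$ it recovers exactly the two formulas of Lemma~\ref{lem:block2}~(i), so your derivation and the paper's are the same argument with the elided details made explicit.
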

\begin{proof}  This is an immediate consequence of Lemma~\ref{lem:block2}~(i).
\end{proof}

\subsection{Pre-staircases and blocking classes}\label{ss:prestair}

In this section we begin by discussing the structure of the staircases that we encounter, and then relate their properties to those of associated blocking classes. Our aim is to clarify exactly what we need to prove in order to show that a particular sequence of Diophantine classes $(\bE_k)$ does  form a staircase.  

\begin{DEFN}\label{def:prestair}  We will say that  a 
sequence $\Ss = \bigl(\bE_k\bigr)_{k\ge 0}$ of quasi-perfect classes 
$\bE_k: = \bigl(d_{k},m_k;q_{k}\mathbf{w}\left({p_{k}}/{q_{k}}\right)\bigr)$
is a {\bf pre-staircase} if it has the 
 following properties:  
\begin{itemize}\item[$\bullet$]  {\bf (Recursion)} There is an integer $\si \ge 0$  such that
$\si + 4$ is a perfect square, and
each of the sequences $x_k: =  d_k, m_k,p_k,q_k$ satisfies the recursion
\begin{align}\label{eq:recur0}
x_{k+1} = (\si + 2) x_k - x_{k-1}  \quad \mbox{ for all } k\ge 0,
\end{align}
\item[$\bullet$]   {\bf (Relation)} there are  integers $R_0,R_1,R_2$ such that the following linear relation holds
\begin{align}\label{eq:linrel0}
R_0 d_k = R_1 p_k +R_2 q_k  \quad \mbox{ for all } k\ge 0,
\end{align}
\end{itemize}
Moreover, if the classes $\bE_k$ are perfect for all  $k$, then we say that $\Ss$ is a {\bf perfect pre-staircase.} 

\end{DEFN} 
In this situation, the whole sequence of classes is determined by the first two centers
$p_0/q_0,\ p_1/q_1$, since the other centers are then determined  by  the recursion, the $d_k$ are determined by the linear relation \eqref{eq:linrel0}, and then the $m_k$ are determined by the linear Diophantine identity $3d_k = m_k +p_k+q_k$.  Of course, for arbitrary choices of initial data, there is no guarantee that $d_k$, if so defined, is a positive integer, or that the quadratic Diophantine identity holds.

The following lemma explains the importance of the (Recursion) condition.

\begin{lemm} \label{lem:recur}   Let $x_{k}, k\ge 0,$ be a sequence of integers that satisfy the recursion 
$x_{k+1} = (\si + 2)x_{k} - x_{k-1}$, where $\si+4$ is a perfect square, and let $\la\in \Q[\sqrt{\si}]$ be the larger root of the equation
$x^2 - (\si+2)x + 1=0$.  Then there is a number  $X \in \Q[\sqrt{\si}]$  such that
\begin{align}\label{eq:recur}
x_k  = X\la^k + \ov X\, {\ov\la}\,\!^k,
\end{align}
where $\ov{a + b\sqrt\si}: = (a - b\sqrt\si)$, so that $\la \ov\la = 1$.  
\end{lemm}  
 \begin{proof}  
 If the monomials $x_k= c^k$ satisfy the recursion then we must have
$c^2-(\si+2) c+1=0$, so that
  $c  =  \bigl((\si+2) \pm N \sqrt{\si}\bigr)/2$, where $N^2 = \si+4$.      Let $\la$ be the larger solution, so that $\ov \la$ is the smaller one, and we have $\la \ov\la = 1$.
 Since  \eqref{eq:recur}  has a unique solution once given the seeds $x_0, x_1$, 
  it follows that  for each choice of constants $A,B$, the numbers 
  $$
  x_k: = A \la^k +  B\ov\la^k
  $$ 
  form the unique solution with
  $$
  x_0 = A+B,\qquad  x_1 = A \la + B \ov \la.
  $$
 Then $A,B\in  \Q[\sqrt{\si}]$,
 and it is easy to check that $x_0, x_1\in \Q$ only if 
we also have $B: = \ov A$.  This completes the proof.
\end{proof}

\begin{rmk}\label{rmk:recur}  \normalfont
(i)  All the pre-staircases that we consider have $\si = (2n+1)(2n+5)$ for some $n\ge 0$ so that $\si+4 = (2n+3)^2,$ a quantity whose square root happens to equal the constant $R_0$ in (Relation).   Note that, if $X = X' + X''\sqrt\si$ is as in Lemma~\ref{lem:recur} and the initial values are  $x_0, x_1$, we have
\begin{align}\label{eq:recurX}
\la = \frac{\si+2+(2n+3)\sqrt \si}{2},\quad X' = \frac{x_0}{2},\quad X'' = \frac{2x_1 - x_0(\si+2)}{2(2n+3)\si}.
\end{align}
\smallskip

\NI (ii)  It turns out that, as in Lemma~\ref{lem:SsL}~(i) below,  some of our staircases  can be extended by a \lq class' $\bigl(d_{-1}, m_{-1}, q_{-1}\bw(p_{-1}/q_{-1})\bigr)$ defined for $k = -1$ such that the (Recursion) \eqref{eq:recur} and (Relation) \eqref{eq:linrel0} conditions hold for all $k\ge -1$.  The word \lq class' above is in quotes because in some cases the numbers $d_{-1}, m_{-1}$ are negative (though $p_{-1},q_{-1}$ are positive), so that the tuples  have no geometric meaning.  However,  they can still be used for computational purposes.  For example a quantity such as $X$ in \eqref{eq:recur} can be computed from knowledge of the terms $x_{-1},x_0$ instead of from $x_0, x_1$.  Note in particular that if $m_k, d_k$ both satisfy the recursion \eqref{eq:recur0} for $k\ge -1$, then
\begin{align*}
m_0(d_1 + d_{-1}) = m_0 d_0 (\si+2) =  d_0(m_{-1} + m_1),
\end{align*}
which implies that
\begin{align}\label{eq:md00}
m_0d_1 - m_1d_0 & = m_{-1}d_0 - m_0d_{-1}.
\end{align}
This fact will simplify some calculations below.
\hfill$\er$
\end{rmk}

The following result  shows that at least the tail end of a pre-staircase consists of center-blocking classes.

\begin{prop}\label{prop:stairblock}
Suppose that $\Ss = (\bE_k)$ is a pre-staircase as above, let $\la$ be as in Lemma~\ref{lem:recur}, and denote by
 $D,M,P,Q$ the constants $X$ defined by \eqref{eq:recur}, where $x_k = d_k, m_{k}, p_k, q_k$ respectively.
Suppose that $M/D\ne 1/3$ and that the centers $p_k/q_k$ of $\Ss$ are all $> 3+2\sqrt2$.
Then 
\begin{itemize}\item[$\bullet$]   $P/Q = \lim p_k/q_k, \ M/D = \lim m_k/d_k$ and
\begin{align}\label{eq:acctpt} \acc\left(\frac MD\right) = \frac PQ. 
\end{align}
\item[$\bullet$]  $\bE_k$ is a center-blocking class for sufficiently large $k$, and 
\item[$\bullet$]    If in addition $\Ss$ is a perfect pre-staircase then  $b_\infty: =M/D$ is unobstructed.  
 \end{itemize}
\end{prop}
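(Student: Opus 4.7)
The plan is to take the three bullets in order, making essential use of Lemma~\ref{lem:recur}. Writing $d_k = D\la^k + \ov D\,\ov\la^k$ and analogously for $m_k, p_k, q_k$, with $\la > 1$ and $\ov\la = 1/\la$, the computation
\[
\frac{p_k}{q_k} - \frac{P}{Q} \ = \ \frac{(P\ov Q - \ov P Q)\,\ov\la^k}{Q(Q\la^k + \ov Q\ov\la^k)} \ = \ O(\ov\la^{2k}),
\]
and the analogous one for $m_k/d_k - M/D$, yield the first two claimed limits $p_k/q_k \to P/Q$ and $m_k/d_k \to M/D$.

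To prove $\acc(M/D) = P/Q$, combine~\eqref{eq:diophantine} with~\eqref{eq:wtexp} to obtain the identities $3d_k - m_k = p_k + q_k$ and $d_k^2 - m_k^2 = p_k q_k - 1$ satisfied by every quasi-perfect class. The sequence $3d_k - m_k - p_k - q_k$ satisfies the same homogeneous recursion as its summands, and since it vanishes identically we deduce the exact identity $3D - M = P + Q$. Matching $\la^{2k}$-coefficients in the quadratic identity gives the exact identity $D^2 - M^2 = PQ$. Substituting $b = M/D$, $z = P/Q$ into~\eqref{eqn:accb} and clearing denominators reduces it to $(P+Q)^2(D^2-M^2) = (3D - M)^2\cdot PQ$, which follows from the two identities above; since $P/Q > 3+2\sqrt 2 > 1$, it is then the unique root greater than $1$, namely $\acc(M/D)$.

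For the second bullet, let $b_k := \acc^{-1}(p_k/q_k)$ on the branch through $M/D$; for large $k$ this is the branch required by Definition~\ref{def:centerbl}, since $m_k/d_k$ lies in the same component of $[0,1)\setminus\{1/3\}$ as $M/D$. By Lemma~\ref{lem:perfect}(iii), $\bE_k$ is center-blocking precisely when $|m_k - d_k b_k| < \sqrt{1-b_k^2}$. A direct expansion shows $m_k - d_k(M/D) = (\ov M - M\ov D/D)\,\ov\la^k = O(\ov\la^k)$, while $b_k - M/D = O(p_k/q_k - P/Q) = O(\ov\la^{2k})$ by smoothness of $\acc^{-1}$ near $P/Q$; so $d_k(M/D - b_k) = O(\la^k\cdot\ov\la^{2k}) = O(\ov\la^k)$ as well. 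Hence $|m_k - d_k b_k|\to 0$ while $1 - b_k^2 \to 1 - (M/D)^2 > 0$, and the inequality holds for all sufficiently large $k$.

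For the third bullet, the same estimate yields $|d_k b_\infty - m_k| = O(\ov\la^k) \to 0 < 1$ for large $k$. Discarding finitely many initial terms, the perfect pre-staircase $(\bE_k)$ then satisfies all the hypotheses of Lemma~\ref{lem:unobstr}, whose conclusion~(ii) gives that $H_{b_\infty}$ is unobstructed. The delicate step is the leading-order matching in step~2: one must check that the linear Diophantine identity really is homogeneous, without which the constant sequence $1$ would be forced to satisfy the recursion, giving $\si = 0$ and a degenerate $\la = 1$.
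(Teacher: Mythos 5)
Your proof is correct and follows essentially the same route as the paper's own argument: derive the exact identities $3D = M + P + Q$ and $D^2 - M^2 = PQ$ from the (homogeneous) Diophantine relations by matching $\la^{2k}$-coefficients, substitute $(z,b) = (P/Q, M/D)$ into the accumulation-point equation, then show $|d_k b_k - m_k| = O(\bar\la^{k}) \to 0$ for $b_k = \acc^{-1}(p_k/q_k)$ to get center-blocking, and finally invoke Lemma~\ref{lem:unobstr} for the last bullet. The one cosmetic difference is that you plug directly into~\eqref{eqn:accb} and clear denominators, whereas the paper reformulates via the identity $V_b(\acc(b)) = \frac{1+\acc(b)}{3-b}$ from~\eqref{eq:MDvol}; these are algebraically equivalent. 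Regarding your closing worry about homogeneity: it is resolved without extra hypotheses, since for a quasi-perfect class $\bbm = q\bw(p/q)$ the formula~\eqref{eq:wtexp} gives $\sum m_i = p + q - 1$, so the Diophantine constraint $3d - m - \sum m_i = 1$ becomes exactly $3d = m + p + q$, which is homogeneous in $(d,m,p,q)$ and so passes to the leading coefficients.
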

\begin{proof}  The identities 
$$\frac PQ = \lim \frac{p_k}{q_k}, \qquad \frac MD = \lim \frac{m_k}{d_k}$$
follow immediately from Lemma~\ref{lem:recur}.   Since $3d_k = m_k + p_k + q_k$ and  
$d_k^2 - m_k^2 = p_kq_k -1$ we have
$3D = M+P+Q$ and  $D^2 - M^2 = PQ$.  (Note that the $1$ in the second identity disappears in the limit, after we divide by $\la^{2k}$.)
By \eqref{eq:MDvol}, we have $\acc(M/D) = P/Q$ exactly if
$$\frac{ 1 + \frac PQ}{3-\frac MD}  = \sqrt{\frac{\frac PQ}{1-(\frac MD)^2}}$$
or equivalently
$$\frac{P+Q}{\sqrt {PQ}}  = \frac{3D-M}{\sqrt{D^2-M^2}}.$$
But we saw above that the top and bottom entries on both sides are equal.

To prove that $\bE_k$ is  center-blocking,  Corollary~\ref{cor:block1} shows that  it suffices to check that
 $\mu_{E_k,b_k}({p_k}/{q_k})$  is nontrivial, where
 $b_k: = \acc^{-1}({p_k}/{q_k})$ and we choose the inverse so that $M/D$ is in its range.   
 Hence by  Lemma~\ref{lem:perfect} we must check that $|d_kb_k - m_k|< \sqrt{1-b_k^2}$ for sufficiently large $k$.  
 
 To see this, 
suppose that $ M/D < 1/3$.  (For the other case, we simply choose $+$ instead of $-$ in \eqref{eq:acc1}.) Then
\eqref{eq:acc1} implies that
$$ b_k = \frac{3 - \sqrt{\ell_k^2 - 8\ell_k} }{\ell_k + 1},\quad \mbox{ where }\;\;  \ell_k = \frac{p_k}{q_k} +     \frac{q_k}{p_k} + 2. $$
Since  $\acc(M/D) = P/Q$, we know that
$$ \frac MD = \frac{3 - \sqrt{L^2 - 8L} }{L + 1}, \quad L: = \frac PQ +     \frac QP + 2. $$
But our assumptions imply that $\ell_k = L + O(\la^{-2k}).$
Hence 
$$ b_k = \frac MD+ O(\la^{-2k}). $$
Therefore 
\begin{align*}
|d_kb_k - m_k| & = 
|(D\la^k + O(\la^{-k})) (\frac MD+ O(\la^{-2k})) - (M + O(\la^{-2k}))\la^k | = O(\la^{-k}).
\end{align*}
Therefore, because $1-b_k^2 = 1- (M/D)^2 - O(\la^{-2k}) > 0$  for large $k$, the required inequality 
$|d_kb_k - m_k|< \sqrt{1-b_k^2}$ holds for sufficiently large $k$.  
This proves the second point.
\MS

Finally, if the classes $\bE_k$ are perfect then $M/D$ is unobstructed by Lemma~\ref{lem:unobstr}.
\end{proof}

\begin{rmk}\label{rmk:fib} \normalfont
(i)  
 Proposition~\ref{prop:stairblock} does not quite give an independent proof that any staircase in the manifold $H_b$ must accumulate at $\acc(b)$, since it is not true that a staircase must be a pre-staircase.  For example, we saw in Example~\ref{ex:13} that the staircase in $H_{1/3}$ is not a pre-staircase since the classes do not satisfy a homogeneous linear relation.  Further,  a staircase need not be given by quasi-perfect classes, though no such examples are known. 
 \MS

 \NI(ii) It is very likely that, as in Lemma~\ref{lem:Fib},   all the classes $\bE_k$ in the staircases defined in \S\ref{ss:Fib} are center-blocking, provided that their centers are in the range of the function $b\mapsto \acc(b)$.    One could  prove this in any particular case by being more careful with the estimates in the above lemma.
\hfill$\er$
\end{rmk}

Here is our most powerful staircase recognition criterion.
\MS

  \begin{thm}\label{thm:stairrecog}  Let $\Ss = (\bE_k)$ be a perfect pre-staircase with constants $P,Q,D,M$ as in Proposition~\ref{prop:stairblock}.
     Suppose in addition 
that at least one of the following conditions holds:
\MS

 \NI {\rm (i)} There is $r/s>0$ such that $ M/D<r/s$,
 $$
  \frac{m_k^2-1}{d_km_k} <  \frac MD<  \frac{s + m_k(rd_k-sm_k)}{r+d_k(rd_k-sm_k)},\qquad \forall \ 
k\ge k_0,
$$
and there is no 
overshadowing class 
at $(z,b_\infty)=(P/Q,M/D)$ of degree $d'< s/(r-sb_\infty)$ and with $m'/d' >r/s$. 
\MS

 \NI {\rm (ii)}  There is $r/s>0$ such that $M/D >r/s$,
  $$
   \frac{m_k(sm_k - rd_k)-s}{d_k(sm_k-rd_k) - r} <  \frac MD< \frac{m_k}{d_k} \qquad \forall \ 
k\ge k_0,
 $$ 
and there is no 
overshadowing class 
at $(z,b_\infty)=(P/Q,M/D)$ of degree $d'< s/(sb_\infty - r)$ and with $m'/d' <r/s$. 

Then  $\Ss$ is a staircase for $H_{M/D}$ that accumulates at $P/Q$. 
\end{thm}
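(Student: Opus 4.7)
The plan is to reduce Theorem~\ref{thm:stairrecog} directly to Proposition~\ref{prop:stair00} in case (i), and to its analog described in Remark~\ref{rmk:stair00}(i) in case (ii). Set $b_\infty := M/D$ and $a_\infty := P/Q$.

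First, I invoke Proposition~\ref{prop:stairblock} to record the basic asymptotics: $m_k/d_k \to b_\infty$, $p_k/q_k \to a_\infty$, and $\acc(b_\infty) = a_\infty$. Since $\Ss$ is a perfect pre-staircase, the same proposition yields that $H_{b_\infty}$ is unobstructed (a fact already built into the conclusion of Proposition~\ref{prop:stair00}). What then remains to verify for hypothesis (i) of Proposition~\ref{prop:stair00} is the monotonicity of the centers $p_k/q_k$.

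To establish monotonicity, I appeal to Lemma~\ref{lem:recur} and write $p_k = P\la^k + \ov P\,\ov\la^k$ and $q_k = Q\la^k + \ov Q\,\ov\la^k$, where $\la,\ov\la$ are the roots of $x^2 - (\si+2)x + 1 = 0$ with $\la\ov\la = 1$ and $\la > 1 > \ov\la > 0$. A direct expansion, using $\la\ov\la = 1$, collapses all $k$-dependence and gives
\[
p_{k+1}q_k - p_k q_{k+1} \;=\; (\la - \ov\la)\,(P\ov Q - \ov P Q),
\]
a constant in $k$. Since $\la \neq \ov\la$, and since $P\ov Q = \ov P Q$ would force $p_k/q_k \equiv P/Q$ (contradicting the distinct centers implicit in a pre-staircase), this constant is nonzero. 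Dividing by the positive quantity $q_{k+1}q_k$ shows that $p_k/q_k$ is strictly monotonic in $k$.

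In case (i), all three hypotheses of Proposition~\ref{prop:stair00} are now in hand: hypothesis (i) follows from the asymptotics combined with the monotonicity just established and the assumption $M/D < r/s$; hypothesis (ii) holds because the strict inequality assumed in our (i) implies the corresponding weak inequality; and hypothesis (iii) is our no-overshadowing assumption. Hence $(\bE_k)_{k \ge k_0}$ is a staircase for $H_{b_\infty}$ accumulating at $a_\infty$, as required. Case (ii) is identical, applying instead the analog of Proposition~\ref{prop:stair00} for descending sequences $m_k/d_k$ stated in Remark~\ref{rmk:stair00}(i). The main obstacle is really just the monotonicity step; beyond that, the theorem is a clean repackaging of Propositions~\ref{prop:stairblock} and~\ref{prop:stair00} into the pre-staircase framework, and no further estimates are required.
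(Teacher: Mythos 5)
Your proof is correct and follows the same route as the paper's: invoke Proposition~\ref{prop:stairblock} (and Lemma~\ref{lem:recur}) for the limit identities and unobstructedness, then reduce case (i) to Proposition~\ref{prop:stair00} and case (ii) to its analog in Remark~\ref{rmk:stair00}. The one point where you go beyond the paper's terse proof is in explicitly verifying the monotonicity of the centers $p_k/q_k$, which is required by hypothesis (i) of Proposition~\ref{prop:stair00} but is left implicit in the paper; your computation $p_{k+1}q_k - p_kq_{k+1} = (\la-\ov\la)(P\ov Q - \ov P Q)$ via the closed form of Lemma~\ref{lem:recur} is valid, though one can reach the same conclusion more directly from the recursion itself, since
\[
p_{k+1}q_k - p_kq_{k+1} = \bigl((\si+2)p_k - p_{k-1}\bigr)q_k - p_k\bigl((\si+2)q_k - q_{k-1}\bigr) = p_kq_{k-1} - p_{k-1}q_k
\]
shows this two-by-two determinant is literally constant in $k$ without ever introducing $\la$.
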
  
\begin{proof}  Proposition~\ref{prop:stairblock} implies  that $\acc(M/D) = P/Q$ and that $H_{M/D}$ is unobstructed.   Further $m_k/d_k \to M/D$ and $p_k/q_k\to P/Q$ by Lemma~\ref{lem:recur}.  Hence it remains to show that the staircase is live.   If (i) holds this follows from Proposition~\ref{prop:stair00}, while if (ii) holds we argue as in Remark~\ref{rmk:stair00}.
 \end{proof}

So far, the linear relation satisfied by a pre-staircase has played no role in our analysis except  to specify the entries $d_k,m_k$ of the classes $\bE_k$. 
Our second main result in this section shows the relevance of this relation, using it to
obtain the following blocking class recognition criterion.

\begin{thm}\label{thm:blockrecog}    Let $\bB = \bigl(d,m,q\bw(p/q)\bigr)$ be a  
quasi-perfect class 
with $d\ne 3m$ 
and suppose given two perfect pre-staircases $\Ss_\ell, \Ss_u$ that satisfy the following conditions:
\begin{itemize} \item[{\rm(i)}] If the constants for $\Ss_\ell$ (resp.\ $\Ss_u$) are denoted
$D_\ell, M_\ell, P_\ell, Q_\ell$ (resp.\ $D_u, M_u, P_u, Q_u$), then
$$
\frac{M_\ell}{D_\ell} \ne \frac 13 \ne \frac {M_u}{D_u},\qquad 
3+2\sqrt2 < \frac{P_\ell}{Q_\ell} < \frac pq < \frac {P_u}{Q_u}.
$$
Moreover $\ell(p/q)< \ell(z)$ for all $z\ne p/q$ in $\bigl({P_\ell}/{Q_\ell} ,  {P_u}/{Q_u}\bigr)$.
\item[{\rm(ii)}]  $\Ss_\ell$ is  ascending  with linear relation 
$R_0 d_k = R_1 p_k + R_2 q_k$ where  \newline  $R_0 = d-3m,\ R_1=q-m, \ R_2 = -m$. 
\item[{\rm(iii)}]  $\Ss_u$ is  descending  with linear relation   $R_0 = d-3m,\ R_1=-m, \ R_2 =p-m$. 
\end{itemize}
Then 
 $\bB$ is a perfect blocking class 
that  blocks the $z$-interval
$I_{\bB} = (\al_\ell, \al_u)$ where
\begin{align}\label{eq:alPQ}
\al_\ell = \frac{P_\ell}{Q_\ell},\ \acc^{-1}(\al_\ell) =  \frac{M_\ell}{D_\ell}, \quad 
\al_u = \frac{P_u}{Q_u},\ \acc^{-1}(\al_u) =  \frac{M_u}{D_u}
\end{align}
\end{thm}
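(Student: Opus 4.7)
The plan is to apply Lemma~\ref{lem:block2}(ii) with $z_\ell = P_\ell/Q_\ell$ and $z_u = P_u/Q_u$ to deduce that $\bB$ is center-blocking with blocked $z$-interval $I_\bB = (z_\ell,z_u)$, and then invoke Proposition~\ref{prop:block}(iii) to upgrade $\bB$ from quasi-perfect to perfect. All the structural information is already available; the task reduces to matching the two $\acc^{-1}$ formulas in Lemma~\ref{lem:block2}(i) against the asymptotic ratios $M_\ell/D_\ell$ and $M_u/D_u$ dictated by the pre-staircases.

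First I would record the limit behavior of each pre-staircase. By Lemma~\ref{lem:recur}, each of $d_k, m_k, p_k, q_k$ has the form $X\la^k + \ov X\,\ov\la^k$ with $\ov\la = 1/\la$, so the ratios $m_k/d_k$ and $p_k/q_k$ converge to $M/D$ and $P/Q$ respectively.  Dividing the linear Diophantine identity $3d_k = m_k + p_k + q_k$ by $\la^k$ and passing to the limit yields $3D = M+P+Q$, while dividing $d_k^2 - m_k^2 = p_kq_k - 1$ by $\la^{2k}$ yields $D^2 - M^2 = PQ$, for each of $\Ss_\ell$ and $\Ss_u$.  Proposition~\ref{prop:stairblock} then confirms that $\acc(M_\ell/D_\ell) = P_\ell/Q_\ell$, $\acc(M_u/D_u) = P_u/Q_u$, and that $H_b$ is unobstructed at both of these values of $b$.

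The core algebraic step is to verify that the lower endpoint $z_\ell = P_\ell/Q_\ell$ satisfies the formula of Lemma~\ref{lem:block2}(ii) with the assignment $\acc^{-1}(z_\ell) = M_\ell/D_\ell$.  After clearing $Q_\ell$ and substituting $3D_\ell - M_\ell = P_\ell + Q_\ell$, this reduces to the single identity $dD_\ell - mM_\ell = qP_\ell$.  But this is exactly what relation (ii) produces: dividing $(d-3m)d_k = (q-m)p_k - mq_k$ by $\la^k$ and letting $k\to\infty$ gives $(d-3m)D_\ell = (q-m)P_\ell - mQ_\ell$, and then rewriting $3mD_\ell$ via $3D_\ell = M_\ell + P_\ell + Q_\ell$ converts this directly into $dD_\ell - mM_\ell = qP_\ell$.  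The analogous computation for the upper endpoint uses relation (iii) to produce $dD_u - mM_u = pQ_u$, which matches in exactly the same way.  Combined with the minimality of $\ell(p/q)$ on $(z_\ell, z_u)$ from hypothesis (i), this completes verification of the hypotheses of Lemma~\ref{lem:block2}(ii).

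With the blocking established, $J_\bB$ must have endpoints $M_\ell/D_\ell$ and $M_u/D_u$ (using the branches of $\acc^{-1}$ containing these values, which are well-defined since $M_\ell/D_\ell, M_u/D_u \ne 1/3$). Since $H_b$ is unobstructed at both endpoints by the first step, Proposition~\ref{prop:block}(iii) delivers the conclusion that $\bB$ is perfect and that $J_\bB$ is a connected component of \textit{Block}, giving the interval $I_\bB = (P_\ell/Q_\ell, P_u/Q_u)$ and the endpoint values in \eqref{eq:alPQ}.  The main obstacle is really just bookkeeping in the reduction of the Lemma~\ref{lem:block2}(i) formula to the single identity $dD - mM = qP$ (respectively $pQ$): one must check that the prescribed coefficients $R_0 = d-3m$, $R_1, R_2$ in hypotheses (ii) and (iii) are precisely those forced by the combination of the linear Diophantine identity and the $\acc^{-1}$ formula.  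This explains, a posteriori, why the linear relation defining a pre-staircase must be dictated so tightly by the entries $d, m, p, q$ of the associated blocking class.
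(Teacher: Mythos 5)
Your proposal is correct and follows essentially the same route as the paper: Proposition~\ref{prop:stairblock} gives the $\acc$-identities and unobstructedness, Lemma~\ref{lem:block2}~(ii) reduces blocking to an algebraic identity verified via the linear relation and $M=3D-P-Q$, and Proposition~\ref{prop:block}~(iii) yields perfectness. The only difference is cosmetic: you package the cross-multiplied version of the $\acc^{-1}$ formula as the compact intermediate identities $dD_\ell-mM_\ell=qP_\ell$ and $dD_u-mM_u=pQ_u$ (after cancelling the common factor $P+Q$), whereas the paper verifies the equivalent form $(d-3m)M_\ell=(3q-d)P_\ell-dQ_\ell$ directly; both manipulations use the same two inputs and take the same amount of algebra.
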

\begin{proof}   Since the hypotheses of Proposition~\ref{prop:stairblock} hold, we know that
$$
\acc( \frac{M_\ell}{D_\ell}) =  \frac{P_\ell}{Q_\ell},\quad \acc(\frac{M_u}{D_u}) = \frac{P_u}{Q_u}.
$$
Moreover, because we assumed that the pre-staircases are perfect, both ${M_\ell}/{D_\ell}$ and ${M_u}/{D_u}$ are unobstructed.

By Lemma~\ref{lem:block2}~(ii), to see that $\bB$ is a class that blocks the given interval $I_\bB$ 
it suffices to check that the equations  in \eqref{eq:alellu2} 
hold when
 $d,p, m$ are defined by $\bB$ and  with $z_\ell = P_\ell/Q_\ell,\ z_u = P_u/Q_u$.  Thus we must check that
\begin{align*}
\frac{M_\ell}{D_\ell}& =\frac{3q P_\ell-(Q_\ell + P_\ell )d}{qP_\ell -(Q_\ell +P_\ell )m }  = \frac{(3q-d)P_\ell -dQ_\ell }{ (q-m)P_\ell-mQ_\ell}
 \;\;\mbox{ and }\\
\frac{M_u}{D_u}&
 = \frac{3pQ_u - (Q_u + P_u )d}{pQ_u -(Q_u +P_u )m} = \frac{(3p-d)Q_u - dP_u}{(p-m)Q_u - mP_u}
\end{align*}
Because $$
(q-m)P_\ell-mQ_\ell  = R_1P_\ell +R_2Q_\ell = (d-3m)D_\ell,
$$ 
the first identity will hold if 
 $  (d-3m)M_\ell = (3q-d)P_\ell -dQ_\ell.$
But  because $M_\ell = 3D_\ell-P_\ell-Q_\ell$ we have
\begin{align*}
  (d-3m)M_\ell & =  3(d-3m) D_\ell-(d-3m) P_\ell-(d-3m) Q_\ell\\
  & = 3\bigl((q-m)P_\ell-mQ_\ell) -(d-3m) P_\ell-(d-3m) Q_\ell\\
  & =(3q-d)P_\ell - d Q_\ell
  \end{align*}
  as required.  
The proof of the second identity is similar; notice that again the coefficients in the denominator equal those in the linear relation for $\Ss_u$.  

This shows that $\bB$ is a blocking class that blocks the interval $(\al_\ell,\al_u)$. 
Since, as we saw in the first paragraph of this proof, 
 $H_{\be_\ell}, H_{\be _u}$ are unobstructed, the class $\bB$
is perfect by Proposition~\ref{prop:block}~(iii).
\end{proof}

\section{The Fibonacci stairs, its cognates, and beyond}\label{sec:manystairs}
The staircases $\Ss^E_n$ in Theorem~\ref{thm:stair} were found by trial and error using methods described in \S\ref{sec:Mathem} and Remark~\ref{rmk:ECH}.
They occur for values of $b\in \bigl(1/5,1/3\bigr)$.
Once we began  looking for staircases in other ranges of $b$, armed  with the numerical knowledge of these stairs, nascent ideas about the importance of blocking classes,   as well as the visual and computational  tools explained in \S\ref{sec:Mathem}, we found many other examples.   
In \S\ref{ss:Fib} we describe three important sets of blocking classes, together with their associated staircases, and explain their relation to the staircases 
in Theorem~\ref{thm:stair}. 
 In \S\ref{ss:thmU}, we explain  our proof strategy. 

We assume that the reader understands the definition of (quasi-)perfect classes (Definition~\ref{def:perf}), 
center-blocking class (Definition~\ref{def:centerbl}) and pre-staircase (Definition~\ref{def:prestair}).  The most important results are Proposition~\ref{prop:stairblock} and Theorem~\ref{thm:stairrecog}.

To put our work in context, recall from \cite{ball} that the
 Fibonacci stairs 
 are given by a family of exceptional divisors 
\begin{align}\label{eq:fib} \bigl(g_k,0; g_{k-1}\bw(\frac{g_{k+1}}{g_{k-1}})\bigr)
\end{align}
 where the $(g_k)_{k\ge 0} $ are the
 odd placed Fibonacci numbers $1,2,5,13,34, \cdots$.
The continued fractions of   the center points ${g_{k+1}}/{g_{k-1}}$ divide naturally into two classes:
the elements in the odd places   have centers $$
5,\ [6;1,4],\ [6;1,5,1,4] ,\ \dots, [6;1,\{5,1\}^{k},4],\dots
$$ while those in the even places  have centers $$
[6;2], \ [6;1,5,2],\ [6;1,5, 1,5, 2],\ \dots, [6;1,\{5,1\}^{k}, 5,2],\dots.
$$
Moreover, each of these classes form a pre-staircase in the sense of Definition~\ref{def:prestair}, with
recursion $x_{k+1} = 7x_k - x_{k-1}$ and linear relation $3d_k = p_k + q_k$.

The staircases that we describe below all have similar numerics, and limit at points $a_\infty = \lim {p_k}/{q_k}$ with $2$-periodic continued fractions.
There are other pre-staircases with continued fractions of higher periods; these will be discussed in our next paper.

  \subsection{The main theorems}\label{ss:Fib}

 We now describe three families of perfect center-blocking classes, $(\bB^U_n)$, $(\bB^L_n)$, and $(\bB^E_n)$ together with their associated  staircases.  
The classes $(\bB^U_n), (\bB^E_n)$ are those in Theorems~\ref{thm:block} and
~\ref{thm:blockstair}.   We will begin with the classes $(\bB^L_n)$,
 since, as explained in Remark~\ref{rmk:Fib0},
we can consider the Fibonacci staircase  to be the 
 initial
 (slightly anomalous) member of
the corresponding family of ascending staircases $(\Ss^{L}_{\ell,n})$.
We will say that an ascending (respectively descending) staircase $\Ss_{\ell}$ (resp.\ $\Ss_u$) is associated to
 the blocking class $\bB$ if it accumulates at the point  $\al_{\bB,\ell}$ (resp.\ $\al_{\bB,u}$).
 Thus staircases labelled $\ell$ always ascend, while those labelled $u$ always descend, regardless of whether the associated value of $b$ is in $(0,1/3)$ or $(1/3,1)$.

\begin{rmk} \label{rmk:evenodd} \normalfont
Each of the staircases below consists of two intertwining 
sequences of classes as follows
\begin{itemize}\item[$\bullet$]  the first has centers ${p_{n,k}}/{q_{n,k}}$  for $k\ge 0$, with $\eend_n = 2n+4$, and 
\item[$\bullet$]  the second 
has centers ${p'_{n,k}}/{q'_{n,k}}$ for $k\ge 0$, with $\eend_n = (2n+5, 2n+2).$
\end{itemize}
Each such sequence is a pre-staircase in the sense of Definition~\ref{def:prestair}.
  When the first of the end entries occurs in an even place (as with the ascending staircase $\Ss^L_{\ell,n}$) then the fact that $2n+4 < 2n+5$ means that 
${p_{n,k}}/{q_{n,k}}<{p'_{n,k}}/{q'_{n,k}}$.\footnote
 {Remark~\ref{rmk:ctfr}  explains the order properties of points that are specified in terms of their continued fraction expansions.}  Similarly, 
 when the first of the end entries occurs in an odd place  we have 
${p_{n,k}}/{q_{n,k}}>{p'_{n,k}}/{q'_{n,k}}$.

Recall also from Definition~\ref{def:stair} that we use the word \lq staircase' rather loosely; thus it could refer to just one of these sequences of classes, or both, depending on context.
\hfill$\er$
\end{rmk}
 \MS
 
 \begin{thm}\label{thm:L}  The classes $\bB^L_n = \bigl(5n,n-1; 2n\bw(\left(12n+1\right)/(2n))\bigr), n\ge 1,$ with decreasing centers, 
 are  perfect and center-blocking, and have the following associated staircases  $\Ss^{L}_{\ell,n}, \Ss^{L}_{u,n}$ for $n\ge 1$:
 \begin{itemize}  \item[$\bullet$]   $\Ss^{L}_{\ell,n}$ is ascending, with limit point $a^{L}_{\ell,n, \infty}= 
 [6;2n+1, \{2n+5,2n+1\}^\infty]$, and has
\begin{equation}
\begin{array}{ll}
\mbox{\rm (Centers)} &\quad 
 [6;2n+1, \{2n+5,2n+1\}^k,\eend_n],  \\ \notag
& \qquad \quad \eend_n = 2n+4\;\;\mbox{ or } (2n+5,2n+2), \ k\ge 0;\\  \notag
\mbox{\rm (Recursion)} &\quad  x_{n,k+1} = (\si_n + 2)x_{n,k} - x_{n,k-1}, \;\; \si_n: = (2n+1)(2n+5)\\ \notag
\mbox{\rm (Relation)} &\quad (2n+3)d_{n,k} =   (n+1) p_{n,k} - (n-1) q_{n,k}.
\end{array}
\end{equation}
 \item[$\bullet$]  $\Ss^{L}_{u,n}$ is descending, with limit point $a^{L}_{u,n, \infty}= 
 [6;2n-1,2n+1, \{2n+5,2n+1\}^\infty]$
 and has 
 \begin{equation}
 \begin{array}{ll}
\mbox{\rm (Centers)} &\quad  [6;2n-1,2n+1,\{2n+5,2n+1\}^k,\eend_n], \\ \notag
\mbox{\rm (Recursion)} &\quad x_{n,k+1} = (\si_n + 2)x_{n,k} - x_{n,k-1},
\\ \notag
\mbox{\rm (Relation)} &\quad (2n+3)d_{n,k} = -(n-1)p_{n,k} + (11n+2)q_{n,k}
\end{array}
\end{equation}
with the same possibilities for $\eend_n$ and the same $\si_n$. 
 \end{itemize}
  The  limit points $a^L_{\bullet,n,\infty}$ (with $\bullet = \ell$ or $u$)   form a decreasing sequence in $(6,7)$ with limit $6$, while the corresponding $b$-values lie in $(0,1/5)$ and increase with limit $1/5$.
\end{thm}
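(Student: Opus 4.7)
The plan is to first package the proposed families as perfect pre-staircases in the sense of Definition~\ref{def:prestair}, then invoke Theorem~\ref{thm:stairrecog} to upgrade them to genuine staircases, and finally apply Theorem~\ref{thm:blockrecog} to deduce that $\bB^L_n$ is a perfect center-blocking class whose blocked interval has the stated endpoints.  Observe that the coefficients in the two stated linear relations are exactly $(R_0,R_1,R_2)=(d-3m,\,q-m,\,-m)$ and $(d-3m,\,-m,\,p-m)$ evaluated on $(d,m,p,q)=(5n,\,n-1,\,12n+1,\,2n)$, so the relations are tailored to feed Theorem~\ref{thm:blockrecog}.  A direct computation verifies that $\bB^L_n$ itself is quasi-perfect: $d^2-m^2=25n^2-(n-1)^2=24n^2+2n-1=(12n+1)(2n)-1=pq-1$, and $3d-m-(p+q-1)=15n-(n-1)-14n=1$, and the weight expansion of $(12n+1)/(2n)=[6;2n]$ matches the $\bbm$ of $\bB^L_n$ by Example~\ref{ex:ctfr0}.

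To construct the two intertwining pre-staircases, I read off $p_{n,k},q_{n,k}$ from the stated continued fractions, define $d_{n,k}$ by the linear relation, and set $m_{n,k}:=3d_{n,k}-p_{n,k}-q_{n,k}$ so that the linear Diophantine identity in \eqref{eq:diophantine} holds.  The Recursion $x_{k+1}=(\sigma_n+2)x_k-x_{k-1}$ follows because $(p_{n,k},q_{n,k})$ are indexed by the number of inserted pairs $(2n+5,2n+1)$, whose period-$2$ transfer matrix $\left(\begin{smallmatrix}(2n+5)(2n+1)+1 & 2n+5\\ 2n+1 & 1\end{smallmatrix}\right)$ has trace $\sigma_n+2$ and determinant $1$; the larger eigenvalue $\lambda_n$ thus solves $x^2-(\sigma_n+2)x+1=0$ with $\sigma_n+4=(2n+3)^2$ a perfect square, as required by Definition~\ref{def:prestair}.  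The quadratic Diophantine identity $d_{n,k}^2-m_{n,k}^2=p_{n,k}q_{n,k}-1$ is then verified by induction on $k$: both sides satisfy a common three-term recursion obtained from squaring the basic one, so only the base cases need to be checked by direct computation from the small centers $[6;2n+1,2n+4]$ and $[6;2n+1,2n+5,2n+2]$.  The weight-expansion identity $\bbm_{n,k}=q_{n,k}\bw(p_{n,k}/q_{n,k})$ is then the content of Example~\ref{ex:ctfr0}.

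The main obstacle is showing that each class $\bE^L_{\bullet,n,k}$ is \emph{perfect}, i.e.\ reduces to $E_1$ under Cremona moves.  This will be done by the inductive reduction argument developed in Section~\ref{ss:reduct}, exploiting the symmetries between the three families $\Ss^U,\Ss^L,\Ss^E$ supplied by Corollary~\ref{cor:symm} and Proposition~\ref{prop:CrEL} to transport the reduction of one family into that of the others.  Granting perfection, I verify the hypotheses of Theorem~\ref{thm:stairrecog}(i) with $r/s=1/5$: Lemma~\ref{lem:recur} and \eqref{eq:recurX} yield a closed form $M_n/D_n<1/5$ and show $|5m_{n,k}-d_{n,k}|=O(\lambda_n^{-k})$, which is sharp enough to sandwich $M_n/D_n$ between the two bounds in condition~(i) for all large $k$.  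Potential overshadowing classes with $m'/d'>1/5$ and sufficiently small degree are all ruled out by direct inspection, the principal candidate being the class $(3,1;2,1^{\times5})$ of Example~\ref{ex:big}, whose constant obstruction on $z>6$ lies strictly below $V_b(a^L_{\bullet,n,\infty})$ for every $b<1/5$.

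With the staircases in place, Theorem~\ref{thm:blockrecog} certifies $\bB^L_n$ as a perfect blocking class with blocked $z$-interval $I_{\bB^L_n}=(a^L_{\ell,n,\infty},\,a^L_{u,n,\infty})$, and the center-blocking property then follows from Lemma~\ref{lem:accV}(ii) applied to $(d,m,p,q)=(5n,n-1,12n+1,2n)$, using the minus sign in \eqref{eq:accV0} because the relevant $b_0=\acc^{-1}_L((12n+1)/(2n))<1/3$.  The limit claims are now standard asymptotics: using \eqref{eq:recurX} to express $P_n/Q_n$ in closed form in $\sqrt{\sigma_n}$, one checks directly that $a^L_{\bullet,n,\infty}$ decreases strictly in $n$ with limit $6$ (the periodic fraction $[6;2n+1,\dots]$ tending to $6$ as the entries grow), and by continuity of $\acc^{-1}_L$ near $6$ together with Example~\ref{ex:52}, the corresponding $b$-values $M_n/D_n$ increase to $1/5$ from below, as claimed.
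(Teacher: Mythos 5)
Your overall strategy is exactly the paper's: verify the pre-staircase structure and quasi-perfectness directly, defer perfectness to the Cremona reduction in Proposition~\ref{prop:CrEL} (via the symmetry with $\Ss^U$), invoke Theorem~\ref{thm:stairrecog} for liveness, and then Theorem~\ref{thm:blockrecog} for the blocking-class conclusion. But there are two concrete problems in Step~4 that would sink the argument as written.

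First, the asymptotic you cite is wrong. For a pre-staircase with $M/D<1/5$ strictly, $5m_{n,k}-d_{n,k}=(5M-D)\lambda_n^k+(5\ov M-\ov D)\ov\lambda_n^k$ with $5M-D\ne 0$, so $|5m_{n,k}-d_{n,k}|$ grows like $\lambda_n^k$, not decays like $\lambda_n^{-k}$. The quantity that is $O(\lambda_n^{-k})$ is $|b_\infty d_{n,k}-m_{n,k}|$ with $b_\infty=M/D$, as in the proof of Proposition~\ref{prop:stairblock}. More importantly, the sandwich condition in Theorem~\ref{thm:stairrecog}(i) is not a consequence of that asymptotic alone; it is the inequality \eqref{eq:DMineq} (equivalently \eqref{eq:DMin1}), whose verification the paper reduces to the sign and size of $m_1d_0-m_0d_1$ (or $m_0d_{-1}-m_{-1}d_0$ using the $k=-1$ extension from Lemma~\ref{lem:SsL}) via Lemma~\ref{lem:DMineq} and Remark~\ref{rmk:DMineq}. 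You skip this computation entirely.

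Second, and more seriously, the choice $r/s=1/5$ does not work uniformly in $n$. The degree bound on candidate overshadowing classes is $d'<s/(r-sb_\infty)=5/(1-5b_\infty)$, and since $b^L_{\bullet,n,\infty}\to1/5^{-}$ as $n\to\infty$, this bound blows up; for large $n$ you would have to rule out overshadowing classes of arbitrarily high degree, which cannot be done "by direct inspection." The paper chooses $r/s$ bounded away from $1/5$ precisely to keep this bound uniformly small: $r/s=7/10$ for $\Ss^L_{\ell,n}$ (giving $d'<2$, whence only degree-one classes, which never obstruct) and $r/s=3/10$ for $\Ss^L_{u,n}$ (giving $d'<10$, a bounded but nontrivial list). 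Even then, in Lemma~\ref{lem:SsLubest} one cannot dismiss the $\Ss^L_{u,n}$ overshadowing candidates casually: the degree-$6$ pair $(d',m')=(6,2)$ passes the divisibility and $|b d'-m'|<1$ screens, and ruling it out requires identifying the unique class $(6,2;3,2^{\times6})$ with break point $7$ via Lemma~\ref{lem:obstruct}, then showing its obstruction stays below $V_b(\acc(b))$ on $(6,7)$ using \eqref{eq:MDvol}. Your proposal names only $(3,1;2,1^{\times5})$ and does not confront this case. Finally, a minor point: the appeal to Lemma~\ref{lem:accV}(ii) for center-blocking is redundant, since Theorem~\ref{thm:blockrecog} already delivers this through Lemma~\ref{lem:block2}(ii).
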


\begin{rmk}\label{rmk:Fib0}  \normalfont
The Fibonacci stairs has the same numerics as the case $n=0$ of  $\Ss^{L}_{\ell,n}$.  However, there cannot be an associated blocking class since this would have to block a $z$-interval with lower endpoint $\al^F_\ell$ equal to the limit point $\tau^4$.  But then $\acc_L^{-1}(\al^F_\ell) = 0$ would be  the upper endpoint of the corresponding $b$-interval, which is clearly impossible.  
Nevertheless, the obstruction given by the class  $\bE_0: = (3,0;2,1^{\times 6})$ with break point $a=7$
does go through the accumulation point $\bigl(\tau^4, V_0(\tau^4) = \tau^2\bigr)$, and  this class 
can be considered as a substitute for the blocking class.  
In this paper, we will often ignore this distinction and will use the notation $\Ss^{L}_{\ell,0}$ to refer to  the Fibonacci stairs.
  \hfill$\er$
\end{rmk}

Here is the analogous result for the blocking classes in Theorem~\ref{thm:block}.

 \begin{thm}\label{thm:U}  The classes $\bB^U_n = \bigl(n+3,n+2; \bw(2n+6)\bigr), n\ge 0,$
 with increasing centers, 
 are perfect and  center-blocking, with the following associated staircases  $\Ss^{U}_{\ell,n}, \Ss^{U}_{u,n}$, where  $\si_n$ and $\eend_n$ are as in Theorem~\ref{thm:L}.
 \begin{itemize}  \item[$\bullet$]  for each $n\ge 1$, $\Ss^{U}_{\ell,n}$ has limit point $a^{U}_{\ell,n, \infty}= 
 [\{2n+5,2n+1\}^\infty]$, and has
\begin{equation}
\begin{array}{ll}
\mbox{\rm (Centers)}&\quad  [\{2n+5,2n+1\}^k,\eend_n], 
\\  \notag
\mbox{\rm (Recursion)}&\quad x_{n,k+1} = (\si_n + 2)x_{n,k} - x_{n,k-1},\\ \notag
\mbox{\rm (Relation)}&\quad  (2n+3)d_{n,k} = (n+1) p_{n,k} + (n+2) q_{n,k}.  
\end{array}
\end{equation}
 \item[$\bullet$]  for each $n\ge 0$, $\Ss^{U}_{u,n}$ has limit point 
 $a^{U}_{u,n,\infty}= 
 [2n+7; \{2n+5,2n+1\}^\infty]$, and
 has 
 \begin{equation}
 \begin{array}{ll}
\mbox{\rm (Centers)} &\quad  [2n+7;\{2n+5,2n+1\}^k,\eend_n], \\ \notag
\mbox{\rm (Recursion)} &\quad x_{n,k+1} = (\si_n + 2)x_{n,k} - x_{n,k-1},
\\ \notag
\mbox{\rm (Relation)} &\quad (2n+3)d_{n,k} = (n+2)p_{n,k} - (n+4) q_{n,k}
\end{array}
\end{equation}
 \end{itemize}
   The  limit points $a^U_{\bullet,n, \infty}$  form  increasing unbounded sequences in $(6,\infty)$, while the corresponding $b$-values lie in $ (5/{11}, 1\bigr)$, where $5/{11} = \acc_U^{-1}(6)$.
\end{thm}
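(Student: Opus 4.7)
My plan is to derive Theorem~\ref{thm:U} as a pair of applications of the general recognition results: Theorem~\ref{thm:blockrecog} will show that $\bB^U_n$ is perfect and center-blocking, while Theorem~\ref{thm:stairrecog} will upgrade the putative pre-staircases $\Ss^U_{\ell,n}$ and $\Ss^U_{u,n}$ to genuine live staircases. Both hinge on producing perfect pre-staircases (Definition~\ref{def:prestair}) whose accumulation data is compatible with $\bB^U_n$, so the work decomposes into (a)~the numerical construction of the classes, (b)~an upgrade of quasi-perfect to perfect via Cremona moves, and (c)~matching the hypothesis lists of the two recognition theorems.

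For the numerical construction, I would first observe that $\bB^U_n=(n+3,n+2;\bw(2n+6))$ is quasi-perfect with center $2n+6$: since $\bw(2n+6)=1^{\times(2n+6)}$, the Diophantine identities \eqref{eq:diophantine} reduce to $(n+3)^2-(n+2)^2=2n+5$ and $3(n+3)-(n+2)-(2n+6)=1$. Setting $\si_n=(2n+1)(2n+5)$, so that $\si_n+4=(2n+3)^2$, I would specify each sub-pre-staircase by its $k=0$ member (one with $\eend_n=2n+4$, one with $\eend_n=(2n+5,2n+2)$) together with the initial values of $d_k,m_k,p_k,q_k$ forced by the stated linear relation and by $3d_k=m_k+p_k+q_k$, and then extend to all $k\ge 0$ via the recursion $x_{k+1}=(\si_n+2)x_k-x_{k-1}$. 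A short induction, using that this recursion preserves both the linear identity and the quadratic identity $d_k^2-m_k^2=p_kq_k-1$ whenever they hold at two consecutive values of $k$, confirms that each $\bE^U_{\bullet,n,k}$ is Diophantine; quasi-perfection then follows by reading the weight expansion of $p_{n,k}/q_{n,k}$ off the stated continued fraction, as in Example~\ref{ex:ctfr0} and \eqref{eq:wtexp}. Solving $a=[\{2n+5,2n+1\}^\infty]$ via $(2n+1)a^2-\si_n a-(2n+5)=0$ shows the accumulation points equal the endpoints $\al_{\bB^U_n,\ell}$ and $\al_{\bB^U_n,u}$ from Theorem~\ref{thm:block}.

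The main obstacle is upgrading from quasi-perfect to perfect, i.e.\ showing that each $\bE^U_{\bullet,n,k}$ and each $\bB^U_n$ actually reduces to $E_1$ under iterated Cremona moves. My approach would be induction on $k$: a carefully chosen Cremona move should carry $\bE^U_{\bullet,n,k+1}$ into a class that is related by the recursion to $\bE^U_{\bullet,n,k}$, so that the inductive hypothesis applies. Where direct computation is unwieldy, I would invoke the symmetries of Corollary~\ref{cor:symm} together with Proposition~\ref{prop:CrEL} to transport perfection between the three families $(\bB^U_n),(\bB^L_n),(\bB^E_n)$ and their associated staircases, concentrating the explicit Cremona calculation in a small set of base cases.

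Once perfection is established, the rest is assembly. For Theorem~\ref{thm:blockrecog} the coefficients of the stated linear relations match the required values: for $\Ss^U_{\ell,n}$ one has $d-3m=-(2n+3)$, $q-m=-(n+1)$, $-m=-(n+2)$, and for $\Ss^U_{u,n}$ one has $d-3m=-(2n+3)$, $-m=-(n+2)$, $p-m=n+4$, and both match the stated relations up to an overall sign. The order $3+2\sqrt 2<P_\ell/Q_\ell<2n+6<P_u/Q_u$ is immediate from the continued fraction data, so the theorem yields perfection of $\bB^U_n$ together with the identification $I_{\bB^U_n}=(\al_{\bB^U_n,\ell},\al_{\bB^U_n,u})$. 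For the staircase property I would apply Theorem~\ref{thm:stairrecog}(ii) with $r/s=1/3$: the limiting ratio $M/D=(n+2)/(n+3)$ exceeds $1/3$, the required two-sided bounds on $(m_k^2-1)/(d_km_k)$ and $M/D$ follow from the $\la_n^k$-asymptotics of Lemma~\ref{lem:recur}, and the degree bound $d'<s/(sb_\infty-r)$ reduces overshadowing to a finite check. Finally, the increasing unbounded behavior of $a^U_{\bullet,n,\infty}\in(6,\infty)$ is visible from the leading $2n+5$ or $2n+7$ in the continued fractions, and the $b$-values lie in $(5/11,1)$ because for $n=0$ only the descending case is claimed (with $\be_{\bB^U_0,u}\approx 0.629>5/11$), while for $n\ge 1$ both $\be_{\bB^U_n,\ell}$ and $\be_{\bB^U_n,u}$ exceed $\acc_U^{-1}(6)=5/11$ by direct evaluation of the closed-form expressions in Theorem~\ref{thm:block}.
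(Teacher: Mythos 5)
Your strategy matches the paper's proof scheme for Theorem~\ref{thm:U} step for step: check quasi-perfection of $\bB^U_n$, prove perfection of the pre-staircase classes via Cremona moves (using the symmetries of Corollary~\ref{cor:symm} and the reductions in Proposition~\ref{prop:CrEL}), then invoke Theorem~\ref{thm:blockrecog} and Theorem~\ref{thm:stairrecog}. Your sign check of the linear relations against Theorem~\ref{thm:blockrecog} is correct, and so is the quadratic $(2n+1)a^2 - \si_n a - (2n+5) = 0$ for the accumulation point.

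There is, however, one substantive error in your last paragraph. The quantity $M/D$ is the limit $\lim_k m_k/d_k = b_\infty$, an irrational number lying in $\Q + \Q\sqrt{\si_n}$; it is \emph{not} equal to $(n+2)/(n+3)$, which is the ratio $m/d$ of the blocking class $\bB^U_n$ itself. These are genuinely different. For $\Ss^U_{\ell,n}$ (with $\eend_n = 2n+4$) one has $m_0/d_0 = (n+1)/(n+2)$ from \eqref{eq:SsUell}, and since $m_1 d_0 - m_0 d_1 = -(2n+3) < 0$ the ratios $m_k/d_k$ strictly decrease (by \eqref{eq:DMin} and Lemma~\ref{lem:DMineq}(i)), so $b_\infty < (n+1)/(n+2) < (n+2)/(n+3)$; see also Remark~\ref{rmk:block0}(i), which shows $m/d$ of $\bB^U_n$ need not even lie in $J_{\bB^U_n}$. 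You also cite the bound $(m_k^2-1)/(d_k m_k)$, which belongs to Theorem~\ref{thm:stairrecog}(i), not to part (ii); what is actually required in (ii) is the sandwich $\frac{m_k(s m_k - r d_k) - s}{d_k(s m_k - r d_k) - r} < M/D < m_k/d_k$, and its verification passes through Lemma~\ref{lem:DMineq} and the concrete comparison~\eqref{eq:DMin1} between $|m_1 d_0 - m_0 d_1|/(2n+3)$ and $\sqrt{\si_n}(sD-rM)/|sM-rD|$ — the asymptotics of Lemma~\ref{lem:recur} alone do not supply this. Finally, $r/s = 1/3$ is not the paper's choice: for $\Ss^U_{\ell,n}$ the paper first shows $b_\infty > 1/2$ and then picks $r/s$ close to $b_\infty$ so that the degree bound $s/(sb_\infty - r)$ falls below $2$, eliminating overshadowing classes with no casework, while for $\Ss^U_{u,n}$ it uses $r/s = 1/2$ (Lemmas~\ref{lem:SsUellbest}, \ref{lem:SsUubest} and Example~\ref{ex:SsUu0}); your $r/s = 1/3$ would raise the degree bound to roughly $3/(3b_\infty - 1) < 6$ and force you to inspect several more candidate overshadowing classes.
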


Figure \ref{fig:blockU} depicts the image under the parameterization $b\mapsto\bigl(\acc(b),V_b(\acc(b))\bigr)$ of the intervals $J_{B^U_n}$ for $n=0,\dots,3$ and part of the interval $J_{B^U_4}$.

\begin{figure}[H]
\includegraphics[width=\textwidth]{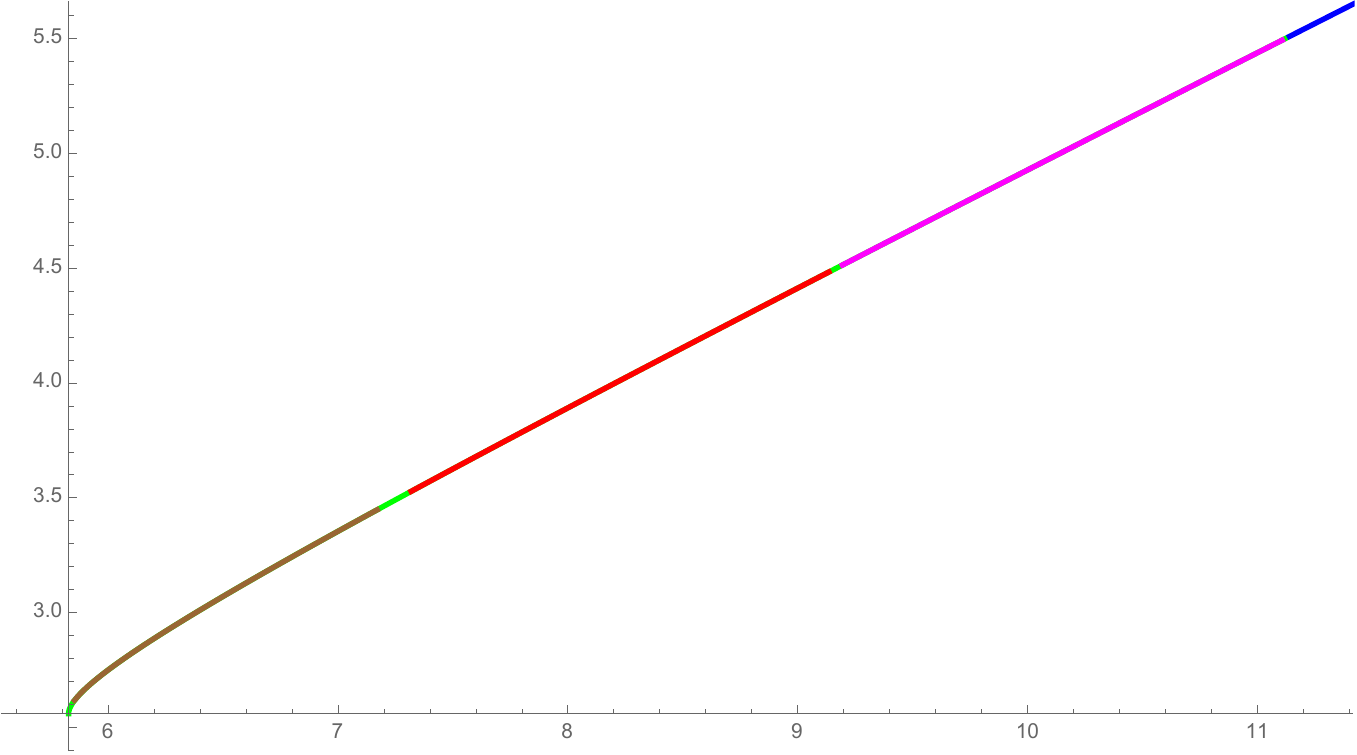}
\caption{Depicted are the intervals $J_{B^U_n}$ for $n=0,\dots,2$ and part of the interval $J_{B^U_3}$, in brown, red, pink, and blue, respectively, for $1/3<b<0.795$, transplanted onto the accumulation point curve $b\mapsto(\acc(b),V_b(\acc(b)))$. The infinite staircases $\Ss^U_{\ell,n}$ accumulate at the left endpoints of the intervals and the infinite staircases $\Ss^U_{u,n}$ accumulate at their right endpoints. Note that $Stair$ is contained in the short 
green intervals between the  intervals blocked by the $J_{B^U_n}$.}\label{fig:blockU}
\end{figure}

\begin{rmk}\label{rmk:SUell0}  \normalfont
Theorem~\ref{thm:U} lists descending staircases $\Ss^{U}_{u,n}$ for all $n\ge 0$, but ascending staircases $\Ss^{U}_{\ell,n}$ only for $n\ge 1$.   There is an ascending staircase for $n=0$ with numerics obtained by setting $n=0$ in the sequence $\Ss^U_{\ell,n}$ above, and depicted in Figure \ref{fig:515}, but because the class $\bB^U_0$ has center $6$, it has limit point $a^U_{\ell,0,\infty} < 6$, and therefore, as we will explain in Remark~\ref{rmk:symm}~(iii) below, it is better to consider this staircase as part of a different family. 
Indeed. as pointed out in Corollary~\ref{cor:symm}~(ii), the accumulation points $[6;1,\{5,1\}^\infty]$ for $\Ss^U_{\ell,0}$ and $[7;\{5,1\}^\infty]$ for $\Ss^U_{u,0}$
are mutual images under the reflection $\Phi$ of  Lemma~\ref{lem:symm}; and we explain in Remark~\ref{rmk:symm} how we expect that the associated families of staircases are organized.
 \hfill$\er$
\end{rmk}

Finally, here is a sharper version of  
Theorems~\ref{thm:stair} and~\ref{thm:blockstair}.  Notice that the centers $6-\frac1{2n+6}$ of the  classes $\bB^E_n$  increase with $n$.

\begin{thm}\label{thm:E}  The classes $\bB^E_n = \bigl(5(n+3), n+4; (2n+6) \bw(\frac{12n+35}{2n+6})\bigr), n\ge 0,$ with increasing centers,
 are perfect and center-blocking, and have the following associated staircases  $\Ss^{E}_{\ell,n}, \Ss^{E}_{u,n}$,  where  $\si_n$ and $\eend_n$ are as in Theorem~\ref{thm:L}.
 \begin{itemize}  \item[$\bullet$]  for each $n\ge 1$, $\Ss^{E}_{\ell,n}$ is ascending, with limit point $a^{E}_{\ell,n,\infty}= 
 [5;1,2n+4, 2n+1, \{2n+5,2n+1\}^\infty]$,
 and has
\begin{equation}
\begin{array}{ll}
\mbox{\rm (Centers)}&\quad  [5;1,2n+4, 2n+1, \{2n+5,2n+1\}^k,\eend_n], 
\\  \notag
\mbox{\rm (Recursion)}&\quad x_{n,k+1} = (\si_n + 2)x_{n,k} - x_{n,k-1},\\ \notag
\mbox{\rm (Relation)}&\quad  (2n+3)d_{n,k} =  (n+2)  p_{n,k} -(n+4)q_{n,k}  
\end{array}
\end{equation}
 \item[$\bullet$]  for each $n\ge 0$, $\Ss^{E}_{u,n}$ is descending, with limit point 
 $a^{E}_{u,n,\infty}= 
 [5;1, 2n+6, \{2n+5,2n+1\}^\infty]$, and
 has 
 \begin{equation}
 \begin{array}{ll}
\mbox{\rm (Centers)} &\quad  [5;1,2n+6,\{2n+5,2n+1\}^k,\eend_n], \\ \notag
\mbox{\rm (Recursion)} &\quad x_{n,k+1} = (\si_n + 2)x_{n,k} - x_{n,k-1}, 
\\ \notag
\mbox{\rm (Relation)} &\quad (2n+3)d_{n,k} = - (n+4) p_{n,k} + (11n +31)q_{n,k}.
\end{array}
\end{equation}
 \end{itemize}
  The limit points $a^E_{\bullet,n,\infty}$  form an increasing sequence in the interval $\bigl({35}/6 = [6;1,5], 6\bigr)$ while the corresponding $b$-values lie in $\bigl(1/5, {19}/{61}\bigr)$ where $
{19}/{61}= \acc_L^{-1}({35}/6)< 1/3$.
\end{thm}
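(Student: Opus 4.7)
The plan is to mimic the strategy used for the families $(\bB^U_n)$ and $(\bB^L_n)$ in \S\ref{ss:thmU} and \S\ref{ss:thmLE}: first show that the data define perfect pre-staircases, then invoke Theorem~\ref{thm:stairrecog} to promote them to live staircases, and finally invoke Theorem~\ref{thm:blockrecog} to harvest the blocking class. To begin, I would verify that the two interwoven sequences of centers $p_{n,k}/q_{n,k}$ (one with $\eend_n = 2n+4$, the other with $\eend_n = (2n+5,2n+2)$) satisfy the recursion $x_{k+1} = (\si_n+2)x_k - x_{k-1}$ with $\si_n + 4 = (2n+3)^2$; this is standard for continued fractions with $2$-periodic tail $\{2n+5,2n+1\}^\infty$. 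Defining $d_{n,k}$ by the stated linear relation and $m_{n,k}$ by $3d_{n,k} = m_{n,k}+p_{n,k}+q_{n,k}$, the quadratic Diophantine identity $d_{n,k}^2 - m_{n,k}^2 = p_{n,k}q_{n,k}-1$ reduces to a finite check at $k=0,1$, after which it propagates through the common recursion. Combined with the integral weight expansion of Example~\ref{ex:ctfr0}, this produces quasi-perfect pre-staircases in the sense of Definition~\ref{def:prestair}. Remark~\ref{rmk:recur}(i) then yields the limits $P_\bullet/Q_\bullet$ and $M_\bullet/D_\bullet$ in closed form, and \eqref{eq:bsi} confirms that both limit $b$-values lie in $(1/5, 19/61)$ with $\acc(M_\bullet/D_\bullet) = P_\bullet/Q_\bullet$ as required by Proposition~\ref{prop:stairblock}.

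The main technical obstacle is to prove that these quasi-perfect classes are in fact perfect, i.e.\ reduce correctly under Cremona moves: Remark~\ref{rmk:fake} and Example~\ref{ex:seq}(iii) show that fake quasi-perfect classes with similar-looking continued-fraction data do occur in this region (for instance the class $\bigl(48,14;19\bw(111/19)\bigr)$), so this step cannot be skipped. I would deploy the Cremona symmetry of Corollary~\ref{cor:symm} to transport the $E$-family to the $U$-family, whose perfectness will have been established in \S\ref{ss:thmU}. Induction on $k$ via the reduction recipe of \S\ref{ss:reduct} then handles the bulk of the argument, with the $k=0$ base cases $\bE^E_{n,0}$ (centers $[5;1,2n+6,2n+4]$ and $[5;1,2n+6,2n+5,2n+2]$) reduced directly as a function of $n$.

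Once perfectness is in hand, the staircase property follows from Theorem~\ref{thm:stairrecog} with $r/s = 1/3$: criterion (ii) applies to the descending $\Ss^E_{u,n}$ and criterion (i) to the ascending $\Ss^E_{\ell,n}$, since in both cases $m_{n,k}/d_{n,k}$ converges monotonically to a limit in $(1/5,1/3)$. The two-sided inequalities on $m_{n,k}/d_{n,k}$ follow from the decay estimate $|b_\infty d_{n,k} - m_{n,k}| = O(\la_n^{-k})$ extracted from the recursion, and overshadowing classes live in a finite list of degrees bounded by $s/|r-sb_\infty|$; the only serious candidate is the \lq\lq big'' class $\bigl(3,1;2,1^{\times 5}\bigr)$ of Example~\ref{ex:big}, which has $m/d = 1/3$ and hence is not overshadowing. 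Finally, since the stated linear relations are precisely $R_0 = d-3m,\ R_1 = q-m,\ R_2 = -m$ for $\Ss^E_{\ell,n}$ and $R_0 = d-3m,\ R_1 = -m,\ R_2 = p-m$ for $\Ss^E_{u,n}$ evaluated on $\bB^E_n = \bigl(5(n+3),\, n+4;\, (2n+6)\bw((12n+35)/(2n+6))\bigr)$, Theorem~\ref{thm:blockrecog} applies verbatim and delivers that $\bB^E_n$ is a perfect blocking class with $I_{\bB^E_n} = \bigl(a^E_{\ell,n,\infty},\, a^E_{u,n,\infty}\bigr)$; center-blocking then follows from Proposition~\ref{prop:stairblock} applied to the tail of either pre-staircase. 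Monotonicity of $a^E_{\bullet,n,\infty}$ in $(35/6, 6)$ and of the endpoints of $J_{\bB^E_n}$ in $(1/5, 19/61)$ is read off directly from the explicit continued fractions and from \eqref{eq:bsi}, completing the proof.
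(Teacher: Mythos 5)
Your high-level architecture is correct and matches the paper's: verify the pre-staircase structure, establish perfectness by transporting to the $U$-family via the symmetry $Sh$ (this is exactly Proposition~\ref{prop:CrEL}(iii)--(iv)), apply Theorem~\ref{thm:stairrecog} to get live staircases, and feed the relations into Theorem~\ref{thm:blockrecog} to obtain the blocking class. However, there are two concrete errors in your Step~4. First, you apply criterion (ii) of Theorem~\ref{thm:stairrecog} to the descending staircase $\Ss^E_{u,n}$; but criterion (ii) requires $M/D > r/s$, while here $M/D \in (1/5,19/61) < 1/3 = r/s$. In fact the paper verifies that \emph{both} $\Ss^E_{\ell,n}$ and $\Ss^E_{u,n}$ satisfy criterion (i), because in both cases the ratios $b_k = m_k/d_k$ are \emph{increasing} (one computes $m_1d_0 - m_0d_1 = (2n+3)(12n+35)>0$ for $\Ss^E_{u,n}$). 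This reflects the phenomenon noted in Remark~\ref{rmk:DMineq}(iv): for descending staircases with $b<1/3$, the $b_k$ increase, so the direction of the staircase does \emph{not} determine which criterion applies.

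Second, and more importantly, your treatment of overshadowing classes misses where the real work is. You single out the class $(3,1;2,1^{\times5})$ as ``the only serious candidate,'' but since it has $m'/d' = 1/3 = r/s$ it does not even appear in the list of candidates produced by criterion (i), which requires $m'/d' > r/s$. For $\Ss^E_{u,n}$ the bound $d' < s/(r-s\,b_\infty)$ is roughly $13$ when $n=0$, and the genuine candidates one must eliminate are $(d',m') = (11,4), (8,3), (5,2), (4,2), (2,1), (1,1)$. Ruling these out is the delicate part of the argument: for each pair $(d',m')$ one must enumerate possible break points $a' > a^E_{u,n,\infty}$ with $\ell(a')$ small (using Remark~\ref{rmk:ctfr} to order the continued fractions), apply Lemma~\ref{lem:obstruct}(ii) to constrain $\bbm'$, and then show the linear and quadratic Diophantine identities fail --- see Example~\ref{ex:SsEu0} and the proof of Lemma~\ref{lem:SsEubest}, which use a ``double-point count'' estimate $\sum (m_i')^2 \le (3d'-m'-1)^2/\ell(\bbm')$ to kill most cases. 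Your proposal as written does not contain this elimination, and it is not a mechanical consequence of the general framework; without it the proof that $\Ss^E_{u,n}$ is a live staircase is incomplete.
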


 The numerics of these three families are very similar, since they all contain repetitions of the same $2$-periodic pair $(2n+5, 2n+1)$, which as is shown in Lemma~\ref{lem:Cr0}, implies that the recursion is the same.  
 We now observe that there are symmetries of the $z$-axis that relate the centers of the different families of blocking classes, as well as those of the corresponding staircase classes, as follows.

In the following we denote by $CF(x)$ the continued fraction expansion of a rational number $x\ge 1$,
and, by abuse of language, we sometimes identify $z$ with $CF(z)$.   For example,
every\footnote
{
This is true even for numbers such as $13/2 = [6;2]$ since in this case we can also write $z=[6;1,1]$.}
 rational $z \in (6,7)$ has continued fraction of the form $z=CF(z)=[6;k,CF(x)]$ for some integer $k\ge 0$
 and rational number $x\ge 1$.

\begin{lemm}\label{lem:symm}  {\rm (i)}
The fractional linear transformation $$
\Psi: (6, \infty) \to (6, \infty):  \quad w\mapsto \frac{6 w - 35}{w-6}
$$ 
reverses orientation and has the following properties:
\begin{itemize}\item[{\rm (a)}] 
$
\Psi\circ\Psi = 1,\qquad \Psi(7) = 7;
$
\item[{\rm (b)}] 
 $\Psi([6;k, CF(x)])=[6+k; CF(x)]\in (7,\infty)$ for all  $z=[6;k, CF(x)]\in (6,7)$;
\end{itemize}

\NI {\rm (ii)} The fractional linear transformation $$
\Phi: (\frac {35}6, \infty) \to (\frac {35}6, \infty):  \quad w\mapsto \frac{35 w - 204}{6w-35}
$$ 
reverses orientation and has the following properties:
\begin{itemize}\item[{\rm (a)}] 
$
\Phi\circ\Phi = 1,\qquad \Phi(6) = 6$ and
\begin{align*} \Phi(7) = \frac{41}7
 = [5;1,6], \qquad \Phi(8) = \frac{76}{13} = [5;1,5,2].
\end{align*}
\item[{\rm (b)}] every  $z\in   \bigl( \frac{76}{13},  \frac{41}7
\bigr)$ has $CF(z) = [5;1,5,1, CF(x)]$ for $x>1$
and $$
\Phi([5;1,5,1, CF(x)]) = [7; CF(x)] \in (7,8),\quad \mbox{ for all } x>1.
$$
\end{itemize}

\smallskip

\NI {\rm (iii)}  The bijective map $$
Sh: (1,\infty)\to (5,6), \quad z\mapsto \frac{6z-1}{z}
$$
preserves orientation, and for each rational $x>1$ and $k\ge1$ we have
$$
Sh(z) = Sh([k+5; CF(x)]) = [5;1,k+4, CF(x)],\quad \mbox{ if }\ z = [k+5; CF(x)]\in (6,\infty).
$$
Further, $\Phi\circ \Psi = Sh: (6,\infty)\to (35/6,6)$.
\end{lemm}

\begin{proof}  We prove (i).   (a) follows immediately from the definition.  
To prove (b), observe that 
\[
w=[6;k+1,CF(x)]=6+\frac{1}{k+1+\frac{1}{x}}=\frac{(6k+7)x+6}{(k+1)x+1}
\]
which gives
\[
x=\frac{w-6}{(6k+7)-(k+1)w}
\]
Therefore we have
\begin{align*}
z&=[k+7;CF(x)]=k+7+\frac{1}{x}
\\&=k+7+\frac{(6k+7)-(k+1)w}{w-6}
\\&=\frac{6w-35}{w-6}=\Psi(w) 
\end{align*}
as claimed.  
 This proves (i).
\MS

The proofs of (ii), (iii) are very similar and are left to the reader.
\end{proof}

\begin{cor}\label{cor:symm} {\rm (i)} For each $n\ge 1$, the involution $\Psi$ takes the center $2n+6$ of the blocking class $\bB^U_n$
to the center $[6;2n]$ of the blocking class $\bB^L_n$. It also takes the centers of the associated staircases $\Ss^U_{\ell,n}, \Ss^U_{u,n}$ to those of  $\Ss^L_{u,n}, \Ss^L_{\ell,n}$, interchanging increasing and decreasing staircases.  Further, $\Psi$ also takes the centers of the decreasing staircase
$\Ss^U_{u,0}$  to the centers of the Fibonacci stairs. \MS

\NI {\rm (ii)}  The involution $\Phi$ fixes the center $6$ of the blocking class $\bB^U_0$ and interchanges the steps of the two associated staircases.  
\MS

\NI {\rm (iii)}  For each $n\ge 1$, the shift $Sh$ takes the center $2n+6$ of the blocking class $\bB^U_n$
to the center $[5;1,2n+5]$ of the blocking class $\bB^E_n$.
It also takes the centers of the associated staircases $\Ss^U_{\ell,n}, \Ss^U_{u,n}$ to those of  $\Ss^E_{\ell,n}, \Ss^E_{u,n}$, preserving the direction of the staircases.  
\end{cor}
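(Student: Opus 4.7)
The proof is a direct verification using the transformation rules for $\Psi$, $\Phi$, and $Sh$ on continued fractions established in Lemma~\ref{lem:symm}. Each of the three parts breaks into two subtasks: first, check that the named map sends the center of one blocking class to the center of the other, which is a short computation with rationals; second, check that it sends each step's center in one staircase to the corresponding step's center in the other staircase, which reduces to a syntactic application of the continued fraction identities in Lemma~\ref{lem:symm}.

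For the blocking-class centers I would substitute directly: $\Psi(2n+6)=(6(2n+6)-35)/((2n+6)-6)=(12n+1)/(2n)=[6;2n]$, the center of $\bB^L_n$ by Theorem~\ref{thm:L}; $\Phi(6)=(35\cdot 6-204)/(6\cdot 6-35)=6$; and $Sh(2n+6)=(6(2n+6)-1)/(2n+6)=(12n+35)/(2n+6)=[5;1,2n+5]$, the center of $\bB^E_n$ by Theorem~\ref{thm:E}.

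For the staircase centers in part (i), use that $\Psi([6+j;CF(x)])=[6;j,CF(x)]$ for $j\ge 1$ (from Lemma~\ref{lem:symm}(i)(b) combined with $\Psi\circ\Psi=\mathrm{id}$). Writing the $\Ss^U_{u,n}$-centers $[2n+7;\{2n+5,2n+1\}^k,\eend_n]$ as $[6+(2n+1);\ldots]$ produces $\Psi$-images $[6;2n+1,\{2n+5,2n+1\}^k,\eend_n]$, matching the centers of $\Ss^L_{\ell,n}$. The $\Ss^U_{\ell,n}$-centers (for $k\ge 1$) rewrite as $[6+(2n-1);2n+1,\{2n+5,2n+1\}^{k-1},\eend_n]$ and map to centers of $\Ss^L_{u,n}$. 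The Fibonacci claim specializes $n=0$: $\Psi([7;\{5,1\}^k,\eend_0])=[6;1,\{5,1\}^k,\eend_0]$, precisely the continued fractions of $g_{k+1}/g_{k-1}$ listed at the start of Section~\ref{ss:Fib}. Part (ii) applies Lemma~\ref{lem:symm}(ii)(b), $\Phi([5;1,5,1,CF(x)])=[7;CF(x)]$, to the $\Ss^U_{\ell,0}$-centers rewritten as $[5;1,5,1,\{5,1\}^{k-2},\eend_0]$ for $k\ge 2$, yielding $\Ss^U_{u,0}$-centers. Part (iii) uses $Sh([k+5;CF(x)])=[5;1,k+4,CF(x)]$ on the $\Ss^U_{\bullet,n}$-centers to obtain the $\Ss^E_{\bullet,n}$-centers, with the direction preserved because $Sh$ is orientation-preserving.

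The main point requiring care is not any single computation but rather the bookkeeping of the index $k$ and of the terminal segment $\eend_n$, so that one verifies a genuine bijection between the sequences of centers in the two related staircases. In particular, one must track the index shifts (e.g. the $k$-th step of $\Ss^U_{\ell,n}$ corresponds to the $(k-1)$-th step of $\Ss^L_{u,n}$, and the $k$-th step of $\Ss^U_{\ell,0}$ corresponds to the $(k-2)$-th step of $\Ss^U_{u,0}$) and handle the first few steps of each staircase separately, since the involutions $\Psi,\Phi$ and the shift $Sh$ act on continued fractions with a specific leading form which only coincides with the staircase centers from some index onward.
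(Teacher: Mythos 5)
Your proposal is correct and follows essentially the same route as the paper: apply the continued-fraction transformation rules from Lemma~\ref{lem:symm} directly to the explicit descriptions of the blocking-class and staircase centers, with a short rational computation for the blocking-class centers. The one place you reason slightly differently is the ascending/descending interchange: you invoke the orientation-reversal of $\Psi$ and $\Phi$ (and orientation-preservation of $Sh$), whereas the paper's proof argues via the parity of the position of the first entry of $\eend_n$ in the continued fraction, citing Remark~\ref{rmk:evenodd}; these are compatible and your version is arguably more direct. Your emphasis on index shifts (the $k$-th step of $\Ss^U_{\ell,n}$ mapping to the $(k-1)$-th step of $\Ss^L_{u,n}$ or $\Ss^E_{\ell,n}$, and similarly the $k=0$ step of $\Ss^U_{\ell,n}$ mapping to the $k=-1$ extension of $\Ss^L_{u,n}$ from Lemma~\ref{lem:SsL}) is a worthwhile point of care that the paper's terse proof glosses over.
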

\begin{proof}   This is an immediate consequence of Lemma~\ref{lem:symm}.
Note that by Theorem~\ref{thm:block} the  ascending staircase associated to $\bB^U_0$ has steps at
$[5; 1,\{5,1\}^{k},\eend_0]$ while the descending staircase has steps at $[7; \{5,1\}^{k-1},\eend_0]$.
Further, for these values of $z$, $\Phi$  moves the first entry in $\eend_0$ from an even place to an odd place, and hence, in accordance with  Remark~\ref{rmk:evenodd}, takes an ascending stair to a descending one. 
\end{proof}

\begin{rmk}  \label{rmk:symm} \normalfont
(i) Though they might seem similar, the two involutions 
 $\Phi$ and $\Psi$  have very different effects.  As shown by part (ii) of Corollary~\ref{cor:symm},  $\Phi$ fixes the center of the  blocking class $\bB^U_0$ and  interchanges its two staircases,  both of which exist for values of $b> 1/3$.  On the other hand,  the fixed point of $\Psi$ 
is not the center of a blocking class, and it takes staircases for $b>1/3$ to staircases for $b< 1/3$.
Similarly, $Sh$ takes staircases for $b>1/3$ with centers in $(6,\infty)$ to staircases for $b<1/3$ with centers in $({35}/6, 6)$.    However, the transformation $\Sh^2: = Sh\circ Sh$ should take staircases for $b>1/3$ (resp.\ $b<1/3$) to staircases for $b$ in the same interval.

\MS

\NI  (ii)    The maps $\Phi, \Psi, Sh = \Phi\circ \Psi$ generate a set of fractional linear transformations that act on the centers of the blocking classes and of the staircase steps.   
 Corollary~\ref{cor:symm} shows that there has to be some associated action on the other two entries $d,m$ of a perfect blocking class.  However, as yet this is not fully understood.
\MS

\NI (iii) We conjecture that every  $2$-periodic staircase for $b\in (1/3,1)$ is one of the following:
\begin{itemize}\item[$\bullet$]   the staircases $\Ss^U_{\bullet,n}$ in Theorem~\ref{thm:U} with centers in $(6,\infty)$;
\item[$\bullet$]  their images under $\Phi$ with centers in $(  35/6,6)$;
\item[$\bullet$]  the images of these two families under $Sh^{2i}, i=1,2\dots$, with centers in the disjoint intervals
$Sh^2\bigl(( {35}/6,6)\bigr) =({1189}/{204}, {35}/6)$,
$Sh^4\bigl(( {35}/6,6)\bigr)$, and so on.
\end{itemize}
Similarly, the $2$-periodic staircase for $b\in [0,1/3)$ should be formed from the staircases $\Psi\bigr(\Ss^U_{\bullet,n}\bigr)$ with centers in $(6,7)$ by involutions and shifts.  
 This idea, together with many other related results, will be developed in our next paper. 
 We have also found many potential staircases of period  $2m, m>1,$ and have conjectural description of them as well.   In these staircases the repeated portion of the continued fraction expansion of the centers $p_{k}/q_k$ has length $2m$, and turns out to consist of a sequence of integers of the form $(2n+1,2n+3,\dots, 2n + 4m - 1)$ in some order. Thus these staircases are still in some way derived from the Fibonacci staircases.  
 \hfill$\er$
\end{rmk}

\subsection{Proof of Theorems~\ref{thm:U} and~\ref{thm:block}}\label{ss:thmU}

Here is the strategy for proving Theorems~\ref{thm:L},~\ref{thm:U} and ~\ref{thm:E}.

\begin{itemize}\item{\bf Step 1:} Check that the classes $\bB_n$ are quasi-perfect.
\item {\bf Step 2:}   Prove that the classes in the two associated families $\Ss_\ell, \Ss_u$ are perfect.
\item {\bf Step 3:}   Check that the conditions in  Theorem~\ref{thm:blockrecog}  hold, and conclude that each $\bB_n$ is a perfect blocking class. 
\item {\bf Step 4:} Check that each pre-staircase $\Ss_\ell, \Ss_u$ satisfies one of the conditions in Theorem~\ref{thm:stairrecog}, 
and conclude that both are staircases with the given properties.
\end{itemize}

We will begin by proving Theorem~\ref{thm:U}, since this is the most straightforward.
As we will see, in the cases at hand only Steps 2 and 4 require significant argument.
The most computational part of the argument is the proof that the pre-staircase classes are perfect, rather than just quasi-perfect, which involves showing that they reduce correctly under Cremona moves.  
 These proofs are all given  in \S\ref{ss:reduct}.  As we will see there,  the proofs for the different pre-staircases are closely related. 

\begin{rmk}\label{rmk:polyn} \normalfont
The  arguments given below reduce many of the proofs to checking some numerical identities that are polynomial in $n$ and of small degree $d$ (less than $10$ or so).  Therefore
if one verifies these identities by computer for $d+1$ values of  $n$,
 then they will hold for all $n$.
 \hfill$\er$
\end{rmk}

\NI {\bf Step 1:}  For $n\ge 0$, the class $$
\bB_n = \bigl(n+3,n+2; 1^{\times(2n+6)}\bigr)
$$ has $d=n+3, m = n+2, p = 2n+6, q=1$ and  evidently satisfies the Diophantine conditions $3d=m + p+ q$, $d^2 - m^2 = pq-1$.  Hence it is quasi-perfect with center  $2n+6$.  

\MS

\NI {\bf Step 2:}  
 We begin by establishing the following lemma.

\begin{lemm}\label{lem:Cr0} {\rm(i)}  For $k\ge 1$, let $ [2n+5;2n+1,\{2n+5,2n+1\}^{k-1},2n+4]$ have (integral) weight expansion 
$$ \bigl(a_{n,k}^{\times (2n+5)}, b_{n,k}^{\times (2n+1)}, (a_{n,k}-(2n+1)b_{n,k})^{\times (2n+5)},\dots, 1^{\times (2n+4)}\bigr)
$$
and define \begin{align}\label{eq:recur2}
a_{n,0} = 1,\quad b_{n,0} = -1, \quad a_{n,-1} = 2n+2.
\end{align}
Then
\begin{itemize}\item[{\rm(a)}]  $b_{n,1} = 2n+4$, $a_{n,1} = (2n+1)(2n+4) + 1$.
\item[{\rm(b)}]   we have
\begin{align}\label{eq:recur5}
b_{n,k+1} = (2n+5) a_{n,k} + b_{n,k},\quad 
a_{n,k} = (2n+1)b_{n,k} + a_{n,k-1}  \qquad \mbox{ if } k\ge 0.
\end{align}

\item[{\rm(c)}]   For each $n$, the quantities $a_{n,\bullet},b_{n,\bullet}$
satisfy the recursion relation 
\begin{align}\label{eq:recur1}
x_{n,k+1} = (4n^2 + 12 n + 7) x_{n,k} - x_{n,k-1}, \quad k\ge 1.
\end{align}
\end{itemize}

\NI {\rm (ii)}   For $k\ge 1$, let $[2n+5;2n+1,\{2n+5,2n+1\}^{k-1},2n+5, 2n+2]$ have (integral) weight expansion $$
\bigl({a'_{n,k}}\!\!\!^{\times (2n+5)},{b'_{n,k}}\!\!\!^{\times (2n+1)}, (a'_{n,k}-(2n+1)b'_{n,k})^{\times (2n+5)},\dots, 
(2n+2)^{\times (2n+5)},1^{\times (2n+2)}\bigr).
$$ 
and define
\begin{align}\label{eq:recur2'}
a'_{n,0} =  2n+2,\quad b'_{n,0} = 1,\quad a'_{n,-1} = 1 
\end{align}
Then
$$b'_{n,1} = (2n+2)(2n+5) + 1,\qquad a'_{n,1} = (2n+1)\bigl((2n+2)(2n+5) + 1\bigr) + (2n+2);
$$
and \eqref{eq:recur5}, \eqref{eq:recur1} hold for $a'_{n,\bullet}, b'_{n,\bullet}$ as in {\rm (i)}.
\end{lemm}  
\begin{proof}    First we consider case (i).  
As explained in Example~\ref{ex:ctfr0},  the integral weight expansion
$(x_0^{\times \ell_0}, x_1^{\times \ell_1}, \dots, x_N^{\times \ell_N})$  of $p/q = [\ell_0;\ell_1,\dots,\ell_N]$ 
satisfies the identities
$x_{k+1} = x_{k-1} - \ell_k x_k$.    Assuming that one knows the multiplicities $[\ell_0;\ell_1,\dots,\ell_N]$,
one can read this identity  in either direction.  If one knows $ p,q$ then $x_0=q$ so that $x_1 = p-\ell_0 q$, and then one can determine $x_2, x_3, $ and so on.    On the other hand, since we are dealing with the integral expansion the last weight is $1$, and hence we have $x_N = 1, x_{N-1}  = \ell_N$ which determines $x_{N-2}, x_{N-3}$ and so on.
The latter is the relevant procedure here since going from $a_{n,k}, b_{n,k}$ to $a_{n,k+1}, b_{n,k+1}$ means adding an extra pair $(2n+5,2n+1)$ to the continued fraction.  
For example, the integral weight expansion of $[2n+5; 2n+1, 2n+4]$ is $$
(a_{n,1}^{\times (2n+5)}, b_{n,1}^{\times (2n+1)}, 1^{\times (2n+4)}).
$$
This proves (i:a), and (i:b) holds by similar reasoning.
One can check that  the relation also holds when $k=0$ with the given definitions of
$a_{n,0},b_{n,0}$ and $a_{n,-1}$. 
\MS

Finally, we prove by induction on $k\ge 1$ that the numbers $a_{n,k}, b_{n,k+1}$ satisfy \eqref{eq:recur1}.  The base case for $a_{n,1}$ follows from the definitions of $a_{n,0}, a_{n,-1}$ in \eqref{eq:recur0}, while the claim for $b_{n,2}$ holds because
\begin{align*}
b_{n,2} &= (2n+5)\bigl((2n+1)(2n+4) +1\bigr) +  (2n+4)\quad\mbox{ by  (b)}\\
& = (2n+4)(4n^2 +12 n + 6) + (2n+5)\\
& = (4n^2 +12 n + 7) (2n+4) + 1 = (4n^2 +12 n + 7) b_{n,1} - b_{n,0}.
\end{align*}
The inductive step then follows from the linear relations $b_{n,k+1} = (2n+5) a_{n,k} + b_{n,k}$ and
 $a_{n,k+1} = (2n+1)b_{n,k+1} + a_{n,k}$ which were proved in (i)(b).
 This proves (i).
 
 The proof of (ii) is similar, and is left to the reader.  \end{proof}

\MS

\begin{lemm}\label{lem:otherend}  Let $\Ss$ 
 be a pre-staircase  with   linear relation  $(2n+3)d_k = R_1 p_k + R_2 q_k$, where $R_1,R_2 \in \Z[n]$, the ring of polynomials in $n$ with integer coefficients,  and with steps 
at the points 
\begin{align*}
p_k/q_k & = [\ell_0;\ell_1,\dots,\ell_r, \{2n+5, 2n+1\}^k, \eend_n], \quad \eend_n: = 2n+4, \ell_r\in  \Z[n],
\end{align*}
and  other coefficients   $d_k,m_k$.  Further,  let $\Ss'$ be the corresponding pre-staircase   with the same linear relation  $(2n+3)d'_k = R_1 p'_k + R_2 q'_k$ and steps $p_k'/q_k'$ defined as above but with 
$\eend_n = (2n+5,2n+2)$.  Then
\begin{itemize} \item[{\rm (i)}]  $p_k,q_k\in  \Z[n]$, and 
$(2n+3)$ divides $R_1 p_k + R_2 q_k$  if and only if  it divides
$R_1 p'_k + R_2 q'_k$.  Moreover, this holds for all $k$ if and only if it holds for $k=0$.
\item[{\rm (ii)}]  $m_1 d_0 - m_0 d_1 = m'_1 d'_0 - m'_0 d'_1.$
\end{itemize}
\end{lemm}
\begin{proof}  
By Lemma~\ref{lem:Cr0}~(i) 
$$
q_k\bw(p_k/q_k) = \bigl(\dots,\ (\ell_r b_{n,k+1} + a_{n,k}^{\times \ell_{r-1}},\ b_{n,k+1}^{\times \ell_r}, a_{n,k}^{\times 2n+5}, \dots \bigr).
$$

Therefore, by induction on $r$, one can see that there are polynomials  $C_{1}, \dots, C_{4}$  that depend only  $\ell_0,\dots, \ell_r$ such that
$$
q_k = C_{1} b_{n,k+1} + C_{2} a_{n,k},\quad p_k =C_{3} b_{n,k+1} + C_{4} a_{n,k}.
$$
Similarly, we have
$$
q'_k = C_1 b'_{n,k+1} + C_2 a'_{n,k},\quad p'_k =C_3 b'_{n,k+1} + C_4 a'_{n,k},
$$
with the same  $C_i$.   Therefore, there are elements  $N_5, N_6\in \Z[n]$ such that
$$
R_1 p_k + R_2 q_k = N_5 b_{n,k+1} + N_6 a_{n,k},\quad R_1 p'_k + R_2 q'_k = N_5 b'_{n,k+1} + N_6 a'_{n,k}
$$
Next note that to prove (i), it suffices to  check that it holds for $k=0,1$ since the general case then follows by the recursion.  
Hence to prove (i) we must check that $(2n+3)$ divides $N_5 b_{n,k+1} + N_6 a_{n,k}$ for $k=0,1$  if and only if it also divides $N_5 b'_{n,k+1} + N_6 a'_{n,k}$ for $k=0,1$.

But by Lemma~\ref{lem:Cr0},  modulo $(2n+3)$ we have
$$
a_{n,0} \equiv 1, b_{n,1}\equiv 1,\;\;  a_{n,1} \equiv  - 1,  b_{n,2}\equiv -1,\qquad   
a'_{n,0} \equiv -1, \; b'_{n,1}\equiv -1,\;  a'_{n,1} \equiv   1, b'_{n,2}\equiv 1.  
$$
Hence in both cases we need precisely that $N_5+N_6$  is divisible by $2n+3$.   Since this identity follows already from the case $k=0$, this proves (i).
\MS

To prove (ii),  notice that given $p_k,q_k$ we define $d_k$ by the staircase relation.  Therefore we may write $d_k = C_7 a_{n,k} + C_8 b_{n,k+1}$, where $C_7,C_8$ are (possibly rational, but in practise polynomial) functions of $n$ that depend also on $\ell_0,\dots,\ell_r$.  When $k=0,1$, \eqref{eq:recur5}  gives an explicit linear expression for  
$b_{n,k+1}$ in terms of $a_n, b_n$, with coefficients that also depend on $n$ (but are independent of $k$).  Therefore, for $k=0,1$  we can write  $d_k = N_1 a_{n,k} + N_2 b_{n,k}$ where $N_1,N_2$ are functions of $n,
\ell_0,\dots,\ell_r$.   Since  $m_k: = 3d_k-p_k-q_k$, a similar argument shows that there are functions $N_1,\dots,N_4$ of $n, \ell_0,\dots,\ell_r$ such that the first two lines below hold.  
\begin{align*}
&d_0 = N_1 a_{n,0} + N_2 b_{n,0}, \quad m_0 =  N_3 a_{n,0} + N_4 b_{n,0}\\
&d_1 = N_1 a_{n,1} + N_2 b_{n,1}, \quad  m_1 = N_3 a_{n,1} + N_4 b_{n,1}\\
&d'_0 =N_1 a'_{n,0} + N_2 b'_{n,0}, \quad m'_0 =   N_3 a'_{n,0} + N_4 b'_{n,0}\\
&d'_1 = N_1 a'_{n,1} + N_2 b'_{n,1}, \quad m'_1 =   N_3 a'_{n,1} + N_4 b'_{n,1}.
\end{align*}
The second two lines then hold with the same constants since \eqref{eq:recur5}  also holds in this case.
Hence 
\begin{align*}
& m_1d_0 - m_0 d_1 = (N_2N_3 - N_1 N_4)( a_{n,1}b_{n,0} - a_{n,0}b_{n,1})\\
&m'_1d'_0 - m'_0 d'_1 = (N_2N_3 - N_1 N_4)( a'_{n,1}b'_{n,0} - a'_{n,0}b'_{n,1}).
\end{align*}
Therefore it suffices to check that 
$$
a_{n,1}b_{n,0} - a_{n,0}b_{n,1}=a'_{n,1}b'_{n,0} - a'_{n,0}b'_{n,1}.
$$
But by Lemma~\ref{lem:Cr0} both sides equal $-(4n^2 + 12n + 9)$.
\end{proof}

\begin{lemm}\label{lem:dquad}  Suppose that the positive integers $d_k, m_k, p_k,q_k$ are defined for $k\ge 0$ and satisfy the recursion 
$$
x_{k+1} = R x_k - x_{k-1},\quad k\ge 1.
$$
Then the identity
$d_k^2 - m_{k}^2 = p_kq_k - 1$ holds for all $k$, if and only if it holds for $k = 0,1$, and also
\begin{align}\label{eq:newid}
2(d_0d_1 - m_0m_1) = (p_1q_0 + p_0q_1) - R.
\end{align}
Hence it holds for all $k$ if and only if it holds for $k = 0,1,2$.
\end{lemm}
\begin{proof}  Abbreviate $x: = x_k, x': = x_{k-1}$ so that $x_{k+1} = Rx-x'$.  Then observe that
\begin{align}\label{eq:recur3}
&\Bigl(d^2 - m^2 = pq - 1, \;\;  (d')^2 - (m')^2 = p'q' - 1
\Bigr) \Longleftrightarrow \\ \notag
&\qquad\qquad \Bigl( (Rd-d')^2 - (Rm-m')^2 = (Rp-p')(Rq-q') - 1\Bigr),
\end{align}
exactly if
$$
2(dd' - mm') = (pq' + p'q) -R,
$$
But if the latter equation holds, we have
\begin{align*}
2\bigl((Rd-d') d - (Rm-m')m\bigr)& =  2R(d^2-m^2) - 2(dd' - mm') \\
& = 2R (pq - 1) + R -  (pq' + p'q)\\
& = - R + (Rp-p')q + (Rq-q')p.
\end{align*}
Hence, arguing inductively, we see that  \eqref{eq:newid} implies 
$$
2(d_{k+1}d_k - m_{k+1}m_k) = -R + (p_kq_{k+1} + p_{k+1}q_k),\quad \mbox{ for all } k\ge 0.
$$
Thus, the calculation in \eqref{eq:recur3} implies that $d_k^2 - m_{k}^2 = p_kq_k - 1$ holds for all $k
\ge 0$.
\end{proof}

\begin{lemm}\label{lem:SsUellu} \begin{itemize}
\item[{\rm (i)}]
	For each $n \ge 1$, the classes  in the ascending pre-staircase $\Ss^U_{\ell,n}$ are 
	integral and satisfy the linear Diophantine identity.
\item[{\rm (ii)}]
For each $n \ge 0$, the classes  in the descending pre-staircase $\Ss^U_{u,n}$ are 
	integral and satisfy the linear Diophantine identity.
\end{itemize}	
\end{lemm}
\begin{proof}  First consider  case (i) with ending $\eend_n = 2n+4$. 
Then $\bE_{\ell,n,0}^U$ has center $2n+4$, and one can calculate the corresponding degree $d$ by
using the staircase relation $$
(2n+3)d_{\ell,n,k}^U = (n+1) p_{\ell,n,k}^U + (n+2)q_{\ell,n,k}^U.
$$
When $k=1$ we may use Lemma~\ref{lem:Cr0}~(i) to obtain $$
q_{\ell,n,1}^U = a_{n,1},\;\;\quad  p_{\ell,n,1}^U = (2n+5)a_{n,1} + b_{n,1}.
$$ 
We compute $d^U_{\ell,n,1}$ using the relation, and then define $m^U_{\ell,n,1}$ by the linear Diophantine identity.
This gives
 the following:
\begin{align}\label{eq:SsUell}
&\bigl(d^U_{\ell,n,0}, m^U_{\ell,n,0}, p^U_{\ell,n,0},q^U_{\ell,n,0}\bigr) =\bigl(n+2,n+1, 2n+4,1\bigr)
\\ \notag
&d^U_{\ell,n,1} = 4n^3 + 20 n^2 + 30 n + 13,\quad m^U_{\ell, n,1} = 4n^3 + 16 n^2 + 18 n + 5.\\ \notag
& p^U_{\ell,n,1} = 8n^3 + 40 n^2 + 62 n + 29,\quad q^U_{\ell,n,1} =   4n^2+10n + 5,
\end{align}
The  tuples for $k\ge 2$ are then defined using the recursion.
Since $d^U_{\ell,n,k}$ is an integer for $k=0,1$, the recursion implies that it is an integer for all $k$.  
The tuple satisfies the linear Diophantine relation by definition.

The claims when 
 $\eend_n = (2n+5,2n+2)$ follow by arguing similarly, using Lemma~\ref{lem:otherend}~(i).  This proves (i).
 \MS
 
The steps of $\Ss^U_{u,n}$ have centers $[2n+7; \{2n+5,2n+1\}^k,\eend_n]$ for $k\ge 0$, and 
the relation is
$$
(2n+3)d^U_{u,n,k} = (n+2)p^U_{u,n,k} - (n+4)q^U_{u,n,k}.
$$
Since the staircase $\Ss^U_{\ell,n}$ has steps with centers $$
[\{2n+5,2n+1\}^k,\eend_n] = q^U_{\ell,n,k}\ \bw\Bigl(\frac{p^U_{\ell,n,k}}{q^U_{\ell,n,k}}\Bigr),
$$
it follows  that\footnote
{ 
In general, if $[\ell_0; \ell_1,\dots, ] =  p/q$ then   $[s; \ell_0, \ell_1,\dots,] = (sp+q)/p$.}
$$
q^U_{u,n,k}  = p^U_{\ell,n,k} , \quad p^U_{u,n,k} = (2n+7)p^U_{\ell,n,k} + q^U_{\ell,n,k}.
$$
Therefore
\begin{align*}
R_1 p^U_{u,n,k} + R_2 q^U_{u,n,k} &=  (n+2)\bigl((2n+7)p^U_{\ell,n,k} + q^U_{\ell,n,k}\bigr) - (n+4)p^U_{\ell,n,k}\\
&\equiv (n+1) p^U_{\ell,n,k} + (n+2)q^U_{\ell,n,k} \pmod{(2n+3)},
\end{align*}
and hence is divisible by $2n+3$ by (i).   Again, the linear Diophantine relation holds by the definition of $m^U_{u,n,k}$.
 For later use, we note that 
\begin{align}\label{eq:SsUu}
\bigl(d^U_{u,n,0}, m^U_{u, n,0}\bigr)& =\bigl(2n^2 + 11n +14,\;\; 2n^2 + 9n + 9\bigr)
\\ \notag
d^U_{u,n,1} &= 8n^4 + 68n^3 + 202 n^2 + 245n + 100,\\ \notag
 m^U_{u, n,1} &= 8n^4 + 60 n^3 + 158 n^2 + 171 n + 63.
 \end{align}
This completes the proof.
\end{proof}

To complete  Step 2 we must show that the classes are perfect.  This proof is deferred to \S\ref{ss:reduct}; see Proposition~\ref{prop:reduct}.
The quadratic Diophantine equality then follows (though one could also prove it by an inductive argument based on Lemma~\ref{lem:dquad}.)  
 \MS

\NI {\bf Step 3:}   We must check that the conditions in Theorem~\ref{thm:blockrecog} hold.  
The fact that $3+2\sqrt 2< P_\ell/Q_\ell < p/q < P_u/Q_u$ follows immediately from the continued fraction expansions of $P_\ell/Q_\ell , P_u/Q_u$.    See Remark~\ref{rmk:ctfr}, which explains the somewhat subtle ordering of numbers that are given by continued fractions, and Remark~\ref{rmk:evenodd} which explains how to check whether a staircase ascends or descends.  The claim that $\ell(p/q) < \ell(z)$ for all $z\in \bigl(P_\ell/Q_\ell , P_u/Q_u\bigr)$ is  very straightforward in the case of the staircases $\Ss^U_{\bullet,n}$, since  $p/q$ is an integer and $\ell(p/q) = p$.  
We also must check that $M_\bullet/D_\bullet \ne 1/3$ for $\bullet = \ell,u$.
Since $M_\bullet, D_\bullet$ are both numbers of the form $a + b\sqrt{\si}$ where $a,b\in \Q$ it suffices to check that rational part $a$ of $3M_\bullet$ is not equal to that of $D_\bullet$.  Therefore, by \eqref{eq:recurX} it suffices to check that in each staircase $3m_0 \ne d_0$, a fact that is immediate from the formulas in   \eqref{eq:SsUell} and \eqref{eq:SsUu}. 
Thus condition (i) in Theorem~\ref{thm:blockrecog} holds,
and  conditions (ii) and (iii)
can be verified by comparing the given linear relations with the parameters of $\bB_n^U$.
\MS

\NI {\bf Step 4:}  By Step 2
 and Section \ref{ss:reduct}  we know that for each $n$ the pre-staircase classes are perfect for large $k$, but it remains to check that they are live at the appropriate limiting value for $b$.
This  requires considerable work, and
again we begin with a general result.
\MS

\begin{lemm}\label{lem:DMineq} 
	 Consider a pre-staircase with classes $\bigl(d_k,m_k;q_k\bw(p_k/q_k)\bigr)$, where the ratios $b_k: =m_k/d_k$ 
have  limit $b_\infty$, and let the constants $D,D',D'',M,M',M'', \si: = \si_n$ be as in \eqref{eq:recurX}, with $x_k = d_k, m_k$ respectively. 
\begin{itemize}\item[{\rm (i)}]  Suppose that $M\ov D - \ov M D\ne 0$.  Then the $b_k$ are strictly increasing iff
$$
M\ov D - \ov M D = 2\sqrt{\si_n}(M''D'-M'D'')> 0,
$$
and otherwise they are strictly decreasing.
\item[{\rm (ii)}]  if $M''D'-M'D'' > 0$, $b_\infty<r/s \le 1$,  and
\begin{align}\label{eq:DMineq}
|M''D' - M'D''| \le \frac{sD-rM}{2\sqrt\si\, |rD-sM|},
\end{align}
then
 there is $k_0$ such that
$$
\frac{m_k}{d_k} \le b_\infty = \frac MD  \le 
\frac{s+ m_k(rd_k-sm_k)}{r+d_{k}(rd_k-sm_k)},\quad \mbox { for } \;\; k\ge k_0.
$$
\item[{\rm (iii)}]  
if $M''D'-M'D'' < 0$, $b_\infty>r/s > 0$, and   \eqref{eq:DMineq} holds,
then  there is $k_0$ such that
$$
b_k: = \frac{m_k(sm_k-rd_k)-s}{d_k(sm_k-rd_k) -r}\le  b_\infty = \frac MD  \le \frac{m_k}{d_k}, \quad  \mbox { for } \;\; k\ge k_0, 
$$
\end{itemize}
 \end{lemm}
\begin{proof}  Write $$
m_k = M\la^k + \ov M\, \ov \la^k, \quad d_{k} = D\la^k + \ov D\, \ov \la^k, 
$$
as in Lemma~\ref{lem:recur}.  
Then
\begin{align*}
b_k: = \frac{m_k} {d_k} & = \frac {M\la^k + \ov M\, \ov \la^k}{D\la^k + \ov D\, \ov \la^k} < 
\frac {M\la^{k+1} + \ov M\, \ov \la^{k+1}}{D\la^{k+1} + \ov D\, \ov \la^{k+1}} = \frac{m_{k+1}} {d_{k+1}}
\end{align*}
if and only if
\begin{align*}
\bigl(M\la^k + \ov M\, \ov \la^k\bigr)\bigl(D\la^{k+1} + \ov D\, \ov \la^{k+1}\bigr) < \bigl(M\la^{k+1} + \ov M\, \ov \la^{k+1}\bigr)\bigl(D\la^k + \ov D\, \ov \la^k\bigr).
\end{align*}
This is equivalent to
\begin{align*}
\ov M  D \la + M \ov D\, \ov \la\; <\; \ov M D \ov \la  + M \ov D \la,
\end{align*}
and hence to the condition
\begin{align*}
(M \ov D - \ov M D)\la \;>\; (M \ov D - \ov M D)\ov\la.
\end{align*}
Since $\la > \ov \la > 0$, this holds exactly if $M \ov D - \ov M D> 0$.
\MS

 Now write $M = M' + M''\sqrt\si, D = D'+D''\sqrt\si$ as in Remark~\ref{rmk:recur}.  Then
 \begin{align*}
 M\ov D - \ov M D & = (M' + M''\sqrt\si)(D' - D''\sqrt\si) - (M' - M''\sqrt\si)(D' + D''\sqrt\si)\\
 &
 = 2\sqrt\si(M''D' - M'D'').
\end{align*}
 This proves (i). \MS

To prove (ii)  we must check that
\begin{align*}
& rM + M(D\la^k+\ov D \, \ov \la^k)\bigl(r(D\la^k+\ov D \, \ov \la^k)-s(M\la^k+\ov M \, \ov \la^k)\bigr) \\
 &\qquad\qquad \le 
 sD + D(M\la^k+\ov M \ \ov \la^k)\bigl(r(D\la^k+\ov D \, \ov \la^k)-s(M\la^k+\ov M \, \ov \la^k)\bigr).
\end{align*}
The coefficients of $\la^{2k}$ are the same on both sides, while, after a little manipulation, the constant term  gives the inequality
 $$
 (M\ov D - \ov M D)(rD-sM) \le {sD-rM}.
 $$
 Since $b_\infty: =  M/D< r/s \le 1$ by assumption,  we have
 $rD-sM >0$, and $sD-rM>0$ since $D>M, s\ge r$.
 The further assumption  $$
  M\ov D - \ov M D = 2\sqrt\si(M''D'-M'D'') > 0,
  $$
shows that the above inequality is equivalent to requiring that 
 $|M''D' - M'D''| \le  \frac{sD-rM}{2\sqrt\si (sM-rD)}$, as claimed.

 This completes the proof of (ii).  The proof of (iii) is almost identical, and is left to the reader.
\end{proof}

\begin{rmk}\label{rmk:DMineq}   \normalfont
(i)  The inequalities in Lemma~\ref{lem:DMineq} can be simplified because, in the notation of \eqref{eq:recurX} we have
$
X = X'+X''\sqrt{\si_n}$ 
 where 
\begin{align}\label{eq:recurX1}
X' = \frac {x_0}2, \;\; X'' = \frac{2x_{1} - x_{0}(\si_n+2)}{2(2n+3)\si_n},\quad \si_n = (2n+1)(2n+5).
\end{align}
Hence if the values of $d_{n,k}, m_{n,k}$ for $k=0,1$ are denoted
$d_k, m_k$, we have $$
D' = \frac {d_0}2, \;\; D'' = \frac{2d_{1} - d_{0}(\si_n+2)}{2(2n+3)\si_n},\;\;
M' = \frac {m_0}2, \;\; M'' = \frac{2m_{1} - m_{0}(\si_n+2)}{2(2n+3)\si_n}
$$
so that
\begin{align}\label{eq:DMin}
M''D' - M'D'' = \frac 1{2(2n+3)\si_n} \bigl(m_1d_0 - m_0 d_1\bigr).
\end{align}
Therefore, we just need to check the sign of $m_1d_0 - m_0d_1$, and 
the following reformulation of \eqref{eq:DMineq}:
\begin{align}\label{eq:DMin1} 
\frac{|m_1d_0 - m_0d_1|}
{2n+3} \le  \frac{\sqrt{\si_n}(sD-rM)}{|sM-rD|}. 
\end{align}
Note that this inequality only involves the entries in the first two terms of the pre-staircase.\MS
\MS

\NI (ii)  By Lemma~\ref{lem:otherend}~(ii), the quantity $m_1d_0 - m_0d_1$ does not depend on the chosen end $\eend_n$.  Further, the ratio $M/D$ is also independent of the end, since $P/Q$ is by definition, and $M/D = \acc^{-1}(P/Q)$.  
Therefore the right hand side of the inequality \eqref{eq:DMin} is also  independent of the choice of end. Hence our arguments below apply with both choices.
\MS

\NI (iii)  
We observe  that in all cases  encountered below 
 the expression $m_1d_0 - m_0 d_1$ is a multiple of
$2n+3$. Further, the calculations can sometimes be simplified by noting that the sign of $m_1d_0 - m_0 d_1$ determines whether $m_0/d_0$ is greater or less than $ m_1/d_1$.  Since the function   $x\mapsto  \frac{s-rx}{|sx-r|}$ 
is monotonic, it is sometimes enough to calculate 
the simpler expression $\frac{sd_0-rm_0}{|sm_0-rd_0|}$ instead of $\frac{sD-rM}{|sM-rD|}$.

\NI (iv) 
If $\Ss = (\bE_k)$ is a pre-staircase whose steps have centers $p_k/q_k$, then we saw in Proposition~\ref{prop:stairblock} that the classes $\bE_k$ are perfect blocking classes for large $k$.  The corresponding family of blocked $b$-intervals $J_{\bE_k}$ 
cannot include the limit $b_\infty$
and if the $b$ values are  $> 1/3$ (resp.\ $< 1/3$) these intervals ascend if and only if the centers ascend  (resp.\ descend).  Although, as we saw in Remark~\ref{rmk:block0}~(i), the values $b_k = m_k/d_k$ do not have to lie in these blocked   $b$-intervals,  they cannot be too far away.  Hence one would expect them to increase for ascending (resp.\ descending) staircases with $b$-values  $> 1/3$ (resp.\ $< 1/3$.)  
However, this does not seem to be true.  For example the ascending  staircases $\Ss^U_{\ell,n}$ considered in 
Lemma~\ref{lem:SsUellbest} have $b$-values in $(1/3,1)$ but yet the sequence  $b^U_{\ell,n,k}$ decreases.
Since  this implies that $b^U_{\ell,n,k}> b^U_{\ell,n,\infty}$, this implies that $b^U_{\ell,n,k}\notin J_{\bE^U_{n,k}}$.
A similar phenomenon happens with the ascending staircases  $\Ss^L_{\ell,n}, \Ss^E_{\ell,n} $ with $b$-values in $(0,1/3)$ 
and yet increasing $b^L_{\ell,n,k}, b^E_{\ell,n,k}$.  On the other hand, the descending staircases do not seem to exhibit this behavior, though in these cases it can require more work to eliminate potential  overshadowing classes. 
\hfill$\er$
\end{rmk}

\begin{lemm}\label{lem:SsUellbest}  The pre-staircase $\Ss^U_{\ell,n}$ satisfies condition (ii) in Theorem~\ref{thm:stairrecog} for all $n\ge 1$.
\end{lemm}

\begin{proof} By Remark~\ref{rmk:DMineq}~(ii) we may suppose that $\eend_n = 2n+4$.  
Then, using the notation in that remark,
 we have from  Lemma~\ref{lem:SsUellu} that
$d_0: = d^U_{\ell,n,0} = n+2, m_0: = m^U_{\ell,n,0} = n+1$ 
and
$$
d_1 = 4n^3 + 20 n^2 + 30 n + 13,\quad m_1 = 4n^3 + 16 n^2 + 18 n + 5.
$$
Hence 
\begin{align*}
m_1d_0 - m_0 d_1&=(4n^3 + 16 n^2 + 18 n + 5)(n+2) -  (4n^3 + 20 n^2 + 30 n + 13)(n+1)\\
& =-(2n+3).
\end{align*}

Therefore, by Lemma~\ref{lem:DMineq}~(iii), we must check that
\eqref{eq:DMin1} holds for suitable $r,s$, i.e. for each $n$ we need
$$
\frac{|m_1d_0 - m_0 m_1|}{2n+3} = 1 < \frac{\sqrt{\si_n} (sD-rM)}{sM-rD}.
$$
But the function $x\mapsto \frac{s-rx}{sx-r}$ decreases for $x\in (r/s,1]$ with minimum value $1$.
    Hence for each $n$, this inequality holds for all $r/s <M/D = b^U_{\ell,n,\infty} $.
    \MS
    
 Finally, we show that there is no overshadowing class of degree $d' < sD/(sM - rD)$.
    Fix $n$ and abbreviate $b_n: = b^U_{\ell,n,\infty}$.    
    We first claim that $b_n>1/2$ for all $n\ge 1$. 
    Indeed, $ a^U_{\ell,1,\infty} = [7;3;\{7,3\}^\infty]>7$ while one can calculate using \eqref{eq:acc1} that
     $\acc_U^{-1}(7) \approx 0.596$.
   Hence, for each $n$ we may  choose $r/s>0$ so  that 
    $s/(sb_n- r) < 2$, or equivalently so that $0<2r< s(2b_n - 1)$.  
    Thus any overshadowing class would have to have degree $1$, and hence does not exist.
  \end{proof}

It remains to consider the descending pre-staircases $\Ss^U_{u,n}$.  
Again, it suffices by Remark~\ref{rmk:DMineq}~(ii) to treat the case $\eend_n = 2n+4$. 
It is convenient to deal first with the case 
$n=0$.
\MS

\begin{EXAMple}\label{ex:SsUu0}\normalfont
When $n=0$ the classes $\bE^U_{u,0,k}$ for $k=0,1$ have centers $[7;4]$ and $[7;5,1,4]$, and hence have parameters 
$$
(d,m, p,q) = (14,9,29,4),\quad (100,63, 208,29)\quad\mbox{ with }\ \si = 5.
$$
Since $$
X: = X' + X''\sqrt\si: =\frac {x_0}2 + \frac{2x_1 - x_0(\si+2)}{2(2n+3)\si} \sqrt{\si},
$$
 we have
\begin{align*}
& D = 7 + \frac{17}5 \sqrt 5 \approx 14.6,\qquad M = \frac 92 + \frac{21}{10}\sqrt 5 \approx 9.2, 
\end{align*}
Hence $b_\infty =  M/D 
\approx 0.6297 $, and 
$$m_1d_0 - m_0 d_1 = 63\cdot 14 - 9\cdot 100 = -18,$$
so that the sequence $m^U_{u,0,k}/d^U_{u,0,k}$ decreases as expected. 
Since $b_\infty >  1/2$ we may take   $r=1,s=2$, and one can check that this choice
gives the estimate $\frac{\sqrt 5 (2D-M)}{(2M-D)}> 6$ required by \eqref{eq:DMin1}. 

Further, since $b_\infty>5/8$ we have $2/(2b_\infty - 1) < 2/0.25 < 8$, so that  we only need rule out 
overshadowing classes of degree $d'< 8$ with $m' < d'/2$.
But  if $d' = 7$ this means that $m'\le 3$, 
so that  $d'b_\infty - m' > 1$.
 Hence, by Lemma~\ref{lem:perfect}~(i),  such a class cannot be obstructive for  $b$ in a nonempty interval of the form $ (b_\infty, b_\infty + \eps)$, and so is not overshadowing. 
  A similar argument rules out the cases $d' = 2,4,5,6$, while if $d'=3$ we must have $m' \le  1$ so that $\bE' = (3,1; 2,1^{\times 5})$ or  $(3,0; 2,1^{\times 6})$.
 But the centers of both these classes are less than the accumulation point $a_\infty$, so that they cannot overshadow.\footnote
 {
Notice that when $z> 6$ the obstruction from   $(3,1; 2,1^{\times 5})$ is no longer given by $z\mapsto \frac{1+z}{3-b}$.}
 Thus no overshadowing classes satisfy the given restrictions.  It follows that $\Ss^U_{u,0}$ is a staircase, by Theorem~\ref{thm:stairrecog}~(ii).
\hfill$\er$
\end{EXAMple}

\begin{lemm}\label{lem:SsUubest}  For each $n\ge 0$, the  pre-staircase $\Ss^U_{u,n}$  satisfies
condition (ii) in Theorem~\ref{thm:stairrecog}.
\end{lemm}

\begin{proof}  Again we carry out the argument for $\eend_n = 2n+4$, leaving the other case to the reader.    Since the case $n=0$ was treated in
 Example~\ref{ex:SsUu0}, we will suppose that $n\ge 1$.
  By \eqref{eq:SsUu} we have
\begin{align*}
\bigl(d_0, m_0\bigr)& =\bigl(2n^2 + 11n +14,\;\; 2n^2 + 9n + 9\bigr) = \bigl((2n+7)(n+2),\;\; (2n+3)(n+3)\bigr)
\\ \notag
d_1 &= 8n^4 + 68n^3 + 202 n^2 + 245n + 100 \\ \notag
& = (4n^2 + 12n +7)d_0 + 2 = (\si_n + 2)d_0 + 2,\\ \notag
m_1 &= 8n^4 + 60 n^3 + 158 n^2 + 171 n + 63 = (\si_n + 2)m_k.
\end{align*}
Therefore
$$
m_1d_0-m_0d_1 = -2m_0  < 0,
$$
so that  by \eqref{eq:DMin} the $b_k$ decrease, and we are case (iii) of Lemma~\ref{lem:DMineq}.
Further,
\begin{align*}
\frac M D \le \frac{m_0}{d_0} = \frac{2n^2 + 9n+9}{2n^2 + 11n + 14}= \frac 1{1+y}, \quad\mbox{where }\  y: = \frac{2n+5}{2n^2 + 9n + 9}.
\end{align*}
Therefore, because the function $x\mapsto \frac{2-x}{2x-1}$ is decreasing for $1/2 < x\le 1$, we have 
\begin{align*} \frac{2D-M}{2M-D} & > \frac{2d_0-m_0}{2m_0-d_0} = \frac{1+2y}{ 1-y} \\
&  \ge 1+3y  = \frac {2n^2 + 15n+24}   {(2n+3)(n+3)} > \frac{2(n+4)}{2n+3}.
\end{align*}
Therefore,  because  $2n+2< \sqrt {\si_n} < 2n+3$, we have 
$$ \frac{|m_1d_0-m_0d_1|}{2n+3} = 2(n+3) \le 2(n+1)\frac{2(n+4)}{2n+3} \le  \sqrt {\si_n}\frac{2D-M}{2M-D} $$
where the first inequality uses  $ (n+3)(2n+3) \le 2(n+1)(n+4)$ for all $n\ge 1$.

This shows that condition \eqref{eq:DMin1}, or equivalently \eqref{eq:DMineq}, holds with $r/s = 1/2$.  

When $n=1$ we can use \eqref{eq:recurX1} and the above formulas for $(d_i,m_i)$ to calculate $b_\infty\approx 0.738> 0.7$.  Hence $2/(2b_\infty - 1)<  5$.  Therefore it remains to show there are no overshadowing classes of degree $d'\le 4$ and $m'/d' < 1/2$.  But the arguments in Example~\ref{ex:SsUu0}
still apply when $n>0$.  This completes the proof.  
\end{proof}
\MS

 \begin{proof}[Proof of Theorem~\ref{thm:U}]  This follows from Lemmas~\ref{lem:SsUellu},~\ref{lem:SsUellbest}, and~\ref{lem:SsUubest},
 and Corollary~\ref{cor:CrU}, using the reasoning explained at the beginning of \S\ref{ss:thmU}.\end{proof}

\MS

 \begin{proof}[Proof of Theorem~\ref{thm:block}]  Most of the claims in this theorem are restated in Theorem~\ref{thm:U}, and hence are proved above.  It remains to check the  explicit formulas for the endpoints of $J_{\bB^U_n}$ and $I_{\bB^U_n}$:
\begin{align*}
\be_\ell(n) = \frac{(2n^2+6n+3)-\sqrt{\si_n}}{2n^2+6n+2},\quad \frac{(n+3)\left(3n+7+\sqrt{\si_n}\right)}{5n^2+30n+44} = \be_u(n),\\
\al_\ell(n) =  \frac{\sigma_n + (2n+3)\sqrt{\sigma_n}}{2(2n+1)}, \quad  6+\frac{\sigma_n + (2n+3)\sqrt{\sigma_n}}{2(2n+5)}  = \al_u(n).
\end{align*}
By Theorem~\ref{thm:blockrecog},  $\be_\ell(n) = M/D$ where $M,D$ are calculated for $\Ss^U_{\ell,n}$, while
$\be_u(n) = M/D$ where $M,D$ are calculated for $\Ss^U_{u,n}$.
Thus by Lemma~\ref{lem:SsUellbest}, and taking $\eend_n = 2n+4$ we have
  \begin{align*} 
   \be_\ell(n) = \frac
   {(n+1) (2n+3)\sqrt{\si_n} + (4n^3 + 16n^2 + 17n + 3)}
   {(n+2) (2n+3)\sqrt{\si_n} + (4n^3 + 20n^2 + 29n + 12)},
  \end{align*}
  and one can check that this does simplify to the desired expression.
Similarly, Lemma~\ref{lem:SsUubest} shows that
  \begin{align*} 
   \be_u(n) = \frac
   {(n+3)(2n+3) \bigl((2n+3)\sqrt{\si_n} + (\si_n + 2)\bigr)}
   {(n+2)(2n+7)) \bigl( (2n+3)\sqrt{\si_n} + (\si_n + 2)\bigr) + 4},
  \end{align*}
and one can again check that this does simplify to the desired expression.

The closed formulas for $\al_\ell(n), \al_u(n)$ can be obtained in a similar way from the ratios $P/Q$, or by simply calculating $\acc(\be_\ell(n)), \acc(\be_u(n))$.  Further details are left to the interested reader. 
\end{proof}

\subsection{Proof of Theorems~\ref{thm:L},~\ref{thm:E},~\ref{thm:stair} and~\ref{thm:blockstair}}\label{ss:thmLE} 
Since the statement of Theorem~\ref{thm:E} includes those of Theorems~\ref{thm:stair} and~\ref{thm:blockstair}, it suffices to prove the first two theorems.
\MS

We will first consider the staircases associated to the classes $$
\bB_n^L = \Bigl(5n,n-1; 2n\bw(\frac{12n+1}{2n})\Bigl), n\ge 1.
$$
  (Notice that the case $n=0$ makes no sense here.)
We follow the procedure laid out at the beginning of \S\ref{ss:thmU}.
Although we could make the arguments completely afresh, ignoring the connection between the classes $\bB^U_n$ and $\bB^L_n$, we will use the fact from
Corollary~\ref{cor:symm} 
 that the reflection $\Psi: w\mapsto \frac{6w-35}{w-6}$ takes the centers of the classes in the staircases  $\Ss_{\bullet,n}^U$ to those 
of  $\Ss_{\bullet',n}^L$, where $\ell':=u, u': = \ell$, i.e. it interchanges the ascending and descending staircases.

We saw in Example~\ref{ex:ctfr0} that the integers $p,q$ calculated from a continued fraction expansion $[\ell_0;\dots,\ell_N]$ are always coprime.  Further,  if $\gcd (p,q) = 1$ then $
\gcd(6p-35q,p-6q) = 1$
 since any common divisor of $6p-35q,p-6q$ also divides $6p-35q - 6(p-6q) = q$ and hence also divides $p$.
Since $\Psi^{-1} = \Psi$, this proves the following:
\begin{align}\label{eq:PsiPQ}
\Bigl(\gcd (p,q) = 1,\ \Psi(\frac pq)=\frac {\widehat p}{\widehat q} \Bigr) \Longleftrightarrow \ \Bigl( \gcd ({\widehat p},{\widehat q})=1,\  {\widehat p}=6p-35q, {\widehat q} = p-6q\Bigr).
\end{align}

\MS

  To prove Step 1 we simply need to check that the classes $\bB^L_n$ satisfy the Diophantine  equations ~\eqref{eq:diophantine}, which can be done by an easy computation.
  
We move on to  Step 2, which claims that 
the staircase classes are perfect.   The next lemma shows that they are well defined  (i.e. that the degree $d$ is always an integer), and satisfy the linear Diophantine identity.   The proof that they are perfect is given in Proposition~\ref{prop:CrEL}.  One could also use Lemma~\ref{lem:dquad} to check  the quadratic Diophantine identity; however,  since this follows from
 Proposition~\ref{prop:CrEL} we do not do that here.
 Note that there is an analog of Lemma~\ref{lem:SsL} when $\eend_n = (2n+5,2n+2)$, but in view of Lemma~\ref{lem:otherend} we do not need it.

\begin{lemm}\label{lem:SsL} \begin{itemize}\item[{\rm (i)}]
For $n\ge 2$ and $\eend_n = 2n+4$, the pre-staircase $\Ss^L_{u,n}$ can be extended by the class
 for $k = -1$ with coefficients
\begin{align} \label{eq:SsLu-1}
& \bigl(5(n-1),(n-2); (2n-2)\bw(\frac{12n-11}{2n-2})\bigr).
\end{align}
\item[{\rm (ii)}] The classes in  $\Ss_{\ell,n}^L$ and  $\Ss_{u,n}^L$ are well defined and satisfy the linear Diophantine identity for
$n\ge 1$ and all $k$.
\end{itemize}\end{lemm}
\begin{proof}   
The classes in  $\Ss_{u,n}^L$ have  centers
\begin{align*}
a^L_{u,n,k} &= [6;2n-1, 2n+1,\{2n+5,2n+1\}^k,\eend_n],\qquad n\ge 1, k\ge 0, \\
& = \Psi([2n+5; 2n+1, \{2n+5,2n+1\}^k,\eend_n] = \Psi(a^U_{\ell,n,k+1}),
\end{align*}
where the second identity holds by Lemma~\ref{lem:symm}~(i:b).  Notice here that the term in $k$ for $\Ss^L_{u,n}$ corresponds to the term in $k+1$ for $ \Ss^U_{\ell,n}$.  Therefore, there should be a term in
$\Ss^L_{u,n}$ with center $\Psi(\eend_n)$. When $\eend_n = 2n+4$, the center is 
$$
\Psi(2n+4) = \frac{6(2n+4) - 35}{2n+4-6} = \frac{12n-11}{2n-2}.
$$
  Therefore,
because the relation in $
\Ss^L_{u,n}$ is $$
(2n+3) d_{n,k} = -(n-1) p_{n,k} + (11n+2)q_{n,k},
$$
 this entry is
$\bigl(5(n-1),n-2; (2n-2)\bw(\frac{12n-11}{2n-2})\bigr)$.
This is a quasi-perfect class, as can be verified by direct calculation. 
Since the (Relation) holds by definition, to
complete the proof that this class can be added to $\Ss^L_{u,n}$ as in Remark~\ref{rmk:recur}~(ii), we must check that the (Recursion) holds for the triple $k = -1,0,1$, i.e. that
$$
x_1 = (\si+2) x_0 - x_{-1},\qquad x = d,p,q,m.
$$  
Since $d_\bullet, m_\bullet$ are linear combinations of $p_\bullet, q_\bullet$, it suffices to prove this for $p_\bullet, q_\bullet$.  But by definition $p_k, q_k$ are linear functions of
$p'_k,q'_k$, where $p'_k/q'_k = [\{2n+5,2n+1\}^{k+1}, 2n+4]$, 
 and $p_k',q_k'$ satisfy the recursion by Lemma~\ref{lem:Cr0}~(i).    This justifies our extending $\Ss^L_{u,n}$ by this class, and hence proves (i).

Since the linear Diophantine equality is always satisfied by definition of $m$, to prove (ii) we must check that $d$ as defined by the staircase relation is integral for each $n$.  By Lemma~\ref{lem:otherend}~(i) it suffices to do this in the case $\eend_n = 2n+4$, and for only two adjacent values of $k$ (either $k= 0,1$ or $k=-1,0$).
In  the case of $\Ss^L_{u,n}$  with $n\ge 2$, we have the integral vectors
\begin{align}\label{eq:SsLu0}
(d^L_{u,n,-1},m^L_{u,n,-1}, p^L_{u,n,-1},q^L_{u,n,-1})& = \bigl(5(n-1),n-2, 12n-11, 2n-2\bigr); \\ \notag
(d^L_{u,n,0},m^L_{u,n,0}, p^L_{u,n,0},q^L_{u,n,0}) & =  \bigl( 20n^3 + 40n^2 + 6n -1, 4n^3 + 4n^2 - 6n-1. \\ \notag
&\qquad  48n^3+ 100n^2 +22n -1,8n^3+ 16n^2 + 2n -1 \bigr).
\end{align}
Further, when $n=1$, although the tuple
$(0,-1;1,0)$ obtained by setting $n=1$ in \eqref{eq:SsLu-1} makes no sense geometrically, it does make numerical sense, and so $d^L_{u,1,k}$ is integral for each $k$ as well.

As for $\Ss^L_{\ell,n}$ with $\eend_n = 2n+4$, one can check that its initial classes  are
\begin{align}\label{eq:SsLell} 
&(d^L_{\ell,n,0},m^L_{\ell,n,0}) = \bigl(10n^2 + 25n + 13 ,\ 2n^2 + 3n\bigr)\\ \notag
&  (d^L_{\ell,n,1},m^L_{\ell,n,1})  = \bigl(40n^4 + 220 n^3 + 422 n^2 + 331n + 89,\ 8n^4 +36n^3 + 50n^2 + 21n\bigr).
\end{align}
Therefore $d^L_{\ell,n,k}$ is integral for all $k, n$ as claimed.
\end{proof}

Step 3 again follows immediately from the definitions.  (For more details about how to check this, see this step in \S\ref{ss:thmU}; it occurs just after Lemma~\ref{lem:SsUellu}.) 
Hence it remains to prove the analog of Lemmas~\ref{lem:SsUellbest} and~\ref{lem:SsUubest}.
Again it suffices to carry out the argument only for the case $\eend_n = 2n+4$ since the other case will follow by Remark~\ref{rmk:DMineq}~(ii).

\begin{lemm}\label{lem:SsLellbest} For all $n\ge 0$, the  pre-staircase $\Ss^L_{\ell,n}$  satisfies
condition (i) in Theorem~\ref{thm:stairrecog} with appropriate $r/s$.
\end{lemm}
\begin{proof} We first check the sign of $m_1d_0-m_0d_1$.
By \eqref{eq:SsLell} we have
\begin{align*}
m_1d_0-m_0d_1& = \bigl(8n^4 +36n^3 + 50n^2 + 21n\bigr)\bigl(10n^2 + 25n + 13\bigr)\\
& \qquad\quad  -
\bigl(40n^4 + 220 n^3 + 422 n^2 + 331n + 89\bigr)\bigl(2n^2 + 3n\bigr)\\
& = 4n^2+6n = (2n+3)2n> 0 \qquad \mbox { since } n>0,
\end{align*} 
Therefore we are in case (ii) of Lemma~\ref{lem:DMineq}, and, by \eqref{eq:DMin1}, need to find $r/s> M/D$
so that 
\begin{equation}\label{eq:rsineq}
\frac{\sqrt{\si} (sD-rM)}{rD-sM} > 2n, 
\end{equation}
where $M: = M_\ell(n), D: = D_\ell(n)$ are the values appropriate for  $\Ss^L_{\ell,n}$, and $\si: = \si_n$.
Since $(2n+2)^2< \si = (2n+1)(2n+5) <(2n+3)^2$, it suffices to have $\frac{sD-rM}{rD-sM} > 1$, or equivalently
$(s-r)D > (s-r)M$.    But this holds for all $0<r<s$, showing that  (\ref{eq:rsineq}) holds for all such $r, s$.
 Therefore, for each $n$ we are free to choose $1>r/s > M/D$ to minimize the degree $d'< sD/(rD-sM)$ of a potentially  overshadowing class. 
 
 We now claim that $M/D < 1/5$.  One could simply prove this by direct calculation.  Alternatively, recall that we have already proved that this staircase is associated to the blocking class $\bB^L_n$ so that $
 M/D$ is one of the endpoints (in fact the upper one) of the blocked interval $J_{\bB^L_n}$, which as one can readily check is a subset of $(0,1/5)$.    
  Hence we can choose any $r/s >1/5$.  
 In particular, if we take $r/s= 7/10$ then 
 $$
 \frac{ sD}{rD-sM} =  \frac{ s}{r-sM/D} < \frac{ s}{r-s/5} = 2.
 $$
 But the only exceptional classes of degree $d=1$ are $L-E_i-E_j$, which do not overshadow.  
 Hence there are no overshadowing classes, and the proof is complete.
\end{proof}

\begin{lemm}\label{lem:SsLubest} For all $n\ge 1$, the  pre-staircase $\Ss^L_{u,n}$  satisfies
condition (i) in Theorem~\ref{thm:stairrecog}.
\end{lemm}
\begin{proof}    
As we pointed out in \eqref{eq:md00} in cases when we can extend the staircase by a term with $k = -1$ we have
$m_0d_1 - m_1d_0  = m_{-1}d_0 - m_0d_{-1}$.  Therefore we can apply \eqref{eq:DMin} using the terms with $k =-1,0$.
By \eqref{eq:SsLu-1}  and \eqref{eq:SsLu0} we have
\begin{align*}
m_0d_{-1}-m_{-1}d_0 & = (4n^3 + 4n^2 - 6n-1)5(n-1) - (n-2)(20n^3 + 40n^2 + 6n -1)\\
& = 24n^2 + 38 n + 3 = (2n+3)(12n+1)> 0.
\end{align*}
Hence, by Lemma \ref{lem:DMineq} (i) and \eqref{eq:DMin}, for each $n$ the sequence $(b_{n,k})_{k\ge -1}$  increases,   which is what we would expect from a descending staircase with $b$-values $< 1/3$ (but see Remark~\ref{rmk:DMineq}~(iv).)

Next, we must check to see if \eqref{eq:DMin1}  holds with appropriate $r/s$.   Thus we need
\begin{equation}\label{eq:rsbounds}
\frac{m_0d_{-1}-m_{-1}d_0}{\sqrt{\si_n}(2n+3)} = \frac{12n+1}{\sqrt{\si_n}} < \frac {s-r\frac MD}{r-s\frac MD}.
\end{equation}
Since $2n+2< \sqrt{\si_n}$, we have $\frac{12n+1}{\sqrt{\si_n}} < 6$ for all $n\ge 1$.  However there are better upper bounds for small $n$:
\begin{align}
\frac{13}{\sqrt{\si_1}}< 2.84,\;\; \frac{25}{\sqrt{\si_2}} <3.73,\;\;  \frac{37}{\sqrt{\si_3}} <4.22\label{eq:ubs}
\end{align}
Notice also that the values $b_n: = b^L_{u,n,\infty} =M/D$ increase with $n$
because $\acc(b_n) = a^L_{u,n,\infty}$ decreases with $n$ (as one can see because the centers of the associated blocking classes $\bB^L_n$ decrease).  Further, for each $n$  we have  
$$\acc^{-1}_L(\frac{12(n-1)+1}{2(n-1)})< b_n< \acc_L^{-1}(\frac{12n +1}{2n}),$$
 since $\frac{12n +1}{2n}$ is the center point of the blocking class $\bB^L_n$ and $\acc_L^{-1}$ reverses orientation.
Therefore
\begin{equation}\label{eq:bbounds}
0< b_1 < \acc_L^{-1}(\frac{13}{2}) < b_2  <  \acc_L^{-1}(\frac{25}{4})  < b_3 < \acc_L^{-1}(\frac{37}{6})  <  b_4, 
\end{equation}
where, using \eqref{eq:acc1} to compute $ \acc_L^{-1}(\frac{12n +1}{2n})$ for $n = 1,2,3$, we find they are approximately $0.064, 0.121$, and $0.144$.

It turns out that if we choose $r/s =3/10$, we can satisfy the inequality
$\frac{12n+1}{\sqrt{\si_n}} < \frac {s-r b_n}{r-sb_n}$ for all $n$, while the requirement in Lemma~\ref{lem:perfect}~(i)
 that an
obstructive  class $\bE' = (d',m';\bbm')$  at $b_n$ have  $|b_nd'-m'|< 1$ has no suitable solutions if
 $m'/d' >3/10$ and $d' \le \frac{10}{3-10 b_n}$.  Thus there is no overshadowing class.

Here are the details.  Notice that when $0\le x< r/s < 1$ the function
$x\mapsto \frac {s-r x}{r-sx}$ increases.
Therefore $ \frac {10- 3 b}{3-10b} > 3$ for all $b$, which 
in view of \eqref{eq:ubs} gives the  bound in \eqref{eq:rsbounds} for $n=1$.
In fact, one can check that  \eqref{eq:rsbounds} holds for all $n$ because, using the  bounds \eqref{eq:bbounds} for $b_i$ and the  bounds in \eqref{eq:ubs} for 
$\frac{12n+1}{\sqrt{\sigma_n}}\le 6$, we have
\begin{align*}
 \frac {10- 3 b_2}{3-10b_2}> 4,\qquad   \frac {10- 3 b_3}{3-10b_3}>5,\qquad  \frac {10- 3 b_n}{3-10b_n}>6,\  \forall\;\; n\ge 4. 
\end{align*}
Next observe that the maximum degree of $d'$ of a potentially overshadowing class is $<\frac{10}{3-10 b_n} < 10$ 
because $b_n=M/D<1/5<3/10$.
If $d' = 7,8,9$ the inequality $m'/d'>3/10$ implies that $m'\ge 3$, so that $$
|bd' - m'| = m'-d'b \ge  3- 9/5 > 1.
$$
Therefore there are no solutions of these degrees.  
Similarly, one can check that there are no solutions if $d'=4,5$, so that $m'\ge2$, or if $d'=3, m'=2$. If $d'\le3$ with $m'=1$ then the only possible solution 
is $\bE'=(3,1;2,1^{\times5})$, but as pointed out in Example~\ref{ex:big} this class is not even obstructive at its break point $z=6$ when $b< 1/5$.

However, if
$d' = 6$, then the inequality  $m'/d' > 3/10$ is satisfied with  $m' = 2,3$, and if $m'=2$ there are $b<1/5$ such that
$|6b - 2|<\sqrt {1-b^2}$.   We rule this case out as follows.
The break point $a'$ of any overshadowing class $\bE'$  must be $> 6$, and hence have $\ell(a')\ge 7$. 
Further, because  $\bw (a')$ starts with a block of
length at least $6$, it follows  from Lemma~\ref{lem:obstruct}~(ii) that 
  the first $6$ entries of $\bbm'$ are $k^{\times 5}, k'$ where $|k'-k|\le 1$.  
  The identity $\sum m_i=3d'-m'-1=15$
    shows that $k=1$ or $2$, while the quadratic identity shows that $k\ne 1$.  
  Therefore we would have to have $\bE' = 
  (6,2;3, 2^{\times 6})$, with break point $7$; to see this notice that $7$ is the unique point $a>6$ with $\ell(a) = 7$.  Arguing as in the proof of Lemma~\ref{lem:munearc}, we find that
  $$
  \mu_{\bE',b}(z) = \frac{1 + 2z}{6-2b} <  \frac{1 + z}{3-b},\quad \mbox{ for } 6<z<7.
  $$
  But by \eqref{eq:MDvol}  when $z = \acc(b)\in (6,7)$ we have $V_b(z) =  \frac{1 + z}{3-b}$.  Hence the constraint $  \mu_{\bE',b}$ never meets the volume constraint at the accumulation point $z = \acc(b)$ and so cannot be an overshadowing class.
  This completes the proof.
\end{proof} 

\MS

\begin{proof}[Proof of Theorem~\ref{thm:L}]  This holds by 
Lemmas~\ref{lem:SsL},~\ref{lem:SsLellbest} and~\ref{lem:SsLubest} 
together with Proposition~\ref{prop:CrEL}, using the proof scheme explained at the beginning of \S\ref{ss:thmU}.
\end{proof}
\MS

Now consider the staircases $\Ss^E_{\bullet, n}$  with blocking classes 
$$
\bB_n^E =\bigl(5(n+3), n+4; (2n+6)\bw(\frac{12n+35}{2n+6})\bigr).
$$
Since we are given simple formulas for the components of $\bB_n^E$, one can readily check that these are Diophantine classes, and hence quasi-perfect.  

For Step 2, we must check that the staircase classes are quasi-perfect.   The argument in Lemma~\ref{lem:SsL} adapts in a straightforward way.  In particular, as we show in Lemma~\ref{lem:SsEellbest} below, the staircase $\Ss^E_{\ell,n}$ can again be extended by a class defined for $k=-1$, which simplifies the calculations. We will leave it to the reader to check the further details of this step.
 
 Step 3  involves checking that the relations in the pre-staircases are those associated to $\bB_n^E$
 as described in Theorem~\ref{thm:blockrecog}, and so follows by inspection.
However, the last step, which establishes that the staircase classes are live at the limiting $b$-value,  is somewhat more tricky, specially in the case $\Ss^E_{u,n}$.  First consider the ascending pre-staircases $\Ss^E_{\ell, n}$.  
By Corollary~\ref{cor:symm} they are the image by the shift $Sh$ of the corresponding staircases $\Ss^U_{\ell,n}$, and have $b$-values in the range $(1/5,1/3)$.

\begin{lemm}\label{lem:SsEellbest}  The   pre-staircases $\Ss^E_{\ell, n}, n\ge 1,$ satisfy  condition (i) in Theorem~\ref{thm:stairrecog}.
\end{lemm}
\begin{proof} 
We first claim that,   as with  the staircase $\Ss^L_{u, n}$ considered in Lemma~\ref{lem:SsL}, when $n\ge 1$ we can extend each $\Ss^E_{\ell, n}$ by a class $\bE^E_{\ell,n,-1}$ with center $Sh(2n+4) = \frac{12n+23}{2n+4}$ and hence with parameters 
$$  (5n+10, n+3; 12n+23, 2n+4).  $$
  To see this, note that
 by transforming  the formulas for $p/q$ in \eqref{eq:SsUell} by $Sh$ and then using the linear relation
 (or by direct computation), the parameters of this staircase are
\begin{align*} (d_{-1},m_{-1}) &= (5n+10, n+3)\\
 (d_0,m_0) &= (20n^3+ 100n^2 + 156 n + 74, 4n^3 + 24n^2 + 44n + 24).
 \end{align*}
 As in Lemma~\ref{lem:SsL} it is straightforward to check that entries $p_k,q_k$ of these staircases satisfy the recursion for $k = -1,0,1$, so that $d_k, m_k$ do as well.
This  justifies 
our extension of the staircase by a term with $k=-1$.

We may now calculate 
$$\frac{m_0 d_{-1}- m_{-1}d_0 }{2n+3}=   2n+6> 0.$$
Therefore, by \eqref{eq:md00}, and \eqref{eq:DMin},  we are in case (ii) of Lemma~\ref{lem:DMineq}.  
Next notice that, taking  $r=1, s=2$, we have
$$\frac{m_0 d_{-1}- m_{-1}d_0 }{\sqrt{\si_n}(2n+3)} \le \frac{2n+6}{2n+2} \le 2 < \frac{2D-M}{D-2M}, \qquad \forall\ n\ge 1, \ 0\le \frac MD < \frac 12.$$
Therefore, because in fact $M/D< 1/3$ it remains to show there is no overshadowing class with $d'<2/(1-2/3) = 6$ 
and $m'/d'> 1/2$.  Thus we must have
$(d',m') = (3,2), (4,3), (5,3)$ or $(5,4)$.  But no such classes can be live at $b<1/3$ since in none of these cases is
$|d'b - m'|< 1$, which is a necessary condition for a class to be obstructive by Lemma~\ref{lem:perfect}~(i).
This completes the proof.
\end{proof}

It requires somewhat more work to rule out overshadowing classes for the staircases $\Ss^E_{u,n}$.
The following remark will be helpful in understanding the possible break points of such a class since it explains how the order on $\Q$ interacts with continued fraction expansions.

\begin{rmk}\label{rmk:ctfr}    \normalfont
Each rational number $a>1$ has a continued fraction representation $[\ell_0;\ell_1,\dots,\ell_N]$, where  $\ell_i$ is a positive integer, and $N>0$ unless $a\in \Z$. 
By convention, the last entry in a continued fraction is always taken to be $> 1$, since $[\ell_0;\ell_1,\dots,\ell_N, 1] = [\ell_0;\ell_1,\dots,\ell_N+ 1]$; for example
$$[1;3,1] = 1 + \frac 1{3 + \frac 11} =  1 + \frac 1{4} = [1;4].$$
If $N$ is odd, then $[\ell_0;\ell_1,\dots,\ell_N]> [\ell_0;\ell_1,\dots,\ell_N + 1]$, in other words, if the last place is odd,  increasing this  entry decreases the number represented.
For example, because $2/13< 3/19$, we have
\begin{align*}
[1;3,6,2] = 1 + \frac 1{3 + \frac 1{ 6 + \frac 12}} = 1 + \frac 1{3 + \frac 2{13}}  >
[1;3,6,3] = 1 + \frac 1{3 + \frac 1{ 6 + \frac 13}} = 1 + \frac 1{3 + \frac 3 {19}},
\end{align*}
Further, although increasing an even place usually  increases the number represented, this rule does not hold if one increases an even place\footnote{This place would necessarily be the last; and notice that the initial place is labelled $0$ and hence is considered to be even.} from $0$ to a positive number. For example 
\begin{align}\label{eq:ctfr4}
[1;4] = 1+\frac 14 > [1;4,2] = 1+\frac 1{4 + \frac 12} = 1 + \frac 29 > [1;4,1]= [1;5] = 1 + \frac 15.
\end{align}
Similarly, if one increases an odd place from $0$, the corresponding number increases.  In particular, 
\begin{align}\label{eq:odd}
[\ell_0;\ell_1,\dots,\ell_N]> [\ell_0;\ell_1,\dots,\ell_N + 1,\dots] \quad\mbox { if $N$ is odd},\  \ell_N\ge 2 
\end{align} 
whatever the last entries are.\hfill$\er$
\end{rmk}

\begin{EXAMple}\label{ex:SsEu0}  \normalfont
Consider the pre-staircase $\Ss^E_{u,0}$.  
Its first two steps  (with $\eend_0=4$) are
$$\bigl(73,20; 29\bw(\frac{170}{29}) \bigr),\quad \bigl(524, 145;  208\bw(\frac{1219}{208})\bigr).$$
Using \eqref{eq:recurX}, one can calculate
\begin{align*}
&d_0 = 73,\quad m_0 = 20,\quad d_1 = 524,\quad m_1
= 145,\\
&D = \frac{73}2 + \frac{179}{10}\sqrt 5,\qquad M = 10 + 5\sqrt 5,\qquad \frac MD = b_{\infty} \approx 0.27677.
\end{align*}
Therefore 
$$\frac{m_1 d_0 - m_0 d_1}{3}  =  35< \sqrt 5 \frac{3D-M}{D-3M}.$$
Thus we are in case (ii) of Lemma~\ref{lem:DMineq} with $r=1, s=3$,
and must check that there are no overshadowing classes
with 
$$d'< \frac {3}{1-3b_{\infty}} < 18, \quad m'/d'  > 1/3, \;\; \mbox{ and } \;\; |m' - d'b_\infty| < 1.$$ 
 It is straightforward to check that the only possibilities for $(d',m')$ are
$$(d',m') = (11,4),\quad (8,3),\quad (5,2), \quad (4,2), \quad (2,1), \quad (1,1). $$
For example, the pairs $(17,6), (14,5)$  satisfy the first two requirements, but not the third.

If $a'$ is the break point of the overshadowing class $\bE'$, then  $a'> a_{\infty}$ because $\bE'$ is 
right-overshadowing, in other words it is obstructive for $z> a_\infty$ to the right of $a_\infty$; see Definition~\ref{def:big}.  Further,
we must have $\ell(a') < \ell(z)$ for all $z\in [a_{\infty},a')$ where $a_\infty\approx 5.86\dots $ is the accumulation point of the staircase.    Therefore we cannot have $a'>6$.
Since $$
[5;2]< [5;1,6] < a_\infty = [5;1,6,\{5,1\}^\infty] < a',
$$
 it follows from Remark~\ref{rmk:ctfr} that the possibilities for $a'$ in decreasing order are:
$$
6 >  [5;1,7] > [5;1,6,4] > [5;1,6,5]> [5;1,6, 5, 2]>\dots 
$$
Notice for example that  Lemma~\ref{lem:obstruct} implies that we cannot have $a'= [5;1,6,3]$ or $[5;1,6,2]$
because, with $z = [5;1,6,5]$,  $\bbm'$   would differ from $\bw(z)$ in at least two 
places of the last block.  
\MS

Suppose first that $(d',m')=(11,4)$.
If $a'=6$ then, by Lemmas~\ref{lem:0}~(iii) and \ref{lem:obstruct},  $\bbm' = (k^{\times 5}, k + \eps)$ where $\eps \in \{-1,0,1\}$.  Therefore 
the linear Diophantine equation gives   $$
6k + \eps = 3d'-m'-1 = 28,
$$
 which has no solution.  On the other hand, 
if $a' = [5;1,7]$ the tuple $\bbm' = (4^{\times 5}, 1^{\times 8})$ does satisfy this equation.  However, it does not satisfy the quadratic identity  $(d')^2 - (m')^2 = \sum m_i^2 -1$.
A good way to think of this quadratic identity is as follows.  
An exceptional curve in class $\bE'$ is  obtained by blowing up the singular points of a rational curve $C$ of degree $d'$ with branch points of orders $m', m_1',\dots,m_n'$, where $\bbm' = (m_1',\dots,m_n')$ and the order of a branch point is the number of branches through that point.   Thus a generic perturbation of a branch point of order $k$ has $\frac {k(k-1)}2$ nearby simple double points.   The quadratic identity can be  reformulated into the statement that, when $C$ is perturbed  so that it only has simple double points, then the total number of these double points is $\frac{(d'-1)(d'-2)}2$.  Since a curve of degree $11$ has $45$ double points, and the coefficient $m'= 4$ accounts for $6$ of them, the tuple   $\bbm'$ must  therefore account for $39$ double points.  But the tuple 
 $(4^{\times 5}, 1^{\times 8})$ only accounts for $5\times 6 = 30$ double points.   
 Further,  if $\ell(a')$ were longer any possible choices for $\bbm'$ would have even fewer double points.  
 Indeed, when $\bbm'$ is subject to the linear Diophantine constraint, $\sum (m'_i)^2$ is at most 
 $\frac{(3d'-m'-1)^2}{\ell(\bbm')} < (d')^2 - (m')^2 + 1$ whenever $\ell(\bbm')\ge8$. 
 Therefore there are no solutions for $(d',m') = (11,4)$.

 Similar arguments rule out  the cases $(8,3)$ and $(4,2)$, while the degree of $(2,1)$ is simply too small for the break point to be $a'$.
If $(d',m') = (5,2)$, there is an exceptional divisor $(5,2; 2^{\times 5},1^{\times 2})$, but its  break point is $[5;2]< a_\infty$ which  is too small. 
Hence there are no overshadowing classes, so that
 the pre-staircase $\Ss^E_{u,0}$ satisfies condition (i) in Theorem~\ref{thm:stairrecog}.  Hence it is a staircase as claimed in Theorem~\ref{thm:E}.\hfill$\er$
 \end{EXAMple}

\begin{lemm}\label{lem:SsEubest}  The pre-staircases $\Ss^E_{u, n}, n\ge 0,$ satisfy  condition (i) in Theorem~\ref{thm:stairrecog}.
\end{lemm}
\begin{proof}  Since the case $n=0$ is treated in Example~\ref{ex:SsEu0}, we will suppose that $n\ge 1$.
One can calculate that
\begin{align*} (d_{0},m_{0}) &= \bigl(10n^2 + 55n + 73,\ 2n^2+ 13n+ 20 \bigr)\\
 (d_1,m_1) &= \bigl(40n^4+340n^3+1022n^2+1261n+524,8n^4+76n^3+250n^2+331n+145\bigr)
 \end{align*}
Hence 
\begin{align*}
m_1d_0 - m_0d_1 = 24n^2 + 106 n + 105 = (2n+3)(12n + 35) > 0.
\end{align*}
Therefore, because  $\sqrt{\si_n}= 2n+c$ where $2<c<3$, we have 
\begin{align*}
 \frac{m_1d_0 - m_0d_1}{\sqrt{\si_n} (2n+3)}
& = \frac{12n + 35}{\sqrt{\si_n}} = 6 + \frac{c'}{2n+c} \quad\mbox{ for some $c'>0$}.
\end{align*}
Thus this function decreases, with limit $6$.   Further, it is approximately $10.25$ when $n=1$, is $< 8.8$ when $n=2$, $ < 8.1$ when $n=3$, and $< 7.2$ when $n=6$, and $<7$ when $n\ge 8$ .

One can calculate that with $r/s = 1/3$ we have
\begin{align*}
&\frac{3d_0- m_0}{d_0-3m_0}  =  \frac{28n^2 + 152n +199}{4n^2 +16 n + 13 }
= 7 + \frac{40n+108}{4n^2 + 16n + 13}
\end{align*}
Therefore, because the $b_k$ increase,  
it follows from Remark~\ref{rmk:DMineq}~(ii) that 
condition (ii) in Lemma~\ref{lem:DMineq} will hold with $r/s = 1/3$ provided that
\begin{align*}
\frac{12n + 35}{\sqrt{\si_n}} < 7 + \frac{40n+108}{4n^2 + 16n + 13},\quad \mbox{ for all } n\ge 1.
\end{align*}
But this is easy to check from the estimates given above for $(12n + 35)/\sqrt{\si_n}$.
Indeed, this is immediate for $n\ge 8$  since we saw above that 
$\frac{12n + 35}{\sqrt{\si_n}}< 7$ in this case.
 If $n=1$ then we saw that $RHS \approx  10.25$ while
$LHS = 7 + 148/33 >11$.     The other cases follow by a similar  computation.

Hence it remains to check that there are no overshadowing classes with  $m'/d'>1/3$ and suitably small degree.
When $n=1$, we have $b_{1,\infty}\approx 0.25416$, so that $3/(1- 3b_{1,\infty}) < 13$.  Therefore we must rule out overshadowing classes of degrees $d'\le 12$, $m'/d'>1/3$, and $|m'-b_{1,\infty}d'|<1$.
Thus, the  possibilities for $(d',m')$ are:
\begin{align}\label{eq:dm0}
(d',m') &= (8,3),\;\; (5,2),\;\; (4,2), \;\; (2,1), \;\; (1,1).
\end{align}
Because the steps of $\Ss^E_{u,1}$ have centers $[5;1,8,\{7,3\}^k,6]$, it follows 
as in Example~\ref{ex:SsEu0} that the possibilities for the break point $a'$ of $\bE'$ are
$$
6> [5;1,9] >[5;1,8,6] >  [5;1,8,7,4] > \dots.
$$
But it is straightforward to check that, just as in Example~\ref{ex:SsEu0}, there are no suitable classes.  

When $n>1$ the proof is very similar.   Since $b_{n,\infty} \le b_{1,\infty}$, the only possible $(d',m')$ are among those listed in \eqref{eq:dm0}, while the possible values of $a' $ are $6> [5;1,7+2n]> \dots$ and so are longer than those for $n=1$. 
 Therefore as $n$ increases the argument gets easier, because in the case of $(d',m')=(8,3), (5,2)$, or $(4,2)$, we have $\frac{(3d'-m'-1)^2}{\ell(\bbm')}<(d')^2-(m')^2+1$ whenever $\ell(\bbm')\ge7$. Because $\ell( [5;1,7+2n])=12 + 3n$, 
the break point of $\bE'$ has to be $6$, and we showed above that there is no corresponding overshadowing class. As before, we cannot have $(d',m')=(2,1)$ or $(1,1)$.
Therefore, there are no overshadowing classes for any $n$, and the proof is complete.
\end{proof}

\begin{proof}[Proof of Theorem~\ref{thm:E}]  This holds by 
Lemmas~\ref{lem:SsEellbest},~\ref{lem:SsEubest} and Proposition~\ref{prop:CrEL}.
\end{proof}

\subsection{Cremona reduction}\label{ss:reduct}

This subsection is devoted to the proof of the following result.

\begin{prop}\label{prop:reduct} Each pre-staircase
$$
\Ss^L_{\ell,n},\Ss^U_{u,n},\Ss^E_{u,n},\quad n\ge  0, \qquad \Ss^L_{u,n}, \Ss^U_{\ell,n}, \Ss^E_{\ell,n},\quad n\ge  1
$$
mentioned in \S\ref{ss:Fib} is perfect, that is, it consists of exceptional classes.
\end{prop}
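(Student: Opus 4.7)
The plan is to prove that each class $\bE_{n,k} = (d_{n,k}, m_{n,k}; q_{n,k}\bw(p_{n,k}/q_{n,k}))$ in the six listed pre-staircases is exceptional, by exhibiting a sequence of standard Cremona moves that reduces it to $E_1$. I would organize the argument as a double induction: an inductive argument on $k$ within each family (whose inductive step corresponds to removing one period $(2n+5,2n+1)$ from the continued fraction expansion of the center $p_{n,k}/q_{n,k}$), combined with a reduction between families using the symmetries $\Phi, \Psi, Sh$ of Corollary~\ref{cor:symm}.

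For the inductive step within a family, the central observation is that by Lemma~\ref{lem:Cr0}, the integral weight expansion $q_{n,k+1}\bw(p_{n,k+1}/q_{n,k+1})$ begins with a block $a_{n,k+1}^{\times(2n+5)}$ followed by $b_{n,k+1}^{\times(2n+1)}$ and then continues with a tail essentially matching $q_{n,k}\bw(p_{n,k}/q_{n,k})$. I would then apply a carefully chosen sequence of Cremona moves (on the three largest entries of the $\bbm$-vector at each step) and use the recursions $b_{n,k+1}=(2n+5)a_{n,k}+b_{n,k}$, $a_{n,k}=(2n+1)b_{n,k}+a_{n,k-1}$ to track how the leading block collapses. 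Combined with the (Relation) identity in the definition of each pre-staircase, which controls how $d$ transforms, this should reduce $\bE_{n,k+1}$ to a class equivalent (after reordering) to $\bE_{n,k}$, completing the inductive step. Base cases for small $k$ can then be checked directly from the explicit formulas in \eqref{eq:SsUell}, \eqref{eq:SsUu}, \eqref{eq:SsLu0}, and \eqref{eq:SsLell}.

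To avoid repeating this computation six times, I would invoke the symmetries $\Phi, \Psi, Sh$ from Corollary~\ref{cor:symm}. These fractional linear transformations on centers should lift to explicit maps on Diophantine classes that intertwine Cremona reduction; this lifting is the content of the separate Proposition~\ref{prop:CrEL} referenced in the introduction to this section. Once it is in hand, the perfectness of the $\Ss^L$ families follows from that of the $\Ss^U$ families via $\Psi$, and the perfectness of the $\Ss^E$ families follows via $Sh$, so the bulk of the combinatorial argument is concentrated on the two $\Ss^U$ families. The main obstacle I anticipate is not the conceptual structure but the bookkeeping within the inductive step: one must verify that the chosen sequence of Cremona moves preserves the ordering of $\bbm$, produces exactly the predicted transformation of $(d,m,p,q)$, and does not create spurious entries. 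This verification is routine given the recursions of Lemma~\ref{lem:Cr0} together with the (Relation) constraint, but it must be written out carefully, especially as one transitions between the two possible endings $\eend_n = 2n+4$ and $\eend_n = (2n+5, 2n+2)$ — where Lemma~\ref{lem:otherend} ensures that a single reduction scheme handles both cases.
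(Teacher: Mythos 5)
Your plan matches the paper's strategy in its essential structure: an induction on $k$ that tracks how removing one period $(2n+5,2n+1)$ from the continued fraction corresponds to a block of Cremona moves (this is Lemma~\ref{lem:CrUell}, using the recursions of Lemma~\ref{lem:Cr0}), combined with a transfer between families (Proposition~\ref{prop:CrEL}), base cases checked from the explicit formulas, and Lemma~\ref{lem:otherend} to avoid duplicating work for the two endings. Two small points of divergence. First, the paper's $U$-family induction is not a clean per-family induction: it introduces an auxiliary tuple $\bE'_{n,k}$ to which \emph{both} $\bE^U_{\ell,n,k}$ and $\bE^U_{u,n,k}$ reduce (Lemma~\ref{lem:CrUuell}), and then links $\bE^U_{\ell,n,k+1}$ back down to $\bE'_{n,k}$ (Lemma~\ref{lem:CrUell}); the ascending and descending families are interleaved rather than handled separately. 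Second, you describe the inter-family transfer as coming from a ``lift'' of $\Phi,\Psi,Sh$ to maps on Diophantine classes that intertwine Cremona reduction. The paper never constructs such a lift — indeed Remark~\ref{rmk:symm}~(ii) explicitly says the associated action on $(d,m)$ ``is not fully understood.'' What Proposition~\ref{prop:CrEL} actually proves is weaker and more concrete: after a short explicit sequence of Cremona moves applied to each of $\bE^L_{\bullet,n,k}$ and $\bE^E_{\bullet,n,k}$, the resulting tuples coincide with those obtained by a short sequence of moves from the corresponding $\bE^U_{\bullet,n,k}$, so the classes are Cremona-equivalent. This suffices for perfectness since Cremona equivalence is an equivalence relation; it does not assert or require a functorial lift of the fractional linear symmetries.
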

\begin{proof}  This is contained in Remark~\ref{rmk:notation}~(ii) (for the case $n=0$), Corollary~\ref{cor:CrU} for the 
pre-staircases labelled $U$, and Proposition~\ref{prop:CrEL} for the other cases.\end{proof}

We will use the 
 following recognition principle, which is explained in \cite[Prop.1.2.12]{ball},  for example.   

\begin{lemm}\label{lem:Cr1}  An integral class $\bE: = dL-\sum_{i=1}^N n_i E_i$ in the $N$ fold blowup $\CP^2\# N \ov{\C P}^2$ represents an exceptional divisor if and only if it may be reduced to $E_1$ by repeated application of  Cremona transformations.  \end{lemm}

 Here, a {\bf Cremona transformation} is the composition of the transformation
$$
\bigl(d, n_1,n_2,n_3,\dots\bigr)\mapsto \bigl(2d-(n_1+n_2+n_3), d-(n_2+n_3), d-(n_1+n_3), d-(n_1+n_2), \dots\bigr)
$$
with a reordering operation that is usually taken to put  the entries in nonincreasing order (but does not have to be this.)   If this reordering does put the entries in decreasing order, we call it a {\bf standard Cremona move}.

Note that because Cremona  transformations preserve the first Chern class and the self-intersection number, once we have proved that $\bE_k$ reduces to $E_1$ there is no need to make an independent check that  $\bE_k\cdot \bE_k = -1$.   Further, 
because Cremona moves are reversible (indeed if one omits the reordering step  they have order $2$),  they generate an equivalence relation on the set of solutions to the Diophantine equations. Hence any two classes with a common reduction are themselves equivalent.

\begin{rmk} \normalfont
(i)  {\bf (On notation)}  \label{rmk:notation}There are two useful notations for an exceptional class, namely  $\bE: = (d,m ;\bbm)$, 
and $(d, m_1,\dots,m_N)$ with $m_1\ge m_2\ge\dots\ge m_N$.
We use the former (with the semicolon) when we are interested in the geometry of the class as in \S\ref{sec:obstr}, while we use the latter (with no semicolon)  when, as in Example~\ref{ex:redUell1}, we are  interested only in its numerics and therefore include $m$ among all the other coefficients. \MS

\NI (ii) {\bf (The case $n=0$)} \  The three pre-staircases 
$\Ss^L_{\ell,0},\Ss^U_{u,0}, \Ss^E_{u,0}$ are rather special since they have no twin.  Indeed  $\Ss^L_{\ell,0}$ is not even defined in Theorem~\ref{thm:L}, although as we saw in Remark~\ref{rmk:Fib0} it can be identified with 
the Fibonacci staircase.
We will prove in Proposition~\ref{prop:CrEL} that the classes in these three staircases are all equivalent.    Therefore, because the Fibonacci staircase is known  to consist of exceptional classes by \cite{ball},  these three pre-staircases are all perfect.  Hence 
the other arguments below assume that $n\ge 1$.
\MS

\NI
 (iii)  As we will see, the reduction processes for the different pre-staircases are closely related.  This appears most clearly in Proposition~\ref{prop:CrEL}.   However, we do not attempt here to provide a unified proof valid for all pre-staircases, but instead will consider each pre-staircase family separately.  This point will be discussed more fully in our next paper.   
 Note also that, because our arguments concern the linear relations among the coefficients at the beginning of the tuple, for  all the arguments  except those in Corollary~\ref{cor:CrU} the choice of $\eend_n$ is irrelevant.
\hfill$\er$
\end{rmk}

We will first discuss the classes $\bE^U_{\ell,n,k}, \bE^U_{u,n,k},$ in the pre-staircases $\Ss^U_{\ell,n}, \Ss^U_{u,n}$ with centers 
\begin{align*}
\frac{p_{n,k}}{q_{n,k}} & = \begin{cases}  [\{2n+5,2n+1\}^k, \eend_n], &\mbox{ for } \Ss^U_{\ell,n}, n\ge 1, k\ge 0\\
[2n+7; \{2n+5,2n+1\}^k, \eend_n], &\mbox{ for } \Ss^U_{u,n}, n\ge 1, k\ge 0,
\end{cases}
\end{align*}
and linear relations 
\begin{align}\label{eq:Ulinrel}
& (2n+3)d_{n,k} = (n+1)p_{n,k} +(n+2)q_{n,k} \quad \mbox{ for }\;   \Ss^U_{\ell,n}\\ \notag
&(2n+3)d_{n,k} = (n+2)p_{n,k} -(n+4)q_{n,k} \quad \mbox{ for }\;   \Ss^U_{u,n}.
\end{align}
As in Lemma~\ref{lem:Cr0}~(i),  the
(integral) weight expansion of $[\{2n+5,2n+1\}^k, 2n+4]$ for $k\ge 1$ is denoted 
\begin{align}\label{eq:Crab}
\bigl(a_{n,k}^{\times (2n+5)}, b_{n,k}^{\times (2n+1)}, (a_{n,k}-(2n+1)b_{n,k})^{\times (2n+5)},\dots, 1^{\times (2n+4)}\bigr).
\end{align}  
while that for 
$[\{2n+5,2n+1\}^k,2n+5,2n+2]$ for $k\ge 1$ is denoted 
\begin{align}\label{eq:Crab'}
\bigl({a'_{n,k}}\!\!^{\times (2n+5)}, {b'_{n,k}}\!\!^{\times (2n+1)}, (a'_{n,k}-(2n+1)b'_{n,k})^{\times (2n+5)},\dots, 1^{\times (2n+4)}\bigr).
\end{align}  
Further, in the pre-staircase $\Ss^U_{\ell,n,k}$ we have 
\begin{align}\label{eq:Crp}
p_{n,k-1} = \begin{cases} b_{n,k},& \quad \mbox{ if } \eend_n = 2n+4, \\
b'_{n,k},& \quad \mbox{ if } \eend_n = 2n+5,2n+2.
\end{cases}
\end{align}

Here are our two main results about these pre-staircases.

\begin{lemm} \label{lem:CrUuell}  For each $n\ge 1, k\ge 0$ there is a tuple $\bE'_{n,k}$ 
that reduces to $\bE^U_{\ell,n,k}$  in  $n+2$ standard Cremona moves and is such that 
 $\bE^U_{u,n,k}$ reduces to it in $n+3$ standard Cremona moves. 
\end{lemm}

\begin{lemm} \label{lem:CrUell}  For each $n\ge 1, k\ge 0$  the class $\bE^U_{\ell,n,k+1}$ reduces to $\bE'_{k,n}$
in $3n+2$ moves all of which are standard except for the last one.
\end{lemm}

\begin{cor}\label{cor:CrU}  The classes  $\bE^U_{\ell,n,k}$ and  $\bE^U_{u,n,k}$ are perfect for all $n\ge 1, k\ge 0$ and both 
choices of $\eend_n$.
\end{cor}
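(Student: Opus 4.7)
The plan is to chain Lemmas~\ref{lem:CrUuell} and \ref{lem:CrUell} into a single induction on $k$, using the recognition principle of Lemma~\ref{lem:Cr1}. Because Cremona transformations are invertible (up to reordering the blowup basis), Cremona reducibility is an equivalence relation on Diophantine classes, and a class is perfect if and only if it is Cremona-equivalent to $E_1$. Hence it will suffice to show that every $\bE^U_{\ell,n,k}$ and $\bE^U_{u,n,k}$ lies in the same Cremona class as a single, manually verified base tuple.

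First, Lemma~\ref{lem:CrUuell} supplies, for each $k\ge 0$, a tuple $\bE'_{n,k}$ such that $\bE^U_{u,n,k}$ reduces to $\bE'_{n,k}$ in $n+3$ standard moves and $\bE'_{n,k}$ reduces to $\bE^U_{\ell,n,k}$ in $n+2$ standard moves. In particular, $\bE^U_{u,n,k}\sim_{\mathrm{Cr}} \bE^U_{\ell,n,k}$, so the two families are interleaved into one. Second, Lemma~\ref{lem:CrUell} gives a reduction $\bE^U_{\ell,n,k+1} \to \bE'_{n,k}$, which when composed with the reduction $\bE'_{n,k}\to \bE^U_{\ell,n,k}$ from the previous step yields $\bE^U_{\ell,n,k+1}\sim_{\mathrm{Cr}} \bE^U_{\ell,n,k}$. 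Induction on $k$ then collapses everything onto $\bE^U_{\ell,n,0}$, and the corollary reduces to the base case: showing that $\bE^U_{\ell,n,0}$ is perfect for each $n\ge 1$ and for each choice of $\eend_n$.

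For the base case with $\eend_n=2n+4$, equation~\eqref{eq:SsUell} gives $\bE^U_{\ell,n,0}=(n+2;\,n+1,\,1^{\times(2n+4)})$. A direct computation shows that applying a standard Cremona move to the top three coefficients sends
\[
(n+2-j;\, n+1-j,\, 1^{\times(2n+4-2j)},\, 0^{\times 2j}) \;\longmapsto\; (n+1-j;\, n-j,\, 1^{\times(2n+2-2j)},\, 0^{\times(2j+2)})
\]
for each $j=0,1,\dots,n$; after $n+1$ such moves one arrives at $(1;\,1,\,1,\,0,\dots,0)$, and one further Cremona move followed by reindexing produces $(0;-1,0,\dots,0)=E_1$. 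For the twin ending $\eend_n=(2n+5,2n+2)$, the base class is the more elaborate tuple $\bigl((2n+5)(n+1);\,(2n+1)(n+2);\,(2n+2)\bw(\tfrac{4n^2+14n+11}{2n+2})\bigr)$; rather than reducing this from scratch, one would preferably mimic the proof of Lemma~\ref{lem:CrUell} to Cremona-relate it to the $\eend_n=2n+4$ base case, using the compatibility between the two endings established in Lemma~\ref{lem:otherend}.

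The main obstacle is purely bookkeeping within the base case: one must check that the reordering step after each Cremona move indeed preserves the nonincreasing order (so the moves are standard), track carefully how the block of $1$'s shrinks and the block of $0$'s grows, and handle the final move where the reordering is nontrivial. Once the base case is verified for both endings, the inductive chain supplied by Lemmas~\ref{lem:CrUuell} and \ref{lem:CrUell} closes the corollary without further work.
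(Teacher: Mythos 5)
Your high-level strategy matches the paper's: chain Lemmas~\ref{lem:CrUuell} and \ref{lem:CrUell} via induction on $k$ (invertibility of Cremona moves makes reducibility an equivalence relation), so that everything collapses onto $\bE^U_{\ell,n,0}$, and then verify that base class directly. For $\eend_n=2n+4$ your explicit $(n+2)$-move reduction of $(n+2;n+1,1^{\times(2n+4)})$ to $E_1$ is correct and in fact \emph{more} explicit than the paper, which simply observes that this class equals $\bB^U_{n-1}$ and declares it ``easily seen to be perfect.'' (One small bookkeeping note: your displayed formula describes the state \emph{after} reordering; the raw Cremona output at each step is $(n+1-j;\,n-j,\,0,\,0,\,1^{\times(2n+2-2j)},\dots)$, which must be resorted before the next move, and the final two moves involve reorderings that move zeros past the ones — you flag this explicitly, which is appropriate.)

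The genuine gap is the second base case, $\eend_n=(2n+5,2n+2)$. You correctly compute this tuple as $\bigl((2n+5)(n+1),(2n+1)(n+2);(2n+2)^{\times(2n+5)},1^{\times(2n+2)}\bigr)$, but your proposal to handle it by ``mimicking the proof of Lemma~\ref{lem:CrUell}'' and invoking Lemma~\ref{lem:otherend} does not actually produce a reduction. Lemma~\ref{lem:CrUell} relates consecutive $k$-values within a \emph{single} pre-staircase (a fixed choice of $\eend_n$), not the two twin endings with the same $k$; and Lemma~\ref{lem:otherend} is purely arithmetic — it controls divisibility and the quantity $m_1d_0-m_0d_1$, saying nothing about Cremona equivalence. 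What the paper actually does is observe that, writing $s:=2n+1$, the tuple is $\bigl((n+3)s+2,(n+2)s,(s+1)^{\times(2n+5)},1^{\times(2n+2)}\bigr)$, that $d-m=s+2$ is preserved by the standard Cremona move which sends it to $\bigl(d-s,m-s,(s+1)^{\times(i-2)},1^{\times(j+2)}\bigr)$, and that $n+2$ such moves land on $\bigl(2n+3,0,2n+2,1^{\times(4n+6)}\bigr)=\bB^U_{2n}$ — i.e.\ the second ending reduces to the same family of classes $(m+3,m+2;1^{\times(2m+6)})$ whose perfection your first-ending argument already establishes (with $n$ replaced by $2n+1$). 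Supplying this explicit $(n+2)$-move reduction, or some equivalent, is needed to close your argument.
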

\begin{proof} By Lemmas~\ref{lem:CrUuell} and~\ref{lem:CrUell} it suffices to check that the classes
$
\bE^U_{\ell,n,0}, n\ge 1,$  are perfect.  But the center of this class is  either $2n+4$ or $[2n+5;2n+2]$ and the relation is
$(2n+3) d = (n+1)p+ (n+2)q$.  Therefore with $\eend_n = 2n+4$, the class is $(n+2,n+1; \bw(2n+4)) = B^U_{n-1}$, which is easily seen to be perfect.  Further, with $\eend_n = (2n+5,2n+2)$  the class is
\begin{align*}
&\bigl((n+1)(2n+5), (n+1)(2n+3)-1; (2n+2)^{\times (2n+5)}, 1^{\times (2n+2)}\bigr) \\ &\quad = 
 \bigl( (n+2)(2n+1) + (2n+3), (n+2)(2n+1), (2n+2)^{\times (2n+5)}, 1^{\times (2n+2)}\bigr) \\
 &\quad = \bigl((n+3)s + 2, (n+2)s, (s+1)^{\times (2n+5)},1^{\times (2n+2)}\bigr),\quad s: = 2n+1.
\end{align*}
Each standard Cremona move takes $ \bigl(d, m, (s+1)^{\times i},1^{\times j}\bigr)$ to
$ \bigl(d-s, m-s, (s+1)^{\times(i-2)},1^{\times (j+2)}\bigr)$. Hence after $(n+2)$ such moves, we obtain the 
tuple $\bigl(2n+3, 0, 2n+2, 1^{\times(4n+6)}\bigr)$ that corresponds to the perfect class
$$
\bigl(2n+3, 2n+2; 1^{\times(4n+6)}\bigr) = \bB^U_{2n}.
$$
This completes the proof.
\end{proof}

\begin{proof} [Proof of Lemma~\ref{lem:CrUuell}]
The class $\bE'_{n,k}$ in Lemma~\ref{lem:CrUuell}  has the following form:
\begin{align}\label{eq:E'nk}
&\Bigl(\ell, s_0, s_1^{\times (2n+4)},  q^{\times (2n+5)}, r^{\times (2n+1)} \dots\Bigr), \quad \mbox{ where } s_0 + s_1 = \ell,\quad s_1: = \frac{\ell-q}{2n+3}\\ \notag
&  \qquad  \qquad \mbox{ and }  (\ell,q,r) =  \begin{cases}   (b_{n,k+1}, a_{n,k}, b_{n,k}) & \mbox { if } \eend_n = (2n+4)\\ 
  (b'_{n,k+1}, a'_{n,k},b'_{n,k}) &\mbox { if } \eend_n = (2n+5,2n+2), 
\end{cases}
\end{align}
and where\footnote
{
We assume here that $k>0$ -- the case $k=0$ can safely be left to the reader.} $(q^{\times (2n+5)}, r^{\times (2n+1)} \dots)$ is the integral weight expansion of the center  $p^U_{\ell,n,k}/q^U_{\ell,n,k}$
 of $\bE^U_{\ell,n,k}$.  
 
 We first prove that $\bE'_{n,k}$ reduces to $\bE^U_{\ell,n,k}$.  To this end, note that
one standard Cremona move gives $$
(\ell, s_0,s_1^{\times (2n+4)}, q^{\times (2n+5)}, \dots) \to (\ell-s_1, \ell-2s_1,s_1^{\times (2n+2)}, q^{\times (2n+5)},\dots),
$$
 which still has the property that the first entry equals the sum of the second two.  Hence 
a total of $(n+2)$ such moves gives
$$
\bigl(\ell-(n+2)s_1, \ell-(n+3)s_1,  q^{\times (2n+5)}, \dots\bigr).
$$
To finish the proof that  $\bE'_{n,k}$ reduces to $\bE^U_{\ell,n,k}$ as claimed, 
we must show that the above tuple is precisely $\bE^U_{\ell,n,k}$.  

But by \eqref{eq:E'nk},  the center of $\bE^U_{\ell,n,k}$ is precisely $\ell/q = ((2n+5)q + r)/q$.
Then, by  \eqref{eq:E'nk} we have $s_1 = q + \frac{q+r}{2n+3}$ and we must show that
\begin{align*}
d: = &d^U_{\ell, n,k} =  \ell - (n+2)\bigl(q + \frac{q+r}{2n+3}\bigr), \\
m: = &m^U_{\ell, n,k} = \ell - (n+3)\bigl(q + \frac{q+r}{2n+3}\bigr).
\end{align*}
But the linear relation in $\Ss^U_{\ell,n,k}$ gives  
$$
(2n+3) d = (n+1) \ell + (n+2) q  = (n+1)\bigl((2n+5)q+ r\bigr) + (n+2)q.
$$
It  is now easy to check that the two expressions for $d$ 
are the same.   
\MS

It remains to prove that the class $\bE^U_{u,n,k}$ reduces to $\bE'_{n,k}$.  
As noted in the proof of Lemma~\ref{lem:SsUellu}, we now have
$$
q^U_{u,n,k} = \ell, \quad p^U_{u,n,k} = (2n+7)\ell + q,
$$
where $\ell,q$ are as above, and $d,m$ are redefined  as follows:
\begin{align}\label{eq:dmU} \notag
(2n+3)d: &= (2n+3)\,d^U_{u,n,k}  = (n+2)\bigl((2n+7)\ell + q\bigr) - (n+4)\ell \\ 
& = (2n^2 + 10 n + 10)\ell + (n+2)q,\\ \notag
(2n+3)m: &= (2n+3)\,m^U_{u,n,k} = (2n^2 + 8n+6)\ell + (n+3)q.
\end{align}
Thus  
$$
\bE^U_{u,n,k}\ = \
\bigl(d,m, \ell^{\times (2n+7)}, q^{\times (2n+5)}, \dots\bigr),
$$
 and we aim to reduce it to
$$
\bE'_{n,k} = 
\bigl( \ell, s_0, s_1^{\times (2n+4)},  q^{\times (2n+5)},\dots\bigr), \quad s_0 + s_1 = \ell,\;\; s_1: = \frac{\ell-q}{2n+3}
$$
We will do this by $n+3$ standard Cremona moves, where all the moves until the last replace $2$ copies of $\ell$ by 
$2$ copies of $s_1$,  and the last removes the final $3$ copies of $\ell$ and shrinks length by $3$.
The first move uses   the identity
$$
d - m - \ell = s_1 = \frac{\ell-q}{2n+3},
$$
which follows easily from \eqref{eq:dmU}.
These moves have the following form (where $\dots$ now includes the terms $q^{\times (2n+5)}$)
\begin{align*}
& \bigl(d,m,\ell^{\times (2n+7)},\dots\bigr)  \\
&\quad \to\;\;  \bigl(2d-m-2\ell, \ d-2\ell,\ \ell^{\times (2n+5)}, \dots , (d-m-\ell)^{\times 2} = s_1^{\times 2}, \dots \bigr) \quad \mbox{after $1$ move}\\
&\quad\to\;\;  \bigl(3d-2m-4\ell, \ 2d-m-4\ell ,\ \ell ^{\times (2n+3)}, s_1^{\times 4}, \dots \bigr)\quad \mbox{after $2$ moves}\\
&\quad \to\;\;  \bigl((n+3)d-(n+2)m-2(n+2)\ell  =2\ell , \ \ell ^{\times 3}, \\
&\qquad\qquad\qquad\qquad (n+2)d-(n+1)m-2(n+2)\ell  = s_0, 
  s_1^{\times (2n+4)}, \dots \bigr)\quad \mbox{after $n+2$ moves}\\
  &\quad \to\;\; \bigl( \ell , s_0, s_1^{\times (2n+4)}, \dots\bigr),
\end{align*}
where we have used the  identity
\begin{align}\label{eq:qs}
(n+3)d-(n+2)m-2(n+2)\ell  = 2\ell ,  
\end{align}
(which follows from \eqref{eq:dmU}) and its corollary $$
(n+2)d-(n+1)m-2(n+2)\ell  = 2\ell  -d + m=  \ell -s_1=: s_0.
$$
This completes the proof.
\end{proof}

\begin{EXAMple}\label{ex:redUell1}  \normalfont
Since the proof of Lemma~\ref{lem:CrUell} is somewhat more complicated,
we begin with an example.
Since $\bE^U_{\ell,1,2}=\bigl(1538, 987; \bw(3191/436)\bigr)$, we have
$$
\bE^U_{\ell,1,2} = \bigl(1538, 987, 436^{\times 7}, 139^{\times 3}, 19^{\times 7},6^{\times 3}, 1^{\times 6}\bigr)
$$
which reduces to $\bE'_{1,1}$ as follows:
\begin{align*}
\begin{array}{c||c|c|c|c|c|c|c|c|}
\hline
0&1538 & 987 & 436^{\times 7} & 139^{\times 3}& 19^{\times 7}&\dots&&\\ \hline
1&1217&666&436^{\times 5}&139^{\times 3} &115^{\times 2}&19^{\times 7}&\dots&\\ \hline
2&896&436^{\times 3}&345&139^{\times 3}  &115^{\times 4}&19^{\times 7}&\dots&\\ \hline
3&484&345&139^{\times 3}  &115^{\times 4}& 24^{\times 3}&19^{\times 7}&\dots&  \\ \hline
4&345&206&139 &115^{\times 4}&24^{\times 3}&19^{\times 7}&\dots&-2 \\ \hline
5&230&115^{\times 3}&91& 24^{\times 4}&19^{\times 7}&\dots&&-1  \\\hline
6&139 &115 & 24^{\times 6}& 19^{\times 7} &\dots& &&-1 \\\hline
\end{array}
\end{align*}
All the above Cremona moves are standard, except for the last one where we reduce on the terms $115^{\times 2}, 91$ instead of the  three largest.  Further, the last column indicates the number of terms that have been reduced to zero; thus altogether the length of the class is reduced by $4$.
Notice also that  these transformations only affect the beginning terms: nothing happens to any term 
of size $19$ or less. Finally, one can check that the last row is precisely $\bE'_{1,1}.$

This reduction has several stages.  In the notation used in Lemma~\ref{lem:CrUell} below, the first three moves reduce the multiplicity of $436=: q$ from  $7$ to $0$; the next move reduces the multiplicity  of $139=:r$ from $3$ to its final value $1$,  and finally we reduce the multiplicity of the new term $115= s_0$ from $4$ to $1$, whilst increasing that of $24=:s_1$.   

Finally notice that although the last move was nonstandard, if we replace it by a standard move we would get
$(115, 91, 24^{\times 4} ,\dots)$ which is precisely what we get from $\bE'_{1,1}$ by doing one standard move.  
Hence one can compare the reductions we do in this paper, with those provided by a computer program.  We chose here to reduce to $\bE'_{n,k}$ because it has a nice formula in terms of the basic parameters of the problem.
\hfill$\er$
\end{EXAMple}

\begin{proof} [Proof of Lemma~\ref{lem:CrUell}]
We must show that for each $n\ge 1, k\ge 1$ the class $\bE^U_{\ell,n,k+1}$ may be reduced to $\bE'_{n,k}$ by   Cremona moves.
The class  $\bE^U_{\ell,n,k+1}$ for $k>0$ starts with the entries\footnote
{
Note that  the values of  $d,m,q,r$ here are different from those in Lemma~\ref{lem:CrUuell}; for example now $q = a_{n,k+1}$ rather than $b_{n,k+1}$.  However, their geometric meaning is the same.} 
\begin{align}\label{eq:EUelln}
\bigr(d_{n,k+1}, m_{0,n,k+1}, a_{n,k+1}^{\times(2n+5)}, b_{n,k+1}^{\times(2n+1)},\dots\bigr) =: \bigl(d,m,q^{\times(2n+5)}, 
r^{\times(2n+1)},\dots \bigr),
\end{align}
and we aim to reduce it to
\begin{align}\label{eq:CrE'}
&\bE_{n,k}' : = \Bigl(r, r-s_1, s_1^{\times (2n+4)}, v \bw\bigl({r}/{v}\bigr)\Bigr),
\quad \ \ v: = a_{n,k} = q-(2n+1)r, \ \ s_1: = \frac{r-v}{2n+3}.
\end{align}

The first part of the reduction process is formally the same as that in the reduction of $\bE^U_{u,n,k}$ to $\bE'_{n,k}$ in Lemma~\ref{lem:CrUuell}:
we reduce the multiplicity of the term in $q$ from $2n+5$ to $3$ by  
$(n+1)$ standard moves, and then reduce it to zero by one further standard move: 
\begin{align*}
& \bigl(d,m,q^{\times (2n+5)},\dots\bigr)  \\
&\quad \to\;\;  \bigl((n+2)d-(n+1)m-2(n+1)q, \ q^{\times 3}, (n+1)d-nm-2(n+1)q, \dots \\
&\qquad\qquad\qquad\qquad  (d-m-q)^{\times (2n+2)}, \dots \bigr)\quad \mbox{after $n+1$ moves}\\
&\quad \to\;\;  \bigl(2(n+2)d-2(n+1)m-(4n+5)q,\ (n+1)d-nm-2(n+1)q, 
 \dots\\
 &\qquad\qquad  (d-m-q)^{\times (2n+2)}, 
((n+2)d-(n+1)m-2(n+2)q)^{\times 3} \dots \bigr)\quad \mbox{after $n+2$ moves.}
\end{align*}
We claim that $$
s_0: = d-m-q, \quad s_1: = (n+2)d-(n+1)m-2(n+2)q
$$ satisfy the identities in \eqref{eq:CrE'}, namely
$$
 s_0 + s_1 =  r, \qquad  (2n+3) s_1 = r-v = (2n+2)r-q.
$$
These are two linear identities in the quantities $d,m,q,r$  
 that determine $\bE^U_{\ell,n,k}$.
The easiest way to  check them is to verify that they hold when $k=0,1$, and then use the recursion. 
The values for $d,m,q,r$ at $k=0$ can be obtained from Lemma~\ref{lem:Cr0}; note in particular
 that when $k=0$ and $\eend_n = 2n+4$ we have $(d,m,q,r) = (n+2, n+1, 1, -1)$. The values for $k=1$ and $\eend_n = 2n+4$ are in \eqref{eq:SsUell}.  The details of the other case are left to the reader.
 
 We can write the other entries in the final tuple above as
 $$
 2(n+2)d-2(n+1)m-(4n+5)q = 2s_1 + q,\qquad (n+1)d-nm-2(n+1)q = q + s_1 - s_0.
 $$
Thus, after $n+2$ moves, we have obtained the tuple
\begin{align*}
& \bigl(2s_1+q,\ q+s_1-s_0, r^{\times(2n+1)},
 s_0^{\times (2n+2)}, 
s_1^{\times 3}, \dots \bigr),
\end{align*}
and the next task is to perform $n$ moves on the first three terms to reduce the multiplicity of the term $ r^{\times(2n+1)}$ to $1$. Because $2s_1+q - (q+s_1-s_0) - r = s_0+s_1 - r = 0$, at each step we reduce the length of the tuple by $2$.  Moreover, $q+s_1-s_0 = 2s_1+q -r$ so that under these moves the first two terms 
change as follows:
$$
 \bigl(2s_1+q,\ 2s_1 + q-r\bigr) \to  \bigl(2s_1+q-r,\ 2s_1 + q-2r\bigr) \to \dots 
\to \bigl(2s_1+q- nr,\ 2s_1 + q-(n+1)r\bigr).
$$
Thus we have
$$
\bigl(2s_1+q- nr,\ 2s_1 + q-(n+1)r, r, s_0^{\times (2n+2)},  s_1^{\times 3}, \dots \bigr).
$$
The next reduction step gives
\begin{align*}
&\bigl(2s_1+q- nr - s_0,\ 2s_1 + q-(n+1)r-s_0, r-s_0, s_0^{\times (2n+1)},  s_1^{\times 3}, \dots \bigr)\\
& \quad =\bigl((n+1)s_0 - (n-1)s_1,\ ns_0 - ns_1,  s_0^{\times (2n+1)},  s_1^{\times 4}, \dots \bigr)\\
\end{align*} 
Finally we do $n$ moves that use two copies of $s_0$ at each step  to
 reduce the multiplicity of $s_0$ to $1$, while building up the multplicity of $s_1$ to $2n+4$ and reducing each of the first two terms  by $s_0-s_1$ at each step.   Note that throughout this reduction the terms designated by $\dots$ in \eqref{eq:EUelln} remain unchanged, and hence start with $v^{\times (2n+5)}.$  Thus finally we have
 \begin{align*}
\bigl( s_0+s_1 = r,  s_0,  s_1^{\times (2n+4)}, v^{\times (2n+5)}, \dots \bigr) = : \bE'_{n,k}.
\end{align*}
This completes the proof of Lemma~\ref{lem:CrUell}.
\end{proof}

By Corollary~\ref{cor:CrU}, we have now completed the proof
that  the pre-staircases $\Ss^U_{\bullet,\bullet}$ are perfect.
It remains to consider the other pre-staircase families.  
For ease of reference, here are the steps and relations for the relevant classes:
\begin{align*}
\begin{array}{ll}
\bE^U_{\ell,n,k+1} &\quad   {p'}/{q'}: = [2n+5; 2n+1,\{2n+5,2n+1\}^k, \eend_n], \\
& \qquad\qquad\quad (2n+3)d' =  (n+1)p'+(n+2)q',\;\; \; n\ge 1,\\
\bE^L_{u,n,k}&\quad   p/q: = [6;2n-1, 2n+1, \{2n+5,2n+1\}^k, \eend_n], \\
&\qquad\qquad (2n+3)d=- (n-1)p + (11n+2)q, \;\; \; n\ge 1,\\
\bE^E_{\ell,n,k}&\quad   p/q: = [5;1, 2n+4, 2n+1, \{2n+5,2n+1\}^k, \eend_n], \\
&\qquad\qquad\quad (2n+3)d= (n+2)p - (n+4)q,\;\; \; n\ge 1,\\
\bE^U_{u,n,k} &\quad   {p'}/{q'}: = [2n+7; \{2n+5,2n+1\}^k, \eend_n], \\
&\qquad\qquad\quad (2n+3)d' =  (n+2)p'-(n+4)q'. \;\; \; n\ge 0,\\
\bE^L_{\ell,n,k}&\quad   p/q: = [6;2n+1,  \{2n+5,2n+1\}^k, \eend_n], \\
&\qquad\qquad\quad (2n+3)d=- (n+1)p + (n+2)q,\;\; \; n\ge 0,\\
\bE^E_{u,n,k}&\quad   p/q: = [5;1,2n+6, \{2n+5,2n+1\}^k, \eend_n], \\
&\qquad\qquad\quad (2n+3)d=- (n+4)p + (11n+31)q, \;\; \; n\ge 0.
\end{array}
\end{align*}
Note that we now use $'$ to distinguish the entries in $\Ss^U_{\bullet,n}$ from those in the other pre-staircases.

\begin{prop}\label{prop:CrEL}   
\begin{itemlist}\item[{\rm(i)}] For $n\ge 1$, the tuple obtained from $\bE^L_{u,n,k}$ by $2$ standard moves 
 that get rid of the term $q^{\times 6}$   equals the one obtained from
$\bE^U_{\ell,n,k+1}$ by $3$ moves that reduce the multiplicity of $q'$ from $2n+5$ to $2n-1$, where the first two are standard, each creating two copies of a new term $c'$, and the third   uses two copies of $q'$ and one of $c'$.
\item[{\rm(ii)}] For $n\ge 0$, the tuple obtained from $\bE^L_{\ell,n,k}$ by $3$ standard moves, 
 that first get rid of the term $q^{\times 6}$ and then get rid of the new term $2d-2q$ of multiplicity $3$,  equals  that  obtained from
$\bE^U_{u,n,k}$ by $3$ standard moves that reduce the multiplicity of $q'$ from $2n+7$ to $2n+1$.  
\item[{\rm(iii)}] For $n\ge 1$, the tuple obtained from $\bE^E_{\ell,n,k}$   by  $5$ standard moves 
equals that  obtained from
$\bE^U_{\ell,n,k+1}$  by $1$ standard move.
\item[{\rm(iv)}] For $n\ge 0$, the tuple obtained from $\bE^E_{u,n,k}$  by  $5$ standard moves 
equals that  obtained from
$\bE^U_{u,n,k}$ by $1$ standard move.
\end{itemlist}
In particular, the classes in all the above pre-staircases are perfect.
\end{prop}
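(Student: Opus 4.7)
The plan is to verify each of the four reductions (i)--(iv) by explicit computation of the prescribed Cremona moves, and then derive the perfectness conclusion from these reductions combined with the already-established perfectness of the $\Ss^U_{\bullet,n}$ families proved in Corollary~\ref{cor:CrU}. Since Cremona moves preserve the equivalence class under Lemma~\ref{lem:Cr1}, exhibiting a common intermediate tuple for, say, $\bE^L_{u,n,k}$ and $\bE^U_{\ell,n,k+1}$ transfers perfectness from the latter to the former. Combined with Remark~\ref{rmk:notation}~(ii) for the orphan cases $n = 0$ of $\Ss^L_{\ell,0}$, $\Ss^U_{u,0}$, $\Ss^E_{u,0}$, this gives the ``In particular'' conclusion.

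For each of (i)--(iv), the mechanics are the same. First I would write out the initial segment of each tuple using the weight expansion formula of Lemma~\ref{lem:Cr0} to describe the tail (i.e.\ the blocks after the first few non-generic entries) and the prescribed linear relation of the pre-staircase, together with $3d = m + p + q$, to obtain $d$ and $m$ as explicit linear combinations of $p$ and $q$. Second, I would apply the prescribed Cremona moves one by one, expressing each new entry as a linear combination of the original parameters $(d, m, p, q)$. Third, I would match the two resulting tuples entry by entry; since the tails past the initial blocks are unaffected, only finitely many coefficients need to agree, producing a finite list of linear identities.

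The essential input tying the two sides together is the action of the symmetries $\Psi$, $\Phi$, and $Sh$ of Lemma~\ref{lem:symm} on the centers: in parts (i) and (ii) the $L$- and $U$-centers are related by $\Psi$, so by \eqref{eq:PsiPQ} we have $p = 6p' - 35 q'$, $q = p' - 6 q'$; in (iii) and (iv) the $E$- and $U$-centers are related by $Sh$, giving $p = 6 p' - q'$, $q = p'$. Substituting these into the linear relations of the pre-staircases expresses $(d, m)$ on the $L$- or $E$-side in terms of $(d', m', p', q')$ on the $U$-side. Each of the identities to be verified then reduces to a polynomial identity in $n$ of small degree (at most four or five), which by Remark~\ref{rmk:polyn} can be checked by evaluating at sufficiently many small values of $n$.

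The main obstacle is the bookkeeping. In parts (iii) and (iv), five standard moves must be tracked on the $E$-side against a single move on the $U$-side, and one must check at every intermediate stage that the resulting tuple is nonincreasing so that the standard interpretation of each Cremona move is legitimate; this requires finitely many inequality comparisons among the new entries, each easily checked for $n \ge 1$ (resp.\ $n \ge 0$). The worked Example~\ref{ex:redUell1} for Lemma~\ref{lem:CrUell} provides a template: running the reduction symbolically for $n = 1$ (and if needed $n = 2$) already reveals both the coefficients and the inequalities, after which the closed-form verification is purely algebraic. Once the four identities are verified, perfectness of $\bE^L_{\bullet,n,k}$ and $\bE^E_{\bullet,n,k}$ for $n \ge 1$ follows by chaining Cremona reductions with Corollary~\ref{cor:CrU}, and the $n = 0$ cases follow by chaining into the Fibonacci staircase via parts (ii) and (iv).
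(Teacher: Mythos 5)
Your plan is essentially the paper's approach. The paper's proof of part (i) writes out both tuples explicitly, expresses the degree and multiplicity as the linear combinations of $p,q$ (equivalently $q',r'$) forced by the pre-staircase relation and $3d = m+p+q$, applies the prescribed moves, and matches coefficients; the relation $q = (2n-1)q' + r'$, $r = q'$ that you obtain from $\Psi$ via \eqref{eq:PsiPQ} is exactly the substitution the paper makes. For parts (ii)--(iv) the paper does exactly what you propose: it observes that, since reduction is linear, it suffices to check two consecutive values of $k$ and that the resulting coefficient identities are polynomials in $n$ of degree at most five, hence verifiable at finitely many $n$ (Remark~\ref{rmk:polyn}); it then illustrates (iii)/(iv) with a single worked instance. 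Your perfectness conclusion -- chain (iii), (i) into Corollary~\ref{cor:CrU} for $n\ge 1$, and chain (iv), (ii) into the Fibonacci staircase for $n=0$ -- is also the paper's. One small caution: the third move in (i) is, by the statement, \emph{not} standard (it uses two copies of $q'$ and one of the new term $c'$), so the monotonicity check you mention must be relaxed there, but this is already built into the prescription you are following.
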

\begin{proof}   We prove (i).
We have
\begin{align*}
&\bE^L_{u,n,k} = \bigl(d, m; q^{\times 6}, r^{\times(2n-1)},  v^{\times (2n+1)}, \dots\bigr), \qquad\mbox{where} 
\\ & \qquad 
(2n+3)d=(5n+8)q-(n-1)r, \
(2n+3) m = (n+3)q -   5nr.
\end{align*}
Thus $d = 2q+ c$, where $c: = \frac{(n+2)q-(n-1)r}{2n+3}$; and, if we reorder $\bE^L_{u,n,k}$ so that $m$ comes after $q^{\times 6}$ and then
 do two moves to get rid of the terms $q^{\times 6}$, we obtain the tuple
\begin{align}\label{eq:dcq}
 \bigl(4c-q,  c^{\times 3}, (2c-q)^{\times 3},  r^{\times(2n-1)}, m, \dots \bigr).
\end{align}
On the other hand, 
\begin{align*}
&\bE^U_{\ell,n,k} = \bigl(d', m', (q')^{\times (2n+5)}, (r')^{\times(2n+1)},  \dots\bigr), \qquad\mbox{where} \quad q'=r, r' = v \\
& \quad d'=(n+2)q'+\frac{n+1}{2n+3}(q'+r'),  \quad m' = (n+1)q' + \frac{n}{2n+3}(q'+r').
\end{align*}
Hence $d'-m'-q' = \frac{q'+r'}{2n+3}=: c'. $
Now reduce three times  (using one copy of $c'$ and two of $q'$ at the third move) to obtain
\begin{align*}
&\bigl(d', m', (q')^{\times (2n+5)}, \dots\bigl)\to  \bigl(2d'-m'-2q',  d-2q',  (q')^{\times (2n+3)},(c')^{\times 2}, \dots\bigl)\\
&\qquad \to   \bigl(3d'-2m'-4q',  2d'-m'-4q', (q')^{\times (2n+1)}, (c')^{\times 4}, \dots\bigl)\\
&\qquad \to     \bigl(6d'-4m'-10q' - c', (3d'-2m'-5q'-c')^{\times 2}, 3d'-2m'-6q', \\
&\qquad\qquad \qquad \qquad  (q')^{\times (2n-1)}, 2d'-m'-4q', (c')^{\times 3}, \dots\bigl)
\end{align*}
We now claim that this tuple is precisely the same as that in \eqref{eq:dcq}.  To see this,  express
all the quantities in terms of the  variables $q',r', $  recalling that
$$
r=q',  \quad v = r', \quad  q = (2n-1)r + v = (2n-1)q'+r'.
$$
Further, by definition of $c'$ we have $3d'-2m'-5q'-c'=2d'-m'-4q'$  so that this term occurs with multiplicity $3$,
 and one can check that $c= 2d'-m'-4q'$. 
One can then check that the other terms agree:
 $$
  4c-q = 6d'-4m'-10q' - c',  \quad 2c-q = c', \quad  m = 3d'-2m'-6q'.
   $$
This completes the proof of (i).  

The proof of (ii) is very similar, and can be done as above. Details are left to the reader.   Alternatively, since reduction is a linear transformation 
it suffices to prove this for  $k=0,1$ and arbitrary $n$; the  result for $k>1$ then follows by recursion.  One can either do this by calculating the relevant transformations explicitly, or by noting that 
when $k = 0,1,$ the entries $d',m', \dots$ in the tuples $\bE^L_{\ell,n,k}, \bE^U_{u,n,k}$ are 
 polynomials in $n$ of degree at most $5$ (the explicit formulas for $\eend_n=2n+4$ are given in \eqref{eq:SsUu} and \eqref{eq:SsLell}).
 Hence we need to prove some polynomial identities  of degree at most $5$.  But these hold for all $n$ if and only if they hold for $5$ different values of $n$.  Thus, without even explicitly calculating these polynomials, we know that the claim must hold for all $n,k$ provided that it holds for $k= 0,1$ and $0\le n \le 5$, 
something that is very  easy to check by computer.
\MS

We illustrate (iii), (iv) by an example.  We have 
$$
\bE^U_{\ell,1,1} = \bigl(67,43, 19^{\times 7},\dots\bigr) \to \bigl(53,29,19^{\times 5},\dots,5^{\times 2}\dots\bigr).
$$
On the other hand, $5$ standard moves on $\bE^E_{\ell,1,0} $ give: 
\begin{align*}
\bE^E_{\ell,1,0} & = \bigl(350,139^{\times 5}, 120, 96,  19^{\times 6},\dots\bigr)
 \to \bigl(283,139^{\times 2}, 120, 96, 72^{\times 3},19^{\times 6},\dots\bigr)\\ & \to
\bigl(168, 96,72^{\times 3},  24^{\times 2},19^{\times 6},\dots,5,\dots\bigr)
\stackrel{-2}\to \bigl(96, 72, 24^{\times 3}, 19^{\times 6}, \dots, 5, \dots\bigr) \\
& \stackrel{-2}\to 
\bigl(72, 48, 24, 19^{\times 6}, \dots, 5, \dots\bigr)
\stackrel{-1}\to (53, 29, 19^{\times 5},\dots, 5^{\times 2}, \dots\bigr),
\end{align*}
where the superscripts over the arrows denote the number of entries that go to zero at that step.
Thus overall $5$ terms go to zero, which is the difference in length between the center $[5;1, 2n+4, 2n+1, \{2n+5,2n+1\}^k, \eend_n]$ of $\bE^E_{\ell,n,k}$ and the corresponding center $[2n+5; 2n+1, \{2n+5,2n+1\}^k, \eend_n]$ of $\bE^U_{\ell,n,k+1}$.
A formal proof of the general case can be written out by one of the methods explained above, and is left to the interested reader.

The final claim that all the pre-staircases are perfect holds when $n=0$ because the Fibonacci staircase is perfect (by \cite{ball}) and holds when $n>0$ by Corollary~\ref{cor:CrU}.
\end{proof}

\section{Obstructions from ECH capacities}\label{sec:ECH}

An alternative way to characterize the ellipsoid embedding function for the target $X$ involves the ECH capacities of the ellipsoid and of $X$. When we use the computer to plot an approximation of the graph of $c_X$, we use that alternative characterization of $c_X$:

\begin{equation}\label{def:csup}
c_X(z)=\sup_k\frac{c_k(E(1,z))}{c_k(X)},
\end{equation}
so long as $X$ is ``convex'' (see Definition \ref{def:cvx}); the equality (\ref{def:csup}) is a consequence of \cite[Theorem 1.2]{cg}. First defined in \cite{Hut}, ECH capacities are invariants of a symplectic 4-manifold $(X,\omega)$ that obstruct symplectic embeddings in the following sense:
\begin{equation}\label{ECHimplies}
(X,\omega)\hookrightarrow(Y,\omega') \implies  \forall_{k\in\mathbb{N}_0}\,\, c_k(X,\omega)\leq c_k(Y,\omega'),
\end{equation}
where the $c_k$ form a non-decreasing sequence 
$$0=c_0(X,\omega)<c_1(X,\omega)\leq c_2(X,\omega)\leq \ldots\leq \infty.$$

\subsection{Toric domains}

To compute the ECH capacities of the Hirzebruch surfaces $H_b$, we use the more general theory of ECH capacities of toric domains.

\begin{DEFN}\label{def:cvx} A \textbf{toric domain} is the region $X_\Omega$ in $\C^2$ determined by
\[
X_\Omega:=\left\{(z_1,z_2)\in\C^2\middle|\left(\pi|z_1|^2,\pi|z_2|^2\right)\in\Omega\right\}
\]
where $\Omega\subset\R^2$. A toric domain is \textbf{concave} if $\Omega$ is a closed region in the first quadrant under the graph of a convex function with both axis intercepts nonnegative. Following \cite{hbeyond}, a toric domain is \textbf{convex} if $\Omega$ is a closed region in the first quadrant under the graph of a nonincreasing concave function with both axis intercepts nonnegative.
\end{DEFN}

When $X$ is a toric domain we use the standard symplectic form $\omega=\sum_{i=1}^2dx_i\wedge dy_i$, where $z_i=x_i+\sqrt{-1}y_i$, and we drop $\omega$ from the notation for the ECH capacities. Note that ellipsoids are both concave and convex toric domains.

Let $X_b$ denote the convex toric domain $X_{\Omega_b}$, where $\Omega_b$ is the Delzant polytope of $H_b$. By \cite[Theorem 1.2]{AADT}, embeddings of ellipsoids into $H_b$ and $X_b$ are equivalent, and therefore $c_{X_b}=c_{H_b}$.

\begin{REMark}\label{ECHiff2} \normalfont
When $X$ is a concave toric domain and $Y$ is a convex toric domain, \eqref{ECHimplies} is in fact an equivalence (\cite[Theorem 1.2]{cg}). This is the case when we have an ellipsoid embedding into $X_b$, which justifies the alternative definition \eqref{def:csup}.
\hfill$\er$
\end{REMark}

The way that we use \eqref{def:csup} to plot approximations to the graph is by taking the maximum over a large but finite number of ratios of ECH capacities, rather than the supremum. In order to do this, we need to compute the ECH capacities of an ellipsoid and of $X_b$.
 
\begin{REMark}\label{rem:computeECH}\normalfont
The sequence of ECH capacities for an ellipsoid $E(a,b)$ is the sequence $N(a,b)$, where for $k\geq 0$, the term $N(a,b)_k$ is the $(k+1)^\text{st}$ smallest entry in the array $(am+bn)_{m,n\in\mathbb{N}_0}$, counted with repetitions \cite[\S1]{m2}.

To compute the ECH sequence of $X_b$ we use a different method, based on \cite[Appendix A]{cg}. The definitions and theorem below are based on \cite[Definitions A.6, A.7, A.8]{cg} and can be found there in more detail.\hfill$\er$
\end{REMark}

\begin{DEFN}\label{def:latticepathdefs}
A \textbf{convex lattice path} is a piecewise linear path  $\Lambda:[0,c]\to\mathbb{R}^2$ such that all its vertices, including the first $(0,x(\Lambda))$ and last $(y(\Lambda),0)$, are lattice points and the region enclosed by $\Lambda$ and the axes is convex. An \textbf{edge} of $\Lambda$ is a vector $\nu$ from one vertex of $\Lambda$ to the next.
The \textbf{lattice point counting function} $\mathcal{L}(\Lambda)$ counts the number of lattice points in the region bounded by a convex lattice path $\Lambda$ and the axes, including those on the boundary.

Let $\Omega\subset\mathbb{R}_{\geq0}^2$ be a convex region in the first quadrant. The $\Omega$\textbf{-length} of a convex lattice path $\Lambda$ is defined as
\begin{equation}\label{eqn:Omega-length}
\ell_\Omega(\Lambda)=\sum_{\nu \in\text{Edges}(\Lambda)}\det \left[ \nu\,\, p_{\Omega,\nu}\right]
\end{equation}
where for each edge $\nu$ we pick an auxiliary point $p_{\Omega,\nu}$ on the boundary of $\Omega$ such that $\Omega$ lies entirely ``to the right'' of the line through $p_{\Omega,\nu}$ and direction $\nu$.
\end{DEFN}

\begin{thm}\cite[Corollary A.5]{cg}\label{thm:lattice path}
Let $X$ be the toric domain corresponding to the region $\Omega$. Then its $k^\text{th}$ ECH capacity is given by:
\begin{equation}\label{cbylatticepaths}
c_k(X)=min\left\{ \ell_\Omega(\Lambda): \Lambda  \text{ is a convex lattice path with } \mathcal{L}(\Lambda)=k+1\right\}.
\end{equation}
\end{thm}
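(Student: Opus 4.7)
The plan is to identify both sides of \eqref{cbylatticepaths} with a common quantity arising from embedded contact homology on the boundary of $X_\Omega$. First I would invoke the general framework of Hutchings: after a nondegenerate perturbation of the contact form on $\partial X_\Omega$, ECH capacities are characterized by
\[
c_k(X_\Omega) = \min\bigl\{ A(\alpha) \,:\, I(\alpha) = 2k \bigr\},
\]
where $\alpha$ ranges over ECH orbit sets on $\partial X_\Omega$, $A(\alpha)$ is the symplectic action, and $I(\alpha)$ is the ECH index. This reduces the problem to matching the action–index data on ECH generators with the $\Omega$-length–lattice-count data on convex lattice paths.

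Next I would set up the combinatorial dictionary. For a convex toric domain, the Reeb orbits of a suitable perturbation of the boundary contact form are indexed by primitive integer outward normals $\nu$ appearing along $\partial \Omega$, and ECH orbit sets correspond to tuples of such vectors with multiplicities. Arranging the vectors in counterclockwise order from the $y$-axis to the $x$-axis assembles them into a convex lattice path $\Lambda$ with edges $\nu_1,\dots,\nu_r$. The first key claim, which is essentially a direct calculation on the toric boundary, is that the action of the associated orbit set equals
\[
A(\alpha_\Lambda) \;=\; \sum_{\nu \in \mathrm{Edges}(\Lambda)} \det\bigl[\nu \ \ p_{\Omega,\nu}\bigr] \;=\; \ell_\Omega(\Lambda),
\]
since for each edge the height of a supporting line of $\Omega$ in direction $\nu$ recovers the action of the corresponding simple Reeb orbit, and the determinant accounts for covering multiplicity.

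The main obstacle is the ECH-index identity $I(\alpha_\Lambda) = 2(\mathcal{L}(\Lambda) - 1)$. Using the ECH index formula $I = c_\tau + Q_\tau + \sum CZ^\tau$, this reduces to a Pick-type lattice-point accounting: the relative first Chern class contributes the boundary lattice points on the axes, the self-intersection term $Q_\tau$ contributes twice the enclosed area, and the sum of Conley–Zehnder indices (in the Morse–Bott limit of the toric perturbation) accounts for the boundary lattice points on $\Lambda$ itself. Convexity of $\Omega$ ensures the correct signs and that these three contributions combine into $2(\mathcal{L}(\Lambda) - 1)$. This step is where the argument is most delicate and where one must carry out the careful combinatorial bookkeeping.

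Having matched action with $\Omega$-length and ECH index with $2(\mathcal{L}-1)$, the formula follows. The last thing I would verify is that the minimization over all ECH generators genuinely reduces to a minimization over convex lattice paths: convexity of $\Omega$ prevents a minimizer from having edge vectors in the wrong cyclic order, ruling out the non-path configurations that would otherwise appear in the more general toric setting. Together these steps yield
\[
c_k(X_\Omega) \;=\; \min_{I(\alpha)=2k} A(\alpha) \;=\; \min_{\mathcal{L}(\Lambda)=k+1} \ell_\Omega(\Lambda),
\]
as claimed.
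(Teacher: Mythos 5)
The paper does not contain a proof of this statement; it is quoted verbatim from \cite[Corollary A.5]{cg} and invoked as a black box. There is therefore no argument in the paper to compare your proposal against.

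On its own terms, your sketch is the natural direct attack: perturb $\partial X_\Omega$, compute the ECH chain complex, match the action with $\ell_\Omega$ and the ECH index with $2(\mathcal{L}(\Lambda)-1)$, and minimize. That strategy is morally right, and it is essentially how the analogous formula for \emph{concave} toric domains was first proved. But it is not how \cite{cg} establishes the \emph{convex} case. The boundary of a convex toric domain does not admit a perturbation whose Reeb dynamics are as readily controlled, and the step you correctly flag as delicate --- the ECH-index-to-lattice-point accounting --- is precisely where a direct argument runs into serious trouble. The proof in \cite{cg} sidesteps it: it decomposes $\Omega$ via a weight expansion into a collection of triangles, applies the ECH disjoint-union and monotonicity properties to reduce the capacities to a combinatorial weight-sum formula, and then proves a purely combinatorial identity equating that weight-sum with the lattice-path minimum on the right-hand side of \eqref{cbylatticepaths}. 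So if you intend your sketch as a reconstruction of the cited proof, it is the wrong route; if you intend it as an alternative direct proof, the index bookkeeping you defer is where essentially all the work lies and cannot be waved through with a Pick-type count.
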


In fact, the lattice path $\Lambda$ that realizes the minimum in \eqref{cbylatticepaths} is shaped roughly like the boundary of the region $\Omega$, see \cite[Ex. 4.16(a)]{hECHlec}. In particular, the slopes of the edges of the minimizing lattice path must also be slopes of edges of $\Omega$, and we use this fact to compute the capacities of $X_b$: see Lemma \ref{lem:restrictedlps}.

\subsection{ECH capacities and exceptional classes}\label{subsec:ecECH}

In this subsection we prove a relationship between the obstructions from exceptional classes of \S3 and those from ECH capacities. This relationship underlies our use of ECH capacities to identify live perfect classes contributing to $c_{H_b}$, as explained in \S\ref{subsec:findingk}. Let $\Lambda_{d,m}$ denote the lattice path from $(0,d-m)$ to $(m,d-m)$ to $(d,0)$, which encloses $\mathcal{L}(\Lambda_{d,m})$ lattice points as in Definition \ref{def:latticepathdefs}.

\begin{lemm} \label{lem:ECHcap}  If $\mathbf{E}=(d,m;\mathbf{m})$ is an exceptional class with $\mathbf{m}=q\mathbf{w}\left(p/q\right)$, then there is an interval containing the center $p/q$ of $\mathbf{E}$ on which the obstructions from $\mathbf{E}$ and $c_{\mathcal{L}(\Lambda_{d,m})-1}(X_b)$ satisfy
\begin{equation}\label{eqn:classgencorresp}
\mu_{\mathbf{E},b}(z)\leq\frac{c_{\mathcal{L}(\Lambda_{d,m})-1}(E(1,z))}{c_{\mathcal{L}(\Lambda_{d,m})-1}(X_b)}
\end{equation}
\end{lemm}

\begin{proof} We will show that $c_{\mathcal{L}(\Lambda_{d,m})-1}(X_b)\leq d-mb$ and that $c_{\mathcal{L}(\Lambda_{d,m})-1}(E(1,z))=\mathbf{m}\cdot\mathbf{w}(z)$ for $z$ sufficiently close to $p/q$. For the former, we have by \eqref{cbylatticepaths} and \eqref{eqn:Omega-length}:
\[
c_{\mathcal{L}(\Lambda_{d,m})-1}(X_b)\leq\ell_{\Omega_b}(\Lambda_{d,m})=d-bm.
\] 
For the latter, we first analyze $c_{\mathcal{L}(\Lambda_{d,m})-1}(E(1,z))$. Note that
\begin{equation}\label{eqn:multiplybyq}
qN\left(1,\frac{p}{q}\right)_k=N(q,p)_k.
\end{equation}
Let $K(p,q)=\frac{(p+1)(q+1)}{2}-1$. We claim that
\[
N(q,p)_{K(p,q)}=N(q,p)_{K(p,q)+1}=pq
\]
and these are the only two values of $k$ for which $N(q,p)_k=pq$. If the point in $\Z^2_{\geq0}$ with coordinates $(n_1,n_2)$ is labeled $n_1q+n_2p$ then $N(q,p)$ is the list of such labels ordered as follows. Take the line of slope $-p/q$ and move it from left to right across the plane. Then $N(q,p)$ is the $k^\text{th}$ label whose lattice point this line crosses, starting from zero. If points are colinear then their indices start from the number of lattice points between the line of slope $-p/q$ through $n_1q+n_2p$ and the axes and increase from there. Because $p$ and $q$ are coprime,
\[
n_1q+n_2p=pq\Rightarrow n_2p=(p-n_1)q
\]
implies that both $n_2p$ and $(p-n_1)q$ must equal an integer multiple of $pq$, and we conclude that either $n_1=0$ or $n_1=p$. Therefore there are only two values of $k$ for which $N(q,p)_k=pq$, and the values of $k$ start from the number of lattice points between the line of slope $-p/q$ through $(pq,0)$ and $(0,pq)$ and the axes. There are exactly $K(p,q)$ such lattice points.

Next we show that $\mathcal{L}(\Lambda_{d,m})=K(p,q)+1$. Firstly, by counting the lattice points in the triangle between $(0,0), (0,d)$, and $(d,0)$, then removing those in the triangle between $(0,d-m), (0,d)$, and $(m,d-m)$, we obtain
\begin{align}
\mathcal{L}(\Lambda_{d,m})&=\frac{1}{2}(d(d+3)-m(m+1))+1\label{eqn:Ldm}
\\&=\frac{1}{2}(d^2-m^2+1+3d-m+1)\nonumber
\\&=\frac{1}{2}(pq+p+q+1)\nonumber&\mbox { \footnotesize{by  \eqref{eq:diophantine} and \eqref{eq:wtexp}} } 
\\ \notag &=K(p,q)+1
\end{align}
By \eqref{eqn:Ldm} and \eqref{eqn:multiplybyq}, we have $c_{\mathcal{L}(\Lambda_{d,m})-1}\left(E\left(1,p/q\right)\right)= N(1,p/q)_{K(p,q)}=p$, and by (\ref{eq:wtexp}), we have $\mathbf{m}\cdot\mathbf{w}\left(p/q\right)=p$.

We will show that there is an interval $I$ containing $p/q$ for which, when $z\in I, z\geq p/q$,
\begin{equation}\label{eqn:ageqpq}
N(1,z)_{K(p,q)}=p=\mathbf{m}\cdot\mathbf{w}(z)
\end{equation}
and when $z\in I, z\leq p/q$,
\begin{equation}\label{eqn:aleqpq}
N(1,z)_{K(p,q)}=z=\mathbf{m}\cdot\mathbf{w}(z)
\end{equation}

The first equalities in (\ref{eqn:ageqpq}) and (\ref{eqn:aleqpq}) both follow from considering the lattice $\Z^2_{\geq0}$ labeled with $n_1q+n_2zq$ defining $N(q,zq)_k$. As $z$ increases from $p/q$, the slope of the line defining $N(q,zq)_k$ decreases from $-p/q$ to $-z$. When moving this line from left to right, it will now reach $(pq,0)$ slightly before it reaches $(0,zq)$, but if $z$ is not increased too much, the triangle between the line through $(pq,0)$ and the axes will still contain $K(p,q)$ points, and there will be no points the line passes strictly between $(pq,0)$ and $(0,zq)$, so $N(q,zq)_{K(p,q)}=pq$, showing the first equality of \eqref{eqn:ageqpq}.

Similarly, as $z$ decreases, the slope of the line incresases from $-p/q$ to $-z$. Now the triangle between the line through $(0,zq)$ of slope $-z$ and the axes will contain $K(p,q)$ points. There will be no points between the line through $(0,zq)$ of slope $-z$ and the line through $(pq,0)$ of slope $-z$ if $z$ is close enough to $p/q$, so $N(q,zq)_{K(p,q)}=z$, showing the first equality of \eqref{eqn:aleqpq}.

The second equalities in both (\ref{eqn:ageqpq}) and (\ref{eqn:aleqpq}) follow from 
Lemma~\ref{lem:munearc}.
\end{proof}

\begin{rmk}\label{rmk:lpec}\normalfont 
In all cases we have checked, the inequality (\ref{eqn:classgencorresp}) is an equality. Note that if $\mathbf{E}$ is live, then we must have an equality. However, it seems to be true more generally that there are pairs of lattice paths and exceptional classes for which (\ref{eqn:classgencorresp}) is an equality on an interval, even when $\mathbf{E}$ is not live (see Figure \ref{fig:center11119}), or is not quasi-perfect (see Figure \ref{fig:k8}). In each of Figures \ref{fig:center11119} and \ref{fig:k8}, there is an interval on which the obstruction coming from the ECH capacity (in bright blue) coincides with the obstruction coming from the exceptional class (in brown), and this interval includes the break point of the exceptional class obstruction. 
Both figures were plotted using 
 {\fontfamily{lmtt}\selectfont\textbf{PlotSingleCapacityObstructionFxn}} (see \S\ref{subsec:ECHcapcode})
 for the bright blue curve and  {\fontfamily{lmtt}\selectfont\textbf{PlotMu}} (see \S\ref{subsec:excclasscode}) for the brown curve.

Specifically, every constraint $\mu_{\bE, b}$ has a break point $a'$ in the sense of Lemma \ref{lem:0} even if $\bE$ is not perfect, and formulas analogous to those in Lemma~\ref{lem:munearc} do always hold.  Exactly what these formulas are depends on the relation between $\bw(a')$ and $\bbm$. One might be able construct a general proof of this fact using the approach to ECH capacities taken in \cite{cgh,cghm} where  these capacities are interpreted in terms of the properties of the ECH cobordism map given by a symplectic embedding $E(1,a)\sembeds H_b$; see in particular the discussion in \cite[\S3.1]{cghm}.    However, to prove this here would take us too far afield.

Instead, we will occasionally discuss $\bE$ which ``correspond'' to an ECH capacity $c_k$ in the sense that \eqref{eqn:classgencorresp} is an equality on an interval of interest on which $\mu_{\bE,b}$ is nontrivial (if $\bE$ is quasi-perfect, this interval will always contain its center).
\hfill$\er$
\end{rmk}

\begin{figure}[H]
	\includegraphics[width=.9\linewidth]{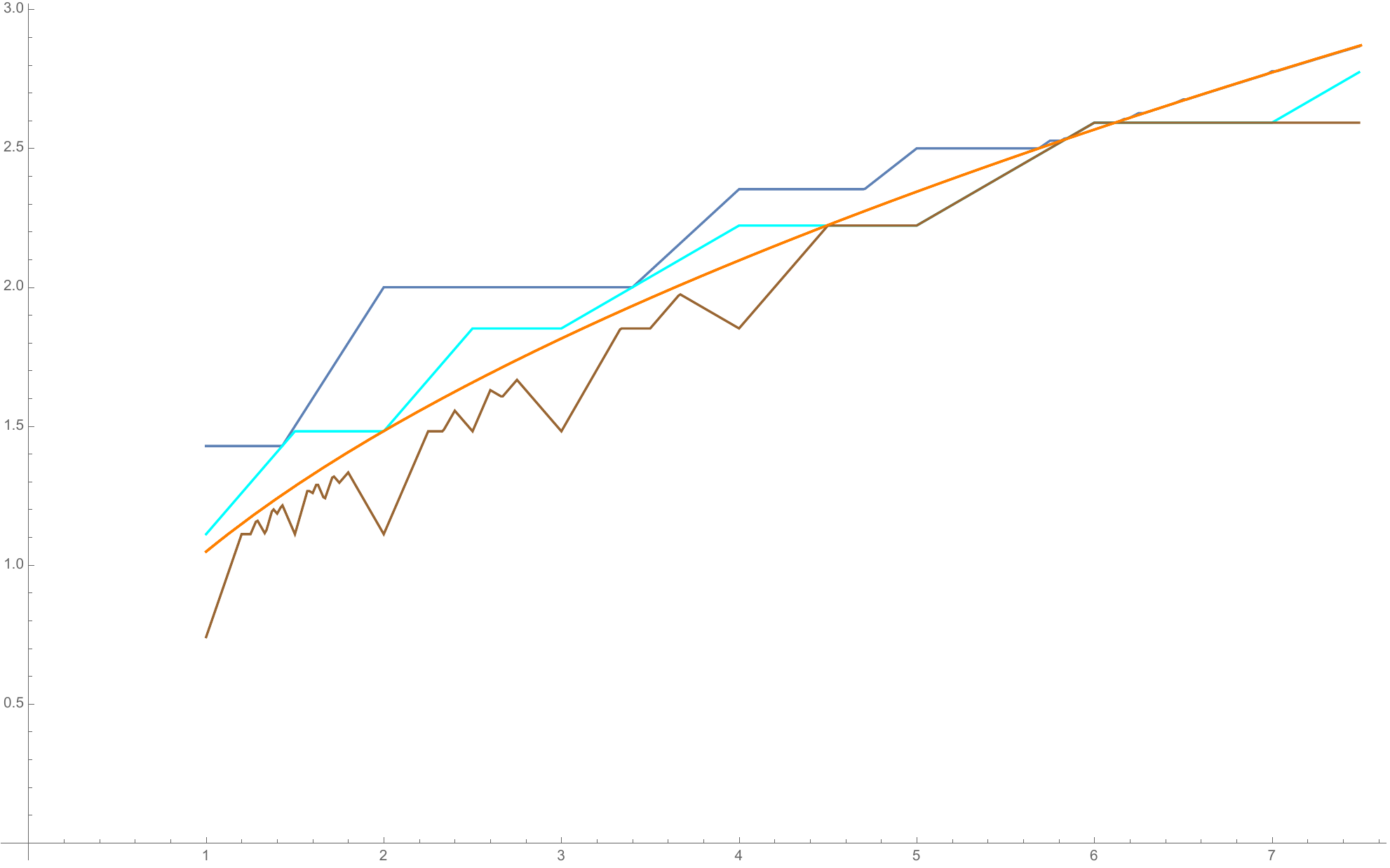}
	\caption{Comparison between the ECH and exceptional class obstructions for $b = 0.3$.  Here a lower bound $c_{H_{0.3}}^\le$ for $c_{H_{0.3}}$ is in dark blue (see \S\ref{sec:Mathem}), the volume obstruction is in orange, the obstruction from the $8^\text{th}$ capacity $c_8(X_{0.3})$ is in bright blue, and the obstruction from the class $\mathbf{E}=\left(3,1;2,1^{\times5}\right)$ is in brown. The plot of $\mu_{\bE,0.3}$ is atop that of the obstruction from $c_8(X_{0.3})$ where they agree.}
	\label{fig:k8}
\end{figure}

\subsection{There is no infinite staircase for $b=1/5$}\label{ss:15}

Let 
$$X=5 H_\frac15=\mathbb{C}P^2_5\#\overline{\mathbb{C}P}^2_{1}.$$ 
The embedding capacity function of $X$ is a scaling of that of $H_\frac15$, with $c_X(z)=\frac15 c_\frac{1}{5}(z)$. Thus, if $X$ has an infinite staircase then it accumulates at $z=\acc(1/5)=6$.

\begin{thm}\label{prop:15}
Let $X=5 H_\frac15$. 
For sufficiently small $\varepsilon$ the ellipsoid embedding function of $X$ is given by
$$c_X(z)= \begin{cases} \frac12 &\mbox{for }z\in(6-\varepsilon,6] \\
\frac{z+6}{24} & \mbox{for } z\in[6,6+\varepsilon) \end{cases}.$$
Thus, there is no infinite staircase for $X$, or equivalently for the Hirzebruch surface $H_\frac15$.
\end{thm}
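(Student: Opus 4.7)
The strategy is to use the fact that $X = 5H_{1/5}$ is a convex toric domain so, by \eqref{def:csup} and Remark \ref{ECHiff2}, we have the sharp identity $c_X(z) = \sup_k c_k(E(1,z))/c_k(X)$, and to identify two specific values of $k$ whose ratios realize this supremum on a neighborhood of $z = 6$. Since the resulting candidate function has exactly one non-smooth point (at $z = 6$), no infinite staircase is possible. I use the scaling $c_X(z) = \tfrac{1}{5}\,c_{H_{1/5}}(z)$ throughout to pass between $X$ and $H_{1/5}$.

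For the lower bound $c_X(z) \geq \max\!\bigl(\tfrac{1}{2},\tfrac{z+6}{24}\bigr)$, I recycle two exceptional class obstructions already analyzed in the paper. The class $\bE = (2,0;1^{\times 5})$ of Example \ref{ex:52} is $(1/5)$-perfect with $\mu_{\bE,1/5}(z) = 5/2$ for all $z \geq 5$, so after rescaling $c_X(z) \geq 1/2$ on $[5,6]$; combined with monotonicity of $c_X$ in $z$ and the equality $c_X(6) = V_{1/5}(6)/5 = 1/2$, this forces $c_X(z) = 1/2$ on $(6-\varepsilon,6]$. On the right, the class $\bE_5 = (5,1;2^{\times 6},1)$ of Example \ref{ex:19th} has $\mu_{\bE_5,1/5}(z) = 5(z+6)/24$ on a neighborhood of 6, rescaling to $(z+6)/24$.

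For the matching upper bound on $[6, 6+\varepsilon)$, I translate the second obstruction into ECH terms. Using \eqref{eqn:Ldm}, the lattice path $\Lambda_{5,1}$ encloses $20$ lattice points and has $\Omega_{1/5}$-length $5 - \tfrac{1}{5} = \tfrac{24}{5}$, so by Theorem \ref{thm:lattice path} one has $c_{19}(X) \leq 24$; meanwhile the Hutchings formula (Remark \ref{rem:computeECH}) gives $c_{19}(E(1,z)) = z+6$ for $z$ in a neighborhood of 6. Thus the $k=19$ ratio already matches the conjectural value on the right, and the task reduces to showing that every competing capacity satisfies $c_k(E(1,z))/c_k(X) \leq (z+6)/24$ for all $z \in [6,6+\varepsilon)$. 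This in turn becomes a statement about the lattice path functional $\ell_{\Omega_{1/5}}$ that can be tackled using Theorem \ref{thm:lattice path} and the explicit form of the ellipsoid capacities.

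The main obstacle is obtaining uniform control as $k \to \infty$. For small $k$ (up to some $k_0$), the inequality can be verified by direct enumeration of candidate convex lattice paths, exactly in the spirit of the Mathematica computations of \S\ref{sec:Mathem} and Remark \ref{rmk:computer}. For large $k$, Weyl-type asymptotics give $c_k(E(1,z))/c_k(X) \to \sqrt{z/\mathrm{vol}(X)} = V_{1/5}(z)/5$, and at $z=6$ this limit equals $1/2 = (6+6)/24$, while for $z$ slightly greater than 6 it is strictly less than $(z+6)/24$, so the tail terms cannot win. The technical heart of the argument is the quantitative interpolation between these two regimes, which is where I will import the lattice-path tools of \cite{rationalellipsoids}: the rationality of the conormals of $\Omega_{1/5}$ constrains the slopes of edges in any minimizing lattice path, and this rigidity yields the uniform bound needed to close the $k \geq k_0$ case.
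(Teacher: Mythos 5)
Your lower-bound analysis (via $\bE=(2,0;1^{\times 5})$ and $\bE_5=(5,1;2^{\times 6},1)$), the monotonicity argument on $(6-\varepsilon,6]$, and the identification of $k=19$ as the relevant capacity index all match the paper's proof exactly; the only substantive content of the theorem is the upper bound $c_X(z)\le (z+6)/24$ on $[6,6+\varepsilon)$, and there your argument has a genuine gap.

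The gap is in the ``large $k$'' regime. You propose to control all $k\ge k_0$ via Weyl asymptotics, observing that $c_k(E(1,z))/c_k(X)\to\sqrt{z/24}$ and that this limit lies below $(z+6)/24$ for $z>6$. But the line $y=(z+6)/24$ is \emph{tangent} to the concave curve $y=\sqrt{z/24}$ at $z=6$: both have value $1/2$ and slope $1/24$ there. So the margin $(z+6)/24-\sqrt{z/24}$ is $O((z-6)^2)$, and as $z\to 6^+$ this margin goes to zero. Meanwhile the error $c_k(E(1,z))/c_k(X)-\sqrt{z/24}$ decays like $O(k^{-1/2})$ but is not sign-definite, so for each fixed $z$ close to $6$ there may be arbitrarily large $k$ whose ratio exceeds the asymptotic value by more than the shrinking margin. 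Asymptotics alone give you no uniform $k_0$ independent of $z$, and without that the ``enumerate small $k$, asymptotics for large $k$'' dichotomy does not close. Appealing vaguely to ``rigidity from rational conormals'' does not supply the missing uniform estimate.

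What the paper actually does (following \cite{rationalellipsoids}, as you correctly guess) is qualitatively different and bypasses asymptotics entirely: it converts the inequality $c_X(z)\le(z+6)/24$, simultaneously for all $k$, into the lattice-point inequality $\mathrm{ehr}_{\Delta}(t)\ge\mathrm{cap}_X(t)$ for all integers $t\ge0$, where $\Delta$ is an explicit triangle. It then compares both sides to the Ehrhart function of the fixed rational triangle $\Delta_{1/2,1/12}$, reducing the claim to two exact combinatorial facts: a slicing argument showing $U\le D$ (Claim \ref{claimUD}) and a period-$24$ quasipolynomial identity for $\tilde d - d$ (Claim \ref{claimdtilde}), proved with \cite{beckrobins} and \cite{worm}. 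This is a sharp, finitary argument with no $k_0$ threshold and no asymptotic estimate; the tangency at $z=6$ causes no trouble because the comparison is done at the level of integer lattice point counts. To repair your proof you would need to carry out this Ehrhart/cap reformulation rather than the two-regime plan you sketch.
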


The proof follows closely the proof that the ellipsoid $E(3,4)$ does not have an infinite staircase, as presented in \cite[Section 2.5]{rationalellipsoids}.

\begin{proof}
The main part of the proof is showing the following claim, whose proof we postpone by a few paragraphs.

\begin{CLAim}\label{claim15}\normalfont
For $z\in[6,6+\varepsilon)$, we have $c_X(z)\leq (z+6)/24$.
\end{CLAim}

Assuming this claim, the rest of the proof goes as follows: the claim gives us an upper bound for $c_X(z)$ for 
$z\in[6,6+\varepsilon)$.
To get a lower bound, either see Example~\ref{ex:19th} or use ECH capacities and \eqref{def:csup}: the 19th ECH capacity of $X$ is 24 (see how to compute it in \S\ref{subsec:computingECHcap}), whereas the 19th ECH capacity of $E(1,z)$ for this range of $z$ is $z+6$, see Remark \ref{rem:computeECH}. Then we have $c_X(z)\geq (z+6)/24$, and in fact $c_X(z)=(z+6)/24$.

For the range $z\in(6-\varepsilon,6]$, we obtain a lower bound either via Example~\ref{ex:52} or again using ECH capacities and \eqref{def:csup}: the 5th ECH capacity of $X$ is 10 (see how to compute it in \S\ref{subsec:computingECHcap}), whereas the 5th ECH capacity of $E(1,z)$ for this range of $z$ is 5, see Remark \ref{rem:computeECH}. Therefore $c_X(z)\geq 1/2$. To get the upper bound $c_X(z)\leq1/2$ we first observe that using Claim \ref{claim15} and the volume constraint at $z=6$, we obtain $c_X(6)=1/2$. Since $c_X(z)$ is nondecreasing, we have $c_X(z)\leq c_X(6)=1/2$ for $z\leq 6$, and therefore in fact $c_X(z)=1/2$ for $z\in(6-\varepsilon,6]$.

\MS

We now proceed to proving Claim \ref{claim15}. 
Note that since $c_X(z)$ is continuous we can assume that $z$ is irrational; this will be convenient in some of the arguments that follow. 
We begin by observing that
\begin{align*} 
c_X(z)\leq\frac{z+6}{24} &\iff  E(1,z)\hookrightarrow\frac{z+6}{24}\,X \\ 
& \iff E(\frac{24}{z+6},\frac{24z}{z+6})\hookrightarrow X\\
&\iff \text{ehr}_{\Delta_{\frac{z+6}{24},\frac{z+6}{24z}}}(t)\geq \text{cap}_X(t), \qquad\forall t\in\mathbb{Z}_{\geq 0},
\end{align*}
where the cap function counts the number of ECH capacities up to $t$
$$\text{cap}_X(t)=\#\left\{k| c_k(X)\leq t  \right\}$$
and the Ehrhart function, which is the cap function for an ellipsoid, counts the number of lattice points inside a scaling of a triangle
$$\text{ehr}_{\Delta_{u,v}}(t)=\#\left\{\mathbb{Z}^2\cap t\Delta_{u,v} \right\},$$ 
with $\Delta_{u,v}$ the triangle with vertices $(0,0)$, $(u,0)$ and $(0,v)$. 

The last equivalence then follows from \eqref{ECHimplies}, Remark \ref{ECHiff2}, and the fact that the ECH capacities of $X$ are all integers because the corresponding region $\Omega\subset\mathbb{R}^2$ is a lattice polygon, see \eqref{eqn:Omega-length} and \eqref{cbylatticepaths}.

Because it is hard to count lattice points in a generic triangle, we compare $\text{ehr}_{\Delta_{\frac{z+6}{24},\frac{z+6}{24z}}}(t)$ with $\text{ehr}_{\Delta_{\frac{z+6}{24},\frac{z+6}{24z}}}(t)|_{z=6}=\text{ehr}_{\Delta_{\frac{1}{2},\frac{1}{12}}}(t)$:
\begin{equation}\label{eq:triangledecomp}
\text{ehr}_{\Delta_{\frac{z+6}{24},\frac{z+6}{24z}}}(t)=\text{ehr}_{\Delta_{\frac{1}{2},\frac{1}{12}}}(t)+D-U-d
\end{equation}
where $D=D(t)$ and $U=U(t)$ are respectively the number of lattice points in the closed regions $R_U$ and $R_D$ as in Figure \ref{fig:triangleUD}, and $d=d(t)$ is the number of lattice points on the segment that is the left boundary of $R_D$ (but excluding the potential lattice point $(t/4,t/24)$).

\begin{figure}[H] 
\centering
  \includegraphics[width=8cm]{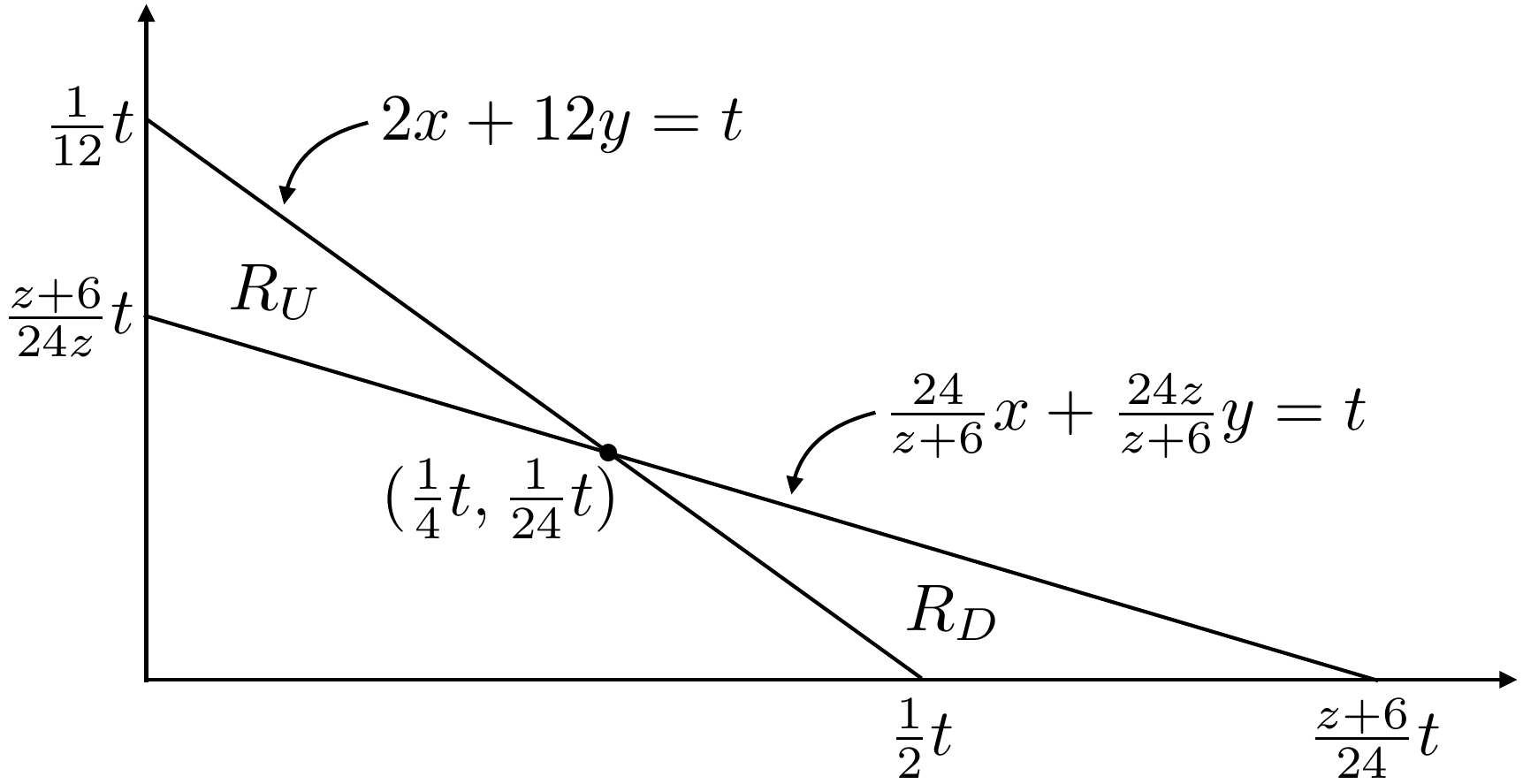}
  \caption{The triangles $\Delta_{\frac{z+6}{24}t,\frac{z+6}{24z}t}$ and $\Delta_{\frac{1}{2}t,\frac{1}{12}t}$ delimit the regions $R_U$ and $R_D$. The corresponding lattice point count is given in \eqref{eq:triangledecomp}.}
  \label{fig:triangleUD}
\end{figure}

We will then also compare the cap function of $X$ with the Ehrhart function of the same triangle:
$$\text{cap}_X(t)=\text{ehr}_{\Delta_{\frac{1}{2},\frac{1}{12}}}(t)-\tilde{d},$$
where $\tilde{d}=\tilde{d}(t):=\text{ehr}_{\Delta_{\frac{1}{2},\frac{1}{12}}}(t)-\text{cap}_X(t).$

The proof of Claim \ref{claim15} will then be complete if we show the following: 

\begin{CLAim}\label{claimUD} \normalfont
For all $t$ we have $U\leq D$, and furthermore for $t\equiv 10\,\, (\mbox{mod } 24)$ we have $U\leq D-1$ .
\end{CLAim}

\begin{CLAim}\label{claimdtilde} \normalfont
For $t\not\equiv 10\,\, (\mbox{mod } 24)$ we have $\tilde{d}\geq d$ and for $t\equiv 10\,\, (\mbox{mod } 24)$ we have $\tilde{d}= d-1$.
\end{CLAim}

\begin{proof}[Proof of Claim \ref{claimUD}]
Recall that we are assuming that $z\not\in\mathbb{Q}$.

We begin by proving the more general inequality $U\leq D$, in the following way: for each $y_0\in[t/24,t/12]\cap \mathbb{Z}$ we define a corresponding $y_1:=\floor{{t}/{12}}-y_0\in[0,{t}/{24}]\cap \mathbb{Z}$.
The values of $y_0$ range through all possible heights of lattice points in $R_U$, and for each height we will show that the number of lattice points on that slice of $R_U$ is no greater than the number of lattice points in the slice of $R_D$ at height $y_1$, which implies that $U\leq D$:
\begin{equation}\label{eq:slice}\#\left\{R_U\cap\left\{y=y_0\right\} \right\}\leq\#\left\{R_D\cap\left\{y=y_1\right\} \right\}.\end{equation}
We can rewrite \eqref{eq:slice} as
\begin{equation}\label{eq:slice2}\floor{x_2}-\ceil{\max\left\{0,x_1 \right\}}+1\leq \floor{x_4}-\ceil{x_3}+1,\end{equation}
where 
\begin{align*}
x_1&=\frac{t(z+6)-24z y_0}{24}           &  x_2&=\frac{t-12y_0}{2}            \\
x_3&=\frac{t-12y_1}{2}                &  x_4& =\frac{t(z+6)-24z y_1}{24}.        
\end{align*}
Now, since $\floor{x_2}-\ceil{\max\left\{0,x_1 \right\}}+1\leq\floor{x_2}-\floor{x_1}$, we have that \eqref{eq:slice2} follows from proving that
\begin{align}
&\floor{x_2}-\floor{x_1}\leq \floor{x_4}-\floor{x_3}   & \mbox{ whenever }  x_3\not\in\mathbb{Z}  \label{eq:integerornot1}\\
&\floor{x_2}-\floor{x_1}\leq \floor{x_4}-\floor{x_3}+1   & \mbox{ whenever }  x_3\in\mathbb{Z}.   \label{eq:integerornot2}
\end{align}
We first write
\begin{align*}
\floor{x_2}-\floor{x_1}&=\frac{t}{2}-\left\{\frac{t}{2}\right\}-6y_0-\frac{t(z+6)}{24}+zy_0+\left\{\frac{t(z+6)}{24}-zy_0\right\}\\
\floor{x_4}-\floor{x_3}&=\frac{t(z+6)}{24}-zy_1-\left\{\frac{t(z+6)}{24}-zy_1\right\}-\frac{t}{2}+\left\{\frac{t}{2}\right\}+6y_1,
\end{align*}
where $\left\{x\right\}=x-\floor{x}$ is the fractional part of $x$.
We start with the case when $x_3\not\in\mathbb{Z}$, or equivalenty, when $t$ is odd. The inequality \eqref{eq:integerornot1} can be rewritten as
\begin{equation}\label{eq:firstequiv}
-2\left\{\frac{t}{2} \right\}-(z-6)\left\{\frac{t}{12} \right\}+\delta\leq 0
\end{equation}
where 
$$\delta=\left\{\frac{t(z+6)}{24}-zy_0\right\}+\left\{\frac{t(z+6)}{24}+zy_0-z\floor{\frac{t}{12}}\right\}.$$
Since $\left\{m\right\}+\left\{n\right\}\leq\left\{m+n\right\}+1$, and furthermore we can assume for this simplification that $t\in\left\{0,1,2,\ldots,11 \right\}$, we have
$$\delta\leq \left\{ \frac{t}{2}+z\left\{\frac{t}{12} \right\} \right\}+1\leq(z-6)\left\{\frac{t}{12} \right\}+1.$$
Thus we see that \eqref{eq:firstequiv} holds when $t$ is odd, since $-2\left\{ {t}/{2}\right\}+1=0$.

We now move on to the case when $x_3\in\mathbb{Z}$, or equivalenty, when $t$ is even. The argument follows exactly as above, except that the right hand side of \eqref{eq:firstequiv} is 1 and  $-2\left\{ {t}/{2}\right\}+1=1$.

This concludes the proof that $U\leq D$ for general $t$, we now focus on the case when $t\equiv 10\,\, (\mbox{mod } 24)$ and will show that $U\leq D-1$.

We show that in this case, as $y_0$ ranges over the integers $[{t}/{24},{t}/{12}]\cap \mathbb{Z}$, the corresponding $y_1$ is never equal to the height of the lattice point 
$$\left(\frac{t-12 y'}{2},y'\right)\in \mathbb{Z}^2\cap R_D,$$ 
where $y':=\floor{{t}/{24}}$. This means that that lattice point is not accounted for in the proof given above for $U\leq D$, and implies that in fact $U\leq D-1$.
Indeed, $y_1$ is maximized for $y_0=\ceil{{t}/{24}}$ which makes $y_1=\floor{{t}/{12}}-\ceil{{t}/{24}}$. When $t\equiv 10\,\, (\mbox{mod } 24)$, this becomes $y_1=\floor{{t}/{24}}-1<y'$. 
This concludes the proof of Claim \ref{claimUD}.
\end{proof}

\begin{proof}[Proof of Claim \ref{claimdtilde}]
To prove Claim \ref{claimdtilde}, we must compute $d$ and 
$$\tilde{d}=\text{ehr}_{\Delta_{\frac{1}{2},\frac{1}{12}}}(t)-\text{cap}_X(t).$$

Recall that $d=d(t)$ is the number of lattice points on the segment that is the left boundary of $R_D$ (but excluding the potential lattice point $({t}/{4},{t}/{24})$). 
Any such lattice point $(m,n)$ must satisfy $2m+12n=t$, which implies that $t$ is even. Therefore, $d(t)=0$ for $t$ odd.
Conversely, if $t$ is even and $(x,y)$ is on the left boundary of $R_D$ then $x=\frac{t-12y}{2}$ and $y<{t}/{24}$. If furthermore $y\in\mathbb{Z}$ then also $x\in\mathbb{Z}$. It follows that $d(t)=\ceil{{t}/{24}}$ for $t$ even.

To compute the function $\text{ehr}_{\Delta_{\frac{1}{2},\frac{1}{12}}}(t)$ we first use \cite[Exercise 2.34]{beckrobins} to obtain the quadratic and linear terms. To obtain the constant terms, we then use the fact that by \cite[Theorem 3.23]{beckrobins} the function is quasipolynomial with period at most $lcm(2,12)=12$ and compute the necessary ECH capacities of the ellipsoid $E(2,12)$ using the method described in Remark \ref{rem:computeECH}:
$$\text{ehr}_{\Delta_{\frac{1}{2},\frac{1}{12}}}(t)=\frac{t^2}{48}+
\begin{cases} \frac{t}{3} &\mbox{$t$ even}  \\
 \frac{7 t}{24} &\mbox{$t$ odd}  \end{cases}
+
\begin{cases}
1 &t\equiv 0,8\,\,\qquad \frac{4}{3} \quad  t\equiv 4 \\
\frac{11}{16} &t\equiv 1,9\,\, \qquad \frac{49}{48} \quad  t\equiv 5 \\ 
\frac{5}{4} &t\equiv 2,6\,\, \qquad \frac{7}{12}\quad  t\equiv 10\\ 
\frac{15}{16} &t\equiv 3,7\,\, \qquad \frac{13}{48}\quad  t\equiv 11
\end{cases}
\,\,(\mbox{mod } 12)
$$ 
Alternately, it is possible to compute $\text{ehr}_{\Delta_{\frac{1}{2},\frac{1}{12}}}(t)$ using Pick's theorem.

To compute the function  $\text{cap}_X(t)$ we use \cite[Thm 5.11]{worm}, with $\Omega$ being the region in $\mathbb{R}^2$ corresponding to $X$, and therefore with area and $\Omega$-perimeter both equal to 24 and affine perimeter equal to 14. We then have that there exists $t_0\in\mathbb{N}$ such that for $t>t_0$ this function is a quasipolynomial of the form 
$$\text{cap}_X(t)=\text{ehr}_\Omega(\floor{t/48})+\gamma_i,$$
with period 24 and $\gamma_i\in\mathbb{Q}$ for $i=0,\ldots,23$. We find that $t_0=42$ by checking the conditions in \cite[Thm 5.11]{worm} and computing enough values of $\text{cap}_X(t)$ using the methods in \S\ref{subsec:computingECHcap}:
\begin{enumerate}[label=(\roman*)]
\item $t_0\geq 2\times 24 -14=34$;
\item $\text{cap}_X(t_0-1)<\text{cap}_X(t_0)<\text{cap}_X(t_0+1)<\ldots<\text{cap}_X(t_0+47)$.
\end{enumerate}
Computing further values of $\text{cap}_X(t)$ we obtain the general formula for $t>42$:
$$
\text{cap}_X(t)=\frac{t^2}{48}+\frac{7t}{24}+\begin{cases}
\begin{array}{rlrl}
1 & t\equiv 0,10\,\,  & \frac{13}{48} & t\equiv 11,23\\
\frac{11}{16} &  t\equiv 1,9\,\, &\quad -\frac{3}{2} & t\equiv 12,22\\ 
-\frac{2}{3} & t\equiv 2,8\,\,&\quad -\frac{5}{16} & t\equiv 13,21 \\
-\frac{17}{16} & t\equiv 3,7\,\,&\quad \frac{5}{6} & t\equiv 14,20\\ 
\frac{1}{2} & t\equiv 4,6\,\,&\quad \frac{15}{16} & t\equiv 15,19\\ 
\frac{49}{48} & t\equiv 5\,\, &\quad 0 & t\equiv 16,18\\  
&&-\frac{95}{48} & t\equiv 17.
\end{array}
\end{cases}
\,\,(\mbox{mod } 24).$$
Alternatively, we could compute the function $\text{cap}_X(t)$ by using the fact that the sequence of ECH capacities of X is given by the sequence subtraction $N(5)-N(1)$, where $N(j)$ is the sequence of ECH capacities of the ball $E(j,j)$, see \cite[Definition 2.4 and equation (2.6)]{AADT}. 

Still for $t\geq42$ we now compute $\tilde{d}=\text{ehr}_{\Delta_{\frac{1}{2},\frac{1}{12}}}(t)-\text{cap}_X(t)$ and compare it with $d$:
$$\tilde{d}=\begin{cases}
d&t\equiv 0,1,4,5,6,9,11,14,15,19,20,23\\ 
d+1&t\equiv 2,8,13,16,18,21\\ 
d+2&t\equiv 3,7,12,22\\ 
d+3&t\equiv 17\\ 
d-1&t\equiv 10. 
\end{cases}
\,\,(\mbox{mod } 24)$$
For $t\leq 42$ we instead use the computer (see \S\ref{subsec:computingECHcap}) to obtain $\text{cap}_X(t)$ and conclude that Claim \ref{claimdtilde} holds also in this range.
\end{proof}

Since Claims~\ref{claimUD} and ~\ref{claimdtilde} are now proven, so is Claim~\ref{claim15}, and thus the proof of Theorem~\ref{prop:15} is complete.
\end{proof}

\section{Mathematica code}\label{sec:Mathem}

In this section we explain how we experimentally identified the infinite staircases of Theorems \ref{thm:stair}, \ref{thm:L}, and \ref{thm:U}, and the properties of the blocking classes discussed in \S\ref{ss:block}, particularly Proposition \ref{prop:block}.

We first had to produce code which approximated $c_{H_b}$ in a reasonable amount of time and identified the obstructions from different exceptional classes. In \S\ref{subsec:computingECHcap}-\S\ref{subsec:excclasscode} we explain how to plot a lower bound $c_{H_b}^\le$ to $c_{H_b}$, the obstruction from a single ECH capacity $c_k(X_b)$, and the obstruction $\mu_{\bE,b}$ from a single exceptional class $\mathbf{E}$ using Mathematica. In \S\ref{subsec:computingECHcap} we explain the theoretical framework of our code, while \S\ref{subsec:ECHcapcode}-\ref{subsec:excclasscode} primarily contain the code itself. Our methods allow us to use $25{,}000$ ECH capacities to plot $c_{H_b}^\le$ in a reasonable amount of time (e.g., the algorithm producing the $k^\text{th}$ ECH capacity is $O(k)$). They also allow us to visually compare the obstructions arising from ECH capacities with those from exceptional classes, as explained in \S\ref{subsec:ecECH}.

Very similar methods allow for the computation of the capacities and ellipsoid embedding functions for polydisks, with small changes only to the formulas in the functions {\fontfamily{lmtt}\selectfont\textbf{LatticePts}} and {\fontfamily{lmtt}\selectfont\textbf{Action}}. These changes are due to the fact that the paths $\Lambda$ and the region $\Omega$ determining the polydisk as a toric domain will have sides of different slopes than those of the analogous paths and regions in the case of $H_b$. 
The obstructions from the exceptional classes of polydisks require changes in the formulas for exceptional classes and obstructions: see \cite[\S1.1]{usher} for the case of polydisks and \cite[\S2.2]{AADT} for the case of general rational toric domains. Note that our code is optimized for cases where $\Omega$ has two sides in addition to the sides on the axes, as the Diophantine equations solved by the function {\fontfamily{lmtt}\selectfont\textbf{Index}} become much more computationally expensive as the number of sides of $\Omega$ increases. This makes the code from \cite[Appendix C]{AADT} more useful when $\Omega$ has more than two sides in addition to the sides on the axes.

In \S\ref{subsec:findingk} we explain our strategy for identifying blocking classes, approximating $Block$, and identifying elements of $Stair$ experimentally. While some of our methods are generalizations of the analysis of blocking classes from \S\ref{ss:block}, we explain how we apply these ideas using a computer. Finally, \S\ref{subsec:pictures} contains illustrations of examples discussed throughout the paper.

\subsection{Computing many ECH capacities of $X_b$ quickly}\label{subsec:computingECHcap}

Implementing Theorem \ref{thm:lattice path} directly in practice is rather slow. The number of lattice points $\mathcal{L}(\Lambda)$ bounded by the lattice path $\Lambda$ is a quadratic polynomial in the number of sides of $\Lambda$.  Enumerating all convex lattice paths $\Lambda$ with $\mathcal{L}(\Lambda)=k+1$ requires solving the equation $\mathcal{L}(\Lambda)=k+1$ with the path $\Lambda$ having a number of sides that ranges between one and some upper bound depending on $k$. Once $k$ is large enough that $\Lambda$ could have three or more sides, this procedure slows dramatically.

In order to shorten our search, we constrain the slopes of the edges of the lattice paths to lie within a certain range. We then use Lemma \ref{lem:restrictedlps}, inspired by \cite[Exercise 4.16(a)]{hECHlec}, to restrict the set of lattice paths over which the minimum of (\ref{cbylatticepaths}) is taken to those with sides parallel to the vectors $(1,0)$ and $(1,-1)$, at the expense of allowing a broader range of values of $\mathcal{L}(\Lambda)$.

\begin{DEFN} For any nonzero vector $v\in\R^2$, define $\theta(v)\in(-2\pi,0]$ to be the angle for which $v$ is a positive multiple of $(\cos\theta(v),\sin\theta(v))$.
\end{DEFN}

Note that this is not the same as the $\theta$ of in \cite[Definition A.6]{cg}.

The proof of the following Lemma was pointed out to us by Michael Hutchings.

\begin{lemm}\label{lem:restrictedslopes} The minimum in (\ref{cbylatticepaths}) can be taken over convex lattice paths $\Lambda$ with $\mathcal{L}(\Lambda)=k+1$ and $\theta(\Lambda'(t))\in\left[-{\pi}/{4},0\right]$ for all $t$ for which $\Lambda(t)$ is not a vertex of $\Lambda$.
\end{lemm}
\begin{proof} The argument is a repeat of the first proof of Proposition 5.6 in \cite{hbeyond}.
\end{proof}

\begin{lemm}\label{lem:restrictedlps} When $\Omega=\Omega_b$, the Delzant polytope of $H_b$, the minimum on the right hand side of (\ref{cbylatticepaths}) is the same if it is taken over convex lattice paths $\Lambda$ whose edges are parallel to the edges of $\Omega$ and for which
\begin{equation}\label{eqn:rangeoflp}
k+1\leq\mathcal{L}(\Lambda)\leq 2k+1.
\end{equation}
\end{lemm}

\begin{REMark} \normalfont
The first conclusion of Lemma \ref{lem:restrictedlps} (i.e., without the bound \eqref{eqn:rangeoflp}) is inspired by \cite[Exercise 4.16(a)]{hECHlec}, which holds for convex domains in $T^*T^2$.
\end{REMark}

\begin{proof} By Lemma \ref{lem:restrictedlps}, if $\Lambda(t)$ is not a vertex and $\theta(\Lambda'(t))\not\in\left\{0,-{\pi}/{4}\right\}$ then $\theta(\Lambda'(t))\in\left(0,-{\pi}/{4}\right)$. Let $(t_0,t_1)$ denote the interval on which $\theta(\Lambda'(t))\in\left(0,-{\pi}/{4}\right)$, assuming it is nonempty. Let $\Lambda(t_0)=(x_0,y_0)$ and $\Lambda(t_1)=(x_1,y_1)$.

Denote by $\Lambda_\Omega$ the new convex lattice path obtained by replacing $\Lambda|_{[t_0,t_1]}$ with the convex path consisting of two edges, one parallel to $(1,0)$ followed by one parallel to $(1,-1)$. This means the only vertex of $\Lambda_\Omega$ with both coordinates greater than zero is the point $(x_1-(y_0-y_1),y_0)$; it has positive coordinates because $\theta(\Lambda')|_{(t_0,t_1)}\in\left(0,-{\pi}/{4}\right)$, therefore $x_1-x_0>y_0-y_1$ and $y_0>y_1\geq0$.

The new path $\Lambda_\Omega$ is a lattice path, because $x_1-x_0$ and $y_0-y_1$ are integers. While the argument until this point could apply to more general $\Omega$, it would not always guarantee that $\Lambda_\Omega$ has vertices at lattice points.

The number of lattice points between $\Lambda_\Omega$ and the axes is at least $\mathcal{L}(\Lambda)$. It remains to show that it is at most $2\mathcal{L}(\Lambda)-1$. It suffices to consider the case $x_0=0, y_1=0$. We have
\[
\mathcal{L}(\Lambda_\Omega)=(x_1-y_0+1)(y_0+1)+\frac{y_0(y_0+1)}{2}
\]
while for $\mathcal{L}(\Lambda)$, we have is at least the number of lattice points between the line from $(0,y_0)$ to $(x_1,0)$ and the axes. That is,
\begin{align*}
\mathcal{L}(\Lambda)&\geq\#\{\text{lattice points between the line from $(0,y_0)$ to $(x_1,0)$ and the axes}\}
\\&\geq\frac{(x_1+1)(y_0+1)}{2}
\end{align*}
Therefore if $y_0\geq1$,
\[
\mathcal{L}(\Lambda_\Omega)\leq(x_1+1)(y_0+1)-1\leq2\mathcal{L}(\Lambda)-1
\]
If $y_0=0$, then $\Lambda_\Omega=\Lambda$ because $\Lambda$ consists of a single edge from $(0,0)$ to $(x_1,0)$.
\end{proof}

\subsection{Obstructions from single ECH capacities and a lower bound for $c_{H_b}$}\label{subsec:ECHcapcode}

Recall that
\begin{align*}
{\it Stair}: &= \{b \ | \ H_b \mbox{ has a staircase} \} \subset [0,1) ,\;\;\mbox{ and }\\
{\it Block}: & = \bigcup \bigl\{ J_{\bB} \ \big| \ \bB  \mbox{ is a blocking class} \bigr\} \subset [0,1).
\end{align*}

It its not possible for a computer to compute the right hand side of (\ref{def:csup}) for general $b$. However, understanding a close lower bound for $c_{H_b}$ allowed us to approximate the set $Block$ and to identify many of the blocking classes contributing to $Block$. Define
\[
c_{H_b}^\le(z):=\max_{k=1,\dots,25000}\frac{c_k(E(1,z))}{c_k(X_b)}
\]
Then we have
\[
Block^\le:=\left\{b\in[0,1)\middle|c_{H_b}^\le(\acc(b))>V_b(\acc(b))\right\}\subset Block
\]
because for all $z$,
\[
c_{H_b}^\le(z)\leq c_{H_b}(z)
\]
In this subsection we explain our code to plot $c_{H_b}^\le$ and the obstructions $\frac{c_k(E(1,z))}{c_k(X_b)}$ from a single ECH capacity.

Let $\Lambda$ be a convex lattice path with edges parallel to the vectors $(1,0)$ and $(1,-1)$, i.e., the edges of $\Omega_b$. The set of all such lattice paths is in bijection with the first quadrant, inclusive of the axes, via the Cartesian coordinates $(x,y)$ of the vertex where the edge parallel to $(1,0)$ ends and the edge parallel to $(1,-1)$ begins. In these coordinates, if $\Lambda$ corresponds to $(x,y)$, we compute $\mathcal{L}(\Lambda)$ using

\vspace{3pt}
\noindent
{\fontfamily{lmtt}\selectfont\textbf{LatticePts}[x\textunderscore,y\textunderscore]:=(x+1)(y+1)+(y(y+1))/2}
\vspace{3pt}

We compute the set of lattice paths $\Lambda$ with $\mathcal{L}(\Lambda)=k+1$ using

\vspace{3pt}
\noindent
{\fontfamily{lmtt}\selectfont\textbf{Index}[k\textunderscore]:=\textbf{Values}[\textbf{Solve}[\textbf{LatticePts}[x,y] == k+1 \&\& 0$\leq$x \&\& 0$\leq$y,$\{$x, y$\}$,

Integers]]}
\vspace{3pt}

We compute the set of lattice paths $\Lambda$ with sides parallel to $(1,0)$ and $(1,-1)$ and $\mathcal{L}(\Lambda)$ at most $50{,}001$, and give it the name {\fontfamily{lmtt}\selectfont\textbf{LatticePaths50000}}. By Lemma \ref{lem:restrictedlps}, this will allow us to compute the first $25{,}000$ ECH capacities of $H_b$.

\vspace{3pt}
\noindent
{\fontfamily{lmtt}\selectfont\textbf{LatticePaths50000}=\textbf{Table}[$\{$k,\textbf{Index}[k]$\}$,$\{$k, 0, 50000$\}$]}
\vspace{3pt}

Running {\fontfamily{lmtt}\selectfont\textbf{LatticePaths50000}} takes approximately twenty minutes on a personal laptop. However, it's a one-time cost: plotting each $c_{H_b}$ takes far less time.

To compute the action of a lattice path $\Lambda$ corresponding to $(x,y)$, we compute its $\Omega_b$-length. When $\Omega=\Omega_b$, the definition (\ref{eqn:Omega-length}) is equivalent to
\[
\ell_{\Omega_b}(x,y)=x(1-b)+y
\]
therefore to compute the action of $\Lambda$ we use

\vspace{3pt}
\noindent
{\fontfamily{lmtt}\selectfont\textbf{Action}[b\textunderscore]:=\textbf{Function}[$\{$x,y$\}$,x(1-b)+y]}
\vspace{3pt}

Instead of computing the actions of all the generators from {\fontfamily{lmtt}\selectfont\textbf{LatticePaths50000}} and minimizing in one step, it is much faster to break the process up into the following three functions:

\vspace{3pt}
\noindent
{\fontfamily{lmtt}\selectfont\textbf{ActionList50000}[b\textunderscore]:=\textbf{Table}[\textbf{Table}[\textbf{Action}[b]

@@\textbf{LatticePaths50000}[[k+1]][[2]][[i]],

$\{$i,\textbf{Length}[\textbf{LatticePaths50000}[[k+1]][[2]]]$\}$],$\{$k,0,50000$\}$]}

\vspace{3pt}
\noindent
{\fontfamily{lmtt}\selectfont\textbf{MinActionList50000}[ALb\textunderscore]:=\textbf{Array}[\textbf{Min}[ALb[[\#]]]\&,50001]}

\vspace{3pt}
\noindent
{\fontfamily{lmtt}\selectfont\textbf{CapacitiesList}[MALb\textunderscore]:=\textbf{Join}[$\{$0.$\}$,\textbf{Table}[\textbf{Min}[\textbf{Array}[MALb[[\#]]\&,k,k+1]],

$\{$k,1,25000$\}$]]}
\vspace{3pt}

Because the index origin is set to be $k+1$, {\fontfamily{lmtt}\selectfont\textbf{CapacitiesList}} minimizes over the entries $k+1,\dots,2k+1=2(k+1)-1$ of {\fontfamily{lmtt}\selectfont MALb}, which Lemma \ref{lem:restrictedlps} guarantees are all we need to consider. In order to compute the capacities of $X_b$, it is fastest to run each command in succession, applying {\fontfamily{lmtt}\selectfont\textbf{MinActionList50000}} to the output of {\fontfamily{lmtt}\selectfont\textbf{ActionList50000}}, then {\fontfamily{lmtt}\selectfont\textbf{CapacitiesList}} to the output of {\fontfamily{lmtt}\selectfont\textbf{MinActionList50000}}, instead of running their composition. It is also much faster to treat $b$ as a decimal number rather than an exact number. Note that this is the point in the process where it is possible to introduce imprecision. If the plot of $c_{H_b}^\le$ looks close to but cannot for some reason be less than or equal to the actual plot of $c_{H_b}$, the rounding inherent in treating $b$ as a decimal is probably to blame. This does happen for known infinite staircases at extremely small scales.

To compute the capacities of the ellipsoid $E(1,z)$, we use

\vspace{3pt}
\noindent
{\fontfamily{lmtt}\selectfont\textbf{EllipsoidCap}:=\textbf{Compile}[$\{\{$z,\textunderscore Real$\}$,$\{$K,\textunderscore Integer$\}\}$,\textbf{Take}[\textbf{Sort}[\textbf{Flatten}[\textbf{Table}[

x+z*y,$\{$x,0,\textbf{Ceiling}[z((4/z)(-z-1/2+\textbf{Sqrt}[(z+1/2)$^2$-z+z*K/2]))]$\}$,

$\{$y,0,\textbf{Ceiling}[(4/z)(-z-1/2+\textbf{Sqrt}[(z+1/2)$^2$-z+z*K/2])]$\}$]],\textbf{Less}],K+1]]}
\vspace{3pt}

The bounds on $x$ and $y$ in {\fontfamily{lmtt}\selectfont\textbf{EllipsoidCap}} come from the relationship between the sequence $N(1,z)$ and triangles in the lattice.

In order to compute the obstruction $\frac{c_k(E(1,z))}{c_k(X_b)}$ from the $k^\text{th}$ ECH capacity, use the function 
{\fontfamily{lmtt}\selectfont\textbf{SingleCapacityObstructionFxn}} with {\fontfamily{lmtt}\selectfont\textbf{EllipsoidCap}[z,K]} (for $K\geq k+1$) as {\fontfamily{lmtt}\selectfont ECzk}, and the list of $25{,}000$ capacities previously computed for $X_b$ as {\fontfamily{lmtt}\selectfont CLbK}:

\vspace{3pt}
\noindent
{\fontfamily{lmtt}\selectfont\textbf{SingleCapacityObstructionFxn}[ECzK\textunderscore,CLbK\textunderscore,k\textunderscore]:=ECzK[[k+1]]/CLbK[[k+1]]}
\vspace{3pt}

To plot {\fontfamily{lmtt}\selectfont\textbf{SingleCapacityObstructionFxn}} over an interval from {\fontfamily{lmtt}\selectfont zmin} to {\fontfamily{lmtt}\selectfont zmax} as a line plot with step {\fontfamily{lmtt}\selectfont zstep}, plug the list of capacities for $X_b$ in as {\fontfamily{lmtt}\selectfont CLbK} with {\fontfamily{lmtt}\selectfont k} at most the length of {\fontfamily{lmtt}\selectfont CLbK} into

\vspace{3pt}
\noindent
{\fontfamily{lmtt}\selectfont\textbf{PlotSingleCapacityObstructionFxn}[zmin\textunderscore,zmax\textunderscore,zstep\textunderscore,CLbK\textunderscore,k\textunderscore,b\textunderscore]:=\textbf{Show}[

\textbf{ListLinePlot}[\textbf{Array}[$\{$(\#-1)*zstep+zmin,\textbf{SingleCapacityObstructionFxn}[

\textbf{EllipsoidCap}[(\#-1)*zstep+zmin,k+5],CLbK,k]$\}$\&,\textbf{Floor}[(zmax-zmin)/zstep]],

\textbf{PlotStyle}$\shortrightarrow$\textbf{Cyan}]]}
\vspace{3pt}

To compute $c_{H_b}^\le(z)$, find the maximum over the first $25{,}000$ capacities of $\frac{c_k(E(1,z))}{h}$, where $h$ is the $k+1^\text{st}$ element of the output of {\fontfamily{lmtt}\selectfont\textbf{CapacitiesList}} (remember the first element will be $c_0(X_b)$). This is done by plugging {\fontfamily{lmtt}\selectfont\textbf{EllipsoidCap}[z,25,000]} as {\fontfamily{lmtt}\selectfont ECzk}, the list of $25{,}000$ capacities previously computed for $X_b$ as {\fontfamily{lmtt}\selectfont CLbK}, and $K=25{,}000$ into

\vspace{3pt}
\noindent
{\fontfamily{lmtt}\selectfont\textbf{CapacityFxn}[ECzK\textunderscore,CLbK\textunderscore,K\textunderscore]:=\textbf{Max}[\textbf{Array}[ECzK[[\#+1]]/CLbK[[\#+1]]\&,K]]}
\vspace{3pt}

In order to plot {\fontfamily{lmtt}\selectfont\textbf{CapacityFxn}} on $[\text{{\fontfamily{lmtt}\selectfont zmin},{\fontfamily{lmtt}\selectfont zmax}}]$ as a line plot with step {\fontfamily{lmtt}\selectfont zstep}, plug your list of capacities for $X_b$ as {\fontfamily{lmtt}\selectfont CLbK} with $K=25{,}000$ into

\vspace{3pt}
\noindent
{\fontfamily{lmtt}\selectfont\textbf{PlotCapacityFxn}[zmin\textunderscore,zmax\textunderscore,zstep\textunderscore,CLbK\textunderscore,K\textunderscore,b\textunderscore]:=

\textbf{Show}[\textbf{Plot}[\textbf{Sqrt}[(z/2)/(.5(1-b$^2$))],

$\{$z,zmin,zmax$\}$,\textbf{PlotStyle}$\shortrightarrow$\textbf{Orange}],\textbf{ListLinePlot}[

\textbf{Array}[$\{$(\#-1)*zstep+zmin,\textbf{CapacityFxn}[\textbf{EllipsoidCap}[(\#-1)*zstep+zmin,K],

CLbK,K]$\}$\&,\textbf{Floor}[(zmax-zmin)/zstep]]]]}
\vspace{3pt}

Starting with {\fontfamily{lmtt}\selectfont zstep} at $0.01$ is a good choice if the interval is $[1,10]$; when analyzing the behavior near the accumulation point, with the interval much smaller, it is possible to make {\fontfamily{lmtt}\selectfont zstep} much smaller. We frequently use a {\fontfamily{lmtt}\selectfont zstep} of $0.000001$.

\subsection{Obstructions from exceptional classes}\label{subsec:excclasscode}

We now discuss our code computing $\mu_{\bE,b}$ for exceptional classes $\bE$.

It is often useful to understand obstructions which do not correspond in the sense of Remark \ref{rmk:lpec} to any of the first $25{,}000$ ECH capacities. As in the case of identifying lattice paths, finding all relevant exceptional classes takes a lot of time and effort, whether done by a computer or a human. But once $\bE$ is identified, computing $\mu_{\mathbf{E},b}(z)$ can be done quickly. We explain our code for this here.

Plotting the obstruction $\mu_{\mathbf{E},b}$ from a single exceptional class $\mathbf{E}$ first requires computing the weight expansion of a number $a$, using

\vspace{3pt}
\noindent
{\fontfamily{lmtt}\selectfont\textbf{WeightDecomp}[p\textunderscore,q\textunderscore]:=\textbf{Module}[$\{$list=$\{\}$,aux,bux,k$\}$,\textbf{If}[p>q,$\{$aux,bux$\}$=$\{$p,q$\}$,

$\{$aux,bux$\}$=$\{$q,p$\}$];\textbf{While}[\textbf{Chop}[bux]$\neq$0,k=\textbf{Floor}[aux/bux];\textbf{AppendTo}[list,

\textbf{Table}[bux,k]];$\{$aux,bux$\}$=$\{$bux,aux-k*bux$\}$];\textbf{Flatten}[list]]}
\vspace{3pt}

If $z={p}/{q}$ is rational, then {\fontfamily{lmtt}\selectfont\textbf{WeightDecomp}[p,q]} computes $q\mathbf{w}(z)$, while {\fontfamily{lmtt}\selectfont\textbf{WeightDecomp}[z,1]} computes $\bw(z)$.

We take the dot product of vectors of unequal length using

\vspace{3pt}
\noindent
{\fontfamily{lmtt}\selectfont\textbf{ExtendedDot}[M\textunderscore,w\textunderscore]:=\textbf{Take}[M,\textbf{Min}[\textbf{Length}[M],\textbf{Length}[w]]].\textbf{Take}[w,\textbf{Min}[

\textbf{Length}[M],\textbf{Length}[w]]]}
\vspace{3pt}

For $\mathbf{E}=(d,m,\mathbf{m})$ we compute $\mu_{\mathbf{E},b}(z)$ by setting {\fontfamily{lmtt}\selectfont d }$=d$, {\fontfamily{lmtt}\selectfont m }$=m$, {\fontfamily{lmtt}\selectfont M }$=\mathbf{m}$, and {\fontfamily{lmtt}\selectfont z }$=z$ in

\vspace{3pt}
\noindent
{\fontfamily{lmtt}\selectfont\textbf{Mu}[b\textunderscore,d\textunderscore,m\textunderscore,M\textunderscore,z\textunderscore]:=\textbf{ExtendedDot}[M,\textbf{WeightDecomp}[a,1]]/(d-b*m)}
\vspace{3pt}

We plot {\fontfamily{lmtt}\selectfont\textbf{Mu}} for $z\in[\text{{\fontfamily{lmtt}\selectfont zmin}},\text{{\fontfamily{lmtt}\selectfont zmax}}]$ with step {\fontfamily{lmtt}\selectfont zstep} using

\vspace{3pt}
\noindent
{\fontfamily{lmtt}\selectfont\textbf{PlotMu}[zmin\textunderscore,zmax\textunderscore,zstep\textunderscore,d\textunderscore,m\textunderscore,M\textunderscore,b\textunderscore]:=\textbf{Show}[\textbf{ListLinePlot}[\textbf{Array}[

$\{$(\#-1)*zstep+zmin,\textbf{Mu}[b,d,m,M,(\#-1)*zstep+zmin]$\}$\&,\textbf{Floor}[(zmax-zmin)/

zstep]],\textbf{PlotStyle}$\shortrightarrow$\textbf{Brown}]]}
\vspace{3pt}


\begin{figure}[H]
	\includegraphics[width=.9\linewidth]{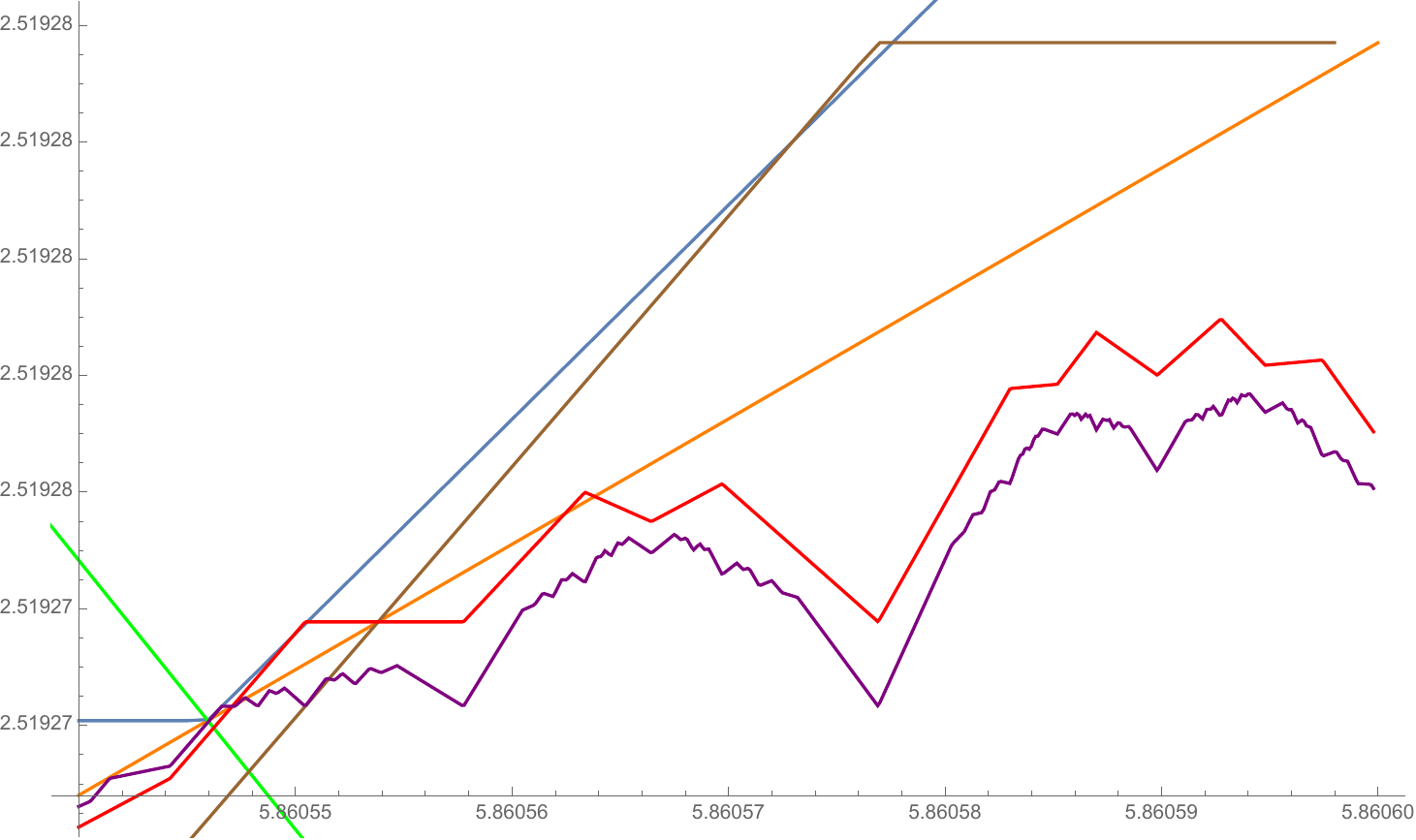}
	\caption{This is a plot of the capacity and volume functions for $b=b_0^E=\acc^{-1}_L([5;1,6,\{5,1\}^\infty])$. The function $c_{H_{b_0^E}}^\le(z)$ is in dark blue, and in this case it is not a good approximation of the true $c_{H_{b_0^E}}(z)$. The volume obstruction is in orange. The green curve is parameterized by $b\mapsto\left(\acc(b),\sqrt{\frac{\acc(b)}{1-b^2}}\right)$, therefore the green curve intersects the orange when $z=\acc(b_0^E)$. 
Using $\bE_{u,0,k}$ to denote the exceptional class with center $[5;1,6,\{5,1\}^k,4]$ and $\bE_{u,0,k}'$ to denote the exceptional class with center $[5;1,6,\{5,1\}^k,5,2]$, the obstruction $\mu_{\mathbf{E}_{u,0,1},b_0^E}(z)$ is in brown, the obstruction $\mu_{\mathbf{E}_{u,0,1}',b_0^E}(z)$ is in red, and the obstruction $\mu_{\mathbf{E}_{u,0,2},b_0^E}(z)$ is in purple. 
Because these correspond, in the sense of Remark \ref{rmk:lpec}, to the obstructions from the $127{,}489^\text{th}, 872{,}234^\text{th}$, and $5{,}971{,}895^\text{th}$ ECH capacities, respectively, they are not captured by our program computing $c_{H_{b_0^E}}^\le$: in fact, on this interval $c_{H_{b_0^E}}^\le$ is given instead by the obstruction from $c_8(H_{b_0^E})$.}
	\label{fig:b0E}
\end{figure}

In Figure \ref{fig:b0E} we have used {\fontfamily{lmtt}\selectfont\textbf{PlotMu}} to depict the obstructions from the exceptional classes whose obstructions underlay the steps
 centered at ${1219}/{208}, {3194}/{545}$, and ${8363}/{1427}$ in the infinite staircase
  for $b=b_0^E$ of Theorem \ref{thm:E}. Using the procedure {\fontfamily{lmtt}\selectfont\textbf{PlotMu}} instead of the procedure 
  {\fontfamily{lmtt}\selectfont\textbf{PlotSingleCapacityObstructionFxn}} allows us to plot these stairs, as the corresponding ECH capacities are the $127{,}489^\text{th}, 872{,}234^\text{th}$, and $5{,}971{,}895^\text{th}$, respectively, which do not contribute to $c_{H_{b_0^E}}^\le(z)$. In order not to overload the figure, we have not drawn the obstruction coming from the classes centered at ${170}/{29}, {463}/{79}$, whose centers are greater than the values of $z$ plotted, or the obstructions from the class centered at ${21895}/{3736}$, or any obstructions from Theorem \ref{thm:E} with $k\geq3$, which Mathematica cannot plot at any scale with any definition.
  
 In general, once we have an exceptional class in mind, it is computationally faster to run {\fontfamily{lmtt}\selectfont\textbf{PlotMu}} instead of {\fontfamily{lmtt}\selectfont\textbf{PlotSingleCapacityObstructionFxn}}, because in order to run the latter we first need to compute all the ECH capacities up to $c_k$, where $k=\frac{1}{1}(d(d+3)-m(m+1))$. If we are interested in an exact irrational value of $b$, there is a great difference, although we almost never do this. However, if we want to overlay our plots on a graph of $c_{H_b}^\le$, there is no discernible difference, because once we have computed the first 25{,}000 capacities of $X_b$ necessary to graph $c_{H_b}^\le$, any difference in speed between {\fontfamily{lmtt}\selectfont\textbf{PlotMu}} and {\fontfamily{lmtt}\selectfont\textbf{PlotSingleCapacityObstructionFxn}} is not noticeable.

\subsection{Strategy for finding staircases}\label{subsec:findingk}

In this section we explain our experimental strategy for identifying infinite staircases. We first explain our method for approximating $Block^\le$, then our two methods for finding staircases once we understood much of $Block^\le$.

We investigate $Block^\le$ by identifying the exceptional class realizing the values of $k$ for which $c_k(X_b)$ determines $c_{H_b}^\le(\acc(b))$. At first, we simply chose values of $b$ at random and identified the smallest $k$ for which the plot from running {\fontfamily{lmtt}\selectfont\textbf{PlotSingleCapacityObstructionFxn}} agreed with the plot from {\fontfamily{lmtt}\selectfont\textbf{PlotCapacityFxn}} for values of $z$ near $\acc(b)$. We encode $\acc(b)$ using

\vspace{3pt}
\noindent
{\fontfamily{lmtt}\selectfont\textbf{AccPt}[b\textunderscore]:=(1/2)((3-b)$^2$/(1-b$^2$)-2+\textbf{Sqrt}[((3-b)$^2$/(1-b$^2$)-2)$^2$-4])}
\vspace{3pt}

We identified the appropriate values of $k$ by visually inspecting the plots of the obstruction from $c_k$ overlaid upon the plots of $c_{H_b}^\le$. See Figure \ref{fig:b3k125accpt}.

\newpage

\begin{figure}[H]
	\includegraphics[width=.6\linewidth]{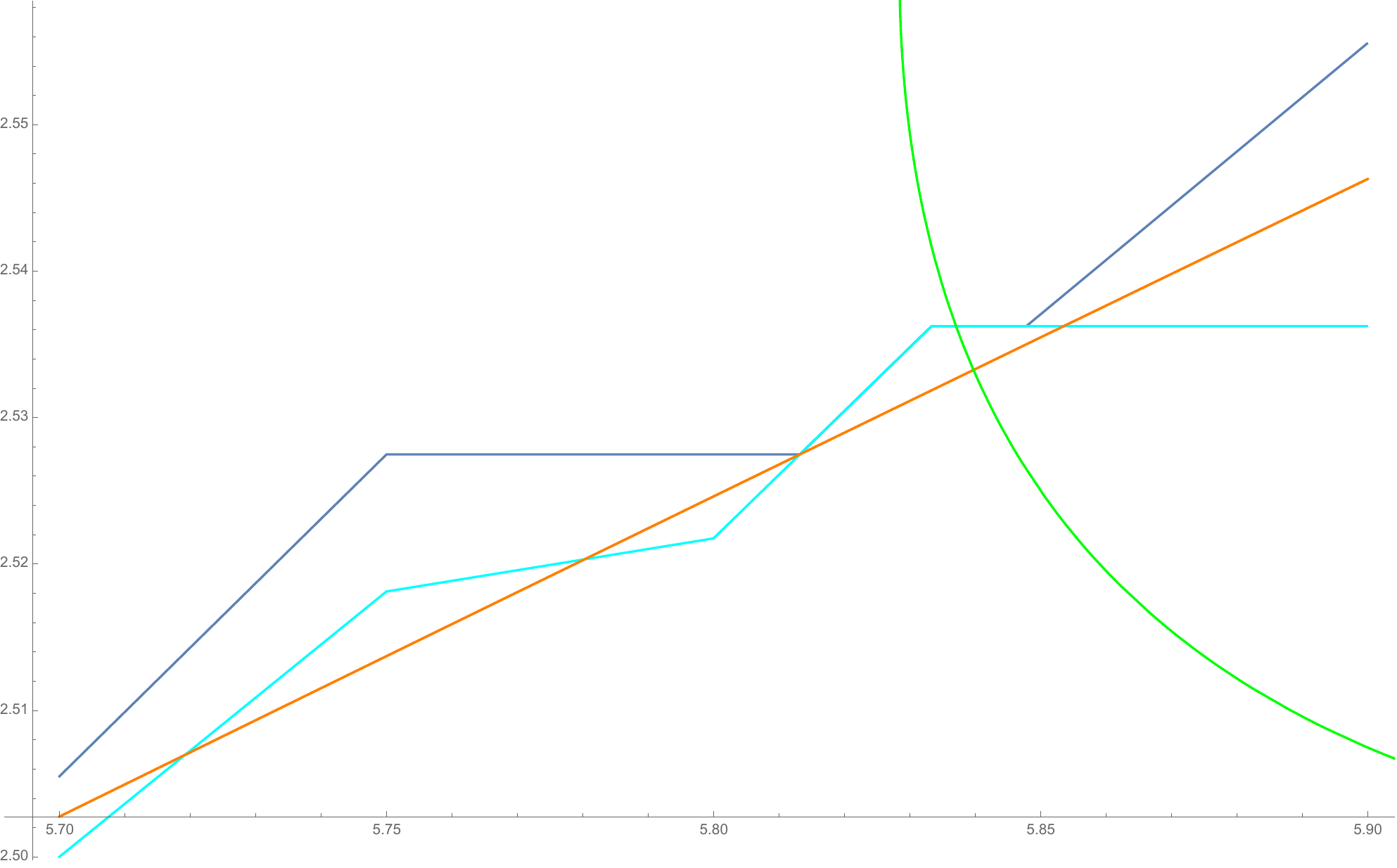}
	\caption{	
	Here, $b = 0.3$.  The function $c_{H_{0.3}}^\le(z)$ is in dark blue, the volume obstruction is in orange, and the obstruction from the $125^\text{th}$ ECH capacity is in bright blue. The green curve is parameterized by the potential accumulation point, therefore the orange and green curves intersect when $z = \acc(0.3)$, and we can see that the dark blue and the light blue plots coincide in a neighbourhood of that point. Indeed, 125 is the smallest $k$ for which $c_k$ obstructs an infinite staircase at $b=0.3$. Note that $c_{H_{0.3}}^\le(z)$ has a corner point at $z=35/6$, and near this value is given by the class 
$\bE=\bigl(15,4;6\bw(35/6)\bigr)$ as described in Remark~\ref{rmk:ECH}~(ii).
	}
	\label{fig:b3k125accpt}
\end{figure}

Inspired by our computations, when $c_{H_b}(\acc(b))>\sqrt{\frac{\acc(b)}{1-b^2}}$ and $k$ satisfies
\[
\frac{c_k(E(1,\acc(b)))}{c_k(X_b)}=c_{H_b}(\acc(b))
\]
we say \textbf{$c_k$ obstructs an infinite staircase at $b$}. 

For values of $b$ very close to one for which $c_{H_b}$ has an infinite staircase, the smallest $k$ for which $c_k$ obstructs an infinite staircase at $b$ becomes very high, so it grows infeasible to check every possible value of $k$ by inspection of plots such as the one depicted in Figure \ref{fig:b3k125accpt}. We introduced a new function, {\fontfamily{lmtt}\selectfont\textbf{IndexAtAccPt}[b\textunderscore,CLbK\textunderscore,K\textunderscore]}. Again, the list {\fontfamily{lmtt}\selectfont CLbK} is the list of the first $25{,}000$ capacities of $H_b$. {\fontfamily{lmtt}\selectfont\textbf{IndexAtAccPt}} outputs the values of $k\in\{0,\dots,K\}$ for which $\frac{c_k(E(1,\acc(b)))}{c_k(X_b)}$ is maximized. The reason we want to be able to use values of $K<25{,}000$ is because when $K=25{,}000$, {\fontfamily{lmtt}\selectfont\textbf{IndexAtAccPt}} is fairly slow.

\vspace{3pt}
\noindent
{\fontfamily{lmtt}\selectfont\textbf{IndexAtAccPt}[b\textunderscore,CLbK\textunderscore,K\textunderscore]:=\textbf{Block}[$\{$list=\textbf{Array}[\textbf{EllipsoidCap}[

\textbf{AccPt}[b],K][[\#+1]]/CLbK[[\#+1]]\&,K]$\}$,\textbf{Position}[list,\textbf{Max}[list]]]}

\vspace{3pt}

Given $k$ and $b$ such that $c_k$ obstructs an infinite staircase at $b$, we also want to identify any quasi-perfect exceptional classes giving the same obstruction, that is, those ``corresponding" exceptional classes in the sense of Remark \ref{rmk:lpec}. For low values of $k$, this can be done by hand, see Remark \ref{rmk:ECH} (ii). For large values of $k$, we do this by applying the function {\fontfamily{lmtt}\selectfont\textbf{FindPQFromDM}} to the values of the output of {\fontfamily{lmtt}\selectfont\textbf{FindDMFromK}}, where

\vspace{3pt}
\noindent
{\fontfamily{lmtt}\selectfont\textbf{FindDMFromK}[k\textunderscore]:=\textbf{Solve}[.5(d(d+3)-m(m+1))==k\&\&d$\geq$0\&\&m$\geq$0,$\{$d,m$\}$,Integers]}

\vspace{3pt}
\noindent
{\fontfamily{lmtt}\selectfont\textbf{FindPQFromDM}[d\textunderscore,m\textunderscore]:=\textbf{Solve}[d$^2$+1-m$^2$==p*q\&\&3d-m==p+q\&\&p$>$q\&\&q$\ge$0\&\&

\textbf{GCD}[p,q]==1,$\{$p,q$\}$,Integers]}
\vspace{3pt}

Through the application of these two functions we have occasionally found multiple candidate quasi-perfect classes, but it has always turned out that if there are any solutions, then exactly one makes sense for the value of $b$ under consideration.

We expect that for many $k$, the ECH capacity $c_k$ obstructs an infinite staircase for an interval of values of $b$. In our computations, for each $k$ where $c_k$ obstructs an infinite staircase, we have been able to find a perfect center-blocking class $\bB$ corresponding to $c_k$ as in Remark \ref{rmk:lpec}. Therefore, for every pair $b_1\leq b_2$ we have found whose infinite staircases are obstructed by the same $c_k$, we know by Lemma \ref{lem:block0} that
\[
[b_1,b_2]\subset J_\bB,
\]
and therefore
\[
[b_1,b_2]\subset Block^\le\subset Block.
\]

By identifying many such pairs $b_1$ and $b_2$, we built up a good approximation to $Block^\le$. Next we searched for values of $b$ whose embedding function $c_{H_b}$ contained infinite staircases outside our approximation to $Block^\le$. We explain our two methods for doing so.

\subsubsection{Checking rational points in $[0,1)\setminus Block^\le$}\label{sssec:firstmethod}
 
Suppose we know that $c_k$ and $c_{k'}$ obstruct infinite staircases at $b,b'$ respectively, with $b<b'$, and that we have checked enough values $b''\in(b,b')$ at small enough resolution to convince ourselves that there is no $k\in\{1,\dots,25{,}000\}$ for which $c_k$ obstructs an infinite staircase at any $b''\in(b,b')$. In terms of our computations, this last condition means that for all $b''\in(b,b')$ we have checked, we have $c_{H_{b''}}^\le(\acc(b''))\le V_{b''}(\acc(b''))$.

We would like to search through all rational numbers ${p}/{q}\in(\acc(b),\acc(b'))$ (if $b,b'\ge1/3$; if $b,b'\le1/3$ then we check the interval $(\acc(b'),\acc(b))$) to see if there are any $d,m$ for which $\left(d,m;q\bw\left({p}/{q}\right)\right)$ solves the Diophantine equations (\ref{eq:diophantine}) and whose continued fractions contain repeated patterns. We do this using the function {\fontfamily{lmtt}\selectfont\textbf{ActualClassesWithCF}} applied to $(\acc(b),\acc(b'),q1,q2)$, which searches through all such rational numbers with $q1\leq q\leq q2$ (when $b,b'\le1/3$, we switch the places of $\acc(b)$ and $\acc(b')$ accordingly). To define this function we need the preliminary function {\fontfamily{lmtt}\selectfont\textbf{NonemptyClassesWithCF}}:

\vspace{3pt}
\noindent
{\fontfamily{lmtt}\selectfont\textbf{NonemptyClassesWithCF}[LB\textunderscore,UB\textunderscore,q1\textunderscore,q2\textunderscore]:=\textbf{Select}[Table[Table[$\{$\textbf{Flatten}[$\{$

\textbf{Values}[\textbf{Solve}[d$^2$+1-m$^2$==pq\&\&3d-m==p+q\&\&d$\geq$0\&\&m$\geq$0,$\{$d,m$\}$,Integers]],p,q$\}$],

\textbf{ContinuedFraction}[p/q]$\}$,$\{$p,\textbf{Ceiling}[LBq],\textbf{Floor}[UBq]$\}$],$\{$q,q1,q2$\}$],\#$\neq\{\}$\&]}

\vspace{3pt}
\noindent
{\fontfamily{lmtt}\selectfont\textbf{ActualClassesWithCF}[LB\textunderscore,UB\textunderscore,q1\textunderscore,q2\textunderscore]:=\textbf{Select}[\textbf{Flatten}[

\textbf{NonemptyClassesWithCF}[LB,UB,q1,q2],1],\textbf{Length}[\#[[1]]]==4\&]}
\vspace{3pt}

{\fontfamily{lmtt}\selectfont\textbf{ActualClassesWithCF}} returns lists with terms of the form $\left((d,m,p,q),CF\left({p}/{q}\right)\right)$ where $\left(d,m,q\bw\left({p}/{q}\right)\right)$ solves the Diophantine equations (\ref{eq:diophantine}), the rational ${p}/{q}\in(\acc(LB),\acc(UB))$ has $q1\leq q\leq q2$, and $CF\left({p}/{q}\right)$ is the continued fraction of $p/q$. If we see any terms whose $CF\left({p}/{q}\right)$ part contains a periodic piece, we check whether there are any solutions to (\ref{eq:diophantine}) for the obvious extension of the periodic pattern, and if so, whether the associated exceptional classes reduce properly under Cremona transformations.

This method works well when the periodic pieces are short. To handle the more complex continued fractions investigated 
in our next paper, 
 we developed our second method, explained below, which also illuminates the relationship between infinite 
 staircases and blocking classes.

\subsubsection{Finding the staircases from a blocking class $\bB$}\label{sssec:secondmethod}

Our second method provides insight into the relevance of periodic continued fractions. Often the denominators of the rationals $p/q$ that we need to check in order to find a pattern are extremely large, making our first method quite slow. However,
 the first method has the advantage that we also learn the ends $\text{end}_n$ differentiating between the continued fractions of the centers of the classes contributing to the individual stairs and the continued fraction of the accumulation point. In contrast, with our second method we can solve for the accumulation point precisely but need to search or guess to find the individual stairs.

Assume $\bB=\left(d,m;q\bw\left({p}/{q}\right)\right)$ blocks an interval $J_\bB$ with $I_\bB=(\alpha_{\bB,\ell},\alpha_{\bB,u})$. First we identify $\alpha_{\bB,\ell}$ and $\alpha_{\bB,u}$ -- these will be the accumulation points of the two infinite staircases determined by $\bB$ as in Conjecture \ref{conj:block}. Our procedure relies on the computer so that it will work for complicated $\bB$; note however that for simpler $\bB$, it may be possible to find $J_\bB$ by hand using Lemma \ref{lem:block2}.


To determine $\alpha_{\bB,\ell}$, let $a_\ell(b)$ be the nonzero solution in $z$ to
\begin{equation}\label{eqn:aell}
\frac{qz}{d-bm}=\sqrt{\frac{z}{1-b^2}}=V_b(z).
\end{equation}
Note that the left hand side of (\ref{eqn:aell}) is the value of $\mu_{\bB,b}(z)$ for $z$ close to and smaller than ${p}/{q}$, by Lemma \ref{lem:munearc}. Then let $b_\ell$ be the solution in $b$ to
\[
\acc(b)=a_\ell(b)
\]
with $0\leq b<1$ and in the same component of $[0,1)-\left\{{1}/{3}\right\}$ as $J_\bB$. Then $\alpha_{\bB,\ell}=\acc(b_\ell)$. Because $\alpha_{\bB,\ell}$ is a quadratic irrationality, its continued fraction is periodic. 

With $\alpha_{\bB,\ell}$ in hand, we can now attempt to guess the endings of the continued fractions of the centers of the classes contributing to the staircase $\Ss_{\beta_{\bB,\ell}}$ (if $J_\bB\subset\left({1}/{3},1\right)$) or $\Ss_{\beta_{\bB,u}}$ (if $J_\bB\subset\left[0,{1}/{3}\right)$). We do this using the function {\fontfamily{lmtt}\selectfont\textbf{SearchingForEnding}}, which uses the function {\fontfamily{lmtt}\selectfont\textbf{FindDMFromCF}}:

\vspace{3pt}
\noindent
{\fontfamily{lmtt}\selectfont\textbf{FindDMFromCF}[CF\textunderscore]:=\textbf{Solve}[d$^2$-m$^2$+1==\textbf{Numerator}[\textbf{FromContinuedFraction}[CF]]*

\textbf{Denominator}[\textbf{FromContinuedFraction}[CF]]\&\&3d-m==\textbf{Numerator}[

\textbf{FromContinuedFraction}[CF]]+\textbf{Denominator}[\textbf{FromContinuedFraction}[CF]]\&\&

d$\geq$0\&\&m$\geq$0,$\{$d,m$\}$,\textbf{Integers}]}

\vspace{3pt}
\noindent
{\fontfamily{lmtt}\selectfont\textbf{SearchingForEnding}[CF\textunderscore,end\textunderscore]:=\textbf{FindDMFromCF}[\textbf{Join}[CF,end]]}
\vspace{3pt}

By constructing tables consisting of {\fontfamily{lmtt}\selectfont\textbf{SearchingForEnding}[CF,end]} with {\fontfamily{lmtt}\selectfont end} varying over judiciously chosen tuples of length up to four, we were able to identify the necessary ends. This method always contains some element of guess-and-check, but our general rule of thumb is that if $CF(\alpha_{\bB,\ell})=[\mathbf{X},\{\mathbf{Y}\}^\infty]$ then there are ways to write $\mathbf{Y}=(\mathbf{y_1},\mathbf{y_2})$ and $\mathbf{Y}=(\mathbf{y_1'},\mathbf{y_2'})$ so that {\fontfamily{lmtt}\selectfont\textbf{SearchingForEnding}[CF,end]} returns nonempty pairs $(d,m)$ for
\[
\text{{\fontfamily{lmtt}\selectfont CF}}=\{\mathbf{X},\{\mathbf{Y}\}^k\},\qquad\text{{\fontfamily{lmtt}\selectfont end}}=\text{exactly one of }\{\mathbf{y_1},2n+2\}, \{\mathbf{y_1},2n+4\}, \{\mathbf{y_1},2n+6\}
\]
and for
\[
\text{{\fontfamily{lmtt}\selectfont CF}}=\{\mathbf{X},\{\mathbf{Y}\}^k\},\qquad\text{{\fontfamily{lmtt}\selectfont end}}=\text{exactly one of }\{\mathbf{y_1'},2n+2\}, \{\mathbf{y_1'},2n+4\}, \{\mathbf{y_1'},2n+6\}.
\]

Identifying the ``opposite'' staircase $\Ss_{\beta_{\bB,\ell}}$ (if $J_\bB\subset\left[0,{1}/{3}\right)$ or $\Ss_{\beta_{\bB,u}}$ (if $J_\bB\subset\left({1}/{3},1\right)$) is done in a similar manner, but using $\alpha_{\bB,u}$, which is identified as follows. Let $a_u(b)$ be the solution in $z$ to
\begin{equation}\label{eqn:au}
\frac{p}{d-bm}=\sqrt{\frac{z}{1-b^2}}=V_b(z).
\end{equation}
Note that the left hand side of (\ref{eqn:au}) is the value of $\mu_{\bB,b}(z)$ for $z$ close to and larger than ${p}/{q}$. Then let $b_u$ be the solution in $b$ to
\[
\acc(b)=a_u(b)
\]
which is in the same component of $[0,1)-\{{1}/{3}\}$ as $J_\bB$. Then $\alpha_{\bB,u}=\acc(b_u)$ and is again a quadratic irrationality, with a periodic continued fraction. We identify endings as for $\alpha_{\bB,\ell}$ above.

With either method we could only identify staircases for single values of $n$, but 
after seeing enough staircases we could usually guess fairly easily how they ought to generalize to higher $n$. We were able to discover all the two-periodic staircases discussed in this paper using the methods of \S\ref{sssec:firstmethod}. The methods of \S\ref{sssec:secondmethod} became necessary for all but the simplest $2k$-periodic staircases of our next paper 
with $k>1$.

\subsection{Plots of ellipsoid embedding functions}\label{subsec:pictures}

In this section we give figures illustrating some of the phenomena mentioned throughout the paper. In the following, all plots show $\mu_{\mathbf{E},b}$ overlaid atop $\frac{c_k(E(1,z))}{c_k(X_b)}$ on the interval where they agree.

\subsubsection{Perfect versus quasi-perfect}

Figures \ref{fig:center17029} and \ref{fig:center11119} illustrate Lemma \ref{lem:perfect}. Figure \ref{fig:center17029} shows that the perfect class 
$\mathbf{E'}=\left(73,20;29\textbf{w}\left({170}/{29}\right)\right)$ of Example \ref{ex:seq}(iii) is live at ${170}/{29}$, as guaranteed by Proposition~\ref{prop:live}~(i).

\newpage

\begin{figure}[H]
	\includegraphics[width=.9\linewidth]{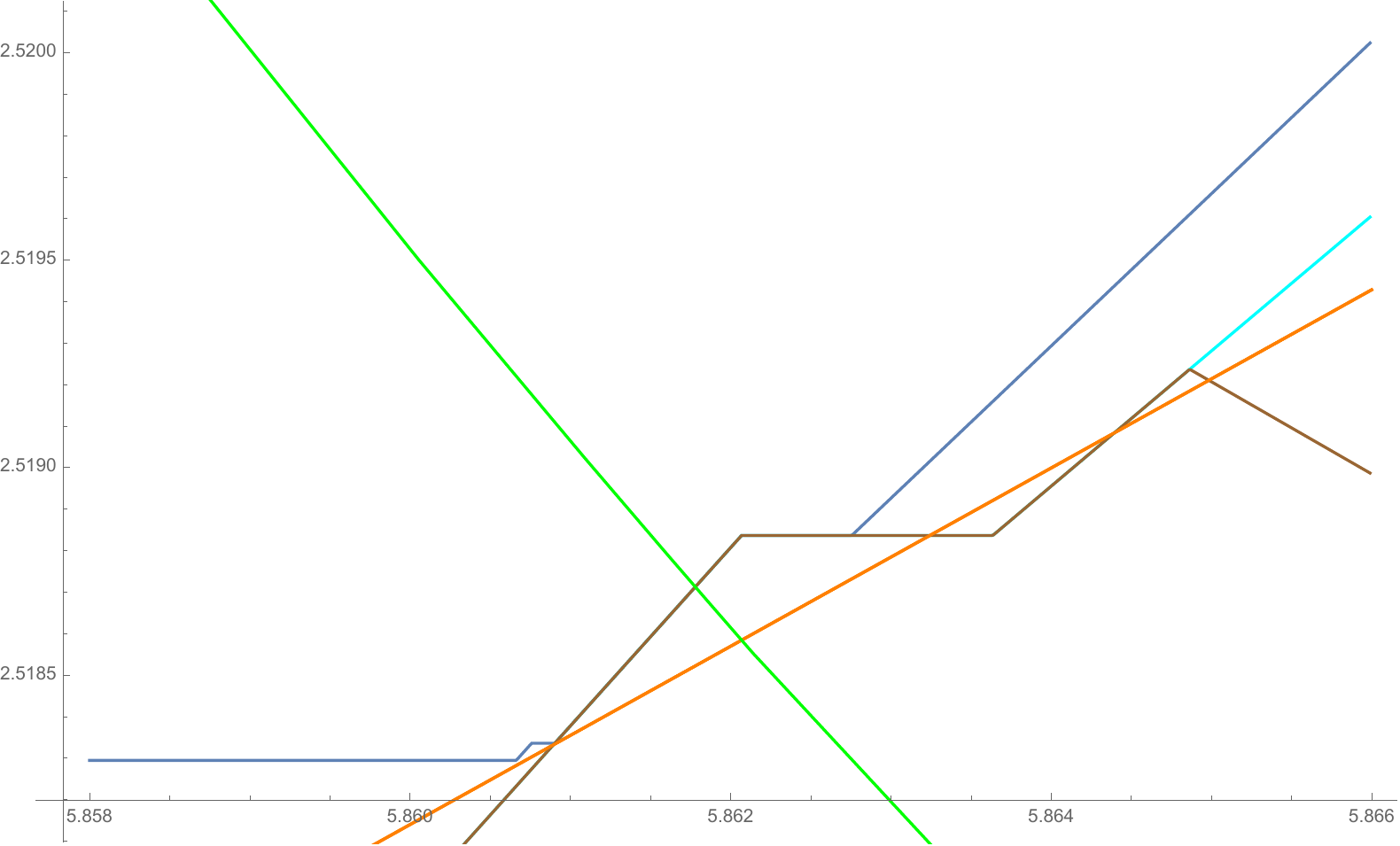}
	\caption{Here $b= \acc_L^{-1}({170}/{29})\approx 5.862$. The function
	$c_{H_b}^\le$ is in dark blue, the volume obstruction is in orange, the obstruction from the $2564^\text{th}$ ECH capacity is in bright blue, and the obstruction from the perfect class $\mathbf{E'}=\left(73,20;29\mathbf{w}\left({170}/{29}\right)\right)$ of Example \ref{ex:seq}(ii) is in brown; notice these last two agree on an interval including the center ${170}/{29}\approx5.86207$ of $\bE'$. 
	The green curve is as before, crossing the orange curve with $z$-coordinate ${170}/{29}$. This diagram also exhibits $\bE'$ (which is a step in the staircase $\Ss^L_{u,0}$) as a blocking class, illustrating Proposition \ref{prop:stairblock}.}
	\label{fig:center17029}
\end{figure}

In Figure \ref{fig:center11119},  
the point $b=\acc_L^{-1}\left({111}/{19}\right)$ is obstructed by the $125^\text{th}$ capacity and the perfect class $\mathbf{E'}=\left(15,4;6\bw\left({35}/{6}\right)\right)=\bB^E_0$ of Example \ref{ex:seq}(ii). The $1119^\text{th}$ capacity and the  quasi-perfect class $\mathbf{E''}=\left(48,14;19\mathbf{w}\left({111}/{19}\right)\right)$ of Example \ref{ex:seq}(ii), while still providing an obstruction, is strictly less than $c_{H_b}^\le\leq c_{H_b}$. Therefore Figure \ref{fig:center11119} illustrates the conclusion of 
Lemma \ref{lem:perfect}~(iii) that, for this $b$, the obstruction 
$\mu_{\mathbf{E''},b}$ is nontrivial at ${111}/{19}$.     However, there is no reason why 
$\mu_{\mathbf{E''},b}$ should be live at ${111}/{19}$ since  $\bE''$  does not satisfy the reduction criterion in Lemma~\ref{lem:Cr0} and so is not perfect.  Thus  Proposition~\ref{prop:live} does not apply;
see Remark~\ref{rmk:fake}.

\newpage

\begin{figure}[H]
	\includegraphics[width=.9\linewidth]{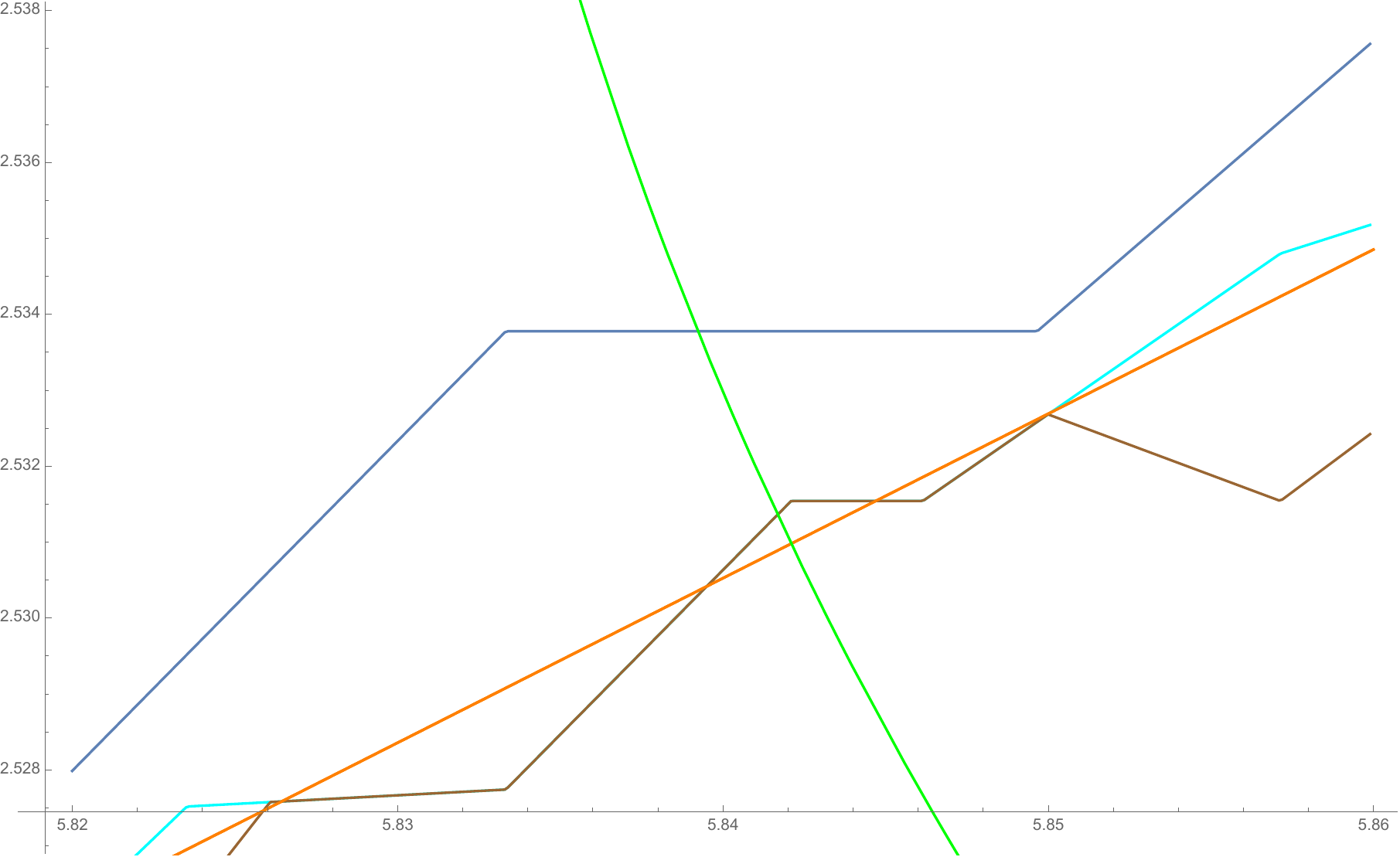}
\caption{Here $b = \acc^{-1}_L\left({111}/{19}\right)$. The function $c_{H_b}^\le$ is in dark blue, the volume obstruction is in orange, the obstruction from the $1119^\text{th}$ ECH capacity is in bright blue, and the obstruction from the quasi-perfect class $\mathbf{E''}=\left(48,14;19\mathbf{w}\left({111}/{19}\right)\right)$ of Example \ref{ex:seq}(ii) is in brown; notice these last two agree on an interval including the center ${111}/{19}$ of $\bE''$. The green curve is as before, crossing the orange curve with $z$-coordinate ${111}/{19}$. For this value of $b$ the obstruction from the $125^\text{th}$ ECH capacity equals that from the perfect class $\bE'=\left(15,4;6\bw\left({35}/{6}\right)\right)$, and both obstructions are stronger than those from the $1119^\text{th}$ ECH capacity and $\bE''$ near $\frac{111}{19}$.}
	\label{fig:center11119}
\end{figure}

\subsubsection{Typical behavior of a blocking class}\label{sssec:E0}

In this subsection we analyze the effect of the blocking class $\bB^U_0=(3,2;\bw(6))$ on $c_{H_b}$ as $b$ varies. Firstly, by the method outlined in \S\ref{sssec:secondmethod}, we find
\[
J_{\bB^U_0}=(\beta_{\bB^U_0,\ell},\beta_{\bB^U_0,u})=\left(\frac{3-\sqrt{5}}{2},\frac{3(7+\sqrt{5})}{44}\right)\approx(0.381966,0.629732),
\]
and by applying $\acc$, we find
\[
I_{\bB^U_0}=(\alpha_{\bB^U_0,\ell},\alpha_{\bB^U_0,u})\approx(5.8541,7.17082).
\]

Because $\bw(6)=(1^{\times6})$, the obstruction $\mu_{\bB^U_0,b}$ 
is nontrivial at $z\geq6$ if
\[
\frac{6}{3-2b}>\sqrt{\frac{z}{1-b^2}}\Leftrightarrow6\leq z<\frac{36(1-b^2)}{(3-2b)^2},
\]
and $\mu_{\bB^U_0,b}$ is nontrivial at $5\leq z<6$ if
\[
\frac{5+(z-5)}{3-2b}>\sqrt{\frac{z}{1-b^2}}\Leftrightarrow\frac{(3-2b)^2}{1-b^2}<z<6.
\]
While it is possible for $\mu_{\bB^U_0,b}$
to be nontrivial for $z<5$, we have $\mu_{\bB^U_0,b}(5)\leq V_b(5)$ for all $b$, meaning that the maximal $z$-interval containing the center $z=6$ of $\bB^U_0$ on which 
$\mu_{\bB^U_0,b}$
is nontrivial is $\left(\frac{(3-2b)^2}{1-b^2},\frac{36(1-b^2)}{(3-2b)^2}\right)$.\footnote{Note also that by Lemma \ref{lem:0} (ii), if $\bB^U_0$ is nontrivial on an interval containing points less than $5$, then that interval cannot include $6$, since $\ell(5)<\ell(6)$, but $6$ would have to be the break point.} This interval is nonempty so long as
\[
b\in\left(\frac{6-\sqrt{6}}{10},\frac{6+\sqrt{6}}{10}\right)\approx(0.355051,0.844949),
\]
and if nonempty it contains $z=6$. Thus when $b=\frac{6-\sqrt{6}}{10},\frac{6+\sqrt{6}}{10}$ we have
\[
\frac{(3-2b)^2}{1-b^2}=\frac{36(1-b^2)}{(3-2b)^2}=6.
\]
The interval $\left(\frac{(3-2b)^2}{1-b^2},\frac{36(1-b^2)}{(3-2b)^2}\right)$ is longest when $b={2}/{3}={m}/{d}$, when it is the interval $\left(5,{36}/{5}\right)$ and $\frac{(3-2b)^2}{1-b^2}$ reaches a minimum when $\frac{36(1-b^2)}{(3-2b)^2}$ reaches a maximum; notice that $I_{\bB^U_0}\subset\left(5,{36}/{5}\right)$. However, $\bB^U_0$ 
does not block $b={m}/{d}$, because $\acc\left({2}/{3}\right)\approx7.66962>{36}/{5}$, as explained in Remark \ref{rmk:block0}.

Figure \ref{fig:E0J} depicts $c_{H_b}$ for 
$$
b= \frac{3-\sqrt{5}}{2} = \be_{\bB^U_{\ell,0}},\quad b={5}/{11}=\acc_U^{-1}(6), \quad b=\frac{3(7+\sqrt{5})}{44} =
\be_{\bB^U_{u,0}}
$$
 highlighting the obstruction $\mu_{\bB^U_0,b}$.  Notice that in (b),(c) there is a new obstruction coming in from the left that dominates $\mu_{\bB^U_0,b}$ near the left end point of the interval where $\mu_{\bB^U_0,b}$ is 
 obstructive.  This illustrates the point that  for general $b$ it is often the case that 
$\mu_{\bB^U_0,b}$ 
is not live over the entire $z$-interval on which it is nontrivial.

\newpage

\begin{figure}[H]
\centering
\subfigure[$b=\beta_{\bB^U_0,\ell}=\frac{3-\sqrt{5}}{2}$]{
\includegraphics[width=0.6\textwidth]{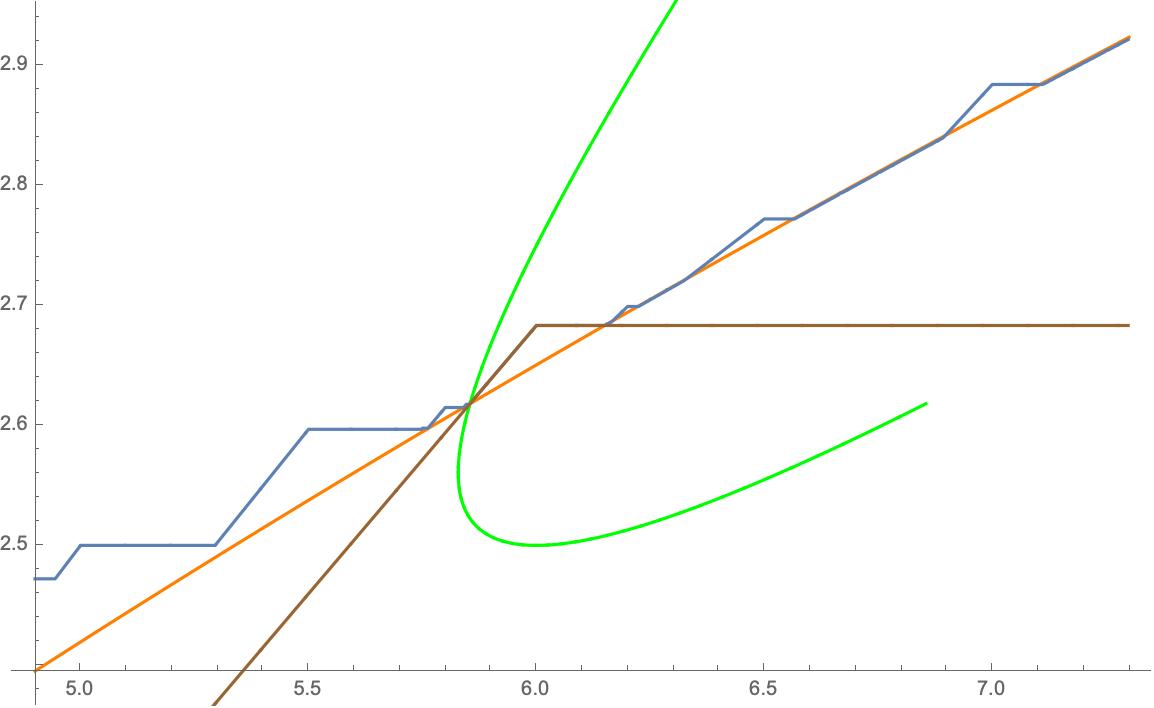}
}\hfil
\subfigure[$b=\acc_U^{-1}(6)=\frac{5}{11}$]{
\includegraphics[width=0.6\textwidth]{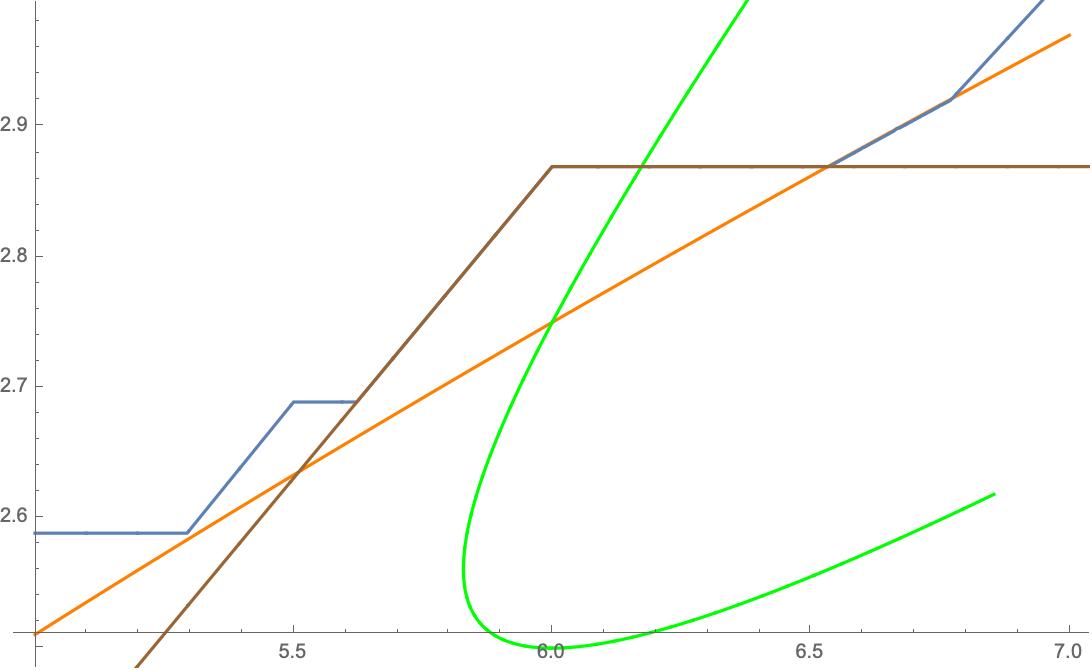}
}\hfil
\subfigure[$b=\beta_{\bB^U_0,u}=\frac{3(7+\sqrt{5})}{44}$]{
\includegraphics[width=0.6\textwidth]{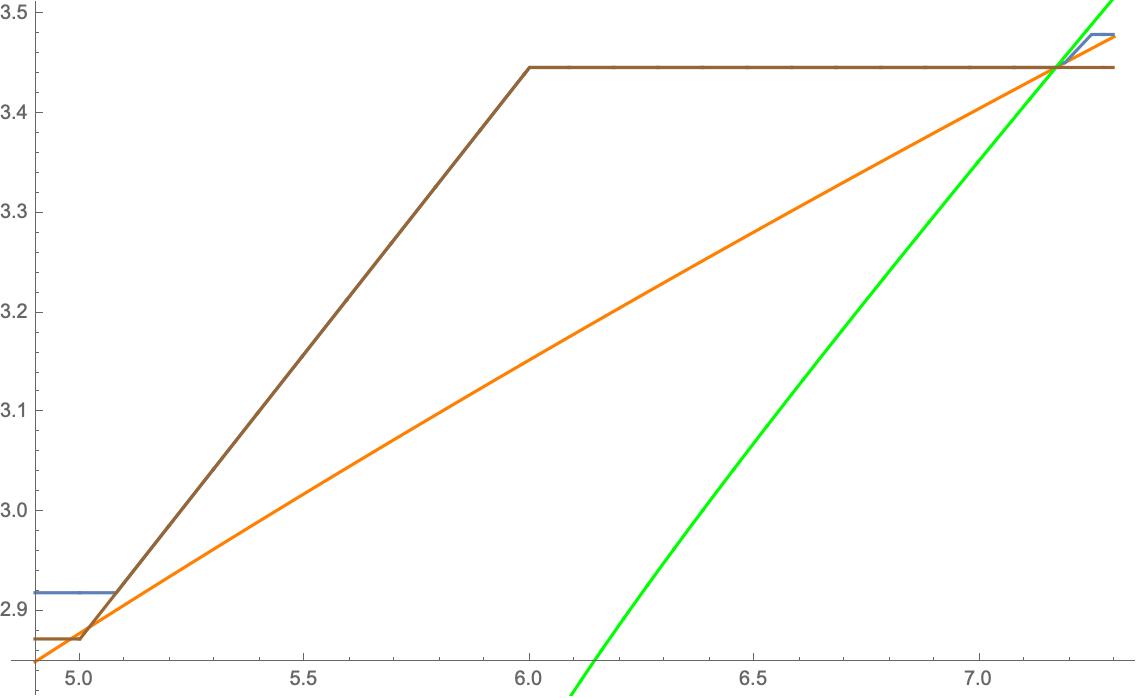}
}
\caption{As $b$ increases from $\beta_{\bB^U_0,\ell}$ to the center of $\bB^U_0$ to $\beta_{\bB^U_0,u}$, the plot of $c_{H_b}$ varies from the infinite staircase $\Ss^U_{\ell,0}$ depicted in (a), through blocked staircases such as that of $c_{H_{{5}/{11}}}$ depicted in (b), to the infinite staircase $\Ss^U_{0,u}$ depicted in (c). In all plots the orange curve indicates the volume obstruction, the function $c_{H_b}^\le$ is in dark blue, the obstruction $\mu_{\bB^U_0,b}$ is in brown, and the curve $(\acc(b),V_b(\acc(b)))$ is in bright green.}
\label{fig:E0J}
\end{figure}

When $b\in\left(\frac{6-\sqrt{6}}{10},\frac{6+\sqrt{6}}{10}\right)-\left(\frac{3-\sqrt{5}}{2},\frac{3(7+\sqrt{5})}{44}\right)$ --- that is, when $\mu_{\bB^U_0,b}$
 is nontrivial but $\bB^U_0$ is not blocking --- this phenomenon persists. Figure \ref{fig:bgeJE0} illustrates the situation when $b\in\left[\frac{3(7+\sqrt{5})}{44},\frac{6+\sqrt{6}}{10}\right)\approx(0.629732,0.844949)$, showing the interaction between  $\bB^U_0$ and $\bB^U_1$.
When $b=0.7$, the obstruction 
$\mu_{\bB^U_0,b}$ 
(in brown), though nontrivial, is not live on the interval 
$\left(7.125,7.171875\right)$; instead 
$\mu_{\bB^U_1,b}$, where $\bB^U_1=(4,3;\bw(8))$,  (in red) is live. As $b$ decreases towards 
$\frac{3(7+\sqrt{5})}{44}$, 
the intervals on which $\mu_{\bB^U_0,b}$
 (in brown) and 
 $\mu_{\bB^U_1,b}$
  (in red) are nontrivial become disjoint, as predicted by 
Proposition \ref{prop:block}, and one begins to see some staircase classes.
Indeed, by Theorem~\ref{thm:U}, when $b=\be_{\bB^U_0,u}$, $H_b$ admits a descending staircase $\Ss^U_{u,0}$ whose $k=0$ step with $\operatorname{end}_0=4$ has center ${29}/{4}=[7;4]$, the largest center amongst steps in $\Ss^U_{u,0}$. Meanwhile, the step with the second-lowest center in the ascending staircase $\Ss^U_{\ell,1}$ (the $k=0$ step with $\operatorname{end}_1=(7,4)$) has the same center; it is visible as the large dark blue step in Figure~\ref{fig:bgeJE0} (b).

Figure \ref{fig:bleJE0} illustrates the situation when $b\in\left(\frac{6-\sqrt{6}}{10},\frac{3-\sqrt{5}}{2}\right]$. When $b\in\left[\frac{3}{8},\frac{3-\sqrt{5}}{2}\right)$, as in (c), there is still a $z$-interval on which $\mu_{\bB^U_0,b}$ (in brown) is live, but when $b\in\left(\frac{6-\sqrt{6}}{10},\frac{3}{8}\right)$, as in (a), the obstruction from the class $(3,1;2,1^{\times5})$ (in red) overwhelms $\mu_{\bB^U_0,b}$. However, note that $(3,1;2,1^{\times5})$ is never blocking in this interval, because the value of its obstruction function at $\acc(b)<6$  is always $V_b(\acc(b))$, as we saw in \eqref{eq:volacc}.  In particular, in each of (a), (b), (c) the orange, green and red curves have a common point of intersection.

\newpage

\begin{figure}[H]
\centering
\subfigure[$b=0.7$]{
\includegraphics[width=0.8\textwidth]{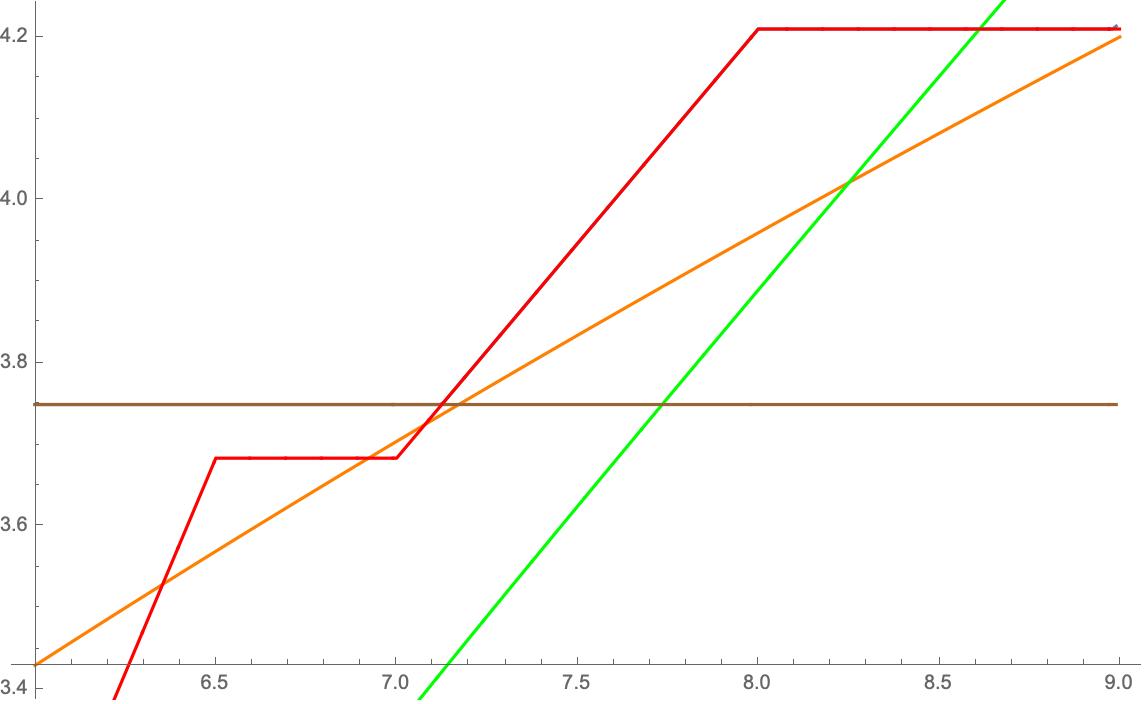}
}\hfil
\subfigure[$b=0.64$]{
\includegraphics[width=0.8\textwidth]{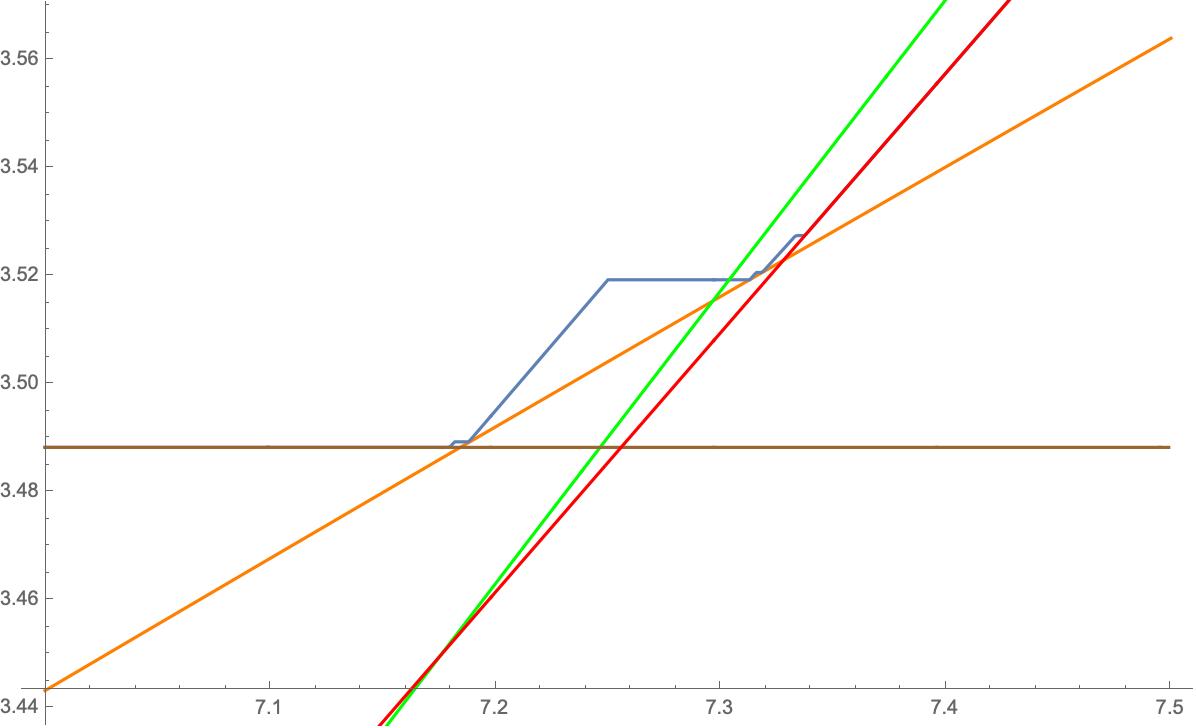}
}
\caption{In both plots the orange curve indicates the volume obstruction, the function $c_{H_b}^\le$ is in dark blue when visible, the obstruction $\mu_{\bB^U_0,b}$ is in brown, the obstruction $\mu_{\bB^U_1,b}$ is in red, and the curve $(\acc(b),V_b(\acc(b)))$ is in bright green. Note the differing scales; the break point $8$ of the red curve is visible in (a) but not in (b).
}
\label{fig:bgeJE0}
\end{figure}
%
%
%
%

\begin{figure}[H]
\centering
\subfigure[$b=0.37$]{
\includegraphics[width=0.6\textwidth]{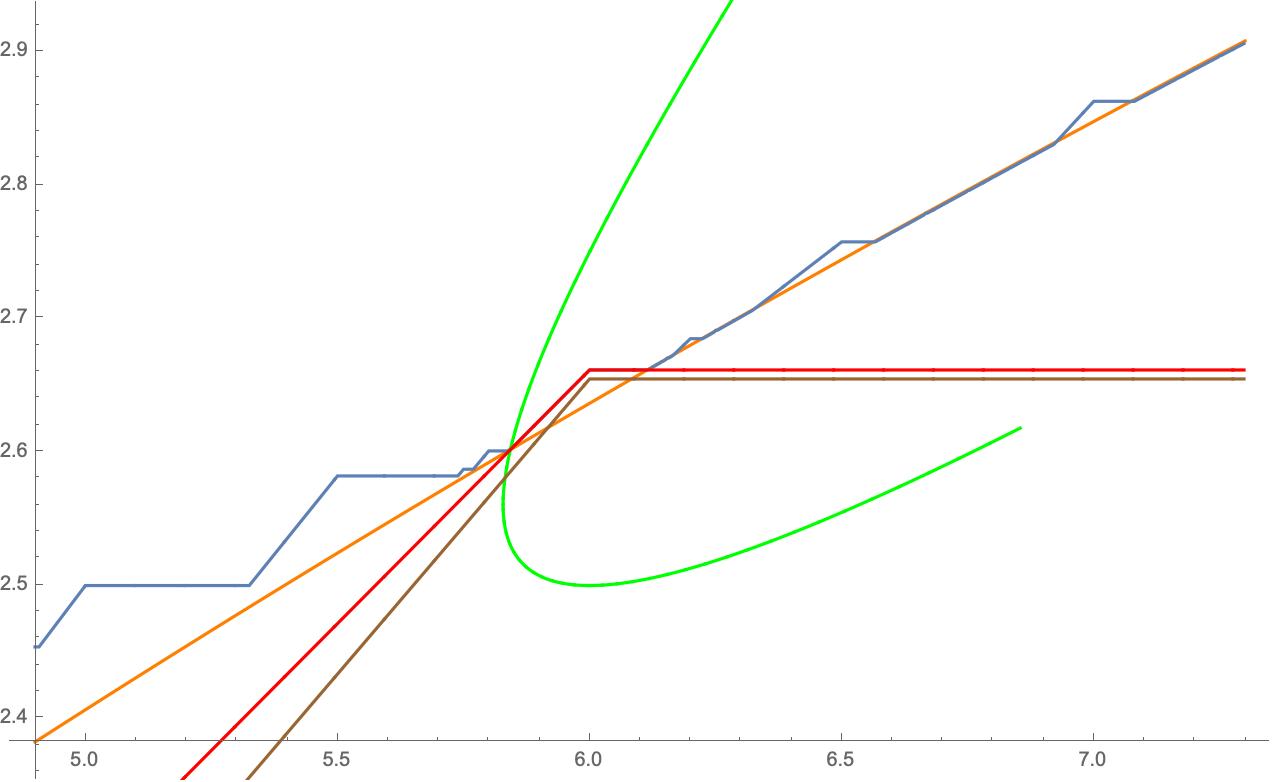}
}\hfil
\subfigure[$b=\frac{3}{8}=0.375$]{
\includegraphics[width=0.6\textwidth]{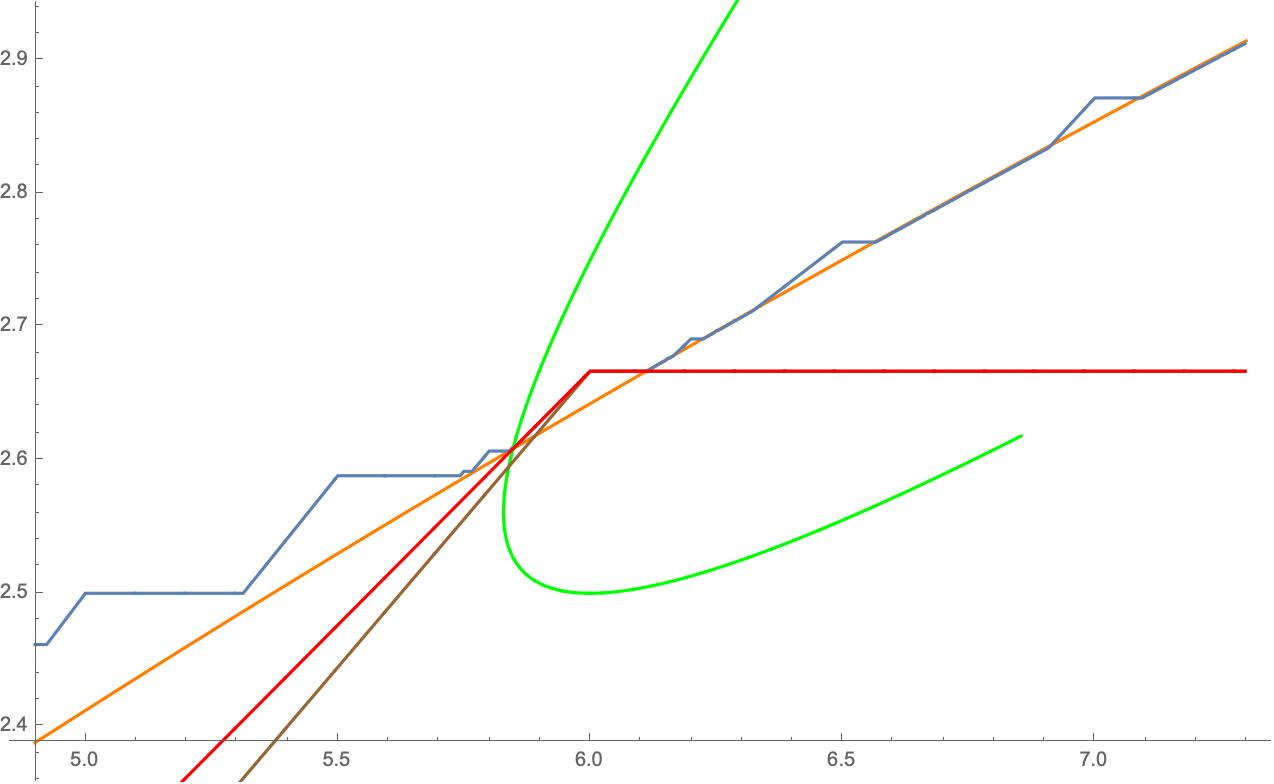}
}\hfil
\subfigure[$b=0.38$]{
\includegraphics[width=0.6\textwidth]{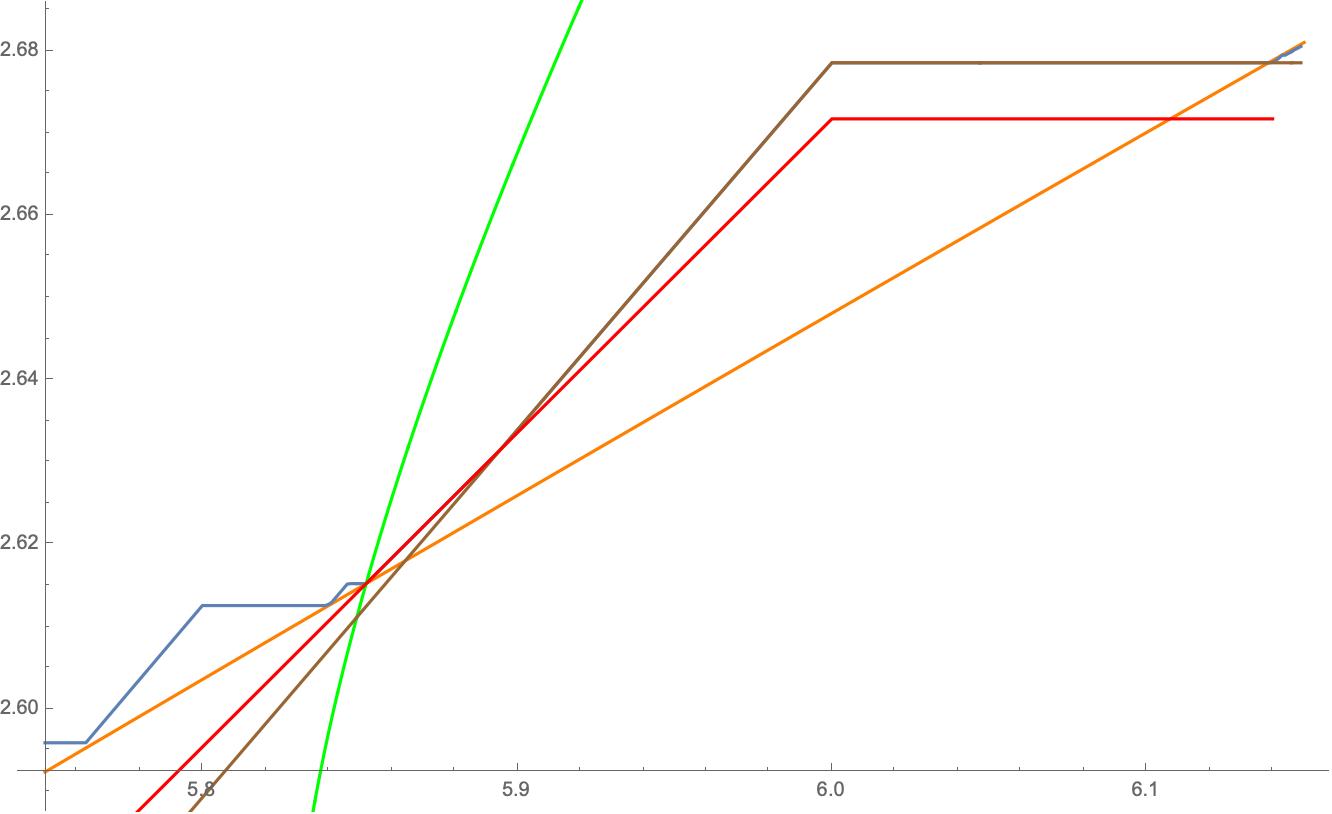}
}
\caption{In all plots the orange curve indicates the volume obstruction, the function $c_{H_b}^\le$ is in dark blue, the obstruction $\mu_{\bB^U_0,b}$ is in brown, the obstruction from $(3,1;2,1^{\times5})$ is in red, and the curve $(\acc(b),V_b(\acc(b)))$ is in bright green. 
Note the differing scales; in all diagrams, both the brown and red curves break  at $z=6$.
}
\label{fig:bleJE0}
\end{figure}


\end{document}